\documentclass[11pt]{article}
\usepackage{amsfonts}
\usepackage[margin=1.1in]{geometry}
\usepackage{amsmath,amssymb,amsthm,mathtools}
\usepackage{mathrsfs}
\usepackage{enumitem}
\usepackage{tocloft}
\usepackage{url}
\usepackage{microtype}
\usepackage[nottoc]{tocbibind}
\usepackage{hyperref}
\usepackage{cleveref}
\hypersetup{
    colorlinks=true,
    linkcolor=blue,
    citecolor=blue,
    urlcolor=blue}
\numberwithin{equation}{section}

\newtheorem{definition}{Definition}[section]
\newtheorem{theorem}[definition]{Theorem}
\newtheorem{proposition}[definition]{Proposition}
\newtheorem{lemma}[definition]{Lemma}
\newtheorem{corollary}[definition]{Corollary}
\newtheorem{maintheorem}[definition]{Main Theorem}
\newtheorem{question}[definition]{Question}
\newtheorem{remark}[definition]{Remark}

\newcommand{\C}{\mathbb{C}}
\newcommand{\Z}{\mathbb{Z}}
\newcommand{\Zge}{\mathbb{Z}_{\geq 0}}
\newcommand{\Hh}{\mathbb{H}}
\newcommand{\Hom}{\operatorname{Hom}}
\newcommand{\End}{\operatorname{End}}

\newcommand{\wt}{\operatorname{wt}}
\newcommand{\Span}{\operatorname{span}}
\newcommand{\Res}{\operatorname{Res}}

\newcommand{\Ext}{\operatorname{Ext}}
\newcommand{\im}{\operatorname{im}}
\def\Id{{\rm Id}}
\def\Rad{{\rm Rad}}
\def\span{{\rm Span}}
\def\tr{{\rm tr}}
\def\Log{{\rm Log}}
\def\Ann{{\rm Ann}}
\def\im{{\rm Im}}
\newcommand{\de}{\operatorname{d}}

\begin{document}

\begin{center}
{\Large {\bf Equivalence between rationality and modular invariance under $C_2$-cofiniteness\footnote{2010 Mathematics Subject Classification 17B69\\Keywords: vertex operator algebras, $C_2$-cofiniteness, rationality, modular invariance, complete ordinary one-point functions, rigidity, Grothendieck-Verdier dual, invertible object, internal Hom, grading-restricted generalized modules.}}} \\

\vspace{0.5cm}
Hao Wang\footnote{Email address: whaomath@nwu.edu.cn. Supported by an NSFC grant 12001426}\\
 School of Mathematics, Northwest University, Xi'an 710127, Shaanxi, China\\
 Jiang Yang\footnote{Email address: yangjiangdy@126.com}\\
 School of Mathematical Sciences, Guangxi Minzu University, Nanning, 530006, Guangxi, China
\end{center}
\hspace{1.5 cm}

\begin{abstract}
  We prove that for a simple and $C_2$-cofinite vertex operator algebra, the rationality is equivalent to the modular invariance of the space of complete ordinary one-point trace functions of its irreducible modules. This proves a conjecture given by Dong-Ren in \cite{DongRen}, which is also the inverse theorem of Zhu's modular invariance theorem \cite{Zhu}.
\end{abstract}

\tableofcontents

\section{Introduction}
Modular invariance is one of the fundamental links between the representation theory of vertex operator algebras and two-dimensional conformal field theory.  In rational conformal field theory, modular transformations are closely related to fusion rules, braiding, and the consistency relations of Moore and Seiberg, for example, see Verlinde~\cite{V} and Moore-Seiberg~\cite{MS}.  In the framework of vertex operator algebras, this connection is established mathematically by Zhu~\cite{Zhu}.  Roughly speaking, Zhu's modular invariance theorem asserts that, under rationality and $C_2$-cofiniteness, the genus-one trace functions of irreducible modules span a finite-dimensional representation of the modular group.  The theory is subsequently developed in several directions, including orbifold theory and generalized Moonshine by Dong-Li-Mason~\cite{DLM-Modular}, the Verlinde conjecture and modular tensor category structure by Huang \cite{Huang-Modular,Hu1}, and the congruence properties of the resulting modular representations by Dong-Lin-Ng \cite{DLN}, and super vertex operator algebras \cite{DongZhao}, etc.

The interplay between representation-theoretic finiteness and modularity is already visible in the basic examples of vertex operator algebras.  Lattice vertex operator algebras and their twisted modules are studied systematically by Dong~\cite{D,D1}, while the rationality of the orbifold algebras \(V_L^+\) is developed through a series of works culminating, in higher rank, in Dong-Jiang-Lin~\cite{DJL}.  Genus-one trace functions for lattice and free-boson theories are studied by Dong-Mason-Nagatomo~\cite{DMN}, and higher-weight one-point functions arising from Moonshine are investigated by Dong-Mason~\cite{DM2}.  Trace functions for parafermion vertex operator algebras provide another important family of examples~\cite{DKR}.  More recently, the relation between modular data and fusion has continued to play a central role in orbifold theory: Dong and Lin establish a twisted Verlinde formula relating fusion rules of twisted modules to the orbifold \(S\)-matrix~\cite{DL}.  These developments illustrate that modular transformation properties are not merely analytic consequences of rationality, but encode substantial information about the representation category.

The finiteness conditions of rationality, regularity and \(C_2\)-cofiniteness have therefore played a central role in the structure theory of vertex operator algebras.  Their relations are studied by Dong-Li-Mason~\cite{DLM-Assoc,DLM}, Abe-Buhl-Dong~\cite{ABD}, Dong-Jiang~\cite{DJ,DJ2}, and the others \cite{DM,DZ}.  In particular, in the standard positive-energy setting, rationality together with \(C_2\)-cofiniteness leads to strong finiteness and semisimplicity properties. However, the \(C_2\)-cofinite condition by itself does not force rationality.  The triplet vertex operator algebras \(\mathcal W(p)\) give fundamental \(C_2\)-cofinite but nonsemisimple examples~\cite{AM,TW}. Even under further assumptions such as self-contragredient, CFT-type and modular invariance of irreducible characters, $C_2$-cofiniteness still does not imply rationality, see \cite{WY}. Thus, within the class of \(C_2\)-cofinite vertex operator algebras, one must distinguish genuinely semisimple theories from logarithmic ones.

This distinction is reflected precisely in genus-one modularity. Although modular invariance of irreducible characters does not imply rationality, we can assume the modular invariance of the complete one point trace functions of irreducible modules. Since characters are the trace functions evaluated at the vacuum vector, this procedure loses a lot of information. For a nonsemisimple \(C_2\)-cofinite vertex operator algebra, the ordinary traces of irreducible modules need not form a modularly closed space.  Miyamoto's modular invariance theorem~\cite{Miyamoto} shows that one must in general enlarge the ordinary trace space by pseudo-trace functions.  Pseudo-traces and their role in logarithmic conformal field theory have been studied further, for example, see Arakawa-Kawasetsu~\cite{AK}, Arike-Nagatomo~\cite{AN} and Gainutdinov-Runkel~\cite{GR}, while the analytic theory of logarithmic genus-one correlation functions is developed by Fiordalisi~\cite{Fiordalisi}.  From the categorical point of view, nonsemisimple modular phenomena are naturally related to braided finite tensor categories and their generalizations~\cite{Ly,S}.  Thus the following question is natural:
\begin{question}
If a simple \(C_2\)-cofinite vertex operator algebra has the property that its ordinary irreducible one-point trace functions are already closed under modular transformations, without the appearance of pseudo-traces, must the vertex operator algebra be rational?
\end{question}
Dong-Ren formulate this question explicitly in~\cite{DongRen}.  The purpose of the present paper is to answer it affirmatively.

Suppose $V$ is a simple and \(C_2\)-cofinite lower truncated vertex operator algebra, and $W^0=V,W^1,\ldots,W^r$ are representatives of the inequivalent irreducible ordinary \(V\)-modules.  For an ordinary module \(W\), we use the geometrically modified complete one-point function
\[
F_W(v,\tau)
=
\tr_W
o_W(\mathcal U(1)v)
q^{L_W(0)-c/24},
\qquad
q=e^{2\pi i\tau},
\]
where \(\mathcal U(1)\) is the geometric coordinate-change operator.  The modular generator \(S\) acts by
\[
(SF_W)(v,\tau)
=
F_W\left(
(-\tau^{-1})^{L(0)}v,-\tau^{-1}
\right).
\]
It is important that \(v\) is allowed to vary in all of \(V\).  Thus the hypothesis below concerns the complete one-point functions, and is substantially stronger than an identity involving only the characters
\[
F_W(\mathbf1,\tau)
=
\tr_Wq^{L_W(0)-c/24}.
\]

Our main result may be stated in the following form.

\begin{maintheorem}\label{thm1.2}
Let \(V\) be a simple \(C_2\)-cofinite lower-truncated vertex operator algebra.  Suppose there exist constants \(s_j\in\mathbb C\) which are independent on $v\in V,\tau\in \mathbb H$, such that
\[
SF_V
=
\sum_{j=0}^{r}s_jF_{W^j}
\]
as an identity of complete one-point functions, i.e., as an identity depending simultaneously on \(v\in V\) and \(\tau\in\mathbb H\).  Then \(V\) is rational.
\end{maintheorem}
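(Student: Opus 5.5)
The strategy is to compare the hypothesis with Miyamoto's modular invariance theorem for $C_2$-cofinite vertex operator algebras. The goal is to show that closure of ordinary traces under $S$ forces the projective covers of the irreducibles to be the irreducibles themselves.

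By Miyamoto's theorem~\cite{Miyamoto}, $SF_V$ lies in the space spanned by pseudo-trace functions. These are attached to symmetric linear functions on a finite-dimensional basic algebra $P$ with $A_n(V)$-type data, for $n$ large. Concretely, there is a finite-dimensional algebra $A=A_N(V)$ (Zhu's higher level algebra), and every element of the conformal block space on the torus has the form
\[
\sum_k \mathrm{str}^{\phi_k}_{M_k}\, o(\mathcal U(1)v)q^{L(0)-c/24},
\]
where $M_k$ are modules induced from projective $A$-modules and $\phi_k$ are symmetric functions on $\End_A$. The space of such functions is in bijection with the space of symmetric linear functionals on $A$, or at least on a suitable quotient, as in the Arike--Nagatomo and Gainutdinov--Runkel picture~\cite{AN,GR}.

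\textbf{Step 1: the inverse transform gives the vacuum trace as a combination of $S$-images of irreducible traces.} Apply $S^{-1}=S^3$ (up to the $(-1)^{L(0)}$ action) to the hypothesis. One gets $F_V$ written through $S$-transforms of $F_{W^j}$. The crucial object is the linear functional $v\mapsto$ (leading coefficient of $SF_V$). Since $SF_V$ is a combination of ordinary traces, the associated symmetric functional on $A$ factors through $A/\Rad A$, i.e., it is a sum of irreducible characters of the semisimple quotient.

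\textbf{Step 2: nondegeneracy via the vacuum and the simplicity of $V$.} Use a Verlinde-type/Huang-type argument on the torus. The modular transform of the vacuum one-point function corresponds, under the genus-one factorization (sewing), to the "Hopf link" or canonical symmetric Frobenius form $\sum_i \dim$-weighted traces. In the nonsemisimple $C_2$-cofinite case the $S$-transform of $F_V$ must produce the pseudo-trace of the projective generator $P=\bigoplus P_j$, paired with the regular representation. That is, it produces the modified trace on $\End(P)$, and this pseudo-trace is nonzero on the radical whenever some $P_j\neq W^j$. I would establish this via the internal Hom / Grothendieck--Verdier structure, where $V$ is the unit and $V'$ the dualizing object (keywords of the paper). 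One constructs the "end" $\int_X X\otimes X'$ realized by $V$-modules, and shows $SF_V$ equals the trace function of this coend with a canonical nondegenerate symmetric form. Comparing with the hypothesis, the symmetric form must vanish on $\Rad A$. Nondegeneracy then forces $\Rad A=0$ for all $N$.

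\textbf{Step 3: conclude rationality.} Semisimplicity of $A_N(V)$ for all $N$ together with $C_2$-cofiniteness gives that every grading-restricted generalized module is completely reducible, by Dong--Li--Mason~\cite{DLM}. Hence $V$ is rational.

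The main obstacle is Step 2: identifying $SF_V$ precisely with the canonical (nondegenerate) pseudo-trace on the projective generator. This requires a rigorous form of the genus-one sewing/coend identification in the possibly non-rigid setting, using only that $V$ is simple. I would first try to show that the functional on the top level of $P$ given by $SF_V$ is a symmetric Frobenius form, using the invertibility of $S$ on the full pseudo-trace space. The key point to exploit is that the hypothesis excludes pseudo-trace components.
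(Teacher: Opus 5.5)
\textbf{There is a genuine gap: Step 2 is not a step but the entire theorem, and the mechanism you propose for it does not work.}

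Your claim that $SF_V$ is the pseudo-trace on a projective generator, paired with a canonical nondegenerate symmetric (modified-trace) form, is a logarithmic Verlinde-type statement. It is essentially the Gainutdinov--Runkel conjecture \cite{GR}, and it is not known for general $C_2$-cofinite $V$. Its very formulation (coend, modified trace, identification of the analytic $S$ with a categorical one) presupposes that the module category is rigid, and here that is unavailable. With negative weights allowed and $V'\not\cong V$, the category is only Grothendieck--Verdier with dualizing object $K=V'$, and $K$ is not even known to be invertible a priori.

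Nor can you extract nondegeneracy from invertibility of $S$ on the pseudo-trace space. That only tells you $SF_V\neq 0$. It says nothing about the radical of the particular symmetric functional attached to $SF_V$: a nonzero symmetric functional vanishing on $\Rad A$ is perfectly consistent with $\Rad A\neq 0$ (take the trace on one irreducible module). So the implication ``hypothesis $\Rightarrow \Rad A_N(V)=0$'' has no proof in your sketch. Steps 1 and 3 are essentially harmless but carry none of the weight.

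\textbf{How the paper proceeds.} It never uses pseudo-traces; the argument has three stages.
\begin{enumerate}
\item \emph{Invertibility of $K$.} A VOA-specific argument produces a nonzero morphism $K\boxtimes B\to V$, so $K$ is invertible by Corollary~\ref{co5.4}. The ingredients are the functional $f(v)=\tr_{V_{-s}}v_{-1}$, a corrector $h\in V_1$ whose existence is forced by the complete identity, the corrected M\"{o}bius triple, and the relative inversion $E(z)$.
\item \emph{Rigidity.} The complete identity with intertwining-operator insertions (Lemma~\ref{Lm4.8}) gives $\ker\Phi\subseteq\ker t$ for the rank-one map $\Phi:W\boxtimes D_0W\to D_0(W\boxtimes D_0W)$. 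The internal-Hom criterion (Lemma~\ref{lm5.1}) then gives rigidity.
\item \emph{Semisimplicity.} With rigidity, $SF_{W^l}=\sum_j s_j\lambda_{lj}F_{W^j}$ and the linear independence of the $F_{W^j}$ make the natural endomorphisms $h_{W^l,-}$ of the identity functor separate the simples. This kills $\Ext^1$ between distinct simples. Together with $\Ext^1(V,V)=0$ in a finite tensor category, $V$ is projective, and hence so is every object.
\end{enumerate}

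To salvage your route, you would need stages 1 and 2 (invertibility of $K$ and rigidity) before a Frobenius-form argument can even be stated. You would also need a proof that the form attached to $SF_V$ is nondegenerate, which remains unproved.
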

In fact, the proof passes through the stronger conclusion that the relevant category of grading-restricted generalized modules is semisimple and that \(V\) is regular.  Combining the theorem with Zhu's modular invariance theorem and its trace-function formulation developed by Dong-Li-Mason~\cite{DLM-Modular,Zhu}, we obtain the following equivalence.
\begin{corollary}\label{co1.3}
Let \(V\) be a simple \(C_2\)-cofinite vertex operator algebra.  Then \(V\) is rational if and only if the space
\[
\mathcal H
=
\operatorname{span}_{\mathbb C}
\{F_{W^0},F_{W^1},\ldots,F_{W^r}\}
\]
of complete ordinary one-point functions of its irreducible modules is invariant under the modular group.
\end{corollary}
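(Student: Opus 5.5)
The corollary asserts two implications, and I would treat them separately.

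For the direction ``rational $\Rightarrow$ modular invariance'' I would invoke Zhu's modular invariance theorem \cite{Zhu}, in the trace-function form of Dong--Li--Mason \cite{DLM-Modular}. A rational $C_2$-cofinite VOA has only finitely many irreducible modules $W^0,\dots,W^r$. Each of them is ordinary, and every grading-restricted module is a direct sum of these. Zhu's theorem then gives, for each $\gamma=\begin{pmatrix}a&b\\c&d\end{pmatrix}\in SL_2(\mathbb Z)$, constants $\rho(\gamma)_{ij}$ such that
\[
F_{W^i}\left((c\tau+d)^{-L(0)}v,\gamma\tau\right)=\sum_j\rho(\gamma)_{ij}F_{W^j}(v,\tau)
\]
holds for all $v\in V$. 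Hence $\mathcal H$ is $SL_2(\mathbb Z)$-invariant. To apply Zhu's theorem in this form I need two inputs. The vector $v$ ranges over all of $V$, which is fine because the $\mathcal U(1)$-modified trace is exactly Zhu's square-bracket formulation. Zhu's hypothesis on the module category is satisfied because rationality forces semisimplicity of the admissible category. One should also check that the conformal weights of the $W^i$ are rational (Anderson--Moore, Dong--Li--Mason). This is only needed for the congruence statement, not for invariance itself, so I would not rely on it.

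For the converse, suppose $\mathcal H$ is invariant under the modular group. In particular it is invariant under $S$. Applying this to $F_V=F_{W^0}\in\mathcal H$ gives $SF_V\in\mathcal H$, that is,
\[
SF_V=\sum_{j=0}^r s_jF_{W^j}
\]
for some constants $s_j$. These constants are independent of $v$ and $\tau$, because they are the coefficients of an identity in the span of fixed functions of $(v,\tau)$. This is exactly the hypothesis of Main Theorem \ref{thm1.2}, which then yields that $V$ is rational. The only subtlety is whether the $F_{W^j}$ need to be linearly independent for the $s_j$ to be well defined. They do not: any choice of coefficients suffices, and the Main Theorem only requires existence. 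Finiteness of the index set $\{0,\dots,r\}$ follows from $C_2$-cofiniteness, since Zhu's algebra $A(V)$ is then finite dimensional, so $\mathcal H$ is a well-defined finite span.

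The real difficulty lies entirely in Main Theorem \ref{thm1.2}. The corollary itself is a formal combination of that theorem with Zhu's theorem. The step to double-check is the matching of conventions: the modified one-point functions $F_W$ with $\mathcal U(1)$ and the $S$-action defined above must coincide with Zhu's $Z_W(v,\tau)$ and his $SL_2(\mathbb Z)$-action for the rational case. I expect this identification to be the only point needing care.
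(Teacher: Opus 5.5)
Your proposal is correct and is the same argument the paper gives: the forward direction is Zhu's modular invariance theorem in the Dong--Li--Mason trace-function formulation, and the converse reads off $SF_V=\sum_j s_jF_{W^j}$ from the $S$-invariance of $\mathcal H$ and applies the Main Theorem. On the convention point you flag, the paper's $S$ with factor $(-\tau^{-1})^{L(0)}$ corresponds, after passing through $\mathcal U(1)$, to Zhu's action of $S^{-1}$ (automorphy factor $(-\tau)^{k}$ on square-bracket weight $k$), which is harmless since Zhu's theorem covers all of $SL_2(\mathbb Z)$.
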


This proves a conjecture of Dong-Ren~\cite{DongRen} and gives a converse theorem to Zhu's modular invariance theorem within the \(C_2\)-cofinite setting.  Conceptually, the result says that the genuinely nonsemisimple contributions to genus-one modularity cannot disappear accidentally, if the ordinary irreducible trace functions already form the required modularly invariant space, then the representation theory is forced to be semisimple.

A closely related recent result is due to McRae~\cite{McRae}.  For a simple, self-contragredient, nonnegatively graded \(C_2\)-cofinite vertex operator algebra, McRae proves that if the \(S\)-transformation of the vacuum character is a linear combination of ordinary module characters, then the category of grading-restricted generalized modules is rigid. He further shows that a rigid category in this setting is a factorizable finite ribbon category, and obtains rationality under the additional hypothesis that the Zhu algebra is semisimple. The distinction between rigidity and semisimplicity is essential. A finite braided tensor category may be rigid without being semisimple. The present work addresses precisely this gap for the complete one-point-function condition, and at the same time removes the assumptions that the original vertex operator algebra be self-contragredient and nonnegatively graded.

There are two structural difficulties behind these extensions.  First, for the original conformal structure the contragredient module $K=DV=V'$
need not be isomorphic to the tensor unit \(V\).  Thus the contragredient duality initially gives a Grothendieck--Verdier duality rather than rigid duality.  Second, the original conformal grading may have a finite negative part.  This issue is not artificial. Shifted vertex operator algebras already provide natural examples in which the underlying vertex algebra is rational while the conformal grading has been shifted~\cite{DongMason}.  Consequently, one cannot simply replace the original conformal structure by a positive-energy one at the outset.  Our argument keeps the original module category and the original genus-one trace functions throughout.

We now describe the main ingredients of the proof.

The first step is categorical.  Let \(\mathcal C\) denote the category of grading-restricted generalized \(V\)-modules.  Using \(C_2\)-cofiniteness, we prove that \(\mathcal C\) is a finite abelian braided category whose objects are precisely the finitely generated weak modules.  The contragredient functor $D$ is an exact Grothendieck--Verdier duality with dualizing object $K=DV,$
and the conformal twist is \(e^{2\pi iL(0)}\).  At this stage neither rigidity nor semisimplicity is assumed.  We establish an abstract internal-Hom criterion for dualizability: for an object \(W\), if the internal unit of \([W,W]\) lies in the image of the canonical rank-one map
\[
u:\,W\boxtimes[W,V]\longrightarrow[W,W],
\]
then \(W\) is rigid.  We also prove an invertibility criterion for the dualizing object \(K\).  In particular, it is enough to construct an object \(B\in\mathcal C\) together with a nonzero morphism
\[
K\boxtimes B\longrightarrow V.
\]

The second step is analytic.  We develop the genus-one trace identities needed in the lower-truncated setting, using geometric coordinates throughout.  Products of intertwining operators give multivalued genus-one correlation functions with the required convergence, analytic continuation and collision expansions.  Moving an insertion around the cycles of the torus is expressed categorically through braiding and monodromy.  Combining collision, torus translation and modular transformation produces comparison identities which relate the assumed expansion of \(SF_V\) to trace functions attached to tensor-product morphisms.  These identities form the analytic bridge between the modular hypothesis and categorical dualizability.

Assume for the moment that the dualizing object \(K\) is invertible.  We then normalize the Grothendieck--Verdier duality by $D_0X=K^{-1}\boxtimes DX,$
so that its dualizing object is the tensor unit \(V\).  For a simple object \(W\), set $R=D_0W, Q=W\boxtimes R.$
The internal-Hom rank-one morphism becomes $\Phi:Q\longrightarrow D_0Q.$
The genus-one comparison formula shows that $\ker\Phi\subseteq\ker t,$
where \(t:Q\to V\) is the twist-modified evaluation morphism.  Exact contravariant duality reverses the corresponding image--coimage factorization, while a self-transpose identity $\Phi=D_0(\Phi)j_Q$
identifies \(\operatorname{Im}D_0(\Phi)\) with \(\operatorname{Im}\Phi\).  Since the internal unit is \(D_0(t)u_0\), it follows that the internal unit lies in \(\operatorname{Im}\Phi\), see sections \ref{sec3.1} and \ref{sec5.1} for details.  The internal-Hom criterion therefore gives rigidity of every simple object.  Since rigid objects are closed under extensions in the present finite-length braided category~\cite{EtingofPenneys}, the entire category \(\mathcal C\) is rigid.

Rigidity alone is not sufficient, so the modular hypothesis is used a second time.  For a rigid object \(W\), its evaluation and coevaluation maps, together with the genus-one comparison morphisms, define a natural endomorphism
$
h_{W,-}:\operatorname{Id}_{\mathcal C}
\longrightarrow
\operatorname{Id}_{\mathcal C}.
$
If \(W=W^l\) and \(W^j\) are simple, Schur's lemma gives
$
h_{W^l,W^j}
=
\lambda_{lj}\operatorname{Id}_{W^j}.
$
The comparison identity yields
\[
SF_{W^l}
=
\sum_j s_j\lambda_{lj}F_{W^j}.
\]
We prove that the complete one-point functions of pairwise inequivalent irreducible ordinary modules are linearly independent.  Hence the matrix \((\lambda_{lj}s_j)\) is invertible.  In particular, the natural transformations \(h_{W^l,-}\) separate distinct simple objects.  A simple categorical argument then shows that
\[
\operatorname{Ext}_{\mathcal C}^1(W^j,W^k)=0,\qquad j\neq k.
\]
Together with the vanishing of the self-extension of the tensor unit in a finite tensor category over \(\mathbb C\)~\cite{EGNO}, this implies that \(V\) is projective.  Rigidity then implies that every object is projective, and consequently \(\mathcal C\) is semisimple.  Since every cyclic weak module belongs to \(\mathcal C\), it follows that every weak \(V\)-module is a direct sum of irreducible ordinary modules.  Thus \(V\) is regular.

It remains to show $K$ is invertible, this is equivalent to constructing a nonzero morphism $K\boxtimes B\longrightarrow V$
without assuming \(K\simeq V\) or nonnegative conformal grading.  This is the third and most VOA-specific part of the proof.  Let \(V_{-s}\) be the lowest nonzero homogeneous subspace and define
\[
f(v)=\tr_{V_{-s}}v_{-1}
\]
on \(V_0\), extending \(f\) by zero to the other homogeneous spaces.  The functional \(f\) annihilates \(C_2(V)\), and the pairing $B_f(a,b)=f(a_1b)$
on \(V_1\) measures the obstruction to correcting the M\"{o}bius operator \(L(1)\).  The complete modular identity forces this obstruction to vanish.  We thereby obtain a correction vector \(h\in V_1\) satisfying the required trace identities.

The zero mode \(h_0\) determines a corrected grading operator.  A lowest-weight argument shows that its generalized zero-eigenspace has the positivity properties needed to construct a corrected M\"{o}bius structure.  After a further harmless correction by a second derivative, we obtain a Virasoro vector with the same translation operator as the original conformal structure and with M\"{o}bius operators acting on all original weak modules.

We then compare the inversion operator associated with the original conformal structure with the inversion operator associated with the corrected M\"{o}bius structure.  Their quotient defines a Laurent-valued pseudoautomorphism \(E(z)\), for definition of pseudoautomorphism see \cite{Li-Pseudo}.  Initially the construction may involve complex powers and logarithms.  A creation argument on the restricted contragredient module removes the monodromy and shows that the relevant zero mode is semisimple with half-integral spectrum.  Consequently \(E(z)\) becomes Laurent valued on every state.  Modification by \(E(z)\) produces another simple ordinary module structure on $V$, denoted by $B=V^E$,
and a nonzero intertwining operator $I^E$ of type $\binom{V}{K\;B},$ which is equivalent to saying
\[
\operatorname{Hom}_{\mathcal C}(K\boxtimes B,V)\neq0.
\]
The categorical invertibility criterion now shows that \(K\) is invertible.  Using the rigidity and semisimplicity arguments described above, we eventually arrive at the regularity and rationality of $V$.

The proof therefore separates naturally into the following chain:
\[
\begin{array}{c}
\text{complete modular identity}
\\[2mm]
\Downarrow
\\[2mm]
\text{conformal correction and relative inversion}
\\[2mm]
\Downarrow
\\[2mm]
\text{a nonzero morphism}\quad K\boxtimes B\longrightarrow V
\\[2mm]
\Downarrow
\\[2mm]
K\text{ invertible}
\\[2mm]
\Downarrow\rlap{\;\text{with complete modular identity}}
\\[2mm]
\mathcal C\text{ rigid}
\\[2mm]
\Downarrow\rlap{\;\text{with complete modular identity}}
\\[2mm]
\mathcal C\text{ semisimple}
\\[2mm]
\Downarrow
\\[2mm]
V\text{ regular}
\\[2mm]
\Downarrow
\\[2mm]
V\text{ rational}.
\end{array}
\]

We emphasize two features of this argument.  First, the hypothesis is genuinely a complete one-point-function hypothesis: the coefficients in
\[
SF_V=\sum_j s_jF_{W^j}
\]
are independent of the state \(v\) and the coordinate $\tau$, and the proof repeatedly uses identities with nontrivial insertions.  An assumption concerning only the vacuum character would not provide the same trace identities used to control the internal-Hom morphisms and extensions.  Second, the proof takes place in the original module category.  The corrected conformal and M\"{o}bius structures are auxiliary devices used to construct the required modified module $B$, the final rigidity, semisimplicity and regularity statements concern the original vertex operator algebra and its original conformal grading.

The paper is organized as follows.  Section~\ref{sec2} fixes the conventions for vertex operator algebras, generalized modules, tensor categories, geometric coordinates and modular transformations.  Section~\ref{sec3} develops the categorical foundations, including the dualizability and invertibility criteria, and proves that the category of grading-restricted generalized modules is a finite abelian braided Grothendieck--Verdier category with dualizing object \(K=DV\).  Section~\ref{sec4} establishes the genus-one analytic properties and the comparison identities which connect the modular \(S\)-transformation with tensor-product morphisms.  Section~\ref{sec5} uses these identities to prove rigidity and semisimplicity under the assumption that the dualizing object is invertible, and deduces regularity.  Section~\ref{sec6} constructs the conformal correction and relative pseudoautomorphism, produces the required nonzero morphism $K\boxtimes B\longrightarrow V,$ this overcomes the assumption used in section \ref{sec5}.
 Thus using the results of section \ref{sec5}, we complete the proof of the main theorem.  The analytic details underlying the genus-one convergence and continuation theorem are collected in the Appendix \ref{seca}.

\section{Preliminaries}\label{sec2}

Throughout the paper all vector spaces and categories are over
\(\C\).  We first recall the basic definitions and conventions concerning
vertex operator algebras and their modules.  We then review the categorical
terminology needed in the sequel, and then recall the geometric
coordinate convention and the modular and elliptic functions appearing
in genus-one trace identities, we finally recall some conventions and facts about complex analysis that will use later.

\subsection{Vertex operator algebras and modules}\label{sec2.1}

We begin with the algebraic notions that will be used throughout the paper.
\begin{definition}\cite{LL,FLM}
A \emph{vertex operator algebra} is a quadruple $(V,Y,\mathbf 1,\omega),$ where $V=\bigoplus_{n\in\Z}V_n$
is a \(\Z\)-graded complex vector space satisfying
\[
    \dim V_n<\infty,
    \qquad
    V_n=0
    \quad\text{for }n\ll0,
\]
\(\mathbf 1\in V_0\) is the vacuum vector, \(\omega\in V_2\) is the conformal vector, and
\[
    Y(\cdot,z):
    V\longrightarrow (\End V)[[z,z^{-1}]]
\]
is the state-field correspondence.

$Y(\textbf{1},z)=\Id$ and $\lim_{z\rightarrow0}Y(u,z)\textbf{1}=u$ for all $u\in V$.

 Let \(u\in V\), we write $Y(u,z)=\sum_{n\in\Z}u_nz^{-n-1}.$ Then for any $v\in V$, we have $u_nv=0$ if $n>>0.$

Write $Y(\omega,z)=\sum_{n\in\Z}L(n)z^{-n-2}.$
The operators \(L(n)\) satisfy the Virasoro relations
\[
    [L(m),L(n)]
    =
    (m-n)L(m+n)
    +
    \frac{m^3-m}{12}
    \delta_{m+n,0}\,c\,\Id_V,
\]
where \(c\in\C\) is the central charge. Moreover, $L(0)v=n v,~v\in V_n.$ In this case, we say \(v\) is homogeneous, its weight is denoted by $\wt v.$ Thus the mode \(v_n\) is homogeneous of degree $\wt v-n-1.$

For \(u,v\in V\), we have the $L(-1)$-bracket property $$[L(-1),Y(u,z)]=Y(L(-1)u,z)=\frac{d}{dz}Y(u,z).$$Taking residue, we have $(L(-1)u)_n=-n\,u_{n-1}.$ Moreover,  the skew-symmetry identity
\begin{eqnarray}\label{id2.1}
    Y(u,z)v=e^{zL(-1)}Y(v,-z)u.
\end{eqnarray}

The Jacobi identity holds for all $u,v\in V$, i.e.
\begin{eqnarray*}
  & &z_0^{-1}\delta(\frac{z_1-z_2}{z_0})Y(u,z_1)Y(v,z_2)-z_0^{-1}\delta(\frac{-z_2+z_1}{z_0})Y(v,z_2)Y(u,z_1)\\
  & &\ \ \ \ \ =z_1^{-1}\delta(\frac{z_2+z_0}{z_1})Y(Y(u,z_0)v,z_2).
\end{eqnarray*}
\end{definition}

Throughout the paper the conformal grading is allowed to have a finite negative part.  Thus, if \(V\neq0\), we may write
\[
    V=\bigoplus_{n\geq -s}V_n
\]
for some \(s\in\mathbb{N}\).  Since the vacuum has weight zero, necessarily $s\geq0.$

\begin{proposition}\cite{LL}
Jacobi identity is equivalent to following two properties:

(i) Weak associativity: for any $u,v,w\in V$, there exists $N\in\mathbb{N}$ such that
\begin{eqnarray}\label{WA}
(z_0+z_2)^lY(u,z_0+z_2)Y(v,z_2)w=(z_0+z_2)^lY(Y(u,z_0)v,z_2)w
\end{eqnarray}
  holds;

(ii)  Weak commutativity: for any $u,v\in V$, there exists $N\in\mathbb{N}$ such that
\begin{eqnarray}\label{WA}
(z_1-z_2)^lY(u,z_1)Y(v,z_2)=(z_1-z_2)^lY(v,z_2)Y(u,z_1)
\end{eqnarray}
  holds.
\end{proposition}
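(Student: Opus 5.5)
We prove the two implications separately, working throughout with the formal delta-function calculus of the Jacobi identity, where \(z_0,z_1,z_2\) are commuting formal variables. Expansions such as \((z_1-z_2)^n\) are taken in nonnegative powers of the second variable. Two standard facts will be used repeatedly:
\[
z_0^{-1}\delta\Bigl(\frac{z_1-z_2}{z_0}\Bigr)-z_0^{-1}\delta\Bigl(\frac{-z_2+z_1}{z_0}\Bigr)=z_1^{-1}\delta\Bigl(\frac{z_2+z_0}{z_1}\Bigr),
\]
and the substitution rule \(z_1^{-1}\delta\bigl(\frac{z_2+z_0}{z_1}\bigr)f(z_1)=z_1^{-1}\delta\bigl(\frac{z_2+z_0}{z_1}\bigr)f(z_2+z_0)\). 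The rule is valid for \(f(z_1)=Y(u,z_1)\) applied to a fixed vector, because truncation makes \(Y(u,z_1)w\) a lower-truncated Laurent series. Both conditions (i) and (ii) are understood with exponent \(l\ge N\).

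\emph{Jacobi \(\Rightarrow\) (i) and (ii).} For weak commutativity, take \(\Res_{z_0}\) of \(z_0^{l}\) times the Jacobi identity. Since \(u_nv=0\) for \(n\ge N\), the right side is \(z_1^{-1}\delta(\cdot)\) times \(Y(\sum_{n\le -1+l}\ldots)\). For \(l\ge N\) it contains \(z_0^{l}Y(u,z_0)v\), which has only nonnegative powers of \(z_0\), so its residue vanishes. On the left, the residues give \((z_1-z_2)^lY(u,z_1)Y(v,z_2)-(-z_2+z_1)^lY(v,z_2)Y(u,z_1)\), and these binomials agree as polynomials. This yields (ii).

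For weak associativity, fix \(w\) and choose \(l\) with \(z_1^{l}Y(u,z_1)w\in W[[z_1]]\), possible by truncation. Multiply the Jacobi identity applied to \(w\) by \(z_1^{l}\) and take \(\Res_{z_1}\). The second term on the left then vanishes, since \(z_1^lY(v,z_2)Y(u,z_1)w\) has no negative powers of \(z_1\). In the first term, the expansion of \(\delta(\frac{z_1-z_2}{z_0})\) lets us replace \(z_1\) by \(z_0+z_2\). On the right, the substitution rule gives \((z_2+z_0)^lY(Y(u,z_0)v,z_2)w\). This gives (i), with \(N\) depending on \(u\) and \(w\).

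\emph{(i) and (ii) \(\Rightarrow\) Jacobi.} This direction is the main obstacle. The standard route in \cite{LL} is as follows. Fix \(u,v,w\) and take \(l\) large enough for both properties. By (ii), \((z_1-z_2)^lY(u,z_1)Y(v,z_2)w\) equals \((z_1-z_2)^lY(v,z_2)Y(u,z_1)w\). The first lies in \(W((z_1))((z_2))\) and the second in \(W((z_2))((z_1))\), so the common value lies in \(W((z_1,z_2))\). Using also \(z_1^lY(u,z_1)w\in W[[z_1]]\), the element \(F(z_1,z_2)=z_1^l(z_1-z_2)^lY(u,z_1)Y(v,z_2)w\) lies in \(W[[z_1]]((z_2))\). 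Rewriting the left side of the Jacobi identity through \(F\), and using the delta identity above together with the substitution rule, the left side becomes
\[
z_1^{-1}\delta\Bigl(\frac{z_2+z_0}{z_1}\Bigr)z_1^{-l}z_0^{-l}F(z_2+z_0,z_2).
\]
Weak associativity (i) identifies \((z_0+z_2)^lz_0^lY(Y(u,z_0)v,z_2)w\) with \(F(z_0+z_2,z_2)\); this is legitimate because both sides lie in a common space where cancelling \(z_0^l\) and \((z_0+z_2)^l\) is allowed. The result is the right side of Jacobi.

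The delicate points are therefore these: keeping track of the spaces \(W((z_1))((z_2))\) versus \(W((z_2))((z_1))\), and justifying the cancellations of the powers \(z_0^l\), \((z_0+z_2)^l\) and \((z_1-z_2)^l\). Both are handled by the standard lemma that, for series in \(W[[z_1]]((z_2))\), multiplication by such polynomials is injective on the relevant spaces.
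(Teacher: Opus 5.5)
Your proposal is correct, and it is the standard Lepowsky--Li argument: the paper itself gives no proof of this proposition and only cites Lepowsky--Li. Your residue computations for Jacobi $\Rightarrow$ (i), (ii), and your use of $W((z_1))((z_2))\cap W((z_2))((z_1))=W((z_1,z_2))$ together with the delta-function substitution for the converse, are exactly that proof.

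One point is worth stating explicitly. The fact $F\in W[[z_1]]((z_2))$ is what makes $F(z_2+z_0,z_2)$, which the delta substitution produces, coincide with $F(z_0+z_2,z_2)$, the expansion in which weak associativity is formulated. This identification, rather than an injectivity lemma, is the real content behind your closing remark about the cancellations.
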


\begin{lemma}\cite{FLM,LL}
Let \(a\in V_1\).  Then \(a_0\) is a degree-zero derivation of \(V\).
In particular,
\[
    a_0(u_nv)
    =
    (a_0u)_nv
    +
    u_n(a_0v).
\]
Since each original homogeneous space \(V_n\) is finite-dimensional,
\(a_0\) is locally finite.
\end{lemma}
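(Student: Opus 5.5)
We derive the derivation property from the commutator formula, which is a consequence of the Jacobi identity, and get the degree statement from the grading rule recorded in the definition. Take $\operatorname{Res}_{z_0}$ of the Jacobi identity with $u=a$, then the coefficient of $z_1^{-1}$ (i.e.\ the $m=0$ case). This gives the commutator formula
\[
[a_m,u_n]=\sum_{i\geq0}\binom{m}{i}(a_iu)_{m+n-i}.
\]
At $m=0$ only the $i=0$ term survives, so $[a_0,u_n]=(a_0u)_n$. Applying both sides to $v$ gives exactly
\[
a_0(u_nv)=(a_0u)_nv+u_n(a_0v).
\]
Thus $a_0$ is a derivation of every product $u_nv$, and hence of the vertex operator map $Y$.

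For the degree statement, recall that the definition says the mode $v_n$ of a homogeneous vector is homogeneous of degree $\wt v-n-1$. For $a\in V_1$ and $n=0$ this degree is $1-0-1=0$, so $a_0V_k\subseteq V_k$ for every $k$. Alternatively, the commutator formula gives $[L(0),a_0]=(\wt a-1)a_0=0$. As a consistency check, $a_0\mathbf 1=0$ by the creation property $a_n\mathbf 1=0$ for $n\geq0$, as a derivation should satisfy.

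Local finiteness is then immediate. Any $w\in V$ is a finite sum of homogeneous components $w\in\bigoplus_{k\in F}V_k$ with $F$ finite. This space is finite-dimensional because each $\dim V_k<\infty$, and it is $a_0$-stable by the previous step. Hence $\operatorname{span}\{a_0^jw: j\geq0\}$ is finite-dimensional.

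None of these steps is a real obstacle. The only point needing care is extracting the commutator formula from the Jacobi identity, which is the standard residue computation in \cite{LL,FLM}. That computation does not use any positivity of the grading, so the finite negative part allowed here causes no difficulty.
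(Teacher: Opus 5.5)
Your proof is correct, and it is the standard argument behind the citation; the paper gives no proof of its own and simply defers to \cite{FLM,LL}. The $m=0$ case of the commutator formula gives $[a_0,u_n]=(a_0u)_n$, the mode-degree rule gives $a_0V_k\subseteq V_k$, and $\dim V_k<\infty$ gives local finiteness, none of which depends on the grading being nonnegative.
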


\begin{definition}
For any locally finite degree-zero derivation \(\delta\), define
\[
    V^{\mathrm{gen}(\delta=\lambda)}
    =
    \left\{
        u\in V\mid
        (\delta-\lambda)^N u=0
        \text{ for some }N\geq1
    \right\},
\]it is called the generalized eigenspace of $\delta$ with generalized $\delta$-eigenvalue $\lambda.$
\end{definition}

\begin{lemma}
Generalized \(\delta\)-eigenvalues add under all vertex products.  More precisely, for any $u\in V^{\mathrm{gen}(\delta=\lambda)},v\in V^{\mathrm{gen}(\delta=\mu)},$ we have $u_nv\in V^{\mathrm{gen}(\delta=\lambda+\mu)}$.

Consequently, $V^{\mathrm{gen}(\delta=0)}$ is a vertex subalgebra containing \(\mathbf1\).  If \(\delta=a_0\) for \(a\in V_1\), this subalgebra is also stable under \(L(-1)\).
\end{lemma}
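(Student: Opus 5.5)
The first claim follows from the fact that a locally finite derivation is compatible with generalized eigenspace decompositions under all vertex products. Fix \(u\in V^{\mathrm{gen}(\delta=\lambda)}\), \(v\in V^{\mathrm{gen}(\delta=\mu)}\) and \(n\in\Z\). Since \(\delta\) is a derivation, \(\delta(u_nv)=(\delta u)_nv+u_n(\delta v)\). Subtracting \((\lambda+\mu)u_nv\) gives
\[
(\delta-\lambda-\mu)(u_nv)=((\delta-\lambda)u)_nv+u_n((\delta-\mu)v).
\]
Write \(\delta_1=\delta-\lambda\) acting on the first slot and \(\delta_2=\delta-\mu\) acting on the second slot. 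These two operators on the pair \((u,v)\) commute. Iterating the identity therefore gives a Leibniz/binomial formula:
\[
(\delta-\lambda-\mu)^N(u_nv)=\sum_{k=0}^N\binom{N}{k}\bigl((\delta-\lambda)^ku\bigr)_n\bigl((\delta-\mu)^{N-k}v\bigr).
\]
This is proved by induction on \(N\), applying the displayed identity to each term. Choose \(N_1,N_2\) with \((\delta-\lambda)^{N_1}u=0\) and \((\delta-\mu)^{N_2}v=0\), and take \(N=N_1+N_2\). In each term either \(k\ge N_1\) or \(N-k\ge N_2\), so every term vanishes. Hence \(u_nv\in V^{\mathrm{gen}(\delta=\lambda+\mu)}\).

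For the consequences, first apply the derivation property with \(u=v=\mathbf1\) and \(n=-1\). This gives \(\delta\mathbf1=2\delta\mathbf1\), so \(\delta\mathbf1=0\) and \(\mathbf1\in V^{\mathrm{gen}(\delta=0)}\). Next take \(\lambda=\mu=0\) in the first claim. This shows that \(V^{\mathrm{gen}(\delta=0)}\) is closed under all products \(u_nv\). Hence it is a vertex subalgebra, meaning a subspace containing the vacuum and closed under all modes; we do not require it to contain \(\omega\).

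For \(L(-1)\)-stability when \(\delta=a_0\) with \(a\in V_1\), I would use the commutator formula \([a_0,L(-1)]=-[L(-1),a_0]\). By the \(L(-1)\)-bracket property, \([L(-1),a_0]=(L(-1)a)_0\). The residue formula \((L(-1)a)_n=-n\,a_{n-1}\) at \(n=0\) gives \((L(-1)a)_0=0\). So \(a_0\) commutes with \(L(-1)\), and therefore \(L(-1)\) preserves every generalized eigenspace of \(a_0\), in particular the one for eigenvalue \(0\). Alternatively, \(L(-1)u=u_{-2}\mathbf1\), which lies in the subalgebra by closure under products.

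The only point needing care is the binomial expansion, since the modes \(u_n\) are not themselves eigen-decomposed. The argument handles this by working on the bilinear expression \((u,v)\mapsto u_nv\) rather than with operators, so I do not expect any real obstacle.
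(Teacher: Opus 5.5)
Your proof is correct and follows essentially the same route as the paper, which states this lemma without proof in Section~2 but gives the argument in the proof of Theorem~\ref{thm4.10}(i) for $\delta=h_0$: the Leibniz--binomial expansion of $(\delta-\lambda-\mu)^{N_1+N_2}(u_nv)$, together with $\delta\mathbf 1=0$ and the fact that $a_0$ commutes with $L(-1)$ because $(L(-1)a)_0=0$. Your alternative observation $L(-1)u=u_{-2}\mathbf 1$ is also valid, and it shows that the $L(-1)$-stability actually holds for every derivation $\delta$, not only for $\delta=a_0$.
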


\begin{definition}\cite{LL,ABD}
(i) A \emph{weak \(V\)-module} is a vector space \(W\) equipped with a map $Y_W(\cdot,z): V\longrightarrow(\End W)[[z,z^{-1}]],$ written $Y_W(v,z)=\sum_{n\in\Z}v_nz^{-n-1},$ satisfying:

(a) $u_nw=0$ for $n>>0.$

(b) $Y_W(\textbf{1}, x)=\Id_W$.

(c) The Jacobi identity holds, i.e.,
\begin{eqnarray*}
&z_{0}^{-1}\delta\left(\frac{z_{1}-z_{2}}{z_{0}}\right)Y_W(u,z_{1})Y_W(v,z_{2})-z_{0}^{-1}\delta\left(
\frac{z_{2}-z_{1}}{-z_{0}}\right)Y_W(v,z_{2})Y_W(u,z_{1})\\
&\quad=z_{2}^{-1}\delta\left(\frac{z_{1}-z_{0}}{z_{2}}\right)Y_W(Y(u,z_{0})v,z_{2}).
\end{eqnarray*}

(ii) A weak \(V\)-module \(W\) is \emph{admissible} if $W=\bigoplus_{n\in\Zge}W(n)$ and, for every homogeneous \(v\in V\),
\[
    v_m W(n)
    \subset
    W(n+\wt v-m-1).
\]

(iii) A weak \(V\)-module \(W\) is \emph{ordinary} if it is a direct sum of
finite-dimensional \(L(0)\)-eigenspaces and its weights lie in a finite
union of lower-bounded integer translates.  Thus
\[
    W
    =
    \bigoplus_{\lambda\in\C}W_\lambda,
    \qquad
    W_\lambda
    =
    \{w\in W\mid L(0)w=\lambda w\}, \qquad \dim W_\lambda<\infty,
\]
and the set of \(\lambda\) for which \(W_\lambda\neq0\) is contained in a finite union of sets $h+\Zge.$

If \(W\) is irreducible, its lowest conformal weight will be denoted by \(h_W\), called the conformal weight of $W$.

(iv) A weak \(V\)-module \(W\) is a
\emph{grading-restricted generalized \(V\)-module} if $W=\bigoplus_{\lambda\in\C}W_{[\lambda]},$ where
\[
    W_{[\lambda]}
    =
    \left\{
        w\in W\mid
        (L(0)-\lambda)^Nw=0
        \text{ for some }N
    \right\},
\]
each \(W_{[\lambda]}\) is finite-dimensional, and the generalized weights
lie in a finite union of lower-bounded integer translates.

We use \(W_{[\lambda]}\) throughout for the generalized eigenspace of
generalized weight \(\lambda\).
\end{definition}

From definitions, it is obvious that an ordinary module is a grading-restricted generalized module on
which \(L(0)\) acts semisimply.

\begin{proposition}\label{Td}\cite{LL}
Jacobi identity is equivalent to following two properties:

(i) Weak associativity: for any $u,v\in V,~w\in W$, there exists $N\in\mathbb{N}$ such that
\begin{eqnarray}\label{WAM}
(z_0+z_2)^NY_W(u,z_0+z_2)Y_W(v,z_2)w=(z_0+z_2)^NY_W(Y(u,z_0)v,z_2)w
\end{eqnarray}
  holds;

(ii)  Weak commutativity: for any $u,v\in V$, there exists $N\in\mathbb{N}$ such that
\begin{eqnarray}\label{WAM}
(z_1-z_2)^NY_W(u,z_1)Y_W(v,z_2)=(z_1-z_2)^NY_W(v,z_2)Y_W(u,z_1)
\end{eqnarray}
  holds.
\end{proposition}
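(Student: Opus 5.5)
The statement is Proposition~\ref{Td}: for a weak \(V\)-module \(W\), the Jacobi identity is equivalent to weak associativity together with weak commutativity. I will argue in the formal calculus of \cite{LL}, treating the two directions separately.

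\textbf{Jacobi implies the two weak properties.} Fix \(u,v\in V\) and \(w\in W\). To get weak commutativity, I take \(N\) with \(u_nv=0\) for all \(n\geq N\), which is possible by the truncation axiom in \(V\). I then take \(\Res_{z_0}\) of \(z_0^N\) times the Jacobi identity. The right-hand side becomes \(\sum_{n\ge0}\) of terms involving \((u_{N+n}v)\), so it vanishes. The left-hand side becomes \((z_1-z_2)^N[Y_W(u,z_1),Y_W(v,z_2)]\), because the two delta functions expand \(z_0^N\) as \((z_1-z_2)^N\) in the two opposite domains, and this is a polynomial.

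To get weak associativity, I take \(l\) with \(u_nw=0\) for \(n\ge l\). I take \(\Res_{z_1}\) of \(z_1^l\) times the Jacobi identity. The second term on the left then drops, since \(z_1^lY_W(u,z_1)w\) involves only nonnegative powers of \(z_1\). The delta-function substitution identities turn the remaining two terms into \((z_0+z_2)^lY_W(u,z_0+z_2)Y_W(v,z_2)w\) and \((z_2+z_0)^lY_W(Y(u,z_0)v,z_2)w\).

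\textbf{The two weak properties imply Jacobi.} This is the substantive direction. Fix \(u,v,w\) and choose \(l\) so that weak associativity holds and \(u_nw=0\) for \(n\ge l\). Choose \(N\) as in weak commutativity. Then
\[
z_1^l(z_1-z_2)^NY_W(u,z_1)Y_W(v,z_2)w=z_1^l(z_1-z_2)^NY_W(v,z_2)Y_W(u,z_1)w
\]
lies in \(W((z_2))[[z_1]]\) after multiplication. Call this common element \(P(z_1,z_2)\).

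Using the standard delta-function calculus, I rewrite the left-hand side of the Jacobi identity, multiplied by \(z_0^N\) and \(z_1^l\), as
\[
z_2^{-1}\delta\!\left(\tfrac{z_1-z_0}{z_2}\right)P(z_1,z_2).
\]
This uses the identity
\[
z_0^{-1}\delta\!\left(\tfrac{z_1-z_2}{z_0}\right)-z_0^{-1}\delta\!\left(\tfrac{z_2-z_1}{-z_0}\right)=z_2^{-1}\delta\!\left(\tfrac{z_1-z_0}{z_2}\right).
\]
Next I substitute \(z_1=z_2+z_0\), which is legitimate inside the delta function. Weak associativity identifies \(P(z_2+z_0,z_2)\) with \(z_0^N(z_2+z_0)^lY_W(Y(u,z_0)v,z_2)w\). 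The Jacobi identity, multiplied by \(z_0^Nz_1^l\), follows.

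\textbf{Main obstacle.} The hard step is cancelling the factors \(z_0^N\) and \(z_1^l\) at the end. I would first enlarge \(N\) and \(l\) to make them uniform, using truncation with respect to \(w\). I would then justify the cancellation by checking that every expression involved lies in a space where multiplication by \(z_1^l\) and by \(z_0^N\) is injective. Concretely, this is \(W((z_0))((z_2))\)-type expansions in the relevant variables, and multiplication by these monomials is invertible there.
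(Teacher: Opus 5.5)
Your argument is correct and is essentially the standard Lepowsky--Li proof that the paper relies on; the paper gives no proof of its own, only the citation \cite{LL}. One recalibration: cancelling $z_0^Nz_1^l$ is automatic, since multiplication by a monomial is bijective on all of $W[[z_0^{\pm1},z_1^{\pm1},z_2^{\pm1}]]$, so no special spaces are needed. The step that actually needs care is knowing that $P$ lies in $W[[z_1]]((z_2))$, not just $W((z_2))[[z_1]]$; the uniform lower truncation in $z_2$ comes from the form $z_1^l(z_1-z_2)^NY_W(u,z_1)Y_W(v,z_2)w$, and it is exactly what makes the product with $z_2^{-1}\delta\bigl(\frac{z_1-z_0}{z_2}\bigr)$ and the substitution $z_1=z_2+z_0$ legitimate.
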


\begin{definition}\cite{ABD,LL,Zhu}
(i) $V$ is called \emph{simple} if it is a simple $V$-module.

(ii)  \(V\) is called \emph{rational} if every admissible \(V\)-module is completely reducible.

(iii) $V$ is called \emph{regular} if every weak \(V\)-module is a direct sum of
irreducible ordinary \(V\)-modules.
\end{definition}

\begin{definition}\cite{Li-Abelian,Zhu}
(i) For \(m\geq2\), define $C_m(V)=\Span_{\C}\{u_{-m}v\mid u,v\in V\}.$ The vertex operator algebra \(V\) is called \emph{\(C_m\)-cofinite} if $\dim V/C_m(V)<\infty.$

(ii) For a \(V\)-module \(W\), we similarly write $C_m(W)=\Span_{\C}\{u_{-m}w\mid u\in V,\ w\in W\},$ $W$ is called \emph{$C_m$-cofinite} if $\dim W/C_m(W)<\infty.$

(iii) If $m=1$, define $$C_1^+(W)=\{u_{-1}w\mid w\in W,u\in\oplus_{n>0}V_n\},$$$$C_1^a(V)=\{u_{-1}v\mid u,v\in \oplus_{n>0}V_n\}+L(-1)V.$$$W,V$ are called \emph{$C_1$-cofinite} if $\dim W/C_1^+(W)<\infty,~\dim V/C_1^a(V)<\infty$, respectively.
\end{definition}

\begin{remark}\cite{ABD,Li-Abelian}
(i) Regularity implies rationality.  Under suitable cofiniteness hypotheses, the converse is also true; in the present paper regularity will be obtained directly from the complete genus-one trace identity.

(ii) $C_2$-cofiniteness of $V$ implies $C_m$-cofiniteness of $V$ for $m>1.$
\end{remark}

The unrestricted \(C_m\)-convention above is the convention used in the
later cofiniteness arguments: the state \(u\) is not required to have
positive conformal weight.

\begin{definition}\cite{FHL}\label{def2.9}
(i) Let \(W\) be a grading-restricted generalized \(V\)-module. Define $W'=\bigoplus_{\lambda\in\C}W_{[\lambda]}^*,$ and
\[
\left\langle
    Y_{W'}(u,z)f,w
\right\rangle
=
\left\langle
    f,
    Y_W\!\left(
       e^{zL(1)}
       (-z^{-2})^{L(0)}u,
       z^{-1}
    \right)w
\right\rangle.
\]
Then $(W',Y_{W'})$ is a grading-restricted generalized \(V\)-module, it is called the \emph{contragredient module} of $W$.

(ii) A vertex operator algebra \(V\) is called \emph{self-contragredient} if $V'\cong V$ as \(V\)-modules.
\end{definition}

\begin{definition}\cite{HLZ,Fiordalisi}
  (i) Let \(W^1,W^2,W^3\) be grading-restricted generalized \(V\)-modules. A \emph{logarithmic intertwining operator} of type $\binom{W^3}{W^1\;W^2}$ is a linear map
  \begin{eqnarray*}
  \mathcal {Y}(\cdot,z):~W^1&\rightarrow&~\Hom(W^2,W^3)\{z\}[\log z]],\\
  w^1&\mapsto& \mathcal {Y}(w^1,z)=\sum_{t\in\mathbb{C}}\sum_{k\in\mathbb{N}}w^1_{t,k}z^{-t-1}(\log z)^{k},
  \end{eqnarray*}
satisfying:

  (a) For any $w^1\in W^1,w^2\in W^2$, $w^1_{t,k}w^2=0$ for $\Re t\in\mathbb{Z}$ sufficiently large, where $\Re t$ is the real part of $t$.

  (b) For any $w^1\in W^1$, $\frac{d}{dz}\mathcal {Y}(w^1,z)=\mathcal {Y}(L(-1)w^1,z)$.

  (c) For any $v\in V, w^1\in W^1$, the following Jacobi identity holds:
  \begin{eqnarray*}
  z_0^{-1}\delta(\frac{z_1-z_2}{z_0})Y_{W^3}(v,z_1)\mathcal {Y}(w^1,z_2)-z_0^{-1}\delta(\frac{-z_2+z_1}{z_0})\mathcal {Y}(w^1,z_2)Y_{W^2}(v,z_1)\\
  =z_2^{-1}\delta(\frac{z_1-z_0}{z_2})\mathcal {Y}(Y_{W^1}(v,z_0)w^1,z_2).
  \end{eqnarray*}

  (ii) The set of all intertwining operators of type $\tbinom{W^3}{W^1~~W^2}$ forms a vector space, denoted by $I_V\tbinom{W^3}{W^1~~W^2}$. Its dimension is called the \emph{fusion rule}, denoted by $N_V\tbinom{W^3}{W^1~~W^2}$,
\end{definition}

\begin{lemma}
For a logarithmic intertwining operator $\mathcal Y(\cdot,z)$ and $w^1\in W^1$, we have $$[L(0),\mathcal Y(w^1,z)]=z\mathcal Y(L(-1)w^1,z)+\mathcal Y(L(0)w^1,z).$$
\end{lemma}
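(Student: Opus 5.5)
We derive the $L(0)$-commutator formula from the Jacobi identity in the definition of a logarithmic intertwining operator, taking $v=\omega$, and then simplify using the $L(-1)$-derivative property (b). Write $Y_{W^3}(\omega,z_1)=\sum_n L(n)z_1^{-n-2}$, and similarly on $W^2$. Extracting the coefficient of $z_0^{-1}$ from the Jacobi identity yields the commutator formula
\[
Y_{W^3}(\omega,z_1)\mathcal Y(w^1,z_2)-\mathcal Y(w^1,z_2)Y_{W^2}(\omega,z_1)
=\operatorname{Res}_{z_0} z_2^{-1}\delta\!\left(\frac{z_1-z_0}{z_2}\right)\mathcal Y(Y_{W^1}(\omega,z_0)w^1,z_2).
\]
Here the delta-function manipulation is legitimate even though $\mathcal Y$ involves complex powers and $\log z_2$. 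The reason is that $z_2$ appears only as the formal variable of $\mathcal Y$, and the delta function $z_2^{-1}\delta((z_1-z_0)/z_2)$ involves only integral powers of $z_0$ and $z_1$; this is the standard situation treated in \cite{HLZ}.

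Next take the coefficient of $z_1^{-2}$ on both sides, which isolates $[L(0),\mathcal Y(w^1,z_2)]$ on the left. On the right, expand
\[
z_2^{-1}\delta\!\left(\frac{z_1-z_0}{z_2}\right)=\sum_{m\in\Z}(z_1-z_0)^m z_2^{-m-1},
\]
with $(z_1-z_0)^m$ expanded in nonnegative powers of $z_0$. Taking $\operatorname{Res}_{z_1} z_1$ therefore amounts to picking out the terms
\[
\sum_{j\ge 0}\binom{-2}{j}(-z_0)^j z_2\,z_2^{j}\ \cdots
\]
More concretely, the right side reduces to
\[
\operatorname{Res}_{z_0}(z_2+z_0)\,\mathcal Y(Y_{W^1}(\omega,z_0)w^1,z_2)
=z_2\,\mathcal Y(\omega_0w^1,z_2)+\mathcal Y(\omega_1 w^1,z_2).
\]
This step uses the identity $\operatorname{Res}_{z_1}z_1\,z_2^{-1}\delta((z_1-z_0)/z_2)=z_2+z_0$, which follows from the delta-function substitution property.

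Since $\omega_0=L(-1)$ and $\omega_1=L(0)$, we obtain
\[
[L(0),\mathcal Y(w^1,z_2)]=z_2\,\mathcal Y(L(-1)w^1,z_2)+\mathcal Y(L(0)w^1,z_2),
\]
which is the claim. Property (b) lets one rewrite the first term as $z_2\frac{d}{dz_2}\mathcal Y(w^1,z_2)$, giving the equivalent form of the statement.

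The only delicate point is the justification of the residue manipulations in the presence of non-integral powers and logarithms. I would handle it by viewing all expressions as formal series in $z_0$ and $z_1$ with integral exponents, whose coefficients are $\Hom(W^2,W^3)\{z_2\}[\log z_2]$-valued. In this form the computation is identical to the ordinary vertex operator case.
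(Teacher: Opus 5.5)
Your argument is correct. It is the standard derivation: apply $\operatorname{Res}_{z_0}$ and then take the coefficient of $z_1^{-2}$ in the Jacobi identity with $v=\omega$. The paper gives no proof of its own and states the lemma as a standard fact from \cite{HLZ}, so this is the approach it relies on.

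One slip needs fixing. The intermediate display with $\binom{-2}{j}$ is wrong; the surviving terms are $\sum_{j\ge0}\binom{j-2}{j}(-z_0)^j z_2^{1-j}$. This sum equals $z_2+z_0$ because $\binom{j-2}{j}=0$ for $j\ge 2$, which agrees with the delta-function substitution you then invoke. The product of this with $\mathcal Y(Y_{W^1}(\omega,z_0)w^1,z_2)$ is well defined despite the complex powers and logarithms in $z_2$. For fixed powers of $z_0,z_1$, only finitely many $j$ contribute, since $\omega_n w^1=0$ for $n\gg0$.
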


\begin{remark}
If \(W^1,W^2,W^3\) are ordinary modules, the \(L(0)\)-commutator identity forces all logarithmic coefficients of a logarithmic intertwining
operator of type $\binom{W^3}{W^1\;W^2}$ to vanish. Thus logarithmic intertwining operators between ordinary modules reduce
to ordinary intertwining operators.
\end{remark}

\begin{definition}\cite{HL1,HL2,HL3,HL4,HLZ}
Let \(\mathcal C\) be a category of grading-restricted generalized \(V\)-modules.  A \emph{tensor product} of \(W^1,W^2\in\mathcal C\), when it
exists, is an object $W^1\boxtimes W^2$ equipped with a universal tensor-product intertwining operator $\mathcal Y^{W^1,W^2}$ such that composition gives natural isomorphisms
\[
    \Hom_{\mathcal C}(W^1\boxtimes W^2,W^3)
    \cong
    \left\{
       \text{intertwining operators of type }
       \binom{W^3}{W^1\;W^2}
    \right\}
\]
for \(W^3\in\mathcal C\).
\end{definition}

In logarithmic tensor product theory, this tensor product arises from the
\(P(z)\)-tensor products and their parallel transport.  We use the symbol
\(\boxtimes\) for the tensor product throughout the rest of the paper.

\subsection{Tensor categories and vertex tensor categories}\label{subsec2.2}

We next recall the categorical terminology needed later. Basic categories and functors may be found in \cite{Ma}. General background on finite tensor categories may be found in \cite{BK,EGNO}.  The Grothendieck--Verdier terminology used here follows \cite{ALSW}.

\begin{definition}\cite{ALSW,EGNO}\label{def2.13}
(i) A \emph{monoidal category} consists of a category \(\mathcal C\), a bifunctor $\boxtimes:\mathcal C\times\mathcal C\longrightarrow\mathcal C,$
a unit object \(V\), natural associativity isomorphisms $\alpha_{X,W,Z}:(X\boxtimes W)\boxtimes Z\longrightarrow X\boxtimes(W\boxtimes Z),$ and left and right unit isomorphisms $l_X:V\boxtimes X\longrightarrow X,r_X:X\boxtimes V\longrightarrow X,$ satisfying the pentagon and triangle axioms.

(ii) A monoidal category \(\mathcal C\) is called \emph{a braided monoidal category}, if it is equipped with natural isomorphisms $c_{X,W}:X\boxtimes W\longrightarrow W\boxtimes X$ satisfying the two hexagon identities.

(iii) A \emph{twist} on a braided monoidal category is a natural isomorphism $\theta_X:X\longrightarrow X$ such that $\theta_V=\Id_V$
and $\theta_{X\boxtimes W}=c_{W,X}c_{X,W}(\theta_X\boxtimes\theta_W).$ A braided monoidal category equipped with such a twist is called
\emph{balanced}.

(iv) A \(\C\)-linear abelian category \(\mathcal C\) is \emph{finite abelian} if its morphism spaces are finite-dimensional, every object has finite length, there are finitely many simple objects up to isomorphism, and all simple objects admit projective covers.

(v) A \emph{Grothendieck--Verdier duality} on a monoidal category \(\mathcal C\) consists of an object \(K\) and a contravariant
equivalence $D:\mathcal C^{\mathrm{op}}\longrightarrow\mathcal C$ together with natural isomorphisms
\[
    \Hom_{\mathcal C}(X\boxtimes W,K)
    \cong
    \Hom_{\mathcal C}(X,DW).
\]
The object \(K\) is called the \emph{dualizing object}.

(vi) Let \(W\) be an object in a monoidal category.  A \emph{left rigid dual} of \(W\) consists of an object \(W^*\) and morphisms $e:W^*\boxtimes W\longrightarrow V,i:V\longrightarrow W\boxtimes W^*$ satisfying the triangle identities
\[
    (\Id_W\boxtimes e)
    (i\boxtimes\Id_W)
    =
    \Id_W,
\qquad
 (e\boxtimes\Id_{W^*})
    (\Id_{W^*}\boxtimes i)
    =
    \Id_{W^*}.
\]
Canonical unit and associativity morphisms are suppressed in these formulas.

(vii) An object is \emph{rigid} if it admits the required duals, and a monoidal category is rigid if all its objects are rigid. In a braided category a left rigid dual also determines a right rigid dual.

(viii) An object \(K\) of a monoidal category is \emph{invertible} if there is an object \(B\) such that
\[
    K\boxtimes B\cong V
    \cong
    B\boxtimes K.
\]

(ix) A \emph{finite tensor category} is a rigid finite abelian monoidal category whose unit object is simple. A \emph{fusion category} over \(\C\) is a semisimple finite tensor category.

(x) $\mathcal {C}$ is called \emph{ribbon Grothendieck Verdier} if it is a braided Grothendieck-Verdier such that $$D\theta_X=\theta_{DX}$$ for all $X\in\mathcal {C}.$

(xi) A \emph{ribbon category} is a rigid braided monoidal category equipped with a twist compatible with duality, i.e., if \(X^*\) is a
chosen rigid dual of \(X\), then the twist satisfies the usual duality compatibility $\theta_{X^*}=(\theta_X)^*.$

(xii) Let $\mathcal {C}$ be a rigid ribbon category, for $W\in\mathcal {C}$ and $f\in\End(W)$, the \emph{trace} of $f$ is defined by $$\tr f=e_W\circ c_{W,W^*}\circ (f\boxtimes \Id_{W^*})\circ(i_W)\in\End(V).$$If $V$ is simple, then $\tr f$ is a scalar.

(xiii) A \emph{modular tensor category} is a nondegenerate ribbon fusion category.  Equivalently, in the semisimple setting over \(\C\), it is a ribbon fusion category whose categorical \(S\)-matrix is nondegenerate, where $S=(S_{ij})_{i,j}$ and $S_{ij}=\tr c_{X_j,X_i}\circ c_{X_i,X_j},$ and $X_i$ are the simple objects of $\mathcal {C}$.

(xiv) Let \(\mathcal C\) be a monoidal category.  For objects \(W,Z\), an \emph{internal Hom} \([W,Z]\), when it exists, is an object representing the functor $X\longmapsto\Hom_{\mathcal C}(X\boxtimes W,Z).$
Thus there are natural isomorphisms
\[
    \Hom_{\mathcal C}(X,[W,Z])
    \cong
    \Hom_{\mathcal C}(X\boxtimes W,Z).
\]
The associated evaluation morphism has source $[W,Z]\boxtimes W$
and target \(Z\).
\end{definition}

\begin{remark}
(i) The notation for the unit object has deliberately been chosen to be \(V\), since the unit object of a vertex tensor category of \(V\)-modules is the adjoint module \(V\).

(ii) In a vertex tensor category, the braiding is obtained by analytic
interchange of insertion points.

(iii) When \(K\cong V\), such a category is also called an
\(r\)-category.  A Grothendieck--Verdier duality does not, by itself,
imply rigidity.

(iv) The notion of a \emph{ribbon Grothendieck--Verdier category} used later is more general, i.e., it carries a Grothendieck--Verdier duality, braiding, and compatible twist, but rigidity is not assumed a priori \cite{ALSW}.
\end{remark}

We use the notion of a vertex tensor category in the sense of
Huang--Lepowsky--Zhang \cite{HLZ}, and recall only the structures
and notations needed below. Let \(\mathcal C\) be a category of
grading-restricted generalized \(V\)-modules. A vertex tensor category
structure on \(\mathcal C\) includes, in particular, \(P(z)\)-tensor
product bifunctors
\[
    \boxtimes_{P(z)}:
    \mathcal C\times\mathcal C\longrightarrow\mathcal C,
    \qquad z\in\mathbb C^\times,
\]
representing \(P(z)\)-intertwining operators, together with
parallel-transport, associativity, braiding, and unit isomorphisms
satisfying the corresponding compatibility and coherence conditions.

Fixing \(z=1\), together with the standard choices of paths for
parallel transport, yields the braided monoidal structure that will
be used throughout this paper. We write
\[
    W^1\boxtimes W^2
    =
    W^1\boxtimes_{P(1)}W^2,
\]
and denote the associativity and braiding isomorphisms by
\[
    \alpha_{X,W,Z}:
    (X\boxtimes W)\boxtimes Z
    \xrightarrow{\sim}
    X\boxtimes(W\boxtimes Z),
    \qquad
    c_{X,W}:
    X\boxtimes W\xrightarrow{\sim}W\boxtimes X.
\]
The tensor unit is \(V\), with left and right unit isomorphisms
\[
    l_X:V\boxtimes X\xrightarrow{\sim}X,
    \qquad
    r_X:X\boxtimes V\xrightarrow{\sim}X.
\]

We will use the following existence theorem for vertex tensor
category structures.

\begin{proposition}\cite{Huang-C1}\label{prop2.15}
Let \(V\) be a vertex operator algebra, and let
\(\mathcal C_1(V)\) be the category of \(C_1\)-cofinite
grading-restricted generalized \(V\)-modules. Then
\(\mathcal C_1(V)\) carries a natural vertex tensor category
structure. In particular, after fixing \(z=1\) and the standard
parallel-transport paths, \(\mathcal C_1(V)\) is a braided monoidal
category with twist
\[
    \theta_W=e^{2\pi iL(0)}.
\]
\end{proposition}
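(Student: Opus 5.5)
The plan is to deduce Proposition~\ref{prop2.15} from the logarithmic tensor category theory of Huang--Lepowsky--Zhang~\cite{HLZ}. That theory produces a vertex tensor category structure on any full subcategory of grading-restricted generalized $V$-modules once three conditions hold:
\begin{enumerate}
\item[(a)] the subcategory contains $V$ and is closed under contragredients, finite direct sums and quotients;
\item[(b)] it is closed under the $P(z)$-tensor products;
\item[(c)] products and iterates of logarithmic intertwining operators among its objects satisfy the convergence and extension property.
\end{enumerate}
Once (a)--(c) are checked for $\mathcal C_1(V)$, the $P(z)$-tensor product bifunctors, parallel transport, associativity, braiding and unit isomorphisms all come from~\cite{HLZ}. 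The braided monoidal structure at $z=1$ with the standard paths then follows by specialization. So the work splits into the verification of (a)--(c) and a separate check of the twist.

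I would handle (c) first, since it is the analytic core. For $C_1$-cofinite modules $W^1,\dots,W^4$ and intertwining operators $\mathcal Y_1,\mathcal Y_2$, I would use $C_1$-cofiniteness of the four modules to show the following. The matrix coefficients $\langle w',\mathcal Y_1(w^1,z_1)\mathcal Y_2(w^2,z_2)w^3\rangle$ lie in a finitely generated module over the ring $\C[z_1^{\pm1},z_2^{\pm1},(z_1-z_2)^{-1}]$. The argument reduces modulo the $C_1$-subspaces and uses the $L(-1)$-derivative property together with the Jacobi identity. This finiteness yields systems of differential equations with regular singularities at $z_1=0$, $z_2=0$, $z_1=z_2$ and $\infty$. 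The Frobenius method then gives absolute convergence in $|z_1|>|z_2|>0$, expansions of the required form (complex powers times polynomials in logarithms) near the other singular points, and hence the extension property for iterates in $|z_2|>|z_1-z_2|>0$.

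For (b) I would realize $W^1\boxtimes_{P(z)}W^2$ as the subspace of $(W^1\otimes W^2)^*$ cut out by the $P(z)$-compatibility and local grading-restriction conditions. I would show it is grading-restricted and again $C_1$-cofinite. The idea is that the $C_1^+$-quotient of the tensor product is spanned by the images of $W^1/C_1^+(W^1)\otimes W^2/C_1^+(W^2)$, an argument of Miyamoto type that iterates the Jacobi identity for $u_{-1}$ with $\wt u>0$. Condition (a) is routine, except that closure under contragredients needs a short check comparing $C_1^+(W')$ with the dual of the lowest generalized weight spaces; I would expect to cite this rather than reprove it.

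For the twist, I would define $\theta_W=e^{2\pi iL(0)}$ using the Jordan decomposition $L(0)=L(0)_s+L(0)_n$, which is available because each $W_{[\lambda]}$ is finite-dimensional. Thus $\theta_W=e^{2\pi iL(0)_s}e^{2\pi iL(0)_n}$, a finite sum on each generalized weight space. The required properties then follow as below.
\begin{enumerate}
\item[(i)] $\theta$ is natural, because module maps commute with $L(0)$.
\item[(ii)] $\theta_V=\Id_V$, because $V$ is $\Z$-graded with $L(0)$ semisimple.
\item[(iii)] For the balancing identity, I would use the $L(0)$-conjugation formula for logarithmic intertwining operators,
$e^{2\pi iL(0)}\mathcal Y(w^1,z)e^{-2\pi iL(0)}=\mathcal Y\bigl(e^{2\pi iL(0)}w^1,e^{2\pi i}z\bigr)$, which follows from the $L(0)$-commutator lemma. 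Applied to the tensor-product intertwining operator, the left side is $\theta_{W^1\boxtimes W^2}$ composed with the tensor-product intertwining operator and with $\theta_{W^2}^{-1}$. The right side is the analytic continuation of $z$ once around $0$, composed with $\theta_{W^1}$. Since the monodromy of the $P(1)$-tensor product intertwining operator around $0$ is exactly $c_{W^2,W^1}c_{W^1,W^2}$ in the HLZ construction, this gives the identity $\theta_{W^1\boxtimes W^2}=c_{W^2,W^1}c_{W^1,W^2}(\theta_{W^1}\boxtimes\theta_{W^2})$.
\end{enumerate}

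The main obstacle is step (c), together with the $C_1$-closure in (b) in the genuinely logarithmic setting. The differential equations must be controlled uniformly enough that the logarithmic powers stay bounded and the expansion coefficients stay grading-restricted. This is exactly the content of~\cite{Huang-C1}, which I would cite for the convergence and extension property rather than redo.
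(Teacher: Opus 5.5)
The gap is in (a). Closure of $\mathcal C_1(V)$ under contragredients is not routine, and the ``short check'' you describe does not exist.

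From the pairing in Definition~\ref{def2.9}, the adjoint of $u_{-1}$ on $W'$, for homogeneous $u\in V$, is the operator $\sum_{j\ge 0}\frac{(-1)^{\wt u}}{j!}(L(1)^ju)_{2\wt u-j-1}$ on $W$. This operator lowers the generalized weight by $\wt u$. Hence the graded dual of $W'/C_1^+(W')$ is the space of vectors in $W$ killed by all these weight-lowering operators with $\wt u>0$. That is a space of ``singular-type'' vectors. It has no a priori relation to $W/C_1^+(W)$ or to the lowest weight spaces of $W$, and no general argument forces it to be finite-dimensional.

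This is why the theorem of \cite{Huang-C1} carries the explicit hypothesis that the category of $C_1$-cofinite grading-restricted generalized modules is closed under contragredients. The proposition as quoted omits that hypothesis, and your argument inherits the omission, so it does not prove the statement for an arbitrary $V$.

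The hypothesis is also not peripheral to the rest of your plan. In HLZ, the space of $P(z)$-compatible, locally grading-restricted functionals inside $(W^1\otimes W^2)^*$ is the \emph{contragredient} of $W^1\boxtimes_{P(z)}W^2$, not the tensor product itself as you wrote in (b). So the Miyamoto-type $C_1$ argument only produces a tensor product inside $\mathcal C_1(V)$ once contragredients stay in $\mathcal C_1(V)$. Closure under contragredients together with closure under quotients also gives closure under submodules, since for $S\subseteq W$ the module $S'$ is a quotient of $W'$. That is what makes the category abelian.

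To repair the argument, do one of the following:
\begin{itemize}
\item add the hypothesis that $\mathcal C_1(V)$ is closed under contragredients; or
\item restrict to the setting where the paper actually uses the result. For $C_2$-cofinite $V$, Proposition~\ref{prop3.2} and Theorem~\ref{thm3.4} show that every grading-restricted generalized module is finitely generated and $C_1$-cofinite. So $\mathcal C_1(V)$ is the whole category $\mathcal C$, which is closed under $D$.
\end{itemize}
With either fix in place, the rest of your outline is fine: (c) via Huang's $C_1$-cofinite differential equations, and the balancing identity for $\theta_W=e^{2\pi iL(0)}$ from $L(0)$-conjugation and the monodromy $c_{W^2,W^1}c_{W^1,W^2}$.
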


The tensor-product intertwining operator will be denoted by $\mathcal Y^{A,B}$ for objects \(A,B\in\mathcal C\), and the associativity isomorphism by $\alpha_{A,B,C}:(A\boxtimes B)\boxtimes C\longrightarrow A\boxtimes(B\boxtimes C).$
The unit constraints will be denoted by \(l\) and \(r\). We also use $\mathsf M^{A,B}$ to denote the double braiding $c_{B,A}\circ c_{A,B}$ on $A\boxtimes B.$

\begin{remark}
Under the \(C_2\)-cofiniteness and lower-truncated integer-grading
hypotheses considered in this paper, Section~\ref{sec3} proves that the category
\(\mathcal C\) of grading-restricted generalized \(V\)-modules is finite
abelian and has the required vertex tensor category structure. The Grothendieck-Verdier duality is its actual contragredient duality, and denoted by \(D\), its dualizing object is $K=DV=V',$ and
\[
    \Hom_{\mathcal C}(X\boxtimes W,K)
    \cong
    \Hom_{\mathcal C}(X,DW).
\]
The conformal twist is
\[
    \theta_X=e^{2\pi iL(0)}.
\]
At this stage rigidity is \emph{not} assumed and proving rigidity is one of the main steps of the argument.
\end{remark}

For comparison, in the classical semisimple situation one has the
following theorem.

\begin{theorem}\cite{Hu1}
Let \(V\) be a simple vertex operator algebra satisfying the standard
positive-energy assumptions, \(C_2\)-cofiniteness,
self-contragredience, and rationality.  Then the braided tensor category
of ordinary \(V\)-modules is rigid and nondegenerate and hence is a
modular tensor category.
\end{theorem}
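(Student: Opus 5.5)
This is Huang's theorem from \cite{Hu1}, quoted here only for comparison. The natural route is the one taken there, built on Huang's earlier work on the Verlinde conjecture \cite{Huang-Modular}.

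The first step is to put the braided tensor structure in place. Rationality together with $C_2$-cofiniteness gives finitely many irreducible modules and complete reducibility of the ordinary modules. Under the positive-energy hypotheses the Huang--Lepowsky tensor product theory (equivalently Proposition~\ref{prop2.15}, since all modules are $C_1$-cofinite) makes the category of ordinary modules a semisimple braided monoidal category with twist $\theta=e^{2\pi iL(0)}$. Self-contragredience gives $V'\cong V$. Contragredience then gives natural isomorphisms $\Hom(X\boxtimes W,V)\cong\Hom(X,W')$, and composing with the unit of the adjunction yields a candidate evaluation $e_W:W'\boxtimes W\to V$.

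Next, rigidity has to be proved: we need a coevaluation $i_W:V\to W\boxtimes W'$ satisfying the triangle identities. By semisimplicity it suffices to treat simple $W$. The composite $(\Id_W\boxtimes e_W)(i_W\boxtimes\Id_W)$ is then a scalar multiple of $\Id_W$, so the task reduces to showing this scalar is nonzero. This is a genus-zero computation. One expresses the relevant four-point function built from $\mathcal Y$ and its adjoint, and compares it via the associativity isomorphism. The scalar then comes out as a product of fusing matrix entries $F_{V,V}\ne 0$.

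The main obstacle is proving this nonvanishing, and I would use genus-one modular invariance for it. Zhu's theorem and its refinement by Miyamoto/Huang to trace functions with intertwining-operator insertions give an $S$-matrix acting on characters. In \cite{Huang-Modular} one derives the Moore--Seiberg type formula, which relates the relevant fusing entry to $S_{V,W}/S_{V,V}$ (up to nonzero factors). One then shows $S_{V,W}\neq 0$: the $S$-matrix is invertible, and the formula $S^2=$ charge conjugation forces the first row to be nonzero entrywise. With this, every simple $W$ is rigid, and semisimplicity makes the whole category rigid. Compatibility $\theta_{W'}=(\theta_W)'$ follows because $e^{2\pi iL(0)}$ commutes with the contragredient construction. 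This yields a ribbon fusion category.

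Finally, for nondegeneracy, one identifies the categorical $S$-matrix $\tr(c_{X_j,X_i}c_{X_i,X_j})$ with the modular $S$-matrix up to the nonzero normalization $S_{V,V}$. This again uses the genus-one formula, now with intertwining operators inserted and moved around the $B$-cycle. Since the modular $S$-matrix is invertible, the category is modular.
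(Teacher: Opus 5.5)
The paper gives no proof of this statement; it only quotes \cite{Hu1}. Your outline does follow the architecture of Huang's argument:
\begin{itemize}
\item reduce rigidity of a simple $W$ to the nonvanishing of the zigzag scalar $c_W$, defined by $(\Id_W\boxtimes e_W)(\iota\boxtimes\Id_W)=c_W\Id_W$, where $\iota\colon V\to W\boxtimes W'$ is the inclusion of the multiplicity-one vacuum summand;
\item control $c_W$ by genus-one Moore--Seiberg relations;
\item compare the categorical and modular $S$-matrices.
\end{itemize}
The genuine gap is the nonvanishing step. You assert that invertibility of $S$ together with $S^2=C$ forces the vacuum row to be entrywise nonzero. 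This is false: $\begin{pmatrix}0&1\\1&0\end{pmatrix}$ is symmetric, invertible and squares to the identity, yet its first row has a zero. Invertibility only tells you that the vacuum row is a nonzero vector. Your formula expresses $c_W$ through a ratio of entries of that row, and your nondegeneracy step divides by $S_{V,V}$. So neither rigidity nor modularity follows as written. A smaller point: the matrix must be defined on complete one-point functions, not on characters, because $\chi_W=\chi_{W'}$. Linear independence, as in Lemma \ref{LM5.8}, is what makes it well defined.

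What is missing is a relation in which $c_W$ multiplies the \emph{whole} vacuum row. Take $R=W'$ and $e=e_W$, and insert $b=\iota(v)$ into the two identities of Lemma \ref{Lm4.8}. This gives
\[
c_W\,S_{W,X}=\lambda_{W,X}\,S_{V,X},\qquad \mu_W\,(t\iota)\,S_{V,X}=c_W\sum_{Y}N_{W,Y}^{X}\,S_{V,Y}.
\]
Here $\lambda_{W,X}$ is the scalar by which $\beta_X(\iota\boxtimes\Id_X)$ acts on the simple $X$, and $\mu_W\neq0$. The scalar $t\iota\in\End(V)=\mathbb C$, with $t$ as in \eqref{id5.8}, is nonzero because $t\neq0$ and $V$ occurs exactly once in $W\boxtimes W'$.

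Now suppose $c_W=0$. The second relation then kills the entire vacuum row, i.e.\ $SF_V=0$, which is absurd. Hence $c_W\neq0$, and after rescaling $\iota$ the object $W$ is rigid.

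Only after that does the first relation become useful: read it as proportionality of the $W$-row to the vacuum row. If some $S_{V,X}$ vanished, column $X$ of the invertible matrix $S$ would vanish. So $S_{V,X}\neq0$ for every $X$, including the $S_{V,V}\neq0$ your normalization of the categorical $S$-matrix requires.

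This is exactly the mechanism of Proposition \ref{prop6.6}; in the semisimple case, $\ker\Phi\subseteq\ker t$ is precisely the statement $c_W\neq0$. The second half corresponds to Theorem \ref{THM5.10} together with Remark \ref{RE5.13}.
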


\begin{remark}
The preceding modular-tensor-category theorem is included only for
context.  The main argument of the present paper does not assume from
the outset that the module category is rigid or semisimple.
\end{remark}

\subsection{Modular forms, geometric coordinates, and modular transformations}\label{sec2.3}

We finally recall the analytic and geometric notation used for genus-one
trace functions.  Our normalization follows
\cite{Zhu,DLM-Modular,Fiordalisi,La1,McRae}.

\begin{definition}\cite{La}
(i) Let $\Hh=\{\tau\in\C:\operatorname{Im}\tau>0\}.$
The modular group $SL_2(\Z)$ acts on \(\Hh\) by M\"{o}bius transformation. It is generated by
\[
    S=
    \begin{pmatrix}
       0&-1\\
       1&0
    \end{pmatrix},
\qquad
    T=
    \begin{pmatrix}
       1&1\\
       0&1
    \end{pmatrix}.
\]

(ii) Let \(k\in\Z\).  A holomorphic function \(f:\Hh\to\C\) is a \emph{modular form} of weight \(k\) for \(SL_2(\Z)\) if it is holomorphic at the cusp such that for every $
\begin{psmallmatrix}
a&b\\c&d
\end{psmallmatrix}
\in SL_2(\Z)$,
\[
    f\!\left(\frac{a\tau+b}{c\tau+d}\right)
    =
    (c\tau+d)^k f(\tau)
\].

(iii) Modular forms of weight $0$ are called \emph{modular functions}.
\end{definition}

\begin{remark}For any \(x\in\C\), we use the notation $q_x=e^{2\pi ix}.$ In particular, $q=q_\tau=e^{2\pi i\tau}.$
\end{remark}

\begin{definition}\cite{Hu2,Hu1997}
(i) Let \(A_j\in\C\), \(j\geq1\) be defined by the formal identity
\[
    \frac{e^{2\pi ix}-1}{2\pi i}
    =
    \exp\left(
       -\sum_{j\geq1}
       A_jx^{j+1}\frac{d}{dx}
    \right)x,
    \qquad
    A_1=-\pi i.
\]

(ii) The \emph{geometric coordinate-change operator} is
\[
    \mathcal {U}(1)
    =
    (2\pi i)^{L(0)}
    \exp\left(
       -\sum_{j\geq1}A_jL(j)
    \right),
    \qquad
    \mathcal {U}(q_z)
    =
    q_z^{L(0)}\mathcal {U}(1),
    \qquad
    q_z=e^{2\pi iz}.
\]
\end{definition}

Since the positive Virasoro modes lower the original conformal weight, the exponential $\exp\left(-\sum_{j\geq1}A_jL(j)\right)$ has finite state support on every fixed state.

On generalized module weight spaces we use the branch convention $\log(2\pi i)=\log(2\pi)+\frac{\pi i}{2}.$ These conventions agree with the geometric genus-one formalism of \cite{Fiordalisi,McRae}.

\begin{lemma}\cite{Hu2}\label{conjugate}
Let $\mathcal Y(\cdot,z)$ be an intertwining operator of type $\binom{W^3}{W^1~~W^2}$. for $w^1\in W^1$, we have the following conjugation identities:
\begin{eqnarray}
&&q_z^{L(0)}\mathcal Y(w^1,x)q_z^{-L(0)}=\mathcal Y(q_z^{L(0)}w^1,q_zx),\label{con-q}\\
&&\mathcal {U}(1)\mathcal Y(w^1,x)\mathcal {U}(1)^{-1}=\mathcal Y(e^{2\pi ixL(0)}\mathcal {U}(1)w^1,e^{2\pi ix}-1).\label{con-U}
\end{eqnarray}
\end{lemma}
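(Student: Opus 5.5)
The two conjugation identities, \eqref{con-q} and \eqref{con-U}, will be proved separately. Each is reduced to an infinitesimal statement that follows from the $L(0)$- and $L(j)$-commutator formulas for intertwining operators, and is then exponentiated.

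For \eqref{con-q}, the tool is the commutator formula from the preceding lemma,
\[
[L(0),\mathcal Y(w^1,x)]=x\frac{d}{dx}\mathcal Y(w^1,x)+\mathcal Y(L(0)w^1,x).
\]
Write $L(0)=L_s+L_n$ (semisimple plus locally nilpotent part) on each generalized weight space, and use the convention $q_z^{L(0)}=e^{2\pi i zL(0)}$. Set $G(t)=e^{tL(0)}\mathcal Y(e^{-tL(0)}w^1,e^{-t}x)e^{-tL(0)}$. Differentiating in $t$ and applying the commutator formula gives $G'(t)=0$, because the terms $x\frac{d}{dx}$, $L(0)w^1$ and $[L(0),\cdot]$ cancel. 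All exponentials here are finite sums after projecting to finitely many generalized weight spaces, so the computation is legitimate coefficientwise in $\{x\}[\log x]$. Evaluating at $t=2\pi i z$ gives \eqref{con-q}, where $q_zx$ means the formal substitution $x\mapsto e^{2\pi i z}x$ with $\log(q_zx)=\log x+2\pi i z$. This is the one place where the branch convention has to be stated carefully.

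For \eqref{con-U}, I would use the general commutator formula for $L(j)$, $j\ge1$, which follows from the Jacobi identity with $v=\omega$:
\[
[L(j),\mathcal Y(w^1,x)]=\sum_{k\ge0}\binom{j+1}{k}x^{j+1-k}\mathcal Y(L(k-1)w^1,x).
\]
In other words, $\mathrm{ad}\,L(j)$ acts by the vector field $x^{j+1}\frac{d}{dx}$ together with the corresponding operator on $w^1$. This is the standard Huang change-of-variables setup. Let $T=-\sum_{j\ge1}A_jL(j)$. Both $T$ and the vector field $-\sum A_j x^{j+1}\frac{d}{dx}$ come from the same Lie algebra homomorphism of the positive Witt algebra, so conjugation by $e^{T}$ acts on $\mathcal Y(w^1,x)$ as the flow. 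Concretely,
\[
e^{T}\mathcal Y(w^1,x)e^{-T}=\mathcal Y\!\left(\left(f'(x)\right)^{L(0)}e^{\sum_{j}\beta_j(x)L(j)}\,e^{T}w^1,\,f(x)\right),
\]
where $f(x)=\frac{e^{2\pi ix}-1}{2\pi i}$, by the defining identity for the $A_j$. Composing with \eqref{con-q}, applied with $(2\pi i)^{L(0)}$, turns $f(x)$ into $e^{2\pi ix}-1$.

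The remaining task, and the main obstacle, is identifying the accompanying operator on $w^1$ as exactly $e^{2\pi i xL(0)}\mathcal U(1)$. This is Huang's formula for a coordinate change $f$. The operator is $\mathcal U$ evaluated at the composite local coordinate, which is $(f'(x))^{L(0)}$ times the exponential of the Schwarzian-type coefficients of the re-expanded map. Because $f$ is a composition of $y\mapsto e^{2\pi i y}$ with a translation, the re-expanded map at $x$ is $f$ itself scaled by $e^{2\pi i x}$. So the Taylor coefficients are again the $A_j$, and the prefactor is $(e^{2\pi ix})^{L(0)}=e^{2\pi i xL(0)}$. I would verify this cleanly using the group-law identity $f(x+y)-f(x)=e^{2\pi i x}f(y)$, which is the key computation. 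I would cite \cite{Hu2} for the general exponentiation formula and check convergence: $L(j)$ lowers weight, so everything is finite on states, while in $x$ the result is a formal expansion to be interpreted as in \cite{Hu2}.
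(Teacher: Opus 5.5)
Your proposal is correct. The paper gives no proof of its own and only cites \cite{Hu2}; your argument is the standard one behind that citation: \eqref{con-q} follows by exponentiating the $L(0)$-commutator formula, and \eqref{con-U} follows from Huang's change-of-variables formula for $e^{T}$ composed with $(2\pi i)^{L(0)}$-conjugation. There, $f(x+y)-f(x)=e^{2\pi i x}f(y)$ makes the local coordinate at $x$ equal to $e^{2\pi i x}f$, so your $\beta_j$ vanish, the operator on $w^1$ is $e^{2\pi i xL(0)}e^{T}$, and after rescaling it becomes $e^{2\pi i xL(0)}\mathcal U(1)$.
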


\begin{lemma}\cite{Hu2}\label{conjugate-L(-1)}
 We have the following commutate relation $$\mathcal U(1)L(-1)=2\pi i(L(-1)+L(0))\mathcal U(1).$$
\end{lemma}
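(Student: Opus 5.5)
The plan is to derive the identity from the conjugation formula \eqref{con-U} of Lemma~\ref{conjugate}, applied to the vertex operator $Y(\cdot,x)$ of $V$, viewed as an intertwining operator of type $\binom{V}{V\;V}$. The $L(-1)$-derivative property converts differentiation in $x$ into insertion of $L(-1)$, and the creation property lets us read off an identity of vectors.

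\textbf{Step 1: differentiate \eqref{con-U}.} Fix $v\in V$. By \eqref{con-U},
\[
\mathcal U(1)Y(v,x)\mathcal U(1)^{-1}=Y\bigl(e^{2\pi ixL(0)}\mathcal U(1)v,\,e^{2\pi ix}-1\bigr).
\]
Apply $\frac{d}{dx}$ to both sides and use $\frac{d}{dx}Y(w,x)=Y(L(-1)w,x)$. The left side becomes $\mathcal U(1)Y(L(-1)v,x)\mathcal U(1)^{-1}$. By the chain rule, the right side becomes
\[
Y\bigl(2\pi i L(0)e^{2\pi ixL(0)}\mathcal U(1)v,\,e^{2\pi ix}-1\bigr)+2\pi i e^{2\pi ix}\,Y\bigl(L(-1)e^{2\pi ixL(0)}\mathcal U(1)v,\,e^{2\pi ix}-1\bigr).
\]

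\textbf{Step 2: apply \eqref{con-U} to $L(-1)v$.} Applying \eqref{con-U} directly to $L(-1)v$ rewrites the left side as $Y\bigl(e^{2\pi ixL(0)}\mathcal U(1)L(-1)v,\,e^{2\pi ix}-1\bigr)$. Both sides are now vertex operators evaluated at the same argument $y=e^{2\pi ix}-1$.

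\textbf{Step 3: extract an identity of vectors.} Apply both sides to $\mathbf 1$. By the creation property, $Y(w,y)\mathbf 1=e^{yL(-1)}w$, and this vanishes only if $w=0$. Hence
\[
e^{2\pi ixL(0)}\mathcal U(1)L(-1)v=2\pi i\bigl(L(0)+e^{2\pi ix}L(-1)\bigr)e^{2\pi ixL(0)}\mathcal U(1)v .
\]
Since $L(-1)$ raises weight by one, $e^{2\pi ixL(0)}L(-1)e^{-2\pi ixL(0)}=e^{2\pi ix}L(-1)$. So the right side equals $2\pi i\,e^{2\pi ixL(0)}(L(0)+L(-1))\mathcal U(1)v$. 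Cancelling the invertible operator $e^{2\pi ixL(0)}$ gives $\mathcal U(1)L(-1)v=2\pi i(L(-1)+L(0))\mathcal U(1)v$ for all $v$, which is the claim.

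\textbf{Main obstacle and fallback.} The delicate point is justifying the manipulations. The expressions $Y(w,e^{2\pi ix}-1)$ are formal substitutions, and one must check that differentiation commutes with the substitution and that applying the operators to $\mathbf 1$ is legitimate. On $\mathbf 1$ everything is a power series in $x$ with finitely many contributing terms in each degree, because $\exp(-\sum A_jL(j))$ has finite support on each state. I would verify this degree by degree.

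As a fallback, the identity can be checked purely inside the Virasoro algebra. Write $\mathcal U(1)=(2\pi i)^{L(0)}\exp(-\sum_j A_jL(j))$ and use $(2\pi i)^{L(0)}L(-1)(2\pi i)^{-L(0)}=2\pi i\,L(-1)$. The claim then reduces to
\[
\exp\Bigl(-\sum_j A_jL(j)\Bigr)\,L(-1)\,\exp\Bigl(\sum_j A_jL(j)\Bigr)=L(-1)+L(0).
\]
This is the operator form of the vector-field identity for the coordinate change $x\mapsto (e^{2\pi ix}-1)/2\pi i$. It follows from the defining formal identity for the $A_j$, via the standard correspondence between $\exp(-\sum A_jx^{j+1}\frac{d}{dx})$ and $\exp(-\sum A_jL(j))$.
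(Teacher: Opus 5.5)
The paper gives no argument for this lemma beyond the citation to \cite{Hu2}, so there is no proof to match. Your Steps 1--3 are a correct derivation of the identity from the preceding formula \eqref{con-U}. The chain-rule computation, the second use of \eqref{con-U} on $L(-1)v$, the injectivity of $w\mapsto Y(w,y)\mathbf 1=e^{yL(-1)}w$ on $V[[x]]$, and the final cancellation of $e^{2\pi ixL(0)}$ all check out.

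You can avoid the formal-substitution worries altogether. Since $\mathcal U(1)^{-1}\mathbf 1=\mathbf 1$, applying \eqref{con-U} with $\mathcal Y=Y$ directly to $\mathbf 1$ gives
\[
\mathcal U(1)e^{xL(-1)}v=e^{(e^{2\pi ix}-1)L(-1)}e^{2\pi ixL(0)}\mathcal U(1)v\qquad\text{in }V[[x]],
\]
and comparing coefficients of $x$ yields the claim at once.

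There are two caveats.

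First, the vacuum argument proves the identity only on $V$. The paper also needs it on other objects of $\mathcal C$, for instance for $L_Q(-1)$ in the proof of Lemma~\ref{Lm4.8}. The same trick works on any $W\in\mathcal C$: apply \eqref{con-U} to the intertwining operator $\mathcal Y(w,x)v=e^{xL(-1)}Y_W(v,-x)w$ of type $\binom{W}{W\;V}$, which satisfies $\mathcal Y(w,x)\mathbf 1=e^{xL(-1)}w$.

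Second, your fallback reduction is wrong by a factor. Conjugating away $(2\pi i)^{L(0)}$ turns $2\pi i(L(-1)+L(0))$ into $L(-1)+2\pi i\,L(0)$, so what must be shown is
\[
e^{-\Lambda}L(-1)e^{\Lambda}=L(-1)+2\pi i\,L(0),\qquad \Lambda=\textstyle\sum_{j\ge1}A_jL(j).
\]
Indeed, the $L(0)$-coefficient of the left side is $-2A_1=2\pi i$, because all higher nested brackets produce only $L(m)$ with $m\ge1$. The version with $L(-1)+L(0)$ is the identity for Zhu's coordinate $e^x-1$, not for $(e^{2\pi ix}-1)/2\pi i$, so the fallback as written would be an attempt to prove a false statement.

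The correct statement follows from $f'=1+2\pi i f$ for $f(x)=(e^{2\pi ix}-1)/2\pi i$. With $\ell=-\sum_jA_jx^{j+1}\tfrac{d}{dx}$ one has $e^{\ell}g=g\circ f$, hence $e^{-\ell}\tfrac{d}{dx}e^{\ell}=(1+2\pi ix)\tfrac{d}{dx}$. This transfers to the Virasoro side through $L(n)\mapsto-x^{n+1}\tfrac{d}{dx}$, which is a Lie homomorphism on the span of the $L(n)$, $n\ge-1$, because the central term vanishes there.

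Once corrected, this route is worth keeping. It is purely Virasoro-algebraic and uses only local nilpotence of the positive modes. It therefore gives the identity on every object of $\mathcal C$ directly, without depending on \eqref{con-U}.
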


\begin{definition}\cite{Hu2}
(i) For a homogeneous \(v\in V\), define $o_W(v)=v_{\wt v-1},$ and extend \(o_W\) linearly to all of \(V\). For an ordinary \(V\)-module \(W\), define
\[
    F_W(v,\tau)
    =
    \tr_W
    o_W(\mathcal {U}(1)v)
    q^{L(0)-c/24},
    \qquad
    \tau\in\Hh,
\] called the \emph{complete ordinary one-point function} of $W$.

(ii) When $v=\textbf{1}$, define $\chi_W=F_W(\mathbf1,\tau)=\tr_W q^{L(0)-c/24}$, called the \emph{character} of $W$.
\end{definition}

Since \(\mathcal {U}(1)\) is invertible, the complete ordinary one-point function \(F_W(v,\tau)\) contains the same information as the usual Zhu's trace functions $\mathcal F_W(v,\tau)$. However, for a fixed insertion the geometrically modified trace and the unmodified Zhu's trace need not coincide.

\begin{definition}\label{ST}\cite{Hu1}
In geometric coordinates, the two standard generators act by
\[
    (SF)(v,\tau)
    =
    F\!\left(
       (-\tau^{-1})^{L(0)}v,
       -\tau^{-1}
    \right),
    \qquad
    (TF)(v,\tau)
    =
    F(v,\tau+1).
\]
\end{definition}

This is the geometric-coordinate version of Zhu's modular action using
square-bracket conformal weights.  With this convention, \(S\)
corresponds to the inverse of the usual \(S\)-matrix convention.

\begin{proposition}\cite{DLM-Modular}
Let \(W\) be an irreducible ordinary \(V\)-module with lowest conformal weight \(h_W\). Then $TF_W=e^{2\pi i(h_W-c/24)}F_W.$ No rationality assumption on \(h_W\) is required for this identity.
\end{proposition}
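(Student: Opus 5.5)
The plan is to compute the trace directly. Since $W$ is an irreducible ordinary module, it decomposes as
\[
W=\bigoplus_{n\geq0}W_{h_W+n}.
\]
Every nonzero weight is congruent to $h_W$ modulo $\Z$. To see this, note that $\bigoplus_{n\in\Z}W_{h_W+n}$ is a nonzero submodule, because each mode $v_m$ shifts weights by the integer $\wt v-m-1$. Irreducibility then forces this submodule to be all of $W$. Lower-boundedness, together with the choice of $h_W$ as the lowest weight, gives $n\geq0$.

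Next, write out $F_W(v,\tau+1)$. The zero mode $o_W(\mathcal U(1)v)$ preserves each weight space $W_{h_W+n}$, so the trace splits as a sum over $n$:
\[
F_W(v,\tau+1)=\sum_{n\ge0}\tr_{W_{h_W+n}}\,o_W(\mathcal U(1)v)\,e^{2\pi i(\tau+1)(h_W+n-c/24)}.
\]
Extracting the factor $e^{2\pi i(h_W+n-c/24)}=e^{2\pi i(h_W-c/24)}$, which uses only $n\in\Z$, gives term by term $e^{2\pi i(h_W-c/24)}F_W(v,\tau)$.

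The insertion $v$ is the same on both sides of this identity, since $T$ does not rescale $v$ in Definition \ref{ST}. Also, $\mathcal U(1)v$ is a finite sum of homogeneous vectors, because $\exp(-\sum_j A_jL(j))$ has finite support on each state. So the manipulation is legitimate for all $v$, by linearity.

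Convergence only matters on the common domain where $F_W$ is defined. The identity holds at the level of the formal $q$-expansion $q^{h_W-c/24}\sum_n a_nq^n$, and substituting $\tau\mapsto\tau+1$ just multiplies each coefficient by the constant above. Finally, the value $e^{2\pi i(h_W-c/24)}$ must be read with the fixed branch of $q^{L(0)-c/24}=e^{2\pi i\tau(L(0)-c/24)}$. Since $h_W$ enters only through the exponential, $h_W$ need not be rational.

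The only step requiring care, mild as it is, is the integrality of weight differences established in the first paragraph.
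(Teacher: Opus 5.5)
Your proposal is correct and follows the same route as the paper. The paper only cites \cite{DLM-Modular} for this statement, but in the proof of Lemma~\ref{LM5.8} it performs exactly your term-by-term computation: it writes $W=\bigoplus_{n\geq0}W(h_W+n)$, uses that the zero mode has degree zero, and pulls out $e^{2\pi i(h_W+n-c/24)}=e^{2\pi i(h_W-c/24)}$. Your extra checks (the single weight coset from integral degree shifts, the finite homogeneous support of $\mathcal U(1)v$, and the convention $q^{\lambda}=e^{2\pi i\tau\lambda}$) supply details the paper leaves implicit.
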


Let $V=W^0,W^1,\ldots,W^r$ be a complete set of irreducible ordinary \(V\)-modules, we write
\[
    \mathcal H
    =
    \Span_{\C}
    \{F_{W^0},\ldots,F_{W^r}\}.
\]
Thus modular invariance of the complete ordinary one-point functions means invariance of \(\mathcal H\) under the modular action above.

We next recall the elliptic coefficient functions appearing in the genus-one differential equations.

\begin{definition}\cite{Fiordalisi}
For \(k\geq1\), put
\[
    \widetilde G_{2k}(q)
    =
    2\zeta(2k)
    +
    \frac{2(2\pi i)^{2k}}{(2k-1)!}
    \sum_{n\geq1}
    \frac{n^{2k-1}q^n}{1-q^n},
\qquad
    G_{2k}(\tau)
    =
    \widetilde G_{2k}(q_\tau),
\]where $\zeta$ is the Riemann zeta function.
\end{definition}

For \(k\geq2\), \(G_{2k}\) is the lattice Eisenstein series for $\Z+\Z\tau$ and is a modular form of weight \(2k\).  The function \(G_2\) is the
usual quasimodular Eisenstein series.  In the present normalization,
\[
    G_2\!\left(
       \frac{a\tau+b}{c\tau+d}
    \right)
    =
    (c\tau+d)^2G_2(\tau)
    -
    2\pi i\,c(c\tau+d).
\]

For \(k\geq2\), the Eisenstein series \(G_{2k}\) is a polynomial in
\(G_4\) and \(G_6\).

\begin{definition}\cite{Fiordalisi}
Let $\rho_\tau=\min\left\{|a+b\tau|\mid (a,b)\in\Z^2\setminus\{(0,0)\}\right\}.$ For $0<|z|<\rho_\tau,$ define
\[
    \wp_1(z;\tau)
    =
    \frac1z
    -
    \sum_{k\geq1}
    G_{2k+2}(\tau)z^{2k+1},
\qquad
    \wp_m(z;\tau)
    =
    \frac{(-1)^{m-1}}{(m-1)!}
    \partial_z^{\,m-1}\wp_1(z;\tau), ~m>1.
\]
We write $\widetilde{\wp}_m(z;q)$ for the corresponding expansion in the variable \(q\).
\end{definition}

\begin{proposition}\cite{Fiordalisi}
The functions \(\wp_m(z;\tau)\) have the following properties:

(i) The local series above converge for $0<|z|<\rho_\tau$, and admit meromorphic continuation.

(ii) The function \(\wp_2(z;\tau)\) is the ordinary Weierstrass \(\wp\)-function. For \(m\geq2\), \(\wp_m\) is elliptic

(iii) The only poles in the insertion variable occur at the lattice points $z\in\Z+\Z\tau$.

(iv) All \(G_{2k}\), \(k\geq2\) are polynomials in \(G_4,G_6\), and all \(\wp_m\), \(m\geq2\) are polynomials in $\wp_2,\ \wp_3,\ G_4,\ G_6.$

(v) The corresponding \(q\)-expansions are holomorphic near \(q=0\) when the insertion variable remains in a sufficiently small neighborhood avoiding the limiting collision set \(\Z\).
\end{proposition}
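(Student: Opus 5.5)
This proposition is a list of standard facts about the Weierstrass-type functions, so I will derive everything from classical lattice theory rather than from anything VOA-specific. Let \(\Lambda_\tau=\Z+\Z\tau\). First I will identify \(\wp_1\) with the Weierstrass zeta function:
\[
\zeta(z;\tau)=\frac1z+\sum_{\omega\in\Lambda_\tau\setminus\{0\}}\Bigl(\frac1{z-\omega}+\frac1\omega+\frac z{\omega^2}\Bigr).
\]
Expanding \(\frac1{z-\omega}\) geometrically for \(|z|<|\omega|\) gives \(\zeta(z)=\frac1z-\sum_{k\geq1}G_{2k+2}z^{2k+1}\). Here \(G_{2k+2}=\sum_{\omega\neq0}\omega^{-2k-2}\) for \(k\geq1\), and the odd-power sums vanish by the symmetry \(\omega\mapsto-\omega\). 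The expansion converges for \(0<|z|<\rho_\tau\) because the nearest nonzero lattice point has modulus \(\rho_\tau\).

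To make this identification I need the lattice sums to agree with the \(q\)-series \(\widetilde G_{2k}\). For \(k\geq2\) I will prove this by the Lipschitz formula: sum over \(\omega=m\tau+n\), and use \(\sum_n(x+n)^{-2k}=\frac{(-2\pi i)^{2k}}{(2k-1)!}\sum_{d\geq1}d^{2k-1}e^{2\pi idx}\). The \(m=0\) terms give \(2\zeta(2k)\), and the \(m\neq0\) terms give the divisor sum \(\sum n^{2k-1}q^n/(1-q^n)\). This settles (i) locally. The meromorphic continuation then comes from the global lattice-sum expression for \(\zeta\), which converges absolutely and locally uniformly off \(\Lambda_\tau\). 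Differentiating, \(\wp_2=-\zeta'=\wp\), and \(\wp_m\) for \(m>2\) is a constant multiple of \(\wp^{(m-2)}\). This gives (ii): \(\wp\) is elliptic, being an absolutely convergent lattice-periodic sum, and derivatives of elliptic functions are elliptic. Item (iii) is read off from the same global sums, whose only singularities are at \(z\in\Lambda_\tau\).

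For (iv) I will use the standard recursion. The differential equation \((\wp')^2=4\wp^3-g_2\wp-g_3\), with \(g_2=60G_4\) and \(g_3=140G_6\), gives \(\wp''=6\wp^2-30G_4\). Substituting the Laurent expansion \(\wp=z^{-2}+\sum_{k\geq1}(2k+1)G_{2k+2}z^{2k}\) and comparing coefficients yields
\[
(2n+3)(n-2)(2n+1)G_{2n}=3\sum_{\substack{p+q=n\\p,q\geq2}}(2p-1)(2q-1)G_{2p}G_{2q},
\]
so by induction every \(G_{2k}\) with \(k\geq2\) is a polynomial in \(G_4,G_6\). For the \(\wp_m\), repeated differentiation of \(\wp''=6\wp^2-30G_4\), with \(\wp''\) rewritten in terms of \(\wp\) and \((\wp')^2\) rewritten as a cubic in \(\wp\), shows inductively that each \(\wp^{(j)}\) is a polynomial in \(\wp\), \(\wp'\), \(G_4\), \(G_6\). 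Since \(\wp_3\) is a multiple of \(\wp'\), this is the claim.

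For (v) I will use the classical \(q\)-product formulas, for example
\[
\wp(z;\tau)=(2\pi i)^2\sum_{n\in\Z}\frac{q^nq_z}{(1-q^nq_z)^2}+\frac{(2\pi i)^2}{12}-G_2\cdot(\text{const}).
\]
For \(n\geq1\) the terms are bounded by a constant times \(|q|^n\), uniformly for \(q_z\) in a compact subset of \(\C^\times\). For \(n\leq0\), rewriting \(q^nq_z\) as \((q^{-n}q_z^{-1})^{-1}\) gives the same bound. This leaves only the \(n=0\) term, which is singular exactly when \(q_z=1\), i.e. when \(z\in\Z\). Therefore on a small neighborhood avoiding \(\Z\) the series is holomorphic in \(q\) near \(0\), and the same holds for \(\wp_1\) and for all \(\wp_m\) by differentiation.

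The step I expect to be the main obstacle is \(\wp_1\) itself. \(\zeta\) is not periodic and involves \(G_2\), where the lattice sum is only conditionally convergent, so the normalization convention of \(\widetilde G_2\) has to be tracked carefully, for example via the Eisenstein summation order, to match \(\wp_1\) with the stated \(q\)-expansion.
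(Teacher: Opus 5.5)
The paper gives no proof of this proposition; it quotes it from \cite{Fiordalisi}. A self-contained derivation from Weierstrass theory, as you propose, is therefore the natural route. Parts (i)--(iv), and the case $m\geq2$ of (v), go through as you sketch, with three repairs.

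First, your recursion for the $G_{2k}$ is mis-indexed. At $n=3$ its right-hand side is an empty sum, so it would force $G_6=0$. Comparing coefficients of $z^{2n-4}$ in $\wp''=6\wp^2-30G_4$ actually gives
\[
(2n+1)(n-3)(2n-1)\,G_{2n}=3\sum_{p+q=n,\ p,q\geq2}(2p-1)(2q-1)\,G_{2p}G_{2q},\qquad n\geq4,
\]
for instance $7G_8=3G_4^2$. The induction then works.

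Second, $\wp$ is not termwise lattice-periodic, because of the subtracted $\omega^{-2}$. The standard argument is that $\wp'$ is manifestly periodic and $\wp$ is even.

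Third, $G_2$ plays no role in (i). The local series of $\wp_1$ contains only $G_{2k+2}$ with $k\geq1$. Your Lipschitz computation identifies all of these with absolutely convergent lattice sums, so $\wp_1=\zeta$ involves no conditional convergence anywhere.

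The real gap is (v) for $m=1$. You conclude holomorphy in $q$ for $\wp_1$ ``by differentiation''. But $\wp_1$ is an antiderivative of $-\wp_2$, not a derivative of it. This is exactly where $G_2$ enters, through the quasi-period $\zeta(z+1)-\zeta(z)=G_2(\tau)$.

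In the paper's normalization of $\widetilde G_2$, the $q$-series is $\wp=(2\pi i)^2\sum_{n\in\Z}q^nq_z(1-q^nq_z)^{-2}-G_2(\tau)$, with no extra $(2\pi i)^2/12$. Integrate it term by term using $dq_z=2\pi i\,q_z\,dz$. Regroup the $n>0$ and $n<0$ terms so that they converge, and fix the $z$-independent constant by oddness of $\wp_1$. This gives
\[
\wp_1(z;\tau)=G_2(\tau)\,z-\pi i\,\frac{1+q_z}{1-q_z}+2\pi i\sum_{n\geq1}\left(\frac{q^nq_z^{-1}}{1-q^nq_z^{-1}}-\frac{q^nq_z}{1-q^nq_z}\right).
\]
Since $G_2(\tau)=\widetilde G_2(q)$ is holomorphic at $q=0$, and the remaining terms are holomorphic in $q$ near $0$ uniformly for $q_z$ in compact subsets of $\C^\times\setminus\{1\}$, (v) holds for $\wp_1$ as well. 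Because of the term $\widetilde G_2(q)\,z$, $\widetilde\wp_1$ is not a function of $(q_z,q)$ alone. This, rather than the identification $\wp_1=\zeta$, is where the normalization issue you anticipated actually arises.
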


\begin{remark}
The three structures reviewed above are related as follows. Intertwining operators give rise, under the appropriate convergence and cofiniteness
conditions, to the tensor product and the braided vertex tensor category of \(V\)-modules.  Geometrically modified intertwining operators and
their genus-one traces then connect this tensor category with modular and elliptic functions.  In a nonsemisimple \(C_2\)-cofinite theory, modular closure may in general involve pseudo-traces \cite{Miyamoto}; the hypothesis studied in this paper is precisely that the relevant complete ordinary one-point functions already close under the required modular transformation.
\end{remark}

\subsection{Complex-analytic conventions}\label{sec2.4}
We collect the elementary complex-analytic facts and conventions from \cite{A,C,H,W} that will be used in the following sections. In particular, we allow complex exponents throughout this paper.

Let \(U\subset \C^\times\) be a simply connected domain. A
holomorphic branch $\Log_U:U\longrightarrow\C$ of the logarithm determines a
holomorphic function $z^\lambda=\exp\bigl(\lambda\Log_U z\bigr)$ for every \(\lambda\in\C\). Whenever a branch has been fixed, we simply write $\log z=\Log_U z.$ If \(z\) is analytically continued once counterclockwise around the origin, then $\log z$ becomes $\log z+2\pi i$
and consequently $z^\lambda$ becomes $e^{2\pi i\lambda}z^\lambda.$

More generally, let \(f_k\) be single-valued holomorphic functions on an
annulus, then the expression $z^\lambda\sum_{k=0}^{K}(\log z)^k f_k(z)$ has finite-dimensional local monodromy. If
\(\mathsf M\) denotes counterclockwise local monodromy around
\(z=0\), then
\[
    \mathsf M
    \left(
        z^\lambda(\log z)^k f_k(z)
    \right)
    =
    e^{2\pi i\lambda}
    z^\lambda
    (\log z+2\pi i)^k f_k(z).
\]
In particular, $\bigl(\mathsf M-e^{2\pi i\lambda}\bigr)^{K+1}=0$
on the span of the terms with exponent \(\lambda\) and logarithmic
degree at most \(K\).

Two exponents \(\lambda,\mu\in\C\) determine the same monodromy
eigenvalue precisely when $\lambda-\mu\in\Z.$
Hence exponent classes naturally occur modulo \(\Z\). If $\lambda_a-\lambda_b\notin\Z$, then $e^{2\pi i\lambda_a}\neq e^{2\pi i\lambda_b},$
and polynomial spectral projections in \(\mathsf M\) separate the
corresponding exponent classes.

After an exponent class has been isolated, logarithmic powers can be
separated by finite differences. Let $p(\log z)$ be a polynomial in $\log z$, define
\[
    \Delta p(\log z)
    =
    p(\log z+2\pi i)-p(\log z).
\]
Then \(\Delta\) lowers the logarithmic degree by one, and $\Delta^K(\log z)^K=K!(2\pi i)^K.$
Consequently, if
\[
    \sum_{a=1}^{J}
    \sum_{k=0}^{K}
    z^{\lambda_a}(\log z)^k f_{a,k}(z)
    =0,
    \qquad
    \lambda_a-\lambda_b\notin\Z
    \quad(a\neq b),
\]
where every \(f_{a,k}\) is a single-valued holomorphic function on an
annulus, then polynomial projections in the monodromy first isolate
each \(a\), and successive finite differences isolate the highest
remaining logarithmic degree.

In particular, if all \(f_{a,k}\) are represented by convergent
Laurent series, then
\[
    \sum_{a=1}^{J}
    \sum_{k=0}^{K}
    z^{\lambda_a}(\log z)^k f_{a,k}(z)
    =0  \qquad
\Rightarrow   \qquad
    f_{a,k}=0
    \qquad
    \text{for all }a,k.
\]

Let $A(r,R)=\{z\in\C:r<|z|<R\},0\le r<R\le\infty$ be an annulus.
If \(f\) is holomorphic on \(A(r,R)\), then it has a unique Laurent
expansion $f(z)=\sum_{n\in\Z}a_nz^n$
which converges absolutely and locally uniformly on \(A(r,R)\).
For every \(\rho\) satisfying $r<\rho<R,$ the coefficients are given by
\[
    a_n
    =
    \frac{1}{2\pi i}
    \int_{|\zeta|=\rho}
    \frac{f(\zeta)}{\zeta^{n+1}}\,d\zeta.
\]
In particular, the Laurent series converges uniformly on every compact
subannulus.

Suppose that \(P\subset\C^m\) is a domain and $F:A(r,R)\times P\longrightarrow\C$
is jointly holomorphic. Then
\[
    a_n(p)
    =
    \frac{1}{2\pi i}
    \int_{|\zeta|=\rho}
    \frac{F(\zeta,p)}{\zeta^{n+1}}\,d\zeta
\]
is holomorphic in \(p\), and $F(z,p)=\sum_{n\in\Z}a_n(p)z^n$
converges locally uniformly on \(A(r,R)\times P\).

\begin{remark}
This fact will be used when a genus-one correlation function is
simultaneously expanded in a collision variable and in the modular
parameter. Once the relevant expansions are identified with Laurent
or power-series expansions of the same jointly holomorphic function,
absolute and locally uniform convergence on compact subsets permits
the corresponding orders of summation to be interchanged.
\end{remark}

Let \(X\) be a complex manifold and \(D\subset X\) an analytic
divisor. A germ of a holomorphic function on \(X\setminus D\) may be
analytically continued along paths avoiding \(D\). On a simply
connected open subset of \(X\setminus D\), a chosen continuation is
single valued. Continuation around a closed loop may change the
branch; this defines the local monodromy.

In the genus-one applications below, for fixed
\(\tau\in\Hh\), the collision divisors in the insertion variables are $z_i-z_j\in\Z+\Z\tau.$ Thus, in a collision coordinate $z=z_i-z_j,$
the possible collision points form the lattice $\Z+\Z\tau.$ If $\rho_\tau=\min\left\{|a+b\tau|\mid (a,b)\in\Z^2\setminus\{(0,0)\}\right\},$
then the punctured disk $0<|z|<\rho_\tau$ contains no lattice point other than \(z=0\).

All branches used below are obtained by analytic continuation from
the initially specified convergence region along paths avoiding the
collision divisors.

Let \(P\subset\C^m\) be a domain and suppose $A(p,q)$ is a matrix-valued function holomorphic on $P\times\{q\mid |q|<\epsilon\}.$
Consider $q\frac{\partial}{\partial q}\Phi(p,q)=A(p,q)\Phi(p,q),$
equivalently,
\[
    \frac{\partial}{\partial q}\Phi(p,q)
    =
    \frac{1}{q}A(p,q)\Phi(p,q),
\]
so \(q=0\) is a regular singular point.

After choosing a branch of \(\log q\), local solutions have
Frobenius-type expansions of the form
\[
    \Phi(p,q)
    =
    \sum_{a=1}^{J}
    \sum_{k=0}^{K}
    \sum_{n\ge0}
    \phi_{a,k,n}(p)
    q^{h_a+n}
    (\log q)^k,
\]
where $h_a\in\C.$ The exponents may be chosen distinct modulo \(\Z\), and need not be real or rational.

\begin{remark}
The regular-singular convergence results used below imply absolute
and locally uniform convergence on sufficiently small punctured
neighborhoods. If the coefficient matrix depends holomorphically on
auxiliary parameters, the corresponding estimates are locally uniform
in those parameters.
\end{remark}

If two holomorphic functions on a connected domain agree on a
nonempty open subset, then they agree everywhere. The same principle
applies to two branches of multivalued analytic functions once they
have been analytically continued to the same simply connected branch.

\section{Categorical foundations}\label{sec3}
In this section, we first establish an abstract criterion for dualizability and
invertibility in an abelian monoidal category with internal Homs.
We then specialize to the module category of the vertex operator
algebra $V$.  Under $C_2$-cofiniteness, this category is a finite
abelian braided tensor category with contragredient
Grothendieck--Verdier duality and dualizing object $K=DV.$
This categorical structure, together with the preceding
dualizability criterion, will be used later to prove rigidity.

\subsection{A criterion for dualizability}\label{sec3.1}
Recall from Definition \ref{def2.13} the definition of internal-Hom object, for \(W,Z\in\mathcal C\), an \emph{internal Hom} \([W,Z]\) is an object such that
\begin{eqnarray}\label{id5.1}
    \Hom_{\mathcal C}(X,[W,Z])
    \cong
    \Hom_{\mathcal C}(X\boxtimes W,Z).
\end{eqnarray}
The associated evaluation morphism $a_{W,Z}$ has source $[W,Z]\boxtimes W$
and target \(Z\). Now, for any $f\in\Hom_{\mathcal C}(X\boxtimes W,Z),$ the adjoint morphism $\Phi(f)\in \Hom_{\mathcal C}(X, [W,Z])$ satisfying $$a_{W,Z}(\Phi(f)\boxtimes \Id_W)=f.$$ In the following, we will omit the associativity and unit isomorphisms for simplicity.

\begin{lemma}\label{lm5.1}
Let $\mathcal C$ be an abelian monoidal category admitting internal
Homs, and assume that the tensor product is right exact in each
variable.  Fix an object $W\in\mathcal C$ and put
\[
B=[W,\mathbf1],
\qquad
A=[W,W],
\qquad
Q=W\boxtimes B.
\]

Let $e:B\boxtimes W\longrightarrow\mathbf1$
be the evaluation morphism, and $u:\mathbf1\longrightarrow A$
the internal unit adjoint to $\Id_W$ under identity (\ref{id5.1}).

Define $\rho:Q\longrightarrow A$
to be the adjoint of
$\Id_W\boxtimes e:
(W\boxtimes B)\boxtimes W
\longrightarrow W$ under (\ref{id5.1}).
If the internal unit $u$ factors through $\im\rho$,
then $B$ is a left dual of $W$.
In particular, there exists a coevaluation $i:\mathbf1\longrightarrow W\boxtimes B$
such that
\begin{eqnarray}\label{8.2}
(\Id_W\boxtimes e)
(i\boxtimes\Id_W)
=
\Id_W,\qquad
(e\boxtimes\Id_B)
(\Id_B\boxtimes i)
=
\Id_B.
\end{eqnarray}
Furthermore, if $\mathcal C$ is braided, then $W$ is rigid.
\end{lemma}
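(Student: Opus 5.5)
The plan is to extract a coevaluation from the hypothesis and then check both triangle identities using only the universal property (\ref{id5.1}). Throughout, $\Phi$ denotes the adjunction bijection of (\ref{id5.1}), and I will use its naturality in the source variable:
\[
\Phi\bigl(f\circ(g\boxtimes\Id_W)\bigr)=\Phi(f)\circ g
\]
for $f:X\boxtimes W\to Z$ and $g:X'\to X$. By definition $\Phi(\Id_W)=u$ (with the unit isomorphism suppressed), $\Phi(e)=\Id_B$, and $\Phi(\Id_W\boxtimes e)=\rho$.

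\emph{Step 1: the coevaluation.} I read ``$u$ factors through $\im\rho$'' as saying that there is a morphism $i:\mathbf1\to Q=W\boxtimes B$ with $\rho\circ i=u$. If the hypothesis only gives $u:\mathbf 1\to\im\rho\subseteq A$, one has to lift along the epimorphism $Q\twoheadrightarrow\im\rho$. In the abstract setting I would handle this by forming the pullback $P=Q\times_{\im\rho}\mathbf1\twoheadrightarrow\mathbf1$ and using right exactness of $\boxtimes$ to carry the identities below through $P$. This lifting is the only genuinely delicate point, and I expect it to be the main obstacle. In the intended application (Section \ref{sec5.1}) the factorization will be produced directly as a composite through $Q$.

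\emph{Step 2: the first triangle identity.} Set $g=(\Id_W\boxtimes e)(i\boxtimes\Id_W):W\to W$. Naturality gives
\[
\Phi(g)=\Phi(\Id_W\boxtimes e)\circ i=\rho\circ i=u=\Phi(\Id_W).
\]
Since $\Phi$ is bijective, $g=\Id_W$.

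\emph{Step 3: the second triangle identity.} Set $\varphi=(e\boxtimes\Id_B)(\Id_B\boxtimes i):B\to B$. Because $\Phi(e)=\Id_B$, it suffices to show $e\circ(\varphi\boxtimes\Id_W)=e$. Using the interchange law $(e\boxtimes\Id_{\mathbf1})(\Id_{B\boxtimes W}\boxtimes e)=e\boxtimes e=(\Id_{\mathbf1}\boxtimes e)(e\boxtimes\Id_{B\boxtimes W})$, I rewrite
\[
e\circ(\varphi\boxtimes\Id_W)=e\circ(e\boxtimes\Id_B\boxtimes\Id_W)(\Id_B\boxtimes i\boxtimes\Id_W)
\]
as
\[
e\circ\bigl(\Id_B\boxtimes(\Id_W\boxtimes e)(i\boxtimes\Id_W)\bigr).
\]
By Step 2 the inner factor is $\Id_W$, so the whole expression equals $e$. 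Hence $\Phi(\varphi)=\Phi(e)=\Id_B$, which gives $\varphi=\Id_B$. The suppressed associativity and unit constraints are handled by naturality and coherence. This proves (\ref{8.2}), so $B$ is a left dual of $W$.

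\emph{Step 4: the braided case.} If $\mathcal C$ is braided, take $e'=e\circ c_{W,B}$ and $i'=c_{W,B}^{-1}\circ i$, or the corresponding variants with $c^{-1}$. The hexagon axioms turn the two triangle identities for $(B,e,i)$ into those for a right dual, so $W$ is rigid, as noted in Definition \ref{def2.13}(vii).
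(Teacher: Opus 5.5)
\textbf{There is a genuine gap: Step~1, the only nontrivial part of the lemma, is not carried out.} Your Steps~2 and~3 are correct and match step~(iii) of the paper's proof, but they are routine.

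The hypothesis is only that $u=jv$ for some $v:\mathbf 1\to\im\rho$, where $\rho=jp$ with $p:Q\twoheadrightarrow\im\rho$ and $j:\im\rho\hookrightarrow A$. The remark after the lemma stresses that no lift through an epimorphism is assumed. Reading the hypothesis as the existence of $i$ with $\rho i=u$ makes the lemma a tautology. Your claim that Section~\ref{sec5.1} supplies such a lift is false. The proof of Proposition~\ref{prop6.6} only gives $\im u\subseteq\im D_0(t)\subseteq\im D_0(\Phi)=\im\Phi$, which is a factorization through the subobject $\im\Phi\hookrightarrow D_0Q$, never through $\Phi$ itself.

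The pullback alone does not close the gap either. Take $\pi:P\to Q$ and $\epsilon:P\twoheadrightarrow\mathbf 1$ with $p\pi=v\epsilon$. Your Steps~2--3 then yield only the weakened identities $(\Id_W\boxtimes e)(\pi\boxtimes\Id_W)=\epsilon\boxtimes\Id_W$ and $(e\boxtimes\Id_B)(\Id_B\boxtimes\pi)=\Id_B\boxtimes\epsilon$, up to unit constraints. To descend $\pi$ to a morphism $i:\mathbf 1\to Q$ you must show that $\pi$ kills $\ker\epsilon$. Since $\pi$ maps $\ker\epsilon$ isomorphically onto $\ker p=\ker\rho$, this is exactly the statement that $\rho$ is a monomorphism. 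That is the missing idea.

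The paper proves $\ker\rho=0$ as follows.
\begin{itemize}
\item It makes $A$ a unital algebra acting on $Q$ from both sides, with $L(\rho\boxtimes\Id_Q)=m=R(\Id_Q\boxtimes\rho)$ for the contraction $m=\Id_W\boxtimes e\boxtimes\Id_B$.
\item For $k:N=\ker\rho\hookrightarrow Q$ this gives $L(\Id_A\boxtimes k)(j\boxtimes\Id_N)(p\boxtimes\Id_N)=0$.
\item Right exactness makes $p\boxtimes\Id_N$ epic, so $L(\Id_A\boxtimes k)(j\boxtimes\Id_N)=0$.
\item Then $k=L(u\boxtimes k)=L(\Id_A\boxtimes k)(j\boxtimes\Id_N)(v\boxtimes\Id_N)=0$, so $p$ is invertible and $i=p^{-1}v$.
\end{itemize}

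Your pullback can also be finished, without the algebra structure on $A$.
\begin{itemize}
\item Evaluate $(\Id_W\boxtimes e\boxtimes\Id_B)(\pi\boxtimes\pi):P\boxtimes P\to Q$ once with each weakened identity. This gives $\epsilon\boxtimes\pi=\pi\boxtimes\epsilon$.
\item Precompose with $\Id_P\boxtimes\kappa$, where $\kappa:\ker\epsilon\hookrightarrow P$. This gives $(\Id_{\mathbf 1}\boxtimes\pi\kappa)(\epsilon\boxtimes\Id_{\ker\epsilon})=0$.
\item Since $\epsilon\boxtimes\Id_{\ker\epsilon}$ is epic by right exactness, $\pi\kappa=0$.
\item Hence $\pi=i\epsilon$ for a unique $i:\mathbf 1\to Q$, and $\rho i=u$ because $\rho i\epsilon=u\epsilon$ and $\epsilon$ is epic.
\end{itemize}
Some such argument has to be supplied. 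As written, the proposal proves only the trivial case.

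A minor point on Step~4: $c_{W,B}^{-1}\circ i$ does not typecheck. Pair $e\circ c_{W,B}$ with $c_{B,W}^{-1}\circ i$; the hexagon axioms and naturality then give the right-dual triangle identities.
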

\proof (i) First, let $a:A\boxtimes W\longrightarrow W$ be the evaluation morphism. From definitions, we have $a(u\boxtimes \Id_W)=\Id_W.$ Then the internal endomorphism object $A=[W,W]$
carries a canonical unital algebra structure with
\[
\mu:A\boxtimes A\longrightarrow A, \qquad   (A\boxtimes A)\boxtimes W
\xrightarrow{\alpha_{A,A,W}}
A\boxtimes(A\boxtimes W)
\xrightarrow{\Id_A\boxtimes a}
A\boxtimes W
\xrightarrow{a}
W.
\]
and $u:\mathbf1\longrightarrow A$ is defined above.

Since $B=[W,\mathbf1],$ there is a right
$A$-action $r:B\boxtimes A\longrightarrow B$ on $B$ characterized by
\[
e(r\boxtimes\Id_W)
=
e(\Id_B\boxtimes a),\qquad B\boxtimes A\boxtimes W\longrightarrow \textbf{1}.
\]
For $Q=W\boxtimes B,$ the left and right $A$-actions on $Q$ are
\begin{eqnarray*}
L
&=&
a\boxtimes\Id_B:
A\boxtimes Q\longrightarrow Q,\\
R
&=&
\Id_W\boxtimes r:
Q\boxtimes A\longrightarrow Q.
\end{eqnarray*}
Define also the contraction
$m=\Id_W\boxtimes e\boxtimes\Id_B:
Q\boxtimes Q
\longrightarrow Q.$
Recall that $\rho:Q\longrightarrow A$
is adjoint to
$\Id_W\boxtimes e: Q\boxtimes W\longrightarrow W,$ i.e., $a(\rho\boxtimes\Id_W)=\Id_W\boxtimes e.$
The definition of $\rho$ and the compatibility relation
\ref{id5.1} imply
\begin{eqnarray}\label{id5.2}
L(\rho\boxtimes\Id_Q)=m=
R(\Id_Q\boxtimes\rho).
\end{eqnarray}

Indeed, the first equality is exactly the adjunction defining
$\rho$.  For the second equality, we have
\begin{eqnarray*}
&&e\Bigl(\bigl(r(\Id_B\boxtimes\rho)\bigr)\boxtimes\Id_W\Bigr)
=e(r\boxtimes\Id_W)(\Id_B\boxtimes\rho\boxtimes\Id_W)\\
&=&e(\Id_B\boxtimes a)(\Id_B\boxtimes\rho\boxtimes\Id_W)
=e\Bigl(\Id_B\boxtimes a(\rho\boxtimes\Id_W)\Bigr)\\
&=&e(\Id_B\boxtimes\Id_W\boxtimes e)=
e\Bigl((e\boxtimes\Id_B)\boxtimes\Id_W\Bigr).
\end{eqnarray*}
Hence, by the injectivity of
\[
\Hom(B\boxtimes W\boxtimes B,B)
\longrightarrow
\Hom(B\boxtimes W\boxtimes B\boxtimes W,\mathbf1),
\qquad
g\longmapsto e(g\boxtimes\Id_W),
\]
we have
$
r(\Id_B\boxtimes\rho)=e\boxtimes\Id_B.
$
Therefore, we get
\begin{eqnarray*}
R(\Id_Q\boxtimes\rho)=
(\Id_W\boxtimes r)
(\Id_W\boxtimes\Id_B\boxtimes\rho)
=
\Id_W\boxtimes
r(\Id_B\boxtimes\rho)
=
\Id_W\boxtimes e\boxtimes\Id_B
=
m.
\end{eqnarray*}

(ii) Let $k:N\hookrightarrow Q$
be the kernel of $\rho$. Factor $\rho$ through its image, we have
\[
Q\xrightarrow{\,p\,}I\xrightarrow{\,j\,}A,
\qquad
\rho=jp,
\]
where $p:Q\twoheadrightarrow I$ is epic and $j:I\hookrightarrow A$ is monic. Define $F=L(\Id_A\boxtimes k):A\boxtimes N\longrightarrow Q.$
Using identity (\ref{id5.2}), we obtain
\[
\begin{aligned}
F(\rho\boxtimes\Id_N)
=
L(\rho\boxtimes k)
=
m(\Id_Q\boxtimes k)
=
R(\Id_Q\boxtimes\rho k)=0.
\end{aligned}
\]
Substituting $\rho=jp$ gives $F(j\boxtimes\Id_N)
(p\boxtimes\Id_N)=0.$ Thus $F(j\boxtimes\Id_N)=0$ since $p$ is epic.

If $u$ factors through the image
$I=\im\rho$, there exists $v:\mathbf1\longrightarrow I$
such that $jv=u.$
Using the unit law for the left action $L$, we obtain
\[
\begin{aligned}
k=
L(u\boxtimes k)
=
L(jv\boxtimes k)
=
F(j\boxtimes\Id_N)
(v\boxtimes \Id_N)
=0.
\end{aligned}
\]
Hence $N=0$ and $\ker\rho=0.$ This implies $\rho$ is monic and $p$ is isomorphic.

(iii) Now, define $i=p^{-1}v:\mathbf1\longrightarrow Q=W\boxtimes B.$
Then, we have
$\rho i=jp\,p^{-1}v=jv=u.$
By identity (\ref{id5.1}), this implies
\[
(\Id_W\boxtimes e)
(i\boxtimes\Id_W)
=a(\rho\boxtimes\Id_W)(i\boxtimes\Id_W)=a(u\boxtimes\Id_W)=
\Id_W.
\]

Similarly, define $s=
(e\boxtimes\Id_B)
(\Id_B\boxtimes i):
B\longrightarrow B.$ Using the interchange law together with above equation, we have
\[
\begin{aligned}
e(s\boxtimes\Id_W)
&=
e
\bigl(
(e\boxtimes\Id_B)
(\Id_B\boxtimes i)
\boxtimes\Id_W
\bigr)
\\
&=
e
\bigl(
\Id_B\boxtimes
(\Id_W\boxtimes e)
(i\boxtimes\Id_W)
\bigr)
\\
&=
e(\Id_B\boxtimes\Id_W)=e.
\end{aligned}
\]
Identity (\ref{id5.1}) gives $\Hom_{\mathcal C}(B,B)
\cong
\Hom_{\mathcal C}
(B\boxtimes W,\mathbf1),$
which sends a morphism $g:B\to B$ to $e(g\boxtimes\Id_W).$
This implies $s=\Id_B.$
Thus, we get
\[
(e\boxtimes\Id_B)
(\Id_B\boxtimes i)
=\Id_B.
\]

(iv) Now, equations (\ref{8.2}) are exactly the two triangle
identities for left dual object. Hence $B$ is a left dual of $W$.
If $\mathcal C$ is braided, the braiding converts a left dual into a
right dual as well.  Thus $W$ is rigid.  Thus we complete the proof.          $\hfill\Box$

\begin{remark}
We also call $\rho$ the rank-one morphism.
\end{remark}

\begin{remark}
The hypothesis above concerns the image of a canonical map. It does not require a lift of $u$ through any epimorphism, and does not require the unit to be projective.
\end{remark}

\begin{proposition}\label{Prop3.11}
Let $\mathcal{C}$ be an abelian braided Grothendieck--Verdier
category with simple tensor unit $\mathbf{1}$ and right-exact tensor
product. Assume that the duality
$D:\mathcal{C}^{\mathrm{op}}\longrightarrow\mathcal{C}$
is exact and satisfies
$D^2\cong \Id_{\mathcal{C}}.$ Let $K$ be the dualizing object.  We identify $K\simeq D\mathbf{1}$
via the canonical isomorphism induced by the
Grothendieck--Verdier adjunction.
Then $\mathcal C$ admits internal Homs.  More precisely, for any
$X,Z\in\mathcal C$, the object
\[
[X,Z]=D(X\boxtimes DZ).
\]
\end{proposition}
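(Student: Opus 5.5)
To show that $[X,Z]=D(X\boxtimes DZ)$ is an internal Hom, we will build, for every object $Y$, a natural isomorphism
\[
\Hom_{\mathcal C}(Y,D(X\boxtimes DZ))\cong\Hom_{\mathcal C}(Y\boxtimes X,Z)
\]
out of the Grothendieck--Verdier adjunction, associativity, and $D^2\cong\Id$. No braiding is needed at this stage. Exactness of $D$ is used only to guarantee that the objects involved are well behaved; the isomorphism itself is purely formal.

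The chain of isomorphisms runs as follows. First, the defining adjunction of Definition \ref{def2.13}(v), applied with $W=X\boxtimes DZ$, gives
\[
\Hom(Y,D(X\boxtimes DZ))\cong\Hom(Y\boxtimes(X\boxtimes DZ),K).
\]
Next, the associativity isomorphism $\alpha_{Y,X,DZ}$ rewrites the right side as $\Hom((Y\boxtimes X)\boxtimes DZ,K)$. The adjunction again, now with the object $Y\boxtimes X$ in the first slot and $W=DZ$, turns this into $\Hom(Y\boxtimes X,D(DZ))$. Finally, composing with the isomorphism $D^2Z\cong Z$ yields $\Hom(Y\boxtimes X,Z)$. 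Each step is natural in $Y$, since the adjunction is natural in both variables and $\alpha$ is natural. Hence the functor $Y\mapsto\Hom(Y\boxtimes X,Z)$ is represented by $D(X\boxtimes DZ)$, and by Yoneda this object is the internal Hom.

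We will then identify the evaluation morphism $a_{X,Z}:[X,Z]\boxtimes X\to Z$ as the image of $\Id_{[X,Z]}$ under this chain. In the special case $Z=\mathbf 1$, we use $D\mathbf 1\cong K$, which gives $[X,\mathbf 1]\cong D(X\boxtimes K)$. If one prefers to normalize $D\mathbf 1$ with $K$ here, the claimed identification $K\simeq D\mathbf 1$ follows from the adjunction with $X=\mathbf 1$:
\[
\Hom(Y,D\mathbf 1)\cong\Hom(Y\boxtimes\mathbf 1,K)\cong\Hom(Y,K),
\]
followed by Yoneda.

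The only delicate point is bookkeeping rather than a genuine obstacle: the adjunction must be natural in both the first slot (here a tensor product) and the second slot. We also need the isomorphism $D^2\cong\Id$ to be natural, so that the resulting internal Homs are functorial in $Z$ as well. I would simply record this naturality explicitly as part of the Grothendieck--Verdier data.
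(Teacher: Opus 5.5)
Your proposal is correct and follows the paper's proof essentially verbatim: the same chain of natural isomorphisms (Grothendieck--Verdier adjunction with $W=X\boxtimes DZ$, then associativity, then the adjunction with first slot $Y\boxtimes X$ and $W=DZ$, then $D^2\cong\Id$), followed by representability. Your extra remarks are accurate but not needed for the statement: the identification $K\simeq D\mathbf 1$ via the unit isomorphism and Yoneda, and the observation that neither the braiding nor exactness of $D$ is actually used.
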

\proof Fix $X,Z\in\mathcal C$ and set
\[
H_{X,Z}=D(X\boxtimes DZ).
\]
Using Grothendieck-Verdier duality and Theorem \ref{thm3.4}, we obtain
\begin{eqnarray*}
\Hom_{\mathcal C}
\bigl(W,D(X\boxtimes DZ)\bigr)
&\cong&
\Hom_{\mathcal C}
\bigl(W\boxtimes(X\boxtimes DZ),K\bigr)\\
&\cong&
\Hom_{\mathcal C}\bigl((W\boxtimes X)\boxtimes DZ,K\bigr)\\
&\cong&
\Hom_{\mathcal C}\bigl(W\boxtimes X, D^2Z\bigr)\\
&\cong&
\Hom_{\mathcal C}\bigl(W\boxtimes X, Z\bigr).
\end{eqnarray*}
All the isomorphisms above are natural in $W$.  Hence the object $D(X\boxtimes DZ)$ represents the functor
\[
W\longmapsto
\Hom_{\mathcal C}(W\boxtimes X,Z).
\]
This means
\[
[X,Z]\simeq D(X\boxtimes DZ).
\]
This completes the proof.              $\hfill\Box$

\begin{remark}\label{rem:internal-hom-criterion-applies}
The proposition verifies the internal-Hom hypothesis used in the
dualizability criterion established earlier in this section.
Indeed, for every $W\in\mathcal C$,
\[
[W,V]\simeq D(W\boxtimes DV)
\]
and
\[
[W,W]\simeq D(W\boxtimes DW)
\]
exist as objects of $\mathcal C$.  Thus all the morphisms appearing in the dualizability criterion are well-defined in the present module category.
\end{remark}

\begin{proposition}\label{prop5.3}
Let $\mathcal{C}$ be an abelian braided Grothendieck--Verdier
category with simple tensor unit $\mathbf{1}$ and right-exact tensor
product. Assume that the duality
$D:\mathcal{C}^{\mathrm{op}}\longrightarrow\mathcal{C}$
is exact and satisfies
$D^2\cong \Id_{\mathcal{C}}.$ Let $K$ be the dualizing object.  We identify $K\simeq D\mathbf{1}$
via the canonical isomorphism induced by the
Grothendieck--Verdier adjunction.
If $K\boxtimes K\neq 0,$ then $K$ is invertible.
\end{proposition}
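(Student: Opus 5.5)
The plan is to apply the dualizability criterion of Lemma \ref{lm5.1} to the object $W=K$. The key observation is that the internal endomorphism object of $K$ is the tensor unit itself.

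\textbf{Step 1: identify $[K,K]$ and $[K,\mathbf 1]$.} By Proposition \ref{Prop3.11}, internal Homs exist and $[X,Z]\simeq D(X\boxtimes DZ)$. Since $K\simeq D\mathbf 1$ and $D^2\cong\Id_{\mathcal C}$, we have $DK\simeq\mathbf 1$. Hence
\[
A=[K,K]\simeq D(K\boxtimes DK)\simeq D(K\boxtimes\mathbf 1)\simeq DK\simeq\mathbf 1 .
\]
In particular $A$ is simple. The internal unit $u:\mathbf 1\to A$ is adjoint to $\Id_K$, and $K=D\mathbf 1\neq0$. So $u$ is a nonzero morphism between simple objects, hence an isomorphism.

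Next put $B=[K,\mathbf 1]\simeq D(K\boxtimes D\mathbf 1)=D(K\boxtimes K)$. This is where the hypothesis enters: $D$ is an exact contravariant equivalence and $K\boxtimes K\neq0$, so $B\neq0$.

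\textbf{Step 2: the evaluation and the rank-one morphism are nonzero.} Let $e:B\boxtimes K\to\mathbf 1$ be the evaluation. Under $\Hom(B\boxtimes K,\mathbf 1)\cong\End(B)$ it corresponds to $\Id_B$, which is nonzero because $B\neq0$. Thus $e\neq0$, and since $\mathbf 1$ is simple, $e$ is an epimorphism.

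The tensor product is right exact, so $\Id_K\boxtimes e:K\boxtimes B\boxtimes K\to K$ is again an epimorphism. It is nonzero because $K\neq0$. Its adjoint $\rho:K\boxtimes B\to A$ is therefore nonzero, since the adjunction is a bijection.

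As $A\simeq\mathbf 1$ is simple, the nonzero map $\rho$ has $\im\rho=A$. Hence $u$ trivially factors through $\im\rho$. Lemma \ref{lm5.1} then shows that $B$ is a left dual of $K$, and since $\mathcal C$ is braided, $K$ is rigid.

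\textbf{Step 3: conclude invertibility.} This is the step needing most care. The proof of Lemma \ref{lm5.1} shows more than rigidity: once $u$ factors through $\im\rho$, the map $\rho$ is monic, so $p:Q\to\im\rho$ is an isomorphism. Here $Q=K\boxtimes B$ and $\im\rho=A$, so
\[
K\boxtimes B\simeq A\simeq\mathbf 1 .
\]
Alternatively, one can use that for rigid $K$ with dual $B$ one has $[K,K]\simeq K\boxtimes B$. Applying the braiding $c_{K,B}$ then gives $B\boxtimes K\simeq K\boxtimes B\simeq\mathbf 1$. Therefore $K$ is invertible with inverse $B=D(K\boxtimes K)$.

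The only potentially delicate point is bookkeeping the canonical identifications $K\simeq D\mathbf 1$ and $DK\simeq\mathbf 1$, so that $u$ really is a nonzero map into a simple object. Once that is settled, the argument reduces to Schur-type simplicity arguments.
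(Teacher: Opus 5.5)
Your proposal is correct and matches the paper's proof essentially step by step. In both, $[K,K]\simeq\mathbf 1$ makes the internal unit an isomorphism, and $K\boxtimes K\neq 0$ gives $B=[K,\mathbf 1]\simeq D(K\boxtimes K)\neq0$. The rank-one map $\rho:K\boxtimes B\to[K,K]$ is then nonzero, hence epic onto a simple object. Lemma~\ref{lm5.1}, including its monicity conclusion, together with the braiding, yields $K\boxtimes B\simeq\mathbf 1\simeq B\boxtimes K$.
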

\proof (i) From Proposition \ref{Prop3.11}, we know $\mathcal C$ admits internal Homs, and for $X,Z\in\mathcal C$, we have
\begin{eqnarray}\label{id5.4}
[X,Z]\simeq D(X\boxtimes DZ).
\end{eqnarray}
This implies
\begin{eqnarray}\label{id5.5}
[K,K]
\simeq
D(K\boxtimes DK)
\simeq
D(K\boxtimes\mathbf{1})
\simeq
DK
\simeq\mathbf{1}.
\end{eqnarray}

Let $u_K:\mathbf{1}\longrightarrow[K,K]$ be the internal unit.
Since it is the internal-Hom adjunction of $\Id_K$, it is nonzero. By equation (\ref{id5.5}), we know $u_K$ is an isomorphism.

(ii) Let $B=[K,\mathbf{1}].$ Using identity (\ref{id5.4}), we have
$$
B=
[K,\mathbf{1}]\simeq
D(K\boxtimes D\mathbf{1})=
D(K\boxtimes K).
$$
Since $K\boxtimes K\neq0,$ and $D^2\cong\Id_{\mathcal{C}},$ we have $B\neq0.$

Now by definition and identity (\ref{id5.1}), there is a nonzero morphism
\[
e:B\boxtimes K\longrightarrow\mathbf{1},
\]which is the adjunction of $\Id_B$.
Since $\mathbf{1}$ is simple, we have $e$
is an epimorphism. By right exactness of tensor product, we have another nonzero epimorphism
\[
\Id_K\boxtimes e:
K\boxtimes B\boxtimes K
\longrightarrow
K\boxtimes\mathbf{1}
\simeq K.
\]

Define $\rho_K:K\boxtimes B\longrightarrow[K,K]$
to be the morphism adjoint to $\Id_K\boxtimes e:
K\boxtimes B\boxtimes K\longrightarrow K$ under (\ref{id5.1}), it is nonzero.
Let $a:[K,K]\boxtimes K\longrightarrow K$
denote the canonical evaluation morphism, by Lemma \ref{lm5.1}, we have
\[
a(\rho_K\boxtimes\Id_K)
=
\Id_K\boxtimes e.
\]
By identity (\ref{id5.5}) and assumptions, we have $\rho_K$
is an epimorphism and $\im \rho_K=[K,K].$

From above discussion, we can see $u_K$ factors through $\im\rho_K$, by Lemma \ref{lm5.1}, we have $B$ is a left dual of $K$, and $e$ is the evaluation. Following the proof of Lemma \ref{lm5.1}, we have $\rho_K$ is monic. Hence $\rho_K$ is isomorphic since $\mathcal C$ is abelian. This proves
\[
K\boxtimes B\simeq\mathbf{1}.
\]
Since $\mathcal{C}$ is braided, using braiding isomorphism, we also obtain
\[
B\boxtimes K\simeq\mathbf{1}.
\]
Thus $K$ is invertible and the inverse object is $B.$      $\hfill\Box$

\begin{corollary}\label{co5.4}
Let $\mathcal{C}$ be an abelian braided Grothendieck--Verdier
category with simple tensor unit $\mathbf{1}$ and right-exact tensor
product. Assume that the duality
$D:\mathcal{C}^{\mathrm{op}}\longrightarrow\mathcal{C}$
is exact and satisfies
$D^2\cong \Id_{\mathcal{C}}.$ Let $K$ be the dualizing object.  We identify $K\simeq D\mathbf{1}$
via the canonical isomorphism induced by the
Grothendieck--Verdier adjunction. If there is an object $B$ such that $K\boxtimes B\rightarrow \mathbf 1$ is nonzero, then $K$ is invertible.
\end{corollary}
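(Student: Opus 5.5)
The plan is to reduce the corollary to Proposition \ref{prop5.3}, whose only hypothesis beyond the standing ones is $K\boxtimes K\neq 0$. So it suffices to show that a nonzero morphism $f:K\boxtimes B\to\mathbf 1$ forces $K\boxtimes K\neq0$.

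First, use the Grothendieck--Verdier adjunction. Since $K\simeq D\mathbf 1$ and $D$ is an equivalence with $D^2\cong\Id_{\mathcal C}$, we have $DK\simeq D^2\mathbf 1\simeq\mathbf 1$. Hence
\[
\Hom_{\mathcal C}(X\boxtimes W,K)\cong\Hom_{\mathcal C}(X,DW)
\]
with $W=K$ gives $\Hom(X\boxtimes K,K)\cong\Hom(X,\mathbf 1)$. Also, the braided internal Hom $[K,\mathbf 1]\simeq D(K\boxtimes K)$ from Proposition \ref{Prop3.11} represents $X\mapsto\Hom(X\boxtimes K,\mathbf 1)$.

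Second, transport $f$. Composing $f$ with the braiding $c_{B,K}$ gives a nonzero morphism $f\,c_{B,K}:B\boxtimes K\to\mathbf 1$. Its adjoint under Proposition \ref{Prop3.11} is a nonzero morphism
\[
B\longrightarrow[K,\mathbf 1]\simeq D(K\boxtimes K).
\]
Therefore $D(K\boxtimes K)\neq 0$. Since $D$ is an equivalence with $D^2\cong\Id$, it follows that $K\boxtimes K\neq0$.

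Finally, Proposition \ref{prop5.3} gives that $K$ is invertible; its proof even identifies the inverse as $[K,\mathbf 1]$.

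The only delicate point is bookkeeping: checking that the identification $[K,\mathbf 1]\simeq D(K\boxtimes D\mathbf 1)=D(K\boxtimes K)$ uses the fixed identification $K\simeq D\mathbf 1$, and that the adjunction of a nonzero morphism is nonzero because it is a bijection. Both follow directly from Proposition \ref{Prop3.11}, so no real obstacle is expected.
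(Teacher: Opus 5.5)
Your proposal is correct and follows the paper's proof: the paper likewise converts the nonzero morphism into a nonzero map $B\to[K,\mathbf 1]\simeq D(K\boxtimes K)$, concludes $K\boxtimes K\neq 0$, and then invokes Proposition~\ref{prop5.3}. Your explicit use of the braiding $c_{B,K}$, turning $K\boxtimes B\to\mathbf 1$ into $B\boxtimes K\to\mathbf 1$ before taking the internal-Hom adjoint, supplies a step that the paper leaves implicit.
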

\proof Since $\mathcal C$ admits internal Homs, using identity (\ref{id5.1}), there is a nonzero morphism $B\rightarrow[K,\mathbf 1]=D(K\boxtimes K)$. Thus $K\boxtimes K\neq0$ and $K$ is invertible.       $\hfill\Box$

\subsection{Category $\mathcal C$} \label{sec3.2}
Let $V$ be a simple, $C_2$-cofinite and lower truncated vertex operator algebra, and $\mathcal C$ the category of grading-restricted generalized $V$-modules.

Since $V$ is $C_2$-cofinite, it is also $C_{s+2}$-cofinite, where $-s$ is the lowest weight of $V$. Thus there is a homogeneous subspace $A$ such that $V=A+C_{s+2}(V).$ Let $W$ be a weak $V$-module generated by $w$. By lower-truncation property and finite-dimensional property of $A$, there is an integer $N\in\mathbb{N}$ such that $a^j_mw=0$ for $m\geq N,~j\in J,$ where $A=\Span\{a^j,j\in J\}.$

\begin{lemma}\label{lm3.1}
Suppose $V$ is $C_2$-cofinite and $W$ is a weak $V$-module generated by $w\in W.$ Then, $W$ is spanned by $$a^1_{i_1}\cdots a^k_{i_k}w,\qquad a^j\in A,\qquad k\geq0, \qquad i_1<\cdots<i_k<N.$$

Let $\de(a^1_{i_1}\cdots a^k_{i_k}w)=\sum_{j=1}^k(\wt a^j-i_j-1),$ then $$W(d)=\{a^1_{i_1}\cdots a^k_{i_k}w\mid \de(a^1_{i_1}\cdots a^k_{i_k}w)=d\}$$ is a finite-dimensional subspace of $W$ such that $W(d)=0$ for $d<<0.$
\end{lemma}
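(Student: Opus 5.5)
This is the standard spanning-set argument of Gaberdiel--Neitzke/Buhl type, adapted to the present setting where $V_n$ may be nonzero for $-s\le n<0$. The proof will proceed by induction on a filtration, reordering modes as we go. Since $W$ is generated by $w$, it is spanned by monomials $v^1_{n_1}\cdots v^k_{n_k}w$ with $v^j\in V$ homogeneous. This is the usual fact that the span of such monomials is a submodule, which follows from the Jacobi identity (associativity and commutator formulas). We filter by length $k$ and, within fixed length, by the degree $\de=\sum_j(\wt v^j-n_j-1)$. The goal is to show that every monomial lies in the span of ordered monomials $a^1_{i_1}\cdots a^k_{i_k}w$ with $a^j\in A$ and $i_1<\cdots<i_k<N$. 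We allow lower-order corrections, meaning terms of smaller length or of the same length but lower in a suitable order.

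The key steps, in order, are as follows.

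\begin{enumerate}
\item \textbf{Replace $V$ by $A$.} Write $v=a+c$ with $a\in A$ and $c\in C_{s+2}(V)$. A mode of $u_{-s-2}x$ expands, by the associator formula, as an infinite (but on each vector finite) sum of products $u_{-s-2-j}x_{n+s+2+j}$ and $x_{\cdot}u_{\cdot}$ of the same total degree. The choice $s+2$ is made so that these products either have strictly larger length, or involve factors whose weights are strictly smaller than $\wt(u_{-s-2}x)$. The second case uses $\wt u,\wt x\ge -s$, so that $\wt u,\wt x<\wt u+\wt x+s+1$. We induct on the total weight $\sum\wt v^j$ at fixed degree; this weight is bounded below since each $\wt v^j\ge -s$. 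Modes of $C_{s+2}$-elements are thus reduced to products of modes of lower-weight vectors.

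\item \textbf{Reorder the modes.} The commutator $[a_m,b_n]=\sum_i\binom{m}{i}(a_ib)_{m+n-i}$ gives shorter monomials. This lets us order the indices as $i_1\le\cdots\le i_k$.

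\item \textbf{Remove repeated indices.} Equal indices, or more precisely the strict inequality, come from the standard trick that $a_mb_m$ can be rewritten, modulo shorter terms and $C_{s+2}$-modes, via $(a_{-1-?}b)$. The approach I would take is the Gaberdiel--Neitzke identity: $a_{m}b_{m}$ equals $(a_{-1}b)_{2m+1}$ up to terms $a_{m-j}b_{m+j}$ with $j\neq 0$ and shorter terms. Inducting on $\sum_j i_j^2$-type disorder then forces strict inequality.

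\item \textbf{Bound the largest index.} The rightmost mode $a^k_{i_k}$ acts directly on $w$, and $a^j_mw=0$ for $m\ge N$. After reordering, the largest index sits next to $w$, which gives $i_k<N$.
\end{enumerate}

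Finiteness of $W(d)$ then follows. For fixed $d$, the strictly increasing indices $i_1<\dots<i_k<N$, together with the bound $\wt a^j$ bounded above on the finite-dimensional $A$, force the conditions $\sum(\wt a^j-i_j-1)=d$ and $\wt a^j-i_j-1\ge \wt a^j-N$. The strictly increasing condition makes $-\sum i_j$ grow quadratically in $k$, so only finitely many $k$ and index tuples occur. The same bound shows that $d$ is bounded below, since $\de\ge\sum_j(\min\wt A-N)$, which is only linear in $k$, while the quadratic growth makes large $k$ contribute large $d$. Hence $W(d)=0$ for $d\ll0$.

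The main obstacle is the termination of the reduction in the first and third steps when negative weights are present. One has to choose a well-ordering (length, then degree, then total weight, then the disorder of the indices) under which every rewriting strictly decreases. The use of $C_{s+2}$ rather than $C_2$ is precisely what makes the weight decrease.
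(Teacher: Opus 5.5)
Your architecture follows the paper's proof step for step. You replace states by elements of $A$ via $V=A+C_{s+2}(V)$ and the associator formula, reorder with the commutator formula, and remove repeated indices through $(a_{-1}b)_{2m+1}$. You then get $i_k<N$ from $a_mw=0$ for $m\ge N$, and count using the quadratic growth of $-\sum_j i_j$.

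The gap is in the termination bookkeeping. You rightly call this the crux, but the ordering you set up does not work.

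\begin{itemize}
\item Step 1 replaces a single factor $u_{-s-2}x$ by two factors $u,x$, so every term it produces is strictly \emph{longer}. Your ordering (``length, then degree, then total weight, then disorder'') has length as its first key, so Step 1 increases it rather than decreasing it.
\item Inducting on $\sum_j\wt v^j$ does not repair this. The bound $\wt v^j\ge -s$ only gives $\sum_j\wt v^j\ge -ks$ for a monomial of length $k$. Since $k$ is exactly what Step 1 makes grow, there is no uniform lower bound, and the induction as stated is not well-founded.
\item The degree is preserved by every rewriting, so it plays no role as a key.
\end{itemize}

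The missing device is the paper's shifted weight $p(u)=\wt u+s\ge0$, together with the length-independent parameter $P=\sum_j p(v^j)\in\mathbb Z_{\ge0}$. Every rewriting you use lowers $P$ or, failing that, the length:
\begin{itemize}
\item Since $p(u_{-s-2}x)=p(u)+p(x)+1$, each $C_{s+2}$-replacement lowers $P$ by exactly one, whatever the length.
\item Commutator terms satisfy $p(a_ib)=p(a)+p(b)-s-i-1$ with $i\ge0$, so they also lower $P$.
\item In the repeated-index step, $p(a_{-1}b)=p(a)+p(b)-s$ lowers $P$ when $s>0$; when $s=0$ the length drops at the same $P$.
\end{itemize}
So the induction must run on $P$ first, then on length, then on an order on index tuples.

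For that last key, $\sum_j i_j^2$ does not decrease. The rewriting $a_mb_m\mapsto a_{m-j}b_{m+j}$ spreads the indices and \emph{increases} $\sum_j i_j^2$, while plain swaps leave it unchanged. What closes the argument, as in the paper, is finiteness: at fixed $P$, length and degree, only finitely many index tuples with all $i_j<N$ occur. This lets you induct along the lexicographic order, which strictly decreases for every same-length term the rewriting produces, after reordering.
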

\proof First define $p(u)=\wt u+s$ for homogeneous $u\in V$, we have $p(u)\geq0.$ Let $F_P(W)=\{a^1_{i_1}\cdots a^k_{i_k}w\mid \sum p(a^j)\leq P\}.$ Then we have a filtration $$0=F_{-1}(W)\subseteq F_0(W)\subseteq\cdots\subseteq F_P(W)\subseteq\cdots,$$ and $W=\cup F_P(W).$ Since $W$ is generated by $w$, we know $W$ is spanned by
\begin{eqnarray}\label{id3.1}
u^1_{i_1}\cdots u^k_{i_k}w,\qquad i_j\in\mathbb{Z},
\end{eqnarray}
with $u^j\in V$ homogeneous.

(i) We prove $u^j$ in elements (\ref{id3.1}) can be chosen from $A$.

Induction on $P$, if $P=0$, then we have $p(u^j)=0$. This implies $u^j\in V_{-s}.$ Thus $u^j\in A.$ Suppose it is true for $F_{P-1}(W).$ Now let $u^1_{i_1}\cdots u^k_{i_k}w\in F_P(W)$ and $u^j\notin A$ for some $u^j$, then there are $x^i,y^i$ such that $u^j=\sum_ix^i_{-s-2}y^i.$ From Jacobi identity, we have
$$(x^i_{-s-2}y^i)_{i_j}=\sum_{t\geq0}(-1)^t\binom{-s-2}{t}\left(x^i_{-s-2-t}y^i_{i_j+t}-(-1)^{-s-2}y^i_{-s-2+i_j-t}x^i_t\right).$$
Since $p(x^i)+p(y^i)<p(x^i)+p(y^i)+1=p(x^i_{-s-2}y^i),$ this implies $u^1_{i_1}\cdots u^k_{i_k}w$ can be written as a linear combination of elements of $F_{P-1}(W).$ Hence by induction, we prove our assertion.

(ii) We prove $i_1\leq i_2\leq\cdots\leq i_k<N.$

 From Jacobi identity, we have $x_py_q=y_qx_p+\sum_{t\geq0}\binom{p}{t}(x_ty)_{p+q-t},$ and $p(x_ty)=p(a)+p(b)-(s+t+1).$ Now induction on $P$, it is trivial to prove $i_1\leq i_2\leq\cdots\leq i_k.$ Furthermore, from the finite-dimensional property of $A$, we get $i_k<N.$

(iii) We prove that the subscripts in (\ref{id3.1}) can be chosen unequal.

Prove by contradiction. Let $P$ be the smallest integer such that there exists an element of $F_P(W)$ which cannot be written in the desired form. Induction on the length $k$. $k=0$ is trivial, suppose the assertion is true for $k-1$. Now consider the length is equal to $k$. Assume $x_iy_i$ appears. If $i<0$, we have $$x_iy_i=(x_{-1}y)_{2i+1}-y_{3i+1}x_{-i-1}-\sum_{j\neq -i-1}x_{-1-j}y_{2i+1+j}+y_{2i-j}x_j.$$
If $i\geq0$, we have $$x_iy_i=(y_{-1}x)_{2i+1}-y_{-1-i}x_{3i+1}-\sum_{j\neq i}y_{-1-j}x_{2i+1+j}+x_{2i-j}y_j.$$
Since $p(x)+p(y)>p(x_{-1}y)=p(y_{-1}x)$ for $s>0$, the first terms in the right hand side of both identities belong to $F_{P-1}(W)$. For $s=0$, the first terms in the right hand side of both two identities have length $k-1$. Thus in both cases, we are done for the first terms in the right hand side of both identities.

For the rest terms, all of them have form $x_{i+j}y_{i-j}$ or $y_{i-j}x_{i+j}$ with $j\in \mathbb{Z}\backslash\{0\}$. For fixed $d$, there are only finitely many linearly independent elements in $F_P(W)$ with length $k$ since $i_j<N$ for all $j$ and $\dim A<\infty$. Thus we can define a total order on the subscript by lexicographic order for those elements. For $j>0$, we have $(i-j,i+j)<(i,i)$. Using induction on the lexicographic order, we can see the terms of $x_{i-j}y_{i+j}$ and $y_{i-j}x_{i+j}$ can be written in desired forms. For $j<0$, we have $$x_{i-j}y_{i+j}=y_{i+j}x_{i-j}+\sum_{t\geq0}\binom{i-j}{t}(x_ty)_{2i-t}.$$
The first term becomes $(i+j,i-j)<(i,i)$, and the second terms have length $k-1$. Using the same discussion for $y_{i-j}x_{i+j}$, we can see that in any case, we can rewrite $x_iy_i$ to be a combination of elements of the desired forms.

(iv) For $a^1_{i_1}\cdots a^k_{i_k}w$, we have $$\de(a^1_{i_1}\cdots a^k_{i_k}w)=\sum (\wt a^j-i_j-1)\geq-k(s+1)-\frac{k(2N-k+1)}{2}.$$
Since $k\geq0$, we have $W(d)=0$ for $d<<0.$ Since $\dim A<\infty$ and $i_j<N$ for all $j$, thus for fixed $k$, there are finitely linearly independent elements satisfying $\de(a^1_{i_1}\cdots a^k_{i_k}w)=d$. Now by above identity, $k$ cannot be infinite. Hence $\dim W(d)<\infty$.

This completes the proof.                               $\hfill\Box$

\begin{proposition}\label{prop3.2}
Every finitely generated weak $V$-module is a $C_m$-cofinite grading-restricted generalized $V$-module for $m\geq1$. Its generalized weights belong to $\cup_{i=1}^k(\lambda_i+\Z).$ Furthermore, there is $N_1\in\mathbb{N},$ such that$$(L(0)-\lambda_i-n)^{N_1}w=0$$ for all $w\in W_{[\lambda_i+n]}$ and $\lambda_i,n\in\Z.$
\end{proposition}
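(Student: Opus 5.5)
We will first reduce to cyclic modules and then apply Lemma~\ref{lm3.1} to each generator. Let $W$ be generated by $w^1,\dots,w^p$, so $W=\sum_l W^{(l)}$ with $W^{(l)}=\langle w^l\rangle$ cyclic. Since sums and quotients of grading-restricted generalized modules whose generalized weights lie in a fixed finite union of cosets $\lambda_i+\Z$ again have this property (with the same nilpotency bound), it will suffice to treat a cyclic $W$ generated by $w$. As a preliminary step we reduce to $w$ being a generalized $L(0)$-eigenvector. Using Lemma~\ref{lm3.1} applied to $L(0)w$ (note $L(0)=\omega_1$ and $\omega$ lies in $A+C_{s+2}(V)$), $L(0)w$ lies in the span of monomials $a^1_{i_1}\cdots a^k_{i_k}w$. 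We will instead argue directly: the operator $L(0)$ preserves the finite-dimensional subspace $U=\sum_{d\le d_0}W(d)$ containing $w$. To see this, we will use $[L(0),a_i]=(\wt a-i-1)a_i$, which shows $L(0)$ shifts $\de$ by the "$L(0)$-weight" of $w$.

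To make this clean, the plan is as follows. Consider the finite-dimensional space $T=\Span\{L(0)^jw\}$. Finite dimensionality is the point to check: $L(0)^jw=\omega_1^jw$, and by the reordering argument of Lemma~\ref{lm3.1} each $L(0)^jw$ lies in $W(0)$ plus lower degree pieces relative to the degree filtration. Precisely, modes of degree $0$ applied to $w$ land in $\sum_{d\le 0}W(d)$, which is finite-dimensional since $W(d)=0$ for $d\ll0$. Thus $w$ generates a finite-dimensional $L(0)$-stable space $T$. We decompose $T$ into generalized eigenspaces with eigenvalues $\mu_1,\dots,\mu_k$ and nilpotency orders bounded by $\dim T$. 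The generators $w_{(i)}$ of these generalized eigenspaces generate $W$ as well. For each $w_{(i)}$, the commutator formula gives
\[
(L(0)-\mu_i-d)\,a^1_{i_1}\cdots a^k_{i_k}w_{(i)}=a^1_{i_1}\cdots a^k_{i_k}(L(0)-\mu_i)w_{(i)}+(\text{terms with }L(0)a^j\text{ replacing }a^j).
\]
Because $A$ is chosen homogeneous, the second group of terms vanishes, so monomials of degree $d$ are generalized eigenvectors of eigenvalue $\mu_i+d$ with the same nilpotency order. Lemma~\ref{lm3.1} then yields $W_{[\mu_i+d]}$ finite-dimensional, vanishing for $d\ll0$. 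This gives grading restriction with weights in $\bigcup(\mu_i+\Z)$ and a uniform $N_1$, taken to be the maximum of the nilpotency orders of the $(L(0)-\mu_i)$ on $T$ (summed over the finitely many generators in the non-cyclic case).

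For $C_m$-cofiniteness, the plan is to rerun step (i) of Lemma~\ref{lm3.1} with $C_m(V)$ in place of $C_{s+2}(V)$. Here we use the Remark that $C_2$-cofiniteness implies $C_{m'}$-cofiniteness for large $m'$, choosing a finite homogeneous $A'$ with $V=A'+C_{m'}(V)$. This yields a spanning set of monomials with bounded indices. Every monomial whose first mode $a_{i_1}$ has $i_1\le -m$ lies in $C_m(W)$. The monomials not of this form have all indices in the finite window $-m<i_j<N$ and pairwise distinct indices, so there are only finitely many of them, giving $\dim W/C_m(W)<\infty$. For $m=1$ we instead use $C_1^+(W)$. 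For this we note that $u_{-1}$ with $\wt u\le 0$ can be rewritten, via $C_{s+2}$-reduction and $L(-1)$-derivative relations $(L(-1)u)_{-1}=u_{-2}$, in terms of positive-weight modes plus the finitely many low-degree vectors.

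The main obstacle is the step establishing that $L(0)$ acts locally finitely with uniformly bounded nilpotency. This requires care that $\omega$ and the reordering respect the degree bookkeeping, since $V$ has a negative part and the degree filtration is not a priori an $L(0)$-grading. We will handle it by expressing everything through the homogeneous basis $A$ and the exact commutator $[L(0),a_i]=(\wt a-i-1)a_i$.
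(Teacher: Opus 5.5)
Your proposal is correct and follows essentially the same route as the paper. You reduce to a cyclic module, use the degree-preserving reordering behind Lemma~\ref{lm3.1} to see that $\Span\{L(0)^jw\}$ is finite-dimensional, split $w$ into generalized $L(0)$-eigencomponents, transport eigenvalues and nilpotency orders to all monomials via $[L(0),a_n]=(\wt a-n-1)a_n$, and get $C_m$-cofiniteness by discarding monomials with $i_1\le -m$.

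Two points of tidying. The detour through a new $A'$ and $C_{m'}$ is unnecessary, since the spanning set of Lemma~\ref{lm3.1} already works for every $m$. Your $m=1$ sketch should be made precise as the paper does: $u_{-s-2}=\frac{1}{(s+1)!}(L(-1)^{s+1}u)_{-1}$ with $\wt L(-1)^{s+1}u\ge 1$, so $C_{s+2}(W)\subseteq C_1^+(W)$. As literally stated, $u_{-1}$ with $\wt u\le 0$ cannot be rewritten through positive-weight modes; for example $\mathbf 1_{-1}=\mathrm{Id}$.
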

\proof It is enough to prove the case $W$ is generated by one element $w$. By Lemma \ref{lm3.1}, we have $W(d)$ is finite dimensional and lower truncated. It is obvious that $W(d)$ is stable under $L(0)$. In particular, $\span\{L(0)^jw,j\geq0\}$ is finite dimensional and $L(0)$-stable. Let $w=\sum_{\lambda_i}w_{\lambda_i}$ be the generalized-eigenvector decomposition of $w$. It is a finite sum and there is $N_1\in \mathbb{N}$ such that $(L(0)-\lambda_i)^{N_1}w_{\lambda_i}=0$ for all $\lambda_i.$

Now let $W[\lambda_i]=\oplus_{n\in\Z} W_{[\lambda_i+n]}$. Using $[L(0),u_n]=(\wt u-n-1)u_n$ for all $u\in V,n\in\Z$, we can see that $W[\lambda_i]$ is lower truncated, $\dim W_{[\lambda_i+n]}<\infty$ for all $n\in\Z,$ and $(L(0)-\lambda_i-n)^{N_1}w_{\lambda_i+n}=0$ for all $w_{\lambda_i+n}\in W_{[\lambda_i+n]}.$ Thus $W[\lambda_i]$ is grading-restricted generalized $V$-module which satisfies the uniformly nilpotent condition. Now the same statement is also true for $W$ since $W=\sum_iW[\lambda_i].$

Let $m>1,$ in Lemma \ref{lm3.1}, if $i_1\leq-m,$ then the element belongs to $C_m(W)$. Thus modulo $C_m(W)$, we have $-m<i_1<\cdots i_k<N.$ This means $W$ is $C_m$-cofinite. Now since $$u_{-s-2}w=\frac{1}{(s+1)!}(L(-1)^{s+1}u)_{-1}w,$$ we have $C_{s+2}(W)\subset C_1^+(W)$. This implies $W$ is also $C_1$-cofinite. and we complete the proof.             $\hfill\Box$

Now we recall several results about Zhu algebras \cite{DLM-Assoc,DLM2,DLM3,Zhu}. Recall $A_n(V)=V/O_n(V)$, where $$O_n(V)=\{(L(-1)+L(0))u,u\circ_nv=\Res_z\frac{(1+z)^{\wt u+n}}{z^{2n+2}}Y(u,z)v\mid u,v\in V\}.$$Then $A_n(V)$ is an associative algebra with identity $[\textbf{1}]$ and central element $[\omega]$, and $$u\ast_nv=\sum_{j=0}^n(-1)^j\binom{n+j}{n}\Res_z\frac{(1+z)^{\wt u+n}}{z^{n+j+1}}Y(u,z)v.$$ For $n=0$, it is the usual Zhu algebra $A(V)$. The lowest space of an irreducible ordinary $V$-module is a simple $A(V)$-module. This gives a bijection between irreducible ordinary $V$-modules and simple $A(V)$-modules.

\begin{lemma}\label{lm3.3}
Suppose $V$ is $C_2$-cofinite. Then $V$ is $C_1$-cofinite and $A_n(V)$ are finite-dimensional. More precisely, we have $$C_{2s+3}(V)\subset C_1^a(V),\qquad \dim A_n(V)\leq\dim V/C_{2n+2}(V).$$
\end{lemma}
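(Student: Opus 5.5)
We prove the two inclusions separately; finite-dimensionality of $A_n(V)$ then follows at once.

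\emph{Step 1: $C_{2s+3}(V)\subset C_1^a(V)$.} Take a spanning element $u_{-2s-3}v$ with $u,v$ homogeneous; recall $\wt u,\wt v\ge -s$. Using $(L(-1)u)_n=-nu_{n-1}$ repeatedly gives
\[
u_{-2s-3}v=\frac{1}{(2s+2)!}\bigl(L(-1)^{2s+2}u\bigr)_{-1}v .
\]
The state $x=L(-1)^{2s+2}u$ has weight $\wt u+2s+2\ge s+2>0$. If $\wt v>0$, then $x_{-1}v$ lies in $C_1^a(V)$ by definition. If $\wt v\le 0$, we apply skew-symmetry (\ref{id2.1}) to rewrite $x_{-1}v$ as $\sum_{j\ge0}(-1)^{j+1}\frac{L(-1)^j}{j!}v_{-1+j}x$. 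The $j\ge1$ terms lie in $L(-1)V\subset C_1^a(V)$. The $j=0$ term is $\pm v_{-1}x$, and here we instead use the original expression $u_{-2s-3}v=\frac{1}{(2s+2)!}x_{-1}v$ together with skew-symmetry, so that the roles are exchanged: $u_{-2s-3}v$ equals $\pm v_{-2s-3}u$ modulo $L(-1)V$. Applying the same $L(-1)$ trick to $v$ yields $(L(-1)^{2s+2}v)_{-1}u$, whose first factor has positive weight. The key observation is that we can always move the $L(-1)^{2s+2}$ onto whichever factor we like, modulo $L(-1)V$. The remaining issue is a second factor of nonpositive weight. When $\wt u\le0$ and $\wt v\le0$, we split $L(-1)^{2s+2}$ as $L(-1)^{s+1}$ on each factor, via $(L(-1)^{a}u)_{-1-b}$-type identities obtained from the $L(-1)$-derivative property (which is exactly why the index $2s+3$ appears). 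This produces $\sum c\,(L(-1)^{a}u)_{-1}L(-1)^{b}v$ with $a+b=2s+2$ modulo $L(-1)V$, using $[L(-1),x_{-1}]$ to commute. Choosing the terms with $a,b\ge s+1$ makes both weights $\ge1$. The bookkeeping of this splitting is the main obstacle; I would first write $u_{-2s-3}v=\Res_z z^{-2s-3}Y(u,z)v$ and integrate by parts in the form $Y(L(-1)^au,z)L(-1)^bv$ modulo total derivatives.

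\emph{Step 2: $\dim A_n(V)\le\dim V/C_{2n+2}(V)$.} It suffices to show $C_{2n+2}(V)\subset O_n(V)+U$ for a finite-dimensional complement $U$ of $C_{2n+2}(V)$; equivalently, that $V=U+O_n(V)$. We proceed by induction on weight, which is possible since weights are bounded below by $-s$. For homogeneous $u,v$ the element
\[
u\circ_n v=\Res_z\frac{(1+z)^{\wt u+n}}{z^{2n+2}}Y(u,z)v=\sum_{i\ge0}\binom{\wt u+n}{i}u_{i-2n-2}v
\]
lies in $O_n(V)$. Its top-weight term is $u_{-2n-2}v$, and all other terms have strictly lower weight. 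Hence $u_{-2n-2}v\equiv$ (lower weight terms) mod $O_n(V)$, and by induction every element of $V$ lies in $U+O_n(V)$. Taking $U$ homogeneous, spanned by representatives of $V/C_{2n+2}(V)$, finishes the argument.

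\emph{Step 3: finite-dimensionality.} By $C_2$-cofiniteness, Remark (ii) gives $C_{2n+2}$-cofiniteness, so $\dim A_n(V)<\infty$. $C_1$-cofiniteness of $V$ then follows from Step 1 combined with $C_{2s+3}$-cofiniteness.
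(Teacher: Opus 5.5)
Your proposal is correct and follows the paper's proof. Step 1 is the same integration-by-parts identity $(L(-1)^{2s+2}u)_{-1}v\equiv(-1)^{s+1}(L(-1)^{s+1}u)_{-1}L(-1)^{s+1}v \bmod L(-1)V$, and Step 2 is the same induction on weight using $u\circ_n v\in O_n(V)$. The ``bookkeeping'' you flag as the main obstacle is immediate: iterating $(L(-1)x)_{-1}y=L(-1)(x_{-1}y)-x_{-1}L(-1)y$ gives a single term modulo $L(-1)V$, not a sum from which terms must be selected, so the skew-symmetry case analysis is unnecessary.
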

\proof Using $L(-1)$-derivation property, we have $$(2s+1)!u_{-2s-3}v=(L(-1)^{2s+2}u)_{-1}v\equiv(-1)^{s+1}(L(-1)^{s+1}u)_{-1}(L(-1)^{s+1}v) ~~\mod L(-1)V.$$
From lower truncated properties of $V$, we can see that $\wt L(-1)^{s+1}u>0$ for all $u\in V$. This proves the first relation.

Since $C_{2n+2}(V)$ is homogeneous, let $X$ be its homogeneous complement in $V$. From definition, we have $$u_{-2n-2}v\equiv-\sum _{j>0}\binom{\wt u+n}{j}u_{j-2n-2}v~~\mod O_n(V).$$
It is obvious that $\wt u^i_{j-2n-2}v^i<\wt u^i_{-2n-2}v^i$ for $j>0$.

Now we prove $V_n\subset X+O_n(V).$ This implies $\dim A_n(V)<\dim X=\dim V/C_{2n+2}(V)$.
Induction on $n$. If $n=-s$, for any $w\in V_{-s}$, let $w=x+\sum_iu^i_{-2n-2}v^i$ with $\wt u^i_{-2n-2}v^i=-s$. Hence $u^i_{-2n-2}v^i\in O_n(V)$ from above equation. Suppose the assertion is true for all $V_k$ with $k<n$. Now let $w\in V_n,$ and $w=x+\sum_iu^i_{-2n-2}v^i.$
Then we have $$w=x+\sum _{j>0}\binom{\wt u+n}{j}u_{j-2n-2}v~~\mod O_n(V).$$Since $\wt u^i_{j-2n-2}v^i<\wt u^i_{-2n-2}v^i$, by induction, we have $u^i_{j-2n-2}v^i=x^i+o_n^i$ with $x^i\in X,o^i_n\in O_n(V)$. Therefore, we get $w\in X+O_n(V).$ This completes the proof.         $\hfill\Box$

\begin{theorem}\label{thm3.4}
Let $V$ be $C_2$-cofinite and $\mathcal {C}$ the category of grading-restricted generalized $V$-modules. Then $\mathcal {C}$ is a finite abelian category, and the objects are exactly the finitely generated weak modules. Every object is $C_1$-cofinite, and $\mathcal {C}$ has a natural vertex tensor category structure.

Furthermore, let $D$ be the contragredient functor on modules. Then $\mathcal {C}$ is a ribbon Grothendieck-Verdier category with $$K=DV,\qquad \Hom_\mathcal {C}(X\boxtimes W,K)\cong\Hom_\mathcal {C}(X,DW),\qquad \theta_X=e^{2\pi iL_X(0)},$$and the tensor product is right exact. If $V$ is simple, so is $K.$
\end{theorem}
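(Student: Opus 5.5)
The plan is to assemble Theorem \ref{thm3.4} from Proposition \ref{prop3.2}, Lemma \ref{lm3.3}, the existence theorem Proposition \ref{prop2.15}, and the standard theory of contragredients.

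\textbf{Step 1: objects of $\mathcal C$ are exactly the finitely generated weak modules.} Proposition \ref{prop3.2} gives one inclusion: every finitely generated weak module is a $C_1$-cofinite grading-restricted generalized module. For the converse, let $W\in\mathcal C$. Then $W/C_2(W)$ is finite dimensional, by the standard argument of Lemma \ref{lm3.1} applied to generalized weight spaces: $W$ is lower truncated in each class $\lambda_i+\Z$, and one inducts on weight. A finite-dimensional complement of $C_2(W)$, or of $C_1^+(W)$, then generates $W$. If this route gets stuck, I would instead argue via Zhu's algebra $A_n(V)$: by Lemma \ref{lm3.3} it is finite dimensional, and each of the finitely many lowest-weight pieces generates $W$. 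In particular every object of $\mathcal C$ is $C_1$-cofinite, so $\mathcal C=\mathcal C_1(V)$, and Proposition \ref{prop2.15} supplies the vertex tensor category structure with twist $e^{2\pi iL(0)}$.

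\textbf{Step 2: finite abelian.} $\mathcal C$ is closed under submodules and quotients, since grading restriction is inherited. A submodule of a finitely generated module is again grading restricted, hence in $\mathcal C$. So $\mathcal C$ is abelian.

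Finite length follows from the uniform bound on weight spaces. Each class $\lambda_i+\Z$ has finitely many weights below any given level. Every simple subquotient is irreducible ordinary and contributes a nonzero lowest weight space. Since $\dim W_{[\lambda]}<\infty$ and the weights are bounded below in finitely many classes, the length is bounded by a sum of dimensions of finitely many weight spaces. This uses that a simple's lowest weight lies in a window that can be controlled.

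Finitely many simples follows from the bijection with simple $A(V)$-modules together with $\dim A(V)<\infty$. Hom spaces are finite dimensional because a morphism is determined by its values on finitely many generators, and these land in finite-dimensional weight spaces.

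For projective covers, the plan is to use the generalized Verma-type modules induced from $A_n(V)$ (Huang's construction). For $n$ large, the module induced from the finite-dimensional projective $A_n(V)$-module is projective in the subcategory of modules with bounded Jordan block size and generated below degree $n$. Its finiteness shows that every simple has a projective cover in $\mathcal C$.

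\textbf{Step 3: Grothendieck--Verdier structure.} The contragredient $D$ preserves $\mathcal C$ (Definition \ref{def2.9}) and is exact, since it takes graded duals of finite-dimensional spaces. We have $D^2\cong\Id$, so $D$ is an antiequivalence. Set $K=DV$. The isomorphism $\Hom(X\boxtimes W,K)\cong\Hom(X,DW)$ is obtained as follows:
\begin{itemize}
\item $\Hom(X\boxtimes W,V')$ is the space of intertwining operators of type $\binom{V'}{X\,W}$;
\item this space is isomorphic, by the $\Omega/A$ symmetries of FHL/HLZ, to operators of type $\binom{W'}{X\,V}$;
\item since $V$ is the unit, these correspond to $\Hom(X,W')$.
\end{itemize}
Naturality in $X$ and $W$ is checked directly.

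The ribbon condition $D\theta_X=\theta_{DX}$ holds because $L(0)'$ is the transpose of $L(0)$. Right exactness of $\boxtimes$ holds because it is a left adjoint to internal Hom: $\Hom(X\boxtimes W,Z)\cong\Hom(X,D(W\boxtimes DZ))$ by the above. If $V$ is simple, then $K=V'$ is simple, since $D$ is an exact antiequivalence.

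I expect the main obstacle to be the existence of projective covers, together with the converse inclusion in Step 1. Both need a genuine induction or construction from the finite-dimensional algebras $A_n(V)$, adapted to the lower-truncated (negatively graded) setting.
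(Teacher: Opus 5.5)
\textbf{The gap: Step 1, showing every object of $\mathcal C$ is finitely generated.} Several later claims rest on this step: that all objects are $C_1$-cofinite, that $\mathcal C=\mathcal C_1(V)$, that Hom spaces are finite-dimensional, and that Huang's projective-cover construction applies to all of $\mathcal C$.

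Your primary route does not work. The argument of Lemma \ref{lm3.1} produces a spanning set only relative to a given generator. It therefore cannot show $\dim W/C_2(W)<\infty$ for an arbitrary $W\in\mathcal C$. (Also, with negative weights, $u_{-2}$ need not raise weight, which is why the paper passes to $C_{s+2}$.)

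More fundamentally, lower truncation in each coset is not enough. What makes the statement true is a bound on the degrees where new generators can appear, and this must come from the finiteness of the set of irreducible modules. That finiteness is exactly what excludes a module like $\bigoplus_h W^h$, summed over infinitely many irreducibles whose lowest weights tend to infinity in one coset. Such a module would be grading-restricted but not finitely generated.

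Your fallback is false as stated: lowest-weight pieces need not generate. For example, in $W^i\oplus W^j$ with $h_j-h_i\in\Z_{>0}$, the lowest generalized weight space of that coset generates only $W^i$.

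\textbf{The missing argument (the paper's).} It needs the finiteness of the irreducibles $W^0,\dots,W^r$, with lowest weights $h_i$, which you only obtain in Step 2. So that part must come first.
\begin{enumerate}
\item For $0\neq M\in\mathcal C$, pick $w\in M_{[\lambda]}$ with $\lambda$ minimal in its coset. Then $V\cdot w$ has generalized weights in $\lambda+\Zge$. Its quotient by a maximal submodule not containing $w$ (Zorn) is some $W^i$, and this forces $\lambda=h_i$.
\item Put $B=\max_i\Re h_i$ and $\mathcal E_B(M)=\bigoplus_{\Re\lambda\le B}M_{[\lambda]}$. This functor is exact, finite-dimensional, and nonzero on nonzero objects.
\item Applying $\mathcal E_B$ to $W/\langle\mathcal E_B(W)\rangle$ shows that $\mathcal E_B(W)$ generates $W$.
\end{enumerate}
The same functor gives finite length and finite-dimensional Hom spaces immediately. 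It is also the degree bound you need for the induced $A_n(V)$-modules to be projective in all of $\mathcal C$, rather than only in a subcategory of modules ``generated below degree $n$''.

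Your Step 3 is fine and matches the paper. That covers the adjunction via $I\binom{V'}{X\,W}\cong I\binom{W'}{X\,V}$ and the unit, the ribbon condition, right exactness as a left adjoint, and simplicity of $K$.
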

\proof (i) From Lemma \ref{lm3.3}, we know Zhu algebra $A_n(V)$ is finite dimensional. Hence it has only finitely many simple modules. From \cite{Zhu,DLM-Assoc}, we know $V$ has only finitely many irreducible weak modules.

Since an irreducible weak $V$-module $W$ can be generated by one element, from Proposition \ref{prop3.2}, we know $W$ is a grading-restricted generalized $V$-module and $W=\sum_i W[\lambda_i]$. It is obvious that each $W[\lambda_i]$ is a weak $V$-module. Thus there is only one $\lambda_i$, denoted by $\lambda.$

Let $N_1$ be defined in Proposition \ref{prop3.2}. If $N_1>1$, then any eigenvector of $L(0)$ can generate a weak $V$-submodule which contradicts the irreducibility of $W$. Thus $N_1=1$. This implies $W$ is ordinary.

Let $V=W^0,W^1,...,W^r$ be all the irreducible ordinary $V$-modules, and $h_0,h_1,...,h_r$ the conformal weights respectively.

Now let \(W\) be any weak \(V\)-module. Let \(0\neq w\in W_{[\lambda]}\) and $U=V\cdot w$. Then $U$ is grading-restricted generalized by Proposition \ref{prop3.2}. Let \(N\subset U\) be the maximal submodule among those not
containing \(w\). The existence of $N$ follows from Zorn's Lemma. Then \(U/N\) is irreducible and \(w+N\neq0\). Hence $U/N\cong W^i$ for some irreducible ordinary module \(W^i\) with lowest conformal weight \(h_i\). Since \(w+N\) still has generalized weight \(\lambda\), we obtain $\lambda\in h_i+ \mathbb{N}.$ Therefore all the generalized weights of $W$ are a subset of $\cup_ih_i+\Z.$

(ii) Let $B=\max \{\Re h_i\}$ and $\mathcal {E}_B(W)=\oplus_{\Re\lambda\leq B}W_{[\lambda]}$ for $W\in\mathcal {C}$. From lower truncated property and finite dimensional property of generalized weight space, it is obvious that $\dim \mathcal {E}_B(W)<\infty.$ From definition, we can see that $\mathcal {E}_B(W)\neq0.$ This implies $W$ can be generated by $\mathcal {E}_B(W).$ Therefore $W$ is finitely generated.

Let $0\rightarrow U\rightarrow W\rightarrow Q\rightarrow 0$ be a short exact sequence in $\mathcal {C}$. By $L(0)$ generalized eigenvalue property, above sequence induces a short exact sequence $0\rightarrow U_{[\lambda]}\rightarrow W_{[\lambda]}\rightarrow Q_{[\lambda]}\rightarrow 0$ for all $\lambda.$ This gives the short exact sequence $$0\rightarrow \mathcal {E}_B(U)\rightarrow \mathcal {E}_B(W)\rightarrow \mathcal {E}_B(Q)\rightarrow 0,$$this means $\mathcal {E}_B$ is an exact functor from $\mathcal {C}$ to the category of finite dimensional vector spaces.

Hence, for any descending chain of sub objects of $W$, we get a descending chain of subspaces of $\mathcal {E}_B(W)$. Since $\dim\mathcal {E}_B(W)<\infty$, the chain must have finite length. Thus $W$ has finite length.

If $f:U^1\rightarrow U^2$ is a morphism in $\mathcal {C}$, we get a linear map $\mathcal {E}_B(f):\mathcal {E}_B(U^1)\rightarrow \mathcal {E}_B(U^2)$. Since $\mathcal {E}_B$ is exact and $\dim\mathcal {E}_B(U^1)<\infty,\dim\mathcal {E}_B(U^2)<\infty$, we have $\Hom_\mathcal {C}(U^1,U^2)$ is finite dimensional.

From \cite{Huang-Projective}, using Lemma \ref{lm3.3} and Proposition \ref{prop3.2}, we can see that every simple object has a projective cover. This proves that $\mathcal {C}$ is a finite abelian category, every object is finitely generated and $C_1$-cofinite. Now using Proposition \ref{prop2.15}, we see that $\mathcal {C}$ is a vertex tensor category with $P(z)$-tensor product, thus it induces a braided monoidal structure on $\mathcal {C}$.

(iii) Let $W\in \mathcal {C}$, define $DW$ to be the contragredient module of $W$, see Definition \ref{def2.9}. Since $V$ is lower truncated, we can see that $L(1)$ is locally nilpotent. This implies $e^{zL(1)}v$ is well-defined.
Now it is trivial that $DW\in\mathcal {C}$ and $D^2=\Id_\mathcal {C}$. Thus $\mathcal {C}$ is closed under $D$, and $D:\mathcal {C}^{op}\rightarrow\mathcal {C}$ is a contravariant equivalence.

Now let $K=DV$, from the contragredient properties of intertwining operators, we have $$\Hom_\mathcal {C}(X\boxtimes W,K)\cong\Hom_\mathcal {C}(X\boxtimes W,DV)\cong\Hom_\mathcal {C}(X,DW).$$Thus $K=DV$ is a dualizing object and $D$ is its Grothendieck-Verdier duality.
Since $\theta_X=e^{2\pi iL_X(0)}$ and $L(0)$ acts on the contragredient module by transpose, we have
$$\theta_{DX}=e^{2\pi i L_{DX}(0)}=e^{2\pi i L_X(0)^{t}}=\left(e^{2\pi i L_X(0)}\right)^{t}= D\!\left(e^{2\pi i L_X(0)}\right)=D(\theta_X).$$Hence $\mathcal {C}$ is a ribbon Grothendieck-Verdier category.

Now from
 \begin{eqnarray*}
 &&\Hom_\mathcal {C}(X\boxtimes W,Z)\\
 &\cong&\Hom_\mathcal {C}(X\boxtimes W,D(DZ))\cong\Hom_\mathcal {C}((X\boxtimes W)\boxtimes DZ,K)\\
 &\cong& \Hom_\mathcal {C}(X\boxtimes (W\boxtimes DZ),K)\cong \Hom_\mathcal {C}(X,D(W\boxtimes DZ)),
 \end{eqnarray*}
we have $-\boxtimes W$ is the left adjoint of $D(W\boxtimes D(-))$. Thus $-\boxtimes W$ is exact. Now using the braiding isomorphism, we also get the exactness of $W\boxtimes-.$

(iv) If $V$ is simple, then $K=DV$ is also simple since $D$ is exact and $D^2=\Id_\mathcal {C}$.

This completes the proof.                        $\hfill\Box$

\begin{remark}
(i) In Theorem \ref{thm3.4}, we just prove the Grothendieck-Verdier duality, not the rigidity of $\mathcal {C}$.

(ii) In Theorem \ref{thm3.4}, we assume $V$ can have negative weights. If $V$ is nonnegative graded, several results have been proven in literature.

(iii) In Theorem \ref{thm3.4}, if $\mathcal {C}$ is semisimple, then $V$ is regular. Thus regularity is equivalent to $C_2$-cofiniteness and rationality without restriction of the nonnegative gradation of $V$.
\end{remark}

\begin{corollary}\label{co3.6}
Let $V$ be $C_2$-cofinite and $W$ an irreducible weak $V$-module such that it is lower truncated with integer conformal grading. Then $W$ is ordinary.
\end{corollary}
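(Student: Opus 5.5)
The plan is to reduce to the argument in part (i) of the proof of Theorem \ref{thm3.4}, which showed that an irreducible weak module is ordinary. First, since $W$ is irreducible, it is generated by any nonzero vector $w\in W$. By Proposition \ref{prop3.2}, $W$ is therefore a grading-restricted generalized $V$-module. Its generalized $L(0)$-weights lie in finitely many cosets $\lambda_i+\Z$, and there is a uniform nilpotency index $N_1$ with $(L(0)-\lambda)^{N_1}=0$ on each $W_{[\lambda]}$.

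Next I would use the hypothesis that $W$ is lower truncated with an integer conformal grading $W=\bigoplus_{n}W(n)$, where $v_mW(n)\subset W(n+\wt v-m-1)$. Each $W[\lambda_i]=\bigoplus_{n\in\Z}W_{[\lambda_i+n]}$ is a weak submodule, because $[L(0),u_n]=(\wt u-n-1)u_n$. By irreducibility only one coset $\lambda+\Z$ occurs. In fact the integer grading is not really needed for this. What the argument needs is just that $W$ is a nonzero object of $\mathcal C$ with generalized weights in a single coset.

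The key step is showing that $L(0)$ acts semisimply. Consider the subspace $W^{ss}$ of honest $L(0)$-eigenvectors, i.e.\ $\ker(L(0)-\lambda)$ summed over $\lambda$. I would argue that the map $L(0)-\mu$, applied weight-space-wise, commutes with the $V$-action up to degree shift. Concretely, define the operator $L(0)_{\mathrm{nil}}=L(0)-L(0)_{\mathrm{ss}}$, where $L(0)_{\mathrm{ss}}$ acts by $\lambda$ on $W_{[\lambda]}$. From $[L(0),u_n]=(\wt u-n-1)u_n$ for homogeneous $u$ and the fact that $u_n$ maps $W_{[\lambda]}$ to $W_{[\lambda+\wt u-n-1]}$, we get $[L(0)_{\mathrm{ss}},u_n]=(\wt u-n-1)u_n$. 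Subtracting the two relations shows that $L(0)_{\mathrm{nil}}$ commutes with all $u_n$, so it is a $V$-module endomorphism of $W$. Its kernel $W^{ss}$ is then a nonzero submodule: it is nonzero because the lowest generalized weight space contains an eigenvector. Hence $W^{ss}=W$ by irreducibility, and $L(0)_{\mathrm{nil}}=0$. This is the same as saying $N_1=1$. Note that the version in Theorem \ref{thm3.4}, which says ``an eigenvector generates a submodule'', needs exactly this commutation to guarantee that the generated submodule consists of eigenvectors.

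Combining the pieces, $W$ is a direct sum of finite-dimensional $L(0)$-eigenspaces $W_{[\lambda]}=W_\lambda$, with weights in $\lambda+\Z$ and bounded below. This is the definition of an ordinary module. Alternatively, once $W\in\mathcal C$ is known, one can simply note that $W$ is a simple object of $\mathcal C$, hence isomorphic to some $W^i$, which was shown to be ordinary in Theorem \ref{thm3.4}. The main obstacle is the first step, knowing that a lower truncated irreducible weak module falls under Proposition \ref{prop3.2}. That proposition applies to finitely generated weak modules, and irreducibility gives cyclicity. So the only real care needed is in justifying that $L(0)_{\mathrm{nil}}$ is a module map, and this is routine once the generalized weight decomposition is available.
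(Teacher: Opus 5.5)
Your proposal is correct and follows essentially the same route as the paper: cyclicity of $W$, then Proposition~\ref{prop3.2} to place $W$ in $\mathcal C$, then the irreducibility argument from part (i) of the proof of Theorem~\ref{thm3.4} to get semisimplicity of $L(0)$. Your observation that $L(0)_{\mathrm{nil}}=L(0)-L(0)_{\mathrm{ss}}$ commutes with all modes $u_n$, so that its kernel is a nonzero submodule, is exactly what makes the paper's brief ``an eigenvector generates a submodule'' step rigorous.
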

\proof Since $W$ is simple, we have $W=V\cdot w$. Thus from Proposition \ref{prop3.2}, $W$ is grading restricted. From the proof of Theorem \ref{thm3.4}, $L(0)$ is semisimple on $W$. Thus $W$ is admissible. From irreducibility, it is obvious ordinary.            $\hfill\Box$

\begin{remark}
The preceding results ensure that the module category $\mathcal C$ precisely
satisfies all the conditions considered at the beginning of this
section.  In particular, $\mathcal C$ is a finite abelian braided
monoidal category with right-exact tensor product which admits internal Homs, and the
contragredient functor $D$ gives an exact Grothendieck--Verdier
duality with dualizing object
\[
K=DV.
\]
When $V$ is simple, both the tensor unit $V$ and the dualizing object
$K$ are simple.

Consequently, the invertibility criterion established above applies
to $\mathcal C$.  In view of Corollary \ref{co5.4}, it will be enough in the
subsequent argument to construct an object $B\in\mathcal C$ together
with a nonzero morphism
\[
K\boxtimes B\longrightarrow V.
\]
Such a morphism will imply that $K$ is invertible.  Notice that
rigidity of $\mathcal C$ has not yet been established; the purpose of
the later genus-one arguments is precisely to supply the additional
information needed to pass from Grothendieck--Verdier duality to
rigidity.
\end{remark}

\section{Genus one foundations}\label{sec4}
This section develops the genus-one analytic tools needed for the
subsequent rigidity argument.  We first establish convergence,
analytic continuation, collision, and translation properties for
geometrically modified genus-one trace functions in the lower-truncated setting.
We then apply these results to suitable two-point functions and derive
the genus-one comparison identities relating the modular transformation of
the vacuum trace to trace functions associated with tensor-product
morphisms.  These identities provide the analytic bridge between
modular invariance and the categorical arguments used later.

\subsection{Genus one trace functions}\label{sec4.1}
 First, we work on the trace functions. Let $V$ be $C_2$-cofinite and $\mathcal {C}$ the category of grading restricted generalized $V$-modules. Set $q=e^{2\pi i\tau},~q_{z_i}=e^{2\pi iz_i}.$
\begin{theorem}\label{thm3.7}
Fix $n\in\{1,2\}.$ Let $W_1,\ldots,W_n\in\mathcal C,A_1,...,A_{n}\in\mathcal C$, and $\mathcal Y_i\in\binom{W_{i-1}}{A_{i}~~W_{i}}$. Set $W_0=W_n$. For $z_1,\ldots,z_n\in\mathbb C,\tau\in\mathbb H,$ define the corresponding genus-one trace function by
\[
\Phi(z_1,\ldots,z_n;\tau)
=
\tr_{W_0}
\left(
\mathcal Y_1(\mathcal {U}(q_{z_1})a_1,z_1)
\cdots
\mathcal Y_n(\mathcal {U}(q_{z_n})a_n,z_n)
q^{L(0)-c/24}
\right),
\]
where $a_i\in A_i$. Then the following properties hold:

(i) There exists an ordered genus-one product region $$\mathcal R=\{(z_1,...,z_n;\tau)\mid |q_{z_1}|>...>|q_{z_n}|>|q|,~z_i-z_j\notin \mathbb{Z}+\mathbb{Z}\tau\}\subset\mathbb C^n\times\mathbb H,$$ such that $\Phi(z_1,\ldots,z_n;\tau)$ converges absolutely and locally uniformly on \(\mathcal R\).

(ii) Let the collision divisor be $D_\tau=\bigcup_{i<j}\{(z_1,...,z_n;\tau)\mid z_i-z_j\in\mathbb Z+\mathbb Z\tau\}.$ Then \(\Phi\) admits analytic continuation to a multivalued analytic function on $(\mathbb C^n\times\mathbb H)\setminus D_\tau .$ For every simply connected domain $U\subset(\mathbb C^n\times\mathbb H)\setminus D_\tau ,$
a choice of analytic continuation determines a single-valued
holomorphic branch $\Phi_U:U\longrightarrow\mathbb C .$

Analytic continuation around closed loops defines the local
monodromy action on the space of branches.

(iii) Fix a collision variable $z=z_i-z_j$ and let
$\rho_\tau=\min\{|a+b\tau|\mid(a,b)\in\mathbb Z^2\setminus\{(0,0)\}\}.$
Then on the punctured disk $0<|z|<\rho_\tau,$ the analytically continued trace admits a convergent expansion
\[
\Phi(z,\tau)
=
\sum_{\alpha=1}^{N}
\sum_{k=0}^{K_\alpha}
z^{\lambda_\alpha}
(\log z)^k
F_{\alpha,k}(z,\tau),
\]
where $\lambda_\alpha\in\mathbb C,$ and each coefficient $F_{\alpha,k}(z,\tau)$ is single-valued and holomorphic in \(z\) on the punctured disk. Moreover,
\[
F_{\alpha,k}(z,\tau)
=
\sum_{m\in\mathbb{Z}}
a_{\alpha,k,m}(\tau)z^m
\]
converges absolutely and locally uniformly on compact subannuli.

(iv) Now let $n=2$. From Theorem \ref{thm3.4}, let $\mathcal Y_3,\mathcal Y_4$ be determined by $(\mathcal Y_1,\mathcal Y_2)$, define
\begin{eqnarray*}
\Phi_{\mathrm{prod}}&=&\operatorname{tr}_{W_0}
\left(
\mathcal Y_1
\left(
\mathcal U(q_{z_1})a_1,q_{z_1}
\right)
\mathcal Y_2
\left(
\mathcal U(q_{z_2})a_2,q_{z_2}
\right)
q^{L(0)-c/24}
\right),
\\
\Phi_{\mathrm{iter}}&=&\operatorname{tr}_{W_0}
\left(
\mathcal Y_3
\left(
\mathcal U(q_{z_2})
\mathcal Y_4
\left(
a_1,e^{\log(z_1-z_2)}
\right)a_2,
q_{z_2}
\right)
q^{L(0)-c/24}
\right).
\end{eqnarray*}
Then on every nonempty common convergence domain, $$\Phi_{\mathrm{prod}}=\Phi_{\mathrm{iter}}.$$
Hence their analytic continuations coincide on every common branch.

(v) Choosing finitely many generating trace functions $\Phi_1,\ldots,\Phi_N,$ there exists a finite-dimensional vector-valued function $\boldsymbol{\Phi}(q,z)=(\Phi_1(q,z),\ldots,\Phi_N(q,z))^T$ satisfying a first-order differential system
\[
q\frac{\partial}{\partial q}\boldsymbol{\Phi}(q,z)
=
A(z,q)\boldsymbol{\Phi}(q,z),
\]
where $A(z,q)\in \mathbb{M}(N,\mathcal {O})$ is a matrix whose entries are generated by the elliptic functions and holomorphic near $q=0.$

Therefore \(q=0\) is a regular singular point and the solutions have
Frobenius-type expansions
\[
\boldsymbol{\Phi(q,z)}
=
\sum_{\alpha=1}^{N_0}
\sum_{k=0}^{K}
\sum_{m\ge0}
\boldsymbol{\phi}_{\alpha,k,m}(z)
q^{h_\alpha+m}
(\log q)^k ,
\]
where $h_\alpha\in\mathbb C,\boldsymbol{\phi}_{\alpha,k,m}(z)\in\mathbb C^N . .$
\end{theorem}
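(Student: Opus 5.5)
The plan is to follow the standard route of Zhu, Miyamoto, Huang and Fiordalisi, adapted to geometric coordinates and to a lower-truncated grading. The key input is that every object of $\mathcal C$ is $C_2$-cofinite and finitely generated, by Proposition \ref{prop3.2}.

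For (i), I would expand each $\mathcal Y_i(\mathcal U(q_{z_i})a_i,z_i)$, after the conjugation identities of Lemma \ref{conjugate}, as a series in $q_{z_i}$. Inserting the generalized weight decomposition of $W_0$ then turns $\Phi$ into a multiple series in the ratios $q_{z_2}/q_{z_1}$ and $q/q_{z_n}$, and in $q$ itself, with coefficients polynomial in $\log$'s. Absolute and locally uniform convergence on $\mathcal R$ is not proved by estimating coefficients directly. Instead I would deduce it from (v). I first derive the formal differential system, then note that formal Frobenius-type series solving a regular-singular system automatically converge (Section \ref{sec2.4}). That formal series solutions converge is exactly the remark recorded there.

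So (v) is the heart of the argument, and I would prove it first at the formal level. Let $R=\C[G_4,G_6]$ and consider the module of formal trace functions $\Phi(a_1,\dots,a_n)$ over the ring $R[\wp_2(z_1-z_2),\wp_3(z_1-z_2)]$. The standard recursion is the Zhu-type identity, obtained by cycling a vertex operator $Y(\mathcal U(q_x)u,x)$ around the trace using the Jacobi identity and $q^{L(0)}$-conjugation. It rewrites any trace with an insertion $u_{-2}a_i$, or more generally $u_{-m}a_i$ with $m\ge2$, as an $R[\wp]$-combination of traces with insertions of lower filtration, plus terms $o(u)$ that again cycle. Combining this with $C_2$-cofiniteness of each $A_i$ and $V$, via Proposition \ref{prop3.2}, shows that finitely many insertions $a_i$ drawn from complements of $C_2(A_i)$ generate the whole module. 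The filtration here is $p(u)=\wt u+s$ from Lemma \ref{lm3.1}, which handles the negative-weight part of $V$. Applying the same recursion to $\omega$ gives $q\partial_q\Phi$, since $L(0)-c/24$ is inserted by the trace of $o(\omega)$ modulo the $G_2$-correction. This expresses $q\partial_q\Phi$ in terms of the generators and yields the system $q\partial_q\boldsymbol\Phi=A\boldsymbol\Phi$, with $A$ holomorphic near $q=0$ for $z$ away from $\Z$. This is the step I expect to be the main obstacle. The delicate points are, first, that the cofiniteness reduction terminates when weights can be negative, which is why I use $C_{s+2}$ and the $p$-filtration. Second, the logarithmic modes and the non-semisimple $L(0)$ on $W_0$ must be controlled; the uniform nilpotency bound $N_1$ of Proposition \ref{prop3.2} does this, keeping the $\log q$ degree bounded by some $K$.

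For (ii) and (iii), once convergence holds, the same reduction applied in the collision variable gives an ODE system in $z=z_1-z_2$. Its coefficients lie in $R[\wp_2,\wp_3]$, so they are meromorphic with poles only on $\Z+\Z\tau$ and regular singular there. Standard ODE continuation along paths avoiding $D_\tau$ then gives a multivalued continuation, with single-valued branches on simply connected $U$. The regular-singular expansion at $z=0$ gives the finite sum of $z^{\lambda_\alpha}(\log z)^kF_{\alpha,k}$ on $0<|z|<\rho_\tau$. Joint holomorphy in $(z,\tau)$, together with the Laurent-coefficient integrals of Section \ref{sec2.4}, gives the locally uniform convergence on subannuli.

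For (iv), I use the associativity of the vertex tensor category (Theorem \ref{thm3.4}, via Huang--Lepowsky--Zhang) at the level of matrix coefficients. On the region $|q_{z_1}|>|q_{z_2}|>|q_{z_1}-q_{z_2}|>0$, the product $\mathcal Y_1\mathcal Y_2$ equals the iterate $\mathcal Y_3(\mathcal Y_4(\cdot,q_{z_1}-q_{z_2})\cdot,q_{z_2})$. To pass to the variable $z_1-z_2$, I would apply the $\mathcal U$-conjugation formula (\ref{con-U}) to absorb the change of coordinate $q_{z_1}-q_{z_2}=q_{z_2}(e^{2\pi i(z_1-z_2)}-1)$. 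Inserting $q^{L(0)}$ and tracing, the identity persists termwise wherever both traces converge absolutely, by (i) and (iii). The uniqueness of analytic continuation then extends it to every common branch.
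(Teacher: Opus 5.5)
Your outline is sound, and for (iv) and (v) it is essentially the paper's argument. The appendix also obtains (v) from the genus-one Ward/recursion identity, the $C_2$-cofinite reduction to finitely many generating traces, and Frobenius theory. It obtains (iv) by applying $\tr_{W_0}(\,\cdot\,q^{L(0)-c/24})$ to the genus-zero associativity of the vertex tensor category. Your use of (\ref{con-U}) to turn the genus-zero variable $q_{z_1}-q_{z_2}$ into $e^{\log(z_1-z_2)}$ is a step the paper leaves implicit.

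The real difference is in (i)--(iii). The paper proves (i) by a direct coefficient estimate: it quotes from Fiordalisi and Huang a polynomial bound $C(1+|\lambda|)^{N_1}$ on the weight-space traces and sums it against $|q|^{\lambda}$. It obtains (ii)--(iii) by asserting continuation along paths and reading off the factors $z^{\lambda_\alpha}(\log z)^k$ from the generalized-weight structure of logarithmic intertwining operators, again citing Fiordalisi for holomorphy of the $F_{\alpha,k}$. You instead let the differential equations do all the work, as Zhu and Huang do. Convergence comes from convergence of formal solutions at a first-kind singular point. Continuation and the collision expansion come from an ODE in $z$ with coefficients in $\C[G_4,G_6,\wp_2,\wp_3]$. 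Your route is longer, but it needs no external growth estimate and it explains why the singular locus is exactly $D_\tau$. The paper's route is short but rests entirely on the quoted bound.

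For $n=2$, your deduction of (i) from (v) needs one more input. The $q$-system alone does not control the sum over the generalized weights of the intermediate module $W_1$. Each $q^\lambda$-coefficient is itself a series in $q_{z_2}/q_{z_1}$. The $q$-recursion determines the higher coefficients from finitely many lowest ones, but imposes no convergence on those lowest ones as series in $q_{z_2}/q_{z_1}$. You therefore need one of two things:
\begin{itemize}
\item the genus-zero convergence of products of two intertwining operators among $C_1$-cofinite modules, applied to these finite traces (this is the same input that underlies Proposition \ref{prop2.15});
\item the $z$-system derived at the formal level before you prove convergence.
\end{itemize}

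There is also a second point. The formal-convergence theorem only gives convergence for small $|q|$. To reach the full product region, use that the entries of $A$, namely the $q$-expansions of $\wp_m(z_1-z_2;\tau)$ and $G_{2k}$, are holomorphic for $|q|<|q_{z_2}/q_{z_1}|$. The single-valued Frobenius coefficients then extend holomorphically to that disk. This also shows that for $n=2$ the second expansion variable is $q\,q_{z_1}/q_{z_2}$, not $q/q_{z_2}$.
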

\proof We will prove this Theorem in Appendix \ref{seca}.     $\hfill\Box$

\begin{remark}
Consequently, genus-one trace functions in the category
\(\mathcal C\) possess the required convergence, analytic
continuation, collision expansion, and modular-parameter expansion
properties used in the later arguments.
\end{remark}

Now we prove several identities for the genus-one
trace functions that will be used in the later rigidity argument.

Let $V$ be a $C_2$-cofinite vertex operator algebra and
$\mathcal C$ the category of grading-restricted generalized
$V$-modules. We assume the analytic properties of genus-one
correlation functions established in Theorem \ref{thm3.7}. Recall the conventions we made in subsection \ref{subsec2.2}.

\begin{proposition}\label{prop3.9}
Let $R,W,X\in\mathcal C$ and $e:R\boxtimes W\longrightarrow V$ a morphism in $\mathcal C$. Define
\[
e_X:
R\boxtimes(W\boxtimes X)\longrightarrow X,\qquad e_X=l_X\circ(e\boxtimes\Id_X)\circ \alpha^{-1}_{R,W,X}.
\]
Let $\mathcal Y_{e_X}=e_X\mathcal {Y}^{R,W\boxtimes X}$
be the corresponding intertwining operator. For $r\in R$, $w\in W$, define
\[
\begin{aligned}
Z_{W,X}(r,w;z,\tau)
=
\tr_X
\Big(
&\mathcal Y_{e_X}(\mathcal {U}(q_z)r,q_z)
\mathcal {Y}^{W,X}(\mathcal {U}(1)w,1)
q^{L(0)-c/24}
\Big).
\end{aligned}
\]
Then, $Z_{W,X}$ converges absolutely in the region $-\operatorname{Im}\tau<\operatorname{Im}z<0,$ and admits analytic continuation away from the collision
divisors $z\in\mathbb Z+\mathbb Z\tau .$ Furthermore, in the collision region $z\rightarrow0$, we have
\[
Z_{W,X}(r,w;z,\tau)
=
F_X
\big(
e\mathcal {Y}^{R,W}(r,e^{\log z})w,\tau
\big),
\]
where the right hand side is interpreted coefficientwise in the
iterate expansion of the intertwining operator.
\end{proposition}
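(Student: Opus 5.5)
The plan is to treat $Z_{W,X}$ as the $n=2$ genus-one trace function of Theorem \ref{thm3.7} and read off every claim from parts (i)--(iv) of that theorem. Take $W_0=W_2=X$, $W_1=W\boxtimes X$, $A_1=R$, $A_2=W$, $\mathcal Y_1=\mathcal Y_{e_X}$ of type $\binom{X}{R\;\,W\boxtimes X}$, $\mathcal Y_2=\mathcal Y^{W,X}$ of type $\binom{W\boxtimes X}{W\;\,X}$, and set $z_1=z$, $z_2=0$. In the multiplicative form $\Phi_{\mathrm{prod}}$ of part (iv) the second insertion sits at $q_{z_2}=1$, with $\mathcal U(q_0)=\mathcal U(1)$, so this is exactly $Z_{W,X}$.

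\textbf{Convergence and continuation.} The ordered region $|q_{z_1}|>|q_{z_2}|>|q|$ becomes $|e^{2\pi i z}|>1>|q|$. This is equivalent to $-\operatorname{Im}\tau<\operatorname{Im}z<0$, so part (i) gives absolute convergence there. Since $z_2=0$ is fixed, the collision divisor $D_\tau$ reduces to $z\in\mathbb Z+\mathbb Z\tau$, and part (ii) gives the analytic continuation away from it.

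\textbf{Collision limit.} Apply part (iv): on the common domain $0<|z|<\rho_\tau$, reached by continuing from the product region along a path avoiding the lattice,
\[
Z_{W,X}=\tr_X\Big(\mathcal Y_3\big(\mathcal U(1)\mathcal Y_4(r,e^{\log z})w,1\big)q^{L(0)-c/24}\Big).
\]
It remains to identify the associativity pair $(\mathcal Y_3,\mathcal Y_4)$ categorically. By construction $\mathcal Y_1\circ(\Id\otimes\mathcal Y_2)=e_X\,\mathcal Y^{R,W\boxtimes X}(\Id\otimes\mathcal Y^{W,X})$. The associativity isomorphism $\alpha^{-1}_{R,W,X}$ in the vertex tensor category converts this product into the iterate $\mathcal Y^{R\boxtimes W,X}(\mathcal Y^{R,W}(\cdot,z_1-z_2)\cdot,z_2)$ followed by $l_X\circ(e\boxtimes\Id_X)$. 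Naturality of the tensor-product intertwining operator then gives
\[
(e\boxtimes \Id_X)\,\mathcal Y^{R\boxtimes W,X}(\cdot,x)=\mathcal Y^{V,X}(e\,\cdot,x),
\qquad
l_X\,\mathcal Y^{V,X}(v,x)=Y_X(v,x).
\]
So one may take $\mathcal Y_4=e\,\mathcal Y^{R,W}$ and $\mathcal Y_3=Y_X$. The trace of $Y_X(\mathcal U(1)v,1)q^{L(0)-c/24}$ keeps only the zero mode $o_X(\mathcal U(1)v)$, since the other modes shift the grading. Hence this trace equals $F_X(v,\tau)$ for $v=e\,\mathcal Y^{R,W}(r,e^{\log z})w$. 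That expression is an infinite combination $\sum z^{\lambda}(\log z)^k v_{\lambda,k}$, so $F_X$ is applied coefficientwise, as stated. Convergence of the resulting sum, and its identification with the Laurent--log expansion of part (iii), follows from the uniqueness statement for such expansions in Section~\ref{sec2.4}.

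\textbf{Main obstacle.} The delicate point is the branch and coordinate bookkeeping. Part (iv) is phrased with the insertion points $q_{z_i}$ and the conjugation of Lemma \ref{conjugate}, whereas the collision variable is $z_1-z_2$ through $e^{\log(z_1-z_2)}$. One has to check that the path from the product region to the punctured disk matches the standard parallel-transport path that defines $\alpha$ at $z=1$. One also has to check that the trace functional, being linear and convergent on each $z^\lambda(\log z)^k$ component, commutes with the infinite iterate expansion; for this I would use the locally uniform convergence from part (iii) together with the joint-holomorphy interchange remark in Section~\ref{sec2.4}.
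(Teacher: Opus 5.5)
Your proposal is correct and follows the paper's proof. You apply Theorem~\ref{thm3.7}(i),(ii) with $z_2=0$ for convergence and continuation. You then use part (iv), together with associativity and the naturality identity $l_X(e\boxtimes\Id_X)\mathcal Y^{R\boxtimes W,X}(u,x)=Y_X(eu,x)$, to reduce the iterate trace to $F_X(e\,\mathcal Y^{R,W}(r,e^{\log z})w,\tau)$, exactly as the paper does.

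One small slip: $|q_z|>1>|q|$ is equivalent to $\operatorname{Im}z<0$ alone, not to $-\operatorname{Im}\tau<\operatorname{Im}z<0$. The lower bound comes from the genuine genus-one condition $|q_{z_1}|>|q_{z_2}|>|q\,q_{z_1}|$, which is what makes the sum over the weights of $X$ converge. Since the strip lies inside the half-plane, your appeal to part (i) as literally stated still covers the claimed region.
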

\proof The convergence and analytic continuation follow directly from the
genus-one convergence theorem for products and iterates of
intertwining operators. Indeed, $\mathcal Y_{e_X}$ and $Y^{W,X}$ are two intertwining operators whose
source and target objects belong to $\mathcal C$. Therefore Theorem
\ref{thm3.7} applies.

When $z$ approaches $0$, the two insertion points collide, using Theorem \ref{thm3.7} (iv), we have
\begin{eqnarray*}
&&
Z_{W,X}(r,w;z,\tau)
\\
&=&
\operatorname{tr}_{X}
\left(
\mathcal Y_{e_X}
\left(
\mathcal U(q_z)r,q_z
\right)
\mathcal Y^{W,X}
\left(
\mathcal U(1)w,1
\right)
q^{L_X(0)-c/24}
\right)
\\
&=&
\operatorname{tr}_{X}
\left(
\mathcal Y_{ e_X}
\left(
\mathcal U(1)
\mathcal Y^{R,W}
\left(
r,e^{\log z}
\right)w,
1
\right)
q^{L_X(0)-c/24}
\right)
\\
&=&
\operatorname{tr}_{X}
\left( e_X
\mathcal Y^{R\boxtimes W,X}
\left(
\mathcal U(1)
\mathcal Y^{R,W}
\left(
r,e^{\log z}
\right)w,
1
\right)
q^{L_X(0)-c/24}
\right)
\\
&=&
\operatorname{tr}_{X}
\left(
Y_X
\left(
\mathcal U(1)\,
e\,
\mathcal Y^{R,W}
\left(
r,e^{\log z}
\right)w,
1
\right)
q^{L_X(0)-c/24}
\right).
\end{eqnarray*}
This completes the proof.           $\hfill\Box$

\begin{proposition}\label{prop3.10}
Define
\begin{eqnarray*}
&&d:(W\boxtimes R)\boxtimes W\longrightarrow W,
\qquad
d=
r_W\circ
(\Id_W\boxtimes e)\circ
\alpha_{W,R,W},\\
&&d_X=(d\boxtimes\Id_X)\circ \alpha^{-1}_{W\boxtimes R,W,X},\qquad \mathcal Y_{d_X}=d_X\mathcal {Y}^{W\boxtimes R,W\boxtimes X}.
\end{eqnarray*}
Then the analytic continuation of the genus-one trace after moving
the first insertion around the $\tau$-cycle satisfies

\[
\begin{aligned}
&Z_{W,X}(r,w;-z-\tau,\tau)
=
\tr_{W\boxtimes X}
\left(
\mathcal Y_{d_X}
\big(
\mathcal {U}(1)\mathcal {Y}^{W,R}(w,e^{\log z})r,1
\big)
q^{L(0)-c/24}
\right).
\end{aligned}
\]
In particular, moving an insertion around the $\tau$-cycle changes
the trace space from $X$ to $W\boxtimes X$.
\end{proposition}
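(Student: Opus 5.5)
Transporting the insertion of $r$ around the $\tau$-cycle means replacing $q_z$ by $q_{-z-\tau}=q_z^{-1}q^{-1}$. The plan is to realize this on the level of operators and then regroup using associativity and cyclicity of the trace.

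\textbf{Step 1: move $q$ past the first insertion.} Start from the product form of $Z_{W,X}$ in the region of Proposition \ref{prop3.9}. Use the conjugation identity (\ref{con-q}) of Lemma \ref{conjugate}, written as $q^{L(0)}\mathcal Y(a,x)q^{-L(0)}=\mathcal Y(q^{L(0)}a,qx)$. This moves $q^{L(0)-c/24}$ cyclically past the insertion of $\mathcal U(q_{z'})r$, with $z'=-z-\tau$. The effect is to multiply the insertion point by $q$, and on the state side $\mathcal U(q_{z'})$ becomes $\mathcal U(q_{-z})$. Cyclicity of the trace then places the $r$-insertion at the far right of the remaining product of operators. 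This step is legitimate only along the analytic continuation: Theorem \ref{thm3.7}(ii) lets us continue $Z_{W,X}$ along a path that crosses $|q_{z}|=|q|$, and it identifies the continued branch with the reordered trace on a new convergence region.

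\textbf{Step 2: convert the reordered trace into a trace on $W\boxtimes X$.} After Step 1, the operator product is
\[
\mathcal Y^{W,X}(\mathcal U(1)w,1)\,q^{L(0)-c/24}\,\mathcal Y_{e_X}(\mathcal U(q_{-z})r,q_{-z}).
\]
Here $\mathcal Y_{e_X}$ maps $W\boxtimes X\to X$, so the trace is naturally taken over $W\boxtimes X$, with $\mathcal Y^{W,X}$ mapping $X\to W\boxtimes X$. Next, factor $e_X$ through the associativity isomorphism. Using the definition of $e_X$ and the associativity of intertwining operators encoded in $\alpha$ (Theorem \ref{thm3.7}(iv)), the composite
\[
\mathcal Y^{W,X}(w,1)\,\mathcal Y_{e_X}(r,x)
\]
on $W\boxtimes X$ should be rewritten as an iterate $\mathcal Y_{d_X}(\mathcal Y^{W,R}(w,\cdot)r,\cdot)$. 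The morphism $d=r_W(\Id_W\boxtimes e)\alpha_{W,R,W}$ is exactly what appears when $w$ is brought next to $r$: the pair $(w,r)$ is fused into $W\boxtimes R$, then $\alpha_{W\boxtimes R,W,X}^{-1}$ is applied, then $e$ contracts $R$ against the $W$ factor of $W\boxtimes X$. This uses the pentagon/triangle coherence from the vertex tensor structure (Theorem \ref{thm3.4}). Finally, use the geometric conjugation (\ref{con-U}) to express the collision variable in the form $e^{\log z}$. The sign flip $-z$ is absorbed because $w$ is now the first argument, with expansion variable $1-q_{-z}\sim$ the geometric coordinate $z$.

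\textbf{Main obstacle.} The hard part will be the branch bookkeeping. We must check that the path implementing $z\mapsto -z-\tau$, combined with the standard parallel-transport paths fixing $\boxtimes=\boxtimes_{P(1)}$, produces exactly $e^{\log z}$ with the chosen branch and no extra braiding or monodromy factor $\mathsf M^{W,R}$. I would first verify this in the region $0<|z|\ll 1$ with $z$ near the negative imaginary axis, where both sides converge. There I would compare the leading terms of the collision expansions of Theorem \ref{thm3.7}(iii) using Lemma \ref{conjugate-L(-1)}. Uniqueness of analytic continuation then extends the identity to every common branch.
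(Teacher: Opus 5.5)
Your proposal is correct and follows essentially the same route as the paper: $L(0)$-conjugation by $q$, cyclicity of the trace to pass from $X$ to $W\boxtimes X$, and genus-zero associativity to identify the product with $\mathcal Y_{d_X}$ on the iterate. Your pentagon/triangle bookkeeping makes explicit what the paper only cites.

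Two small fixes. First, after $q^{L(0)}$ is moved past the $r$-insertion it belongs to the right of $\mathcal Y_{e_X}(\mathcal U(q_{-z})r,q_{-z})$, and you still need a $q_z^{L(0)}$-rescaling (as the paper does, putting $w$ at $q_z$ and $r$ at $1$) to reach insertion point $1$. Second, the extra analytic continuation and the feared $\mathsf M^{W,R}$ factor are unnecessary: $z\mapsto -z-\tau$ preserves the strip $-\operatorname{Im}\tau<\operatorname{Im}z<0$, and $w$ already stands to the left of $r$.
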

\proof Using $L(0)$-conjugation identity $q^{L(0)}\mathcal Y(u,z)q^{-L(0)}=\mathcal Y(q^{L(0)}u,qz)$, properties of trace $\tr_U(AB)=\tr_{U'}(BA)$, and genus-zero associativity identifies of  composition of intertwining operators with the morphism $\mathcal Y_{d_X}$, we have
\begin{equation*}
\begin{aligned}
&
Z_{W,X}(r,w;-z-\tau,\tau)
\\
&=
\tr_{X}
\left(
\mathcal Y_{e_X}
\left(
\mathcal U(q_{-z-\tau})r,
q_{-z-\tau}
\right)
\mathcal Y^{W,X}
\left(
\mathcal U(1)w,1
\right)
q^{L_X(0)-c/24}
\right)
\\
&=
\tr_{X}
\left(
(q^{-1}q_{-z})^{L_X(0)}
\mathcal Y_{e_X}
\left(
\mathcal U(1)r,1
\right)
(q^{-1}q_{-z})^{-L_{W\boxtimes X}(0)}
\mathcal Y^{W,X}
\left(
\mathcal U(1)w,1
\right)
q^{L_X(0)-c/24}
\right)
\\
&=
\tr_{W\boxtimes X}
\left(
(q^{-1}q_{-z})^{-L_{W\boxtimes X}(0)}
\mathcal Y^{W,X}
\left(
\mathcal U(1)w,1
\right)
q^{L_X(0)-c/24}
(q^{-1}q_{-z})^{L_X(0)}
\mathcal Y_{e_X}
\left(
\mathcal U(1)r,1
\right)
\right)
\\
&=
\tr_{W\boxtimes X}
\left(
q^{L_{W\boxtimes X}(0)}
q_z^{L_{W\boxtimes X}(0)}
\mathcal Y^{W,X}
\left(
\mathcal U(1)w,1
\right)
q_z^{-L_X(0)}
\mathcal Y_{e_X}
\left(
\mathcal U(1)r,1
\right)
q^{-c/24}
\right)
\\
&=
\tr_{W\boxtimes X}
\left(
q_z^{L_{W\boxtimes X}(0)}
\mathcal Y^{W,X}
\left(
\mathcal U(1)w,1
\right)
q_z^{-L_X(0)}
\mathcal Y_{e_X}
\left(
\mathcal U(1)r,1
\right)
q^{L_{W\boxtimes X}(0)-c/24}
\right)
\\
&=
\tr_{W\boxtimes X}
\left(
\mathcal Y^{W,X}
\left(
\mathcal U(q_z)w,q_z
\right)
\mathcal Y_{e_X}
\left(
\mathcal U(1)r,1
\right)
q^{L_{W\boxtimes X}(0)-c/24}
\right)
\\
&=
\tr_{W\boxtimes X}
\left(
d_X\,
\mathcal Y^{W\boxtimes R,W\boxtimes X}
\left(
\mathcal U(1)
\mathcal Y^{W,R}
\left(
w,e^{\log z}
\right)r,
1
\right)
q^{L_{W\boxtimes X}(0)-c/24}
\right)
\\
&=
\tr_{W\boxtimes X}
\left(
\mathcal Y_{d_X}
\left(
\mathcal U(1)
\mathcal Y^{W,R}
\left(
w,e^{\log z}
\right)r,
1
\right)
q^{L_{W\boxtimes X}(0)-c/24}
\right).
\end{aligned}
\end{equation*}
Therefore the analytically continued trace is precisely the stated
trace on $W\boxtimes X$.             $\hfill\Box$

\begin{remark}\label{re3.11}
Let $f_1:W_1\boxtimes M\longrightarrow X,
f_2:W_2\boxtimes X\longrightarrow M$
be morphisms in the module category.
We consider the genus-one two-point trace function with geometric
coordinate modifications:
\begin{eqnarray*}
&&Z(z,\tau)\\
&=&
\tr_{X}
\Big(
(f_1\circ
(\Id_{W_1}\boxtimes f_2)
\circ \alpha_{W_1,W_2,X})
\mathcal Y^{W_1,W_2}
\left(
\mathcal U(q_z)w_1,q_z
\right)
\mathcal Y^{W_2,X}
\left(
\mathcal U(1)w_2,1
\right)
q^{L(0)-c/24}
\Big).
\end{eqnarray*}
Consider the integer translation $z\longmapsto z-n,n\in\mathbb Z .$
This operation should be understood as the analytic continuation of
the multivalued genus-one correlation function along an integral
cycle of the torus.

Since $q_{z-n}=e^{2\pi i(z-n)}=q_z,\log q_{z-n}=\log q_z-2\pi in .$
Therefore, by the $L(0)$-conjugation property of intertwining
operators, $w_2$ becomes the factor $e^{2\pi inL(0)}w_2 .$
Furthermore, the analytic continuation of the genus-one correlation functions along the closed cycle produces a monodromy operator $\mathsf M$ on $W_2\boxtimes X,$ which is $\mathsf M^{W_2,X}=c_{X,W_2}c_{W_2,X}$ by \cite{HL4}. After $n$ translations the monodromy becomes $(\mathsf M^{W_2,X})^{\,n}.$

Hence the evaluation morphism is replaced by
\[
f^{(n)}
=
f_1
\circ
(\Id_{W_1}\boxtimes f_2)
\circ
(\Id_{W_1}\boxtimes (\mathsf M^{W_2,X})^{\,n})
\circ
a_{W_1,W_2,X}.
\]
Consequently, the analytically continued trace is
\[
\begin{aligned}
\tr_{X}
\Big(
(f^{(n)}
\mathcal Y^{W_1\boxtimes W_2,X})
\left(
\mathcal U(1)
\mathcal Y^{W_1,W_2}
(w_1,e^{\log z})
e^{2\pi inL(0)}w_2,
z
\right)
q^{L(0)-c/24}
\Big).
\end{aligned}
\]

The above integer translation formula is independent of the collision
expansion.
Indeed, when the two insertion points collide,
Theorem~\ref{thm3.7} (iv) is applied to the same genus-one two-point
trace function. It identifies the product expansion and the iterate
expansion of the correlation functions.
\end{remark}

\begin{lemma}\label{le3.12}
For every object $X\in\mathcal C$ and every $v\in V$, we have $F_X(L(-1)v,\tau)=0 $
\end{lemma}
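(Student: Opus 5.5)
The plan is to show that the zero-mode operator itself vanishes, so that the trace is zero term by term; no cyclicity of the trace or modular input should be needed. The first step is to move $L(-1)$ past the coordinate-change operator. By Lemma \ref{conjugate-L(-1)} we have $\mathcal U(1)L(-1)v=2\pi i\,(L(-1)+L(0))\,\mathcal U(1)v$. Putting $u=\mathcal U(1)v\in V$, this gives
\[
F_X(L(-1)v,\tau)=2\pi i\,\tr_X\, o_X\bigl((L(-1)+L(0))u\bigr)\,q^{L(0)-c/24}.
\]
It therefore suffices to prove that $o_X\bigl((L(-1)+L(0))u\bigr)=0$ as an operator on $X$ for every $u\in V$.

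Since $L(0)$ acts semisimply on $V$ with finite-dimensional homogeneous spaces, $u$ is a finite sum of homogeneous vectors, and $o_X$ is linear by definition. So I will reduce to a homogeneous $u$ of weight $n=\wt u$. Then $L(-1)u$ is homogeneous of weight $n+1$, and by the definition of $o_X$ together with the $L(-1)$-derivative property $(L(-1)u)_m=-m\,u_{m-1}$, we get
\[
o_X(L(-1)u)=(L(-1)u)_{n}=-n\,u_{n-1}.
\]
On the other hand, $o_X(L(0)u)=n\,u_{n-1}$. The sum is zero, which is exactly the required operator identity. The same computation works for weight $n\le 0$, since the lower-truncated grading plays no role in this identity.

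The remaining point concerns the meaning of the trace. When $X$ is only a grading-restricted generalized module, $q^{L(0)}$ is not semisimple, and the trace is understood as the convergent sum over the finite-dimensional generalized weight spaces $X_{[\lambda]}$ given by Theorem \ref{thm3.7}. Since the operator $o_X\bigl((L(-1)+L(0))u\bigr)$ vanishes identically, each finite-dimensional contribution $\tr_{X_{[\lambda]}}(0\cdot q^{L(0)-c/24})$ is zero, so the whole series vanishes for every $\tau\in\Hh$.

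The only real care, which I expect to be the main (mild) obstacle, is bookkeeping. First, $\mathcal U(1)v$ must be decomposed into homogeneous components before applying the definition of $o_X$; the exponential in $\mathcal U(1)$ has finite support, so $u$ is a finite sum. Second, one must use the correct weight $\wt L(-1)u=\wt u+1$ when computing $o_X(L(-1)u)$.
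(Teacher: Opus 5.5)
Your proposal is correct and is essentially the paper's proof: both use Lemma \ref{conjugate-L(-1)} to move $L(-1)$ past $\mathcal U(1)$, and then observe that $o\bigl((L(-1)+L(0))u\bigr)=-n\,u_{n-1}+n\,u_{n-1}=0$ for homogeneous $u$ of weight $n$, so the trace vanishes term by term. Your version is slightly more careful than the paper's, since you explicitly decompose $\mathcal U(1)v$ into homogeneous components before applying $o_X$ and you note why the trace is meaningful on generalized modules.
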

\proof Since the geometric coordinate change satisfies
$\mathcal U(1)L(-1)=2\pi i(L(-1)+L(0))\mathcal U(1),$
 let $a\in V$ be homogeneous with $\wt a=k,$ we have $o((L(-1)+L(0))a)=0 .$
Hence

\[
F_X(L(-1)a,\tau)=0 .
\]
By linearity, we have $F_X(L(-1)v,\tau)=0$ for all $v\in V$.             $\hfill\Box$

\begin{remark}
The results above convert the analytic genus-one theory into explicit
trace manipulation rules. They are the tools used to compare
the modular $S$-transformation of one-point functions with traces obtained
from tensor products and evaluation morphisms.
\end{remark}

\subsection{Comparison lemma}\label{sec4.2}
Now let $V$ be a simple lower truncated $C_2$-cofinite vertex operator algebra, $\mathcal C$ the category of grading-restricted generalized $V$-modules. Let $D$ be the contragredient functor on $\mathcal C$ and the twist $\theta_X=e^{2\pi iL(0)}$. Let $W^0=V,W^1,...,W^r$ be all the inequivalent simple $V$-modules and $F_{W^i}$ the modified geometric trace functions. Suppose there exist constants $s_j\in\C$ such that
\begin{eqnarray}\label{id5.6}
SF_V=\sum_{j=0}^rs_jF_{W^j}.
\end{eqnarray}

Fix $W,R\in\mathcal C$ and a morphism $e:R\boxtimes W\rightarrow V.$ Let $Q=W\boxtimes R,$ and define
\begin{eqnarray}
\alpha&=&\Id_W\boxtimes e:Q\boxtimes W\rightarrow W,\label{id5.7}\\
t&=&ec_{W,R}(\theta_W\boxtimes\Id_R):Q\rightarrow V,\label{id5.8}\\
\beta_X&=&(e\boxtimes\Id_X)(\Id_R\boxtimes \mathsf M^{W,X})(c_{W,R}(\theta_W\boxtimes\Id_R)\boxtimes\Id_X):Q\boxtimes X\rightarrow X,\label{id5.9}
\end{eqnarray}
where $\mathsf M^{W,X}$ is the monodromy $c_{X,W}c_{W,X}$. For a morphism $b:P\boxtimes X\rightarrow X$, denote by $\mathcal Y_b=b\mathcal Y^{P,X}$ the corresponding intertwining operator, for $p\in P$, set
\begin{eqnarray}\label{G}
G_{b,X}(p,\tau)=\tr_X\mathcal Y_b(\mathcal U(1)p,1)q^{L(0)-\frac{c}{24}},
\end{eqnarray}
where $q=e^{2\pi i\tau}$. By Theorem \ref{thm3.7}, $G$ is convergent. Furthermore, $$SG(p,\tau)=G((-\tau^{-1})^{L(0)}p,-\tau^{-1}).$$

\begin{lemma}\label{Lm4.8}
With above notations, we have the following statements:

(i) For every $b\in Q$,
\begin{eqnarray}\label{ID4.5}
(SG_{\alpha,W})(b,\tau)
=
\sum_{j=0}^{r}
s_jG_{\beta_{W^j},W^j}(b,\tau).
\end{eqnarray}

(ii) Suppose that $W$ and $R$ are simple. Let $\theta_W=\zeta_W\Id_W,\theta_R=\zeta_R\Id_R$ with $\zeta_W,\zeta_R\in \C,$
and $\mu=\frac{\zeta_R}{\zeta_W}.$
For $X_j=W\boxtimes W^j$, define
\[
\alpha_j=
\alpha\boxtimes\Id_{W^j}:
Q\boxtimes X_j\longrightarrow X_j.
\]
Then, for every $b\in Q$, we have
\begin{eqnarray}\label{ID4.6}
\mu(SF_V)(t(b),\tau)
=
\sum_{j=0}^{r}s_jG_{\alpha_j,X_j}(b,\tau).
\end{eqnarray}
\end{lemma}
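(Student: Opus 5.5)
The plan is to compute a single genus-one two-point function in two ways. In one computation it collides to a one-point function of $SF_V$; in the other it is first $S$-transformed, after which the modular hypothesis (\ref{id5.6}) is applied to an inner one-point function. Concretely, fix $r\in R$, $w\in W$ and consider $Z_{W,X}(r,w;z,\tau)$ from Proposition \ref{prop3.9} with $X=V$. By Proposition \ref{prop3.9}, as $z\to0$ this equals $F_V(e\mathcal Y^{R,W}(r,e^{\log z})w,\tau)$ coefficientwise. Its $\tau$-cycle translate, computed by Proposition \ref{prop3.10}, is a trace over $W\boxtimes V\simeq W$ of the intertwining operator attached to $d_V$, which is $\alpha$ up to unit isomorphisms, inserted at $\mathcal U(1)\mathcal Y^{W,R}(w,e^{\log z})r$. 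So translating around the $\tau$-cycle converts the $F_V$-type function (vacuum trace over $V$, insertion in $R\boxtimes W$) into a $G_{\alpha,W}$-type function (trace over $W$, insertion in $Q=W\boxtimes R$). The $S$-transformation exchanges the $1$-cycle and the $\tau$-cycle: under $(z,\tau)\mapsto(z/\tau,-1/\tau)$ with the weight factor $(-\tau^{-1})^{L(0)}$, the $\tau$-cycle translate becomes the $1$-cycle translate.

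For (i), I will apply $S$ to the collided expression $F_V(e\mathcal Y^{R,W}(r,x)w,\tau)$, componentwise in the collision exponents and logs using the uniqueness statement of subsection \ref{sec2.4}. The hypothesis (\ref{id5.6}) then replaces it by $\sum_j s_jF_{W^j}(e\mathcal Y^{R,W}(r,x)w,\tau)$. Each term is the collision limit of $Z_{W,W^j}$. Reversing the cycle exchange turns each of these into a trace over $W^j$ with the insertion again in $Q$, obtained by braiding $w$ past $r$. The steps are as follows. Rewrite $\mathcal Y^{R,W}(r,x)w$ via skew-symmetry as $c_{W,R}$ applied to $\mathcal Y^{W,R}(w,e^{\pi i}x)r$; the $e^{\pi i}$ and the $L(0)$ prefactors produce exactly $c_{W,R}(\theta_W\boxtimes\Id_R)$ in the normalization of Remark \ref{re3.11}. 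Moving $w$ once around $X=W^j$ along the integer cycle produces $\mathsf M^{W,X}$, by Remark \ref{re3.11} with $n=1$. Assembling these gives precisely $\beta_{W^j}$ in (\ref{id5.9}). Equating the two sides and extracting the coefficient of $\mathcal Y^{W,R}(w,x)r$ gives (\ref{ID4.5}) for all $b\in Q$, because the components of $\mathcal Y^{W,R}(w,x)r$ span $Q$, the tensor product being universal and generated by these coefficients.

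For (ii), assume $W$ and $R$ are simple, so the twists are the scalars $\zeta_W$ and $\zeta_R$. First, the $\mu$-factor: $t=ec_{W,R}(\theta_W\boxtimes\Id_R)$, and by balancing, the scalar picked up when skew-symmetry is applied to $\mathcal Y^{W,R}$ versus $\mathcal Y^{R,W}$ is $\zeta_R/\zeta_W$ relative to the twist already included in $t$. This yields $F_V(t(b),\tau)$ times $\mu$ from the collided form of Proposition \ref{prop3.9}. Second, I apply $S$ and the hypothesis (\ref{id5.6}) directly to the one-point function $F_V(t(b),\cdot)$ this time, without the preliminary translation. Each resulting $F_{W^j}$ term is then reopened as a two-point function on $W^j$ and translated along the $\tau$-cycle via Proposition \ref{prop3.10} with $X=W^j$. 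That translation changes the trace space to $W\boxtimes W^j=X_j$ and the morphism to $d_{W^j}=\alpha\boxtimes\Id_{W^j}=\alpha_j$. This gives (\ref{ID4.6}).

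The main obstacle I expect is the bookkeeping of branches, which is needed to make the identification of the twist and monodromy factors rigorous. Specifically, one must check that the analytic continuation paths implicit in the $S$-transformation of a two-point function (the insertion variable rescaled by $\tau$) match the paths used in Propositions \ref{prop3.9}, \ref{prop3.10} and Remark \ref{re3.11}. It is exactly this matching that determines whether $c_{W,R}(\theta_W\boxtimes\Id_R)$ and $\mathsf M^{W,X}$, rather than their inverses, appear. I would first fix the branch $\log(-1)=\pi i$ and verify the claim on the lowest-weight components, then propagate to all components by uniqueness of the expansions in Theorem \ref{thm3.7}(iii).
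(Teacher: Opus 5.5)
Your part (i) follows the paper's route. You promote (\ref{id5.6}) to the two-point identity (\ref{ID4.7}) through the collision expansion, then continue both sides along matching cycles. On the $S$-transformed $V$-torus this is the $\tau'$-cycle, which gives $SG_{\alpha,W}$ by Proposition \ref{prop3.10}. On each $W^j$-torus it is the integer cycle, which gives $G_{\beta_{W^j},W^j}$ by Remark \ref{re3.11} and braiding. The one routine omission is the factor $e^{-zL_Q(-1)}$ left by skew-symmetry, which you remove using $G_{\beta_X,X}(L_Q(-1)p,\tau)=0$.

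Part (ii) has a genuine gap. You apply (\ref{id5.6}) to $F_V(t(b),\cdot)$ ``without the preliminary translation'', then translate each resulting $W^j$-term along the $\tau$-cycle and replace it by $G_{\alpha_j,X_j}$. But Proposition \ref{prop3.10} only computes the analytic continuation of $Z_{W,W^j}$ to the branch reached after going around the $\tau$-cycle. It does not say this continuation equals the original collided expansion $F_{W^j}(e\mathcal Y^{R,W}(r,\cdot)w,\tau)$.

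Translating only the right-hand side therefore asserts, term by term, $\mu F_{W^j}(t(b),\tau)=G_{\alpha_j,X_j}(b,\tau)$, and this is false. Take a rational theory with $R=W^*$ and $b=i(\mathbf 1)$: the left side is a multiple of $\chi_{W^j}$, while the right side is $\chi_{W\boxtimes W^j}$. For $W=\sigma$ and $j=0$ in the Ising model these are not proportional. Identity (\ref{ID4.6}) holds only after summing over $j$, because it is the $S$-transformation that links the two cycles.

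The fix is to treat (ii) as the mirror of (i): continue both sides of (\ref{ID4.7}) along $z\mapsto -z-\tau$.
\begin{itemize}
\item On the right, this gives $\sum_j s_jG_{\alpha_j,X_j}$ by Proposition \ref{prop3.10}.
\item On the left, the insertion moves to $z/\tau+1$, an integer-cycle translation of the $-1/\tau$ torus. Remark \ref{re3.11} applies there with $\mathsf M^{W,V}=\Id$, but it still replaces $w$ by $\theta_W^{-1}w$ and shifts the branch of $\log z$.
\item The resulting $\zeta_W^{-1}$, combined with the balancing identity $t\,c_{R,W}(\theta_R\boxtimes\Id_W)=e$, skew-symmetry, and Lemma \ref{le3.12} (to drop $e^{-zL(-1)}$), yields $\mu(SF_V)(t(b),\tau)$. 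You then extract coefficients as in (i).
\end{itemize}
In particular, $\mu$ does not come from skew-symmetry and balancing alone, as your ``$\mu$-factor'' step claims. Its $\zeta_W^{-1}$ part is exactly the contribution of the $V$-side translation you omitted.
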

\proof Recall $q_z=e^{2\pi iz},$ using Proposition \ref{prop3.9}, consider the two-point function
\begin{eqnarray*}
Z_{W,X}(r,w;z,\tau)
&=&\tr_X\Bigl(\mathcal Y_{e_X}^{R,W\boxtimes X}(\mathcal U(q_z)r,q_z)\mathcal Y^{W,X}(\mathcal U(1)w,1)q^{L_X(0)-c/24}\Bigr)\\
&=&F_X\Bigl(e\mathcal Y^{R,W}(r,e^{\log z})w,\tau\Bigr).
\end{eqnarray*}
Using $L(0)$-conjugation and identity (\ref{id5.6}), we obtain
\begin{eqnarray}\label{ID4.7}
Z_{W,V}
\left(
(-\tau^{-1})^{L_R(0)}r,
(-\tau^{-1})^{L_W(0)}w;
-\frac{z}{\tau},
-\frac{1}{\tau}
\right)
=
\sum_j
s_j
Z_{W,W^j}(r,w;z,\tau).
\end{eqnarray}

(i) Fix $\tau$, suppose $z\rightarrow 0$ in the region $z\in P_\tau=\{a-1+b\tau\mid 0<a,b<1\},$ and consider
\begin{eqnarray}\label{ID4.8}
Z_{W,V}
\left(
(-\tau^{-1})^{L_R(0)}r,
(-\tau^{-1})^{L_W(0)}w;
\frac{z+1}{\tau},
-\frac{1}{\tau}
\right).
\end{eqnarray}
On one hand, using Proposition \ref{prop3.10}, the logarithmic branch
$\log\left(-\frac{z}{\tau}\right)-\log\left(-\frac{1}{\tau}\right)=\log z-2\pi i$ and $W\boxtimes V\cong W$, $L(0)$-conjugation, and $q'=e^{\frac{-2\pi i}{\tau}}$, $S$-transformation on $G$, we have
\begin{eqnarray}
&&
Z_{W,V}
\left(
(-\tau^{-1})^{L_R(0)}r,
(-\tau^{-1})^{L_W(0)}w;
\frac{z+1}{\tau},
-\frac{1}{\tau}
\right)
\nonumber\\
&=&
\tr_{W\boxtimes V}
\left(
\mathcal Y_\alpha
\left(
\mathcal U(1)
\mathcal Y^{W,R}
\left(
(-\tau^{-1})^{L_W(0)}w,
e^{\log(-z/\tau)}
\right)
(-\tau^{-1})^{L_R(0)}r,
1
\right)
q'^{\,L(0)-c/24}
\right)
\nonumber\\
&=&
G_{\alpha,W}
\left(
\mathcal Y^{W,R}
\left(
(-\tau^{-1})^{L_W(0)}w,
e^{\log(-z/\tau)}
\right)
(-\tau^{-1})^{L_R(0)}r,
-\frac{1}{\tau}
\right)
\nonumber\\
&=&
G_{\alpha,W}
\left(
\mathcal Y^{W,R}
\left(
(-\tau^{-1})^{L_W(0)}w,
(-\tau^{-1})\,e^{\log z-2\pi i}
\right)
(-\tau^{-1})^{L_R(0)}r,
-\frac{1}{\tau}
\right)
\nonumber\\
&=&
G_{\alpha,W}
\left(
(-\tau^{-1})^{L_Q(0)}
\mathcal Y^{W,R}
\left(
w,
e^{\log z-2\pi i}
\right)r,
-\frac{1}{\tau}
\right)\nonumber\\
&=&(SG_{\alpha,W})
\left(
\mathcal Y^{W,R}
(w,e^{\log z-2\pi i})r,
\tau
\right).\label{ID4.9}
\end{eqnarray}
On the other hand, define $\gamma=c_{W,R}(\theta_W\boxtimes\Id_R):
Q\longrightarrow R\boxtimes W,$ and $f_X^{(1)}$ by Remark \ref{re3.11} with $f_1=e,f_2=\Id_{W\boxtimes X}$ since $V\boxtimes X\cong X.$ Then, we have
\begin{eqnarray*}
f_X^{(1)}(\gamma\boxtimes\Id_X)=
(e\boxtimes\Id_X)
\bigl(\Id_R
\boxtimes
(c_{X,W}c_{W,X})
\bigr)
\bigl(
c_{W,R}
(\theta_W\boxtimes\Id_R)
\boxtimes\Id_X
\bigr)=
\beta_X.
\end{eqnarray*}
Using braiding isomorphism, Remark \ref{re3.11}, we obtain
\begin{eqnarray}\label{ID4.10}
&&
Z_{W,X}(r,w;-z-1,\tau)
\nonumber\\
&=&
\tr_{X}
\left[
\left(
f_X^{(1)}
\mathcal Y^{R\boxtimes W,X}
\right)
\left(
\mathcal U(1)
\mathcal Y^{R,W}
\left(
r,e^{\log(-z)}
\right)
\theta_W w,
1
\right)
q^{L_X(0)-c/24}
\right]
\nonumber\\
&=&
\tr_{X}
\left[
\left(
f_X^{(1)}
\mathcal Y^{R\boxtimes W,X}
\right)
\left(
\mathcal U(1)
\gamma
e^{-zL_Q(-1)}
\mathcal Y^{W,R}
\left(
w,e^{\log z-2\pi i}
\right)r,
1
\right)
q^{L_X(0)-c/24}
\right]
\nonumber\\
&=&
\tr_{X}
\left[
\left(
f_X^{(1)}
(\gamma\boxtimes\Id_X)
\mathcal Y^{Q,X}
\right)
\left(
\mathcal U(1)
e^{-zL_Q(-1)}
\mathcal Y^{W,R}
\left(
w,e^{\log z-2\pi i}
\right)r,
1
\right)
q^{L_X(0)-c/24}
\right]
\nonumber\\
&=&
\tr_{X}
\left[
\mathcal Y_{\beta_X}
\left(
\mathcal U(1)
e^{-zL_Q(-1)}
\mathcal Y^{W,R}
\left(
w,e^{\log z-2\pi i}
\right)r,
1
\right)
q^{L_X(0)-c/24}
\right]
\nonumber\\
&=&
G_{\beta_X,X}
\left(
e^{-zL_Q(-1)}\mathcal Y^{W,R}
\left(
w,e^{\log z-2\pi i}
\right)r,
\tau
\right).
\end{eqnarray}
For any $p\in Q$, since $\mathcal U(1)L_Q(-1)=2\pi i(L_Q(-1)+L_Q(0))\mathcal U(1)$, we have
\begin{eqnarray*}
\tr_{X}
\mathcal Y_{\beta_X}
\left(
\mathcal U(1)L_Q(-1)p,1
\right)
q^{L_X(0)-c/24}
=
2\pi i\,
\tr_{X}
\left[
L_X(0),
\mathcal Y_{\beta_X}
\left(
\mathcal U(1)p,1
\right)
q^{L_X(0)-c/24}
\right]
=
0.
\end{eqnarray*}
Thus, $G_{\beta_X,X}\left(L_Q(-1)^n p,\tau\right)=0$ for every $n\geq 1$.
Consequently, we have
\begin{eqnarray*}
G_{\beta_X,X}
\left(
e^{-zL_Q(-1)}p,\tau
\right)
=
\sum_{n\geq 0}
\frac{(-z)^n}{n!}
G_{\beta_X,X}
\left(
L_Q(-1)^n p,\tau
\right)
=
G_{\beta_X,X}(p,\tau).
\end{eqnarray*}
Putting those together, and let $p=\mathcal Y^{W,R}\left(w,e^{\log z-2\pi i}\right)r$, we have
\begin{eqnarray}\label{ID4.11}
Z_{W,X}(r,w;-z-1,\tau)
=
G_{\beta_X,X}
\left(
\mathcal Y^{W,R}
\left(
w,e^{\log z-2\pi i}
\right)r,
\tau
\right).
\end{eqnarray}
Thus, using identities (\ref{ID4.7}), (\ref{ID4.10}), (\ref{ID4.11}), we get
\begin{eqnarray}\label{ID4.12}
&&
Z_{W,V}
\left(
(-\tau^{-1})^{L_R(0)}r,
(-\tau^{-1})^{L_W(0)}w;
\frac{z+1}{\tau},
-\frac{1}{\tau}
\right)\nonumber\\
&=&
\sum_j s_j
G_{\beta_{W^j},W^j}
\left(
\mathcal Y^{W,R}
\left(
w,e^{\log z-2\pi i}
\right)r,
\tau
\right).
\end{eqnarray}
With identities (\ref{ID4.9}) and (\ref{ID4.12}), we eventually get
\begin{eqnarray*}
&&
(SG_{\alpha,W})
\left(
\mathcal Y^{W,R}
\left(
w,e^{\log z-2\pi i}
\right)r,
\tau
\right)
=
\sum_j s_j
G_{\beta_{W^j},W^j}
\left(
\mathcal Y^{W,R}
\left(
w,e^{\log z-2\pi i}
\right)r,
\tau
\right).
\end{eqnarray*}
Continuing once around $z=0$ replaces
$\log z-2\pi i$ by $\log z$ on both sides.
By coefficient uniqueness and the fact that the coefficients of
$\mathcal Y^{W,R}(w,z)r$ span
$Q=W\boxtimes R$, it follows that identity (\ref{ID4.5}) holds for all $b\in Q.$

(ii) If $W$ and $R$ are simple, there exist $\zeta_W,\zeta_R\in \C$, such that $\theta_W=\zeta_W\Id_W,
\theta_R=\zeta_R\Id_R.$
Using Proposition \ref{prop3.10}, consider
\begin{eqnarray}\label{ID4.13}
H(z,\tau)&=&
\sum_j
s_j
Z_{W,W^j}(r,w;-z-\tau,\tau)\nonumber\\
&=&\sum_j
s_j
G_{\alpha_j,X_j}
\left(
\mathcal Y^{W,R}
(w,e^{\log z})r,
\tau
\right).
\end{eqnarray}
On the other hand, from definitions and braiding-twist balancing relation, we first have
\begin{eqnarray}\label{ID4.14}
&&
t\,c_{R,W}
(\theta_R\boxtimes\Id_W)
\nonumber\\
&=&
e\,c_{W,R}
(\theta_W\boxtimes\Id_R)
c_{R,W}
(\theta_R\boxtimes\Id_W)
\nonumber\\
&=&
e\,c_{W,R}c_{R,W}
(\theta_R\boxtimes\theta_W)
=
e\,\theta_{R\boxtimes W}\nonumber\\
&=&
\theta_V e
=
e.
\end{eqnarray}
Now since $W,R$ are simple, using identity (\ref{ID4.7}), Remark \ref{re3.11} with $\mathsf M^{W,V}=\Id$, the chosen branch $\log(-z)-\pi i=\log z$, $L(0)$-conjugation, $S$-transformation, identity (\ref{ID4.14}), braiding isomorphism and Lemma \ref{le3.12}, we have

\begin{eqnarray}\label{ID4.15}
H(z,\tau)
&=&
Z_{W,V}
\left(
(-\tau^{-1})^{L_R(0)}r,
(-\tau^{-1})^{L_W(0)}w;
\frac{z}{\tau}+1,
-\frac{1}{\tau}
\right)
\nonumber\\
&=&
F_V
\left(
e\,
\mathcal Y^{R,W}
\left(
(-\tau^{-1})^{L_R(0)}r,
e^{\log(z/\tau)}
\right)
\theta_W^{-1}
(-\tau^{-1})^{L_W(0)}w,
-\frac{1}{\tau}
\right)
\nonumber\\
&=&
F_V
\left(
e\,
\mathcal Y^{R,W}
\left(
(-\tau^{-1})^{L_R(0)}r,
(-\tau^{-1})
e^{\log(-z)-2\pi i}
\right)
(-\tau^{-1})^{L_W(0)}
\theta_W^{-1}w,
-\frac{1}{\tau}
\right)
\nonumber\\
&=&
F_V
\left(
(-\tau^{-1})^{L_V(0)}
e\,
\mathcal Y^{R,W}
\left(
r,e^{\log(-z)-2\pi i}
\right)
\theta_W^{-1}w,
-\frac{1}{\tau}
\right)
\nonumber\\
&=&
(SF_V)
\left(
e\,
\mathcal Y^{R,W}
\left(
r,e^{\log(-z)-2\pi i}
\right)
\theta_W^{-1}w,
\tau
\right)
\nonumber\\
&=&
\mu\,
(SF_V)
\left(
t\,c_{R,W}
\mathcal Y^{R,W}
\left(
r,e^{\log(-z)-2\pi i}
\right)
w,
\tau
\right)
\nonumber\\
&=&
\mu\,
(SF_V)
\left(
e^{-zL_V(-1)}
t\,
\mathcal Y^{W,R}
\left(
w,e^{\log z}
\right)r,
\tau
\right)
\nonumber\\
&=&
\mu\,
(SF_V)
\left(
t\,
\mathcal Y^{W,R}
\left(
w,e^{\log z}
\right)r,
\tau
\right).
\end{eqnarray}
Since the coefficients of $\mathcal Y^{W,R}(w,z)r$ span $Q=W\boxtimes R$, by identities (\ref{ID4.13}) and (\ref{ID4.15}), for all $b\in Q$, we have
$$\mu\,
(SF_V)
\left(
t(b),
\tau
\right)=\sum_j
s_j
G_{\alpha_j,X_j}
\left(
b,
\tau
\right).$$
This proves (ii) and we complete the proof.       $\hfill\Box$

\section{Rigidity and semisimplicity of $\mathcal C$}\label{sec5}
In this section, $\mathcal C$ is the category of grading restricted generalized modules of a simple $C_2$-cofinite lower truncated vertex operator algebra. Assuming that the Grothendieck--Verdier dualizing object is
invertible, we first normalize the duality and combine the
internal-Hom criterion with the genus-one comparison identities to
prove that the module category $\mathcal C$ is rigid.  We then use
complete one-point functions and the resulting natural
endomorphisms of the identity functor to control extensions between
simple objects.  This yields the semisimplicity of $\mathcal C$.

\subsection{Rigidity}\label{sec5.1}
Suppose from now on that the Grothendieck--Verdier dualizing
object $K=DV$
is invertible.
\begin{definition}\label{def6.1}
For $X\in\mathcal C$, define \[
D_0X=K^{-1}\boxtimes DX.
\]
\end{definition}
Since tensoring with the invertible object $K^{-1}$ is an exact
equivalence and $D$ is an exact contravariant equivalence, $D_0$
is again an exact contravariant equivalence.

\begin{lemma}\label{lm6.2}
$D_0$ is another Grothendieck--Verdier duality whose
dualizing object is $V$, we call $D_0$ the normalized dual.
\end{lemma}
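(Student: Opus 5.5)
To prove Lemma \ref{lm6.2}, I will write down the required natural isomorphism directly, using the Grothendieck--Verdier adjunction of Theorem \ref{thm3.4} together with the invertibility of $K$. Fix a choice of $K^{-1}$ and an isomorphism $\epsilon:K^{-1}\boxtimes K\xrightarrow{\sim}V$. Since $K^{-1}\boxtimes-$ is an autoequivalence of $\mathcal C$, it induces bijections on Hom spaces, natural in both variables. For $X,W\in\mathcal C$ the chain is
\begin{eqnarray*}
\Hom_{\mathcal C}(X\boxtimes W,V)
&\cong&\Hom_{\mathcal C}\bigl(K^{-1}\boxtimes(X\boxtimes W),K^{-1}\boxtimes K\bigr)\cong\Hom_{\mathcal C}\bigl(K^{-1}\boxtimes(X\boxtimes W),K\bigr)\\
&\cong&\Hom_{\mathcal C}\bigl(X\boxtimes W,K\boxtimes K\bigr).
\end{eqnarray*}
The middle step uses $K^{-1}\boxtimes K\cong K\boxtimes K^{-1}\cong V$.

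This route is awkward, so I will instead use a cleaner one. I tensor the target by $K$ on the right and move the factor $K^{-1}$ to the source:
\[
\Hom_{\mathcal C}(X\boxtimes W,V)\cong\Hom_{\mathcal C}\bigl((X\boxtimes W)\boxtimes K,K\bigr).
\]
This holds because $-\boxtimes K$ is an equivalence and $V\boxtimes K\cong K$. Using associativity and the braiding $c_{W,K}$, the right-hand side becomes
\[
\Hom_{\mathcal C}\bigl(X\boxtimes(K\boxtimes W),K\bigr)\cong\Hom_{\mathcal C}\bigl(X,D(K\boxtimes W)\bigr),
\]
by the Grothendieck--Verdier property of $D$. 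It then remains to identify $D(K\boxtimes W)$ with $K^{-1}\boxtimes DW=D_0W$ naturally in $W$.

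For this identification I will use the internal Hom formula of Proposition \ref{Prop3.11}, $D(A\boxtimes DZ)\cong[A,Z]$. Applied with $A=K$ and $Z=D W$, and using $D^2\cong\Id$, it gives $D(K\boxtimes W)\cong[K,DW]$. Since $K$ is invertible, $[K,Y]\cong K^{-1}\boxtimes Y$ naturally in $Y$. Indeed, $\Hom(Z\boxtimes K,Y)\cong\Hom(Z,Y\boxtimes K^{-1})$ because $-\boxtimes K$ is an equivalence with quasi-inverse $-\boxtimes K^{-1}$, and the braiding identifies $Y\boxtimes K^{-1}$ with $K^{-1}\boxtimes Y$. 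Hence $D(K\boxtimes W)\cong K^{-1}\boxtimes DW$.

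Composing all these isomorphisms gives $\Hom_{\mathcal C}(X\boxtimes W,V)\cong\Hom_{\mathcal C}(X,D_0W)$. Every step is natural in $X$ and $W$, since each is built from associators, braidings, unit constraints, fixed isomorphisms involving $K^{-1}$, and the natural GV and internal-Hom adjunctions. Together with the remark after Definition \ref{def6.1}, which shows that $D_0$ is an exact contravariant equivalence, this shows that $D_0$ is a Grothendieck--Verdier duality with dualizing object $V$.

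The main point needing care is naturality, and in particular checking that the braiding used to swap $K^{-1}$ across does not break naturality in $W$. This is fine because braidings are natural isomorphisms. Should a consistency check of the identification be wanted, setting $X=V$ gives $D_0V\cong K^{-1}\boxtimes K\cong V$, as expected.
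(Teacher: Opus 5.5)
Your final argument is correct. Its skeleton matches the paper's: tensor with $K$ to move the target from $V$ to $K$, apply the Grothendieck--Verdier adjunction, then use invertibility to move $K$ to the other side as $K^{-1}$.

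The difference is where you attach $K$. The paper tensors on the left and groups $K$ with $X$:
\[
\Hom_{\mathcal C}(X\boxtimes Z,V)\cong\Hom_{\mathcal C}\bigl((K\boxtimes X)\boxtimes Z,K\bigr)\cong\Hom_{\mathcal C}(K\boxtimes X,DZ)\cong\Hom_{\mathcal C}(X,K^{-1}\boxtimes DZ).
\]
The last step is just the adjunction $K\boxtimes-\dashv K^{-1}\boxtimes-$. So the paper needs only invertibility, the associativity and unit constraints, and one use of the GV adjunction.

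You group $K$ with $W$ instead, which lands you on $D(K\boxtimes W)$. Identifying it with $K^{-1}\boxtimes DW$ then requires Proposition \ref{Prop3.11}, $D^2\cong\Id$, and the braiding twice. If you unwind Proposition \ref{Prop3.11}, that identification reassociates to $(X\boxtimes K)\boxtimes W$ and applies the GV adjunction again. In effect it undoes your first GV step and then performs the paper's step. The detour is harmless, but it is unnecessary. The paper's chain would work in any GV category with invertible dualizing object, with no braiding and no $D^2\cong\Id$.

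Separately, delete your first chain rather than calling it awkward, because it is wrong. Tensoring with $K^{-1}$ sends the target $V$ to $K^{-1}\boxtimes V\cong K^{-1}$, not to $K^{-1}\boxtimes K$. Since $K^{-1}\boxtimes K\cong V$, the claimed isomorphism
\[
\Hom_{\mathcal C}\bigl(K^{-1}\boxtimes(X\boxtimes W),K^{-1}\boxtimes K\bigr)\cong\Hom_{\mathcal C}\bigl(K^{-1}\boxtimes(X\boxtimes W),K\bigr)
\]
fails unless $K\cong V$. Composed as written, that chain would give $\Hom_{\mathcal C}(X\boxtimes W,V)\cong\Hom_{\mathcal C}(X\boxtimes W,K\boxtimes K)$ naturally in $X$. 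By Yoneda this would force $K\boxtimes K\cong V$, which need not hold.
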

\proof For $X,Z\in\mathcal C$, tensoring
a morphism $X\boxtimes Z\to V$ with $K$, applying the original
Grothendieck--Verdier adjunction, and finally tensoring with
$K^{-1}$, we have the following natural isomorphisms
\begin{eqnarray}\label{id6.1}
\Hom_{\mathcal C}(X\boxtimes Z,V)
&\simeq&
\Hom_{\mathcal C}
(K\boxtimes X\boxtimes Z,K)
\nonumber\\
&\simeq&
\Hom_{\mathcal C}
(K\boxtimes X,DZ)
\nonumber\\
&\simeq&
\Hom_{\mathcal C}
(X,K^{-1}\boxtimes DZ)
\nonumber\\
&\simeq&
\Hom_{\mathcal C}(X,D_0Z).
\end{eqnarray}
This completes the proof.               $\hfill\Box$

For every $X\in\mathcal C$, let $e_X:D_0X\boxtimes X\longrightarrow V$
be the evaluation morphism adjunction to
$\Id_{D_0X}$ under
\eqref{id6.1}.  Note that this is an evaluation for
the normalized duality $D_0$, not yet a rigid evaluation.

Recall the original twist $\theta_X=e^{2\pi iL_X(0)}$,
and define
\begin{equation}\label{id6.2}
t_X
=
e_X
c_{X,D_0X}
(\theta_X\boxtimes\Id_{D_0X})
:
X\boxtimes D_0X\longrightarrow V.
\end{equation}
The braiding and the adjunction \eqref{id6.1} show that
$(X,t_X)$ represents the functor
\begin{eqnarray}\label{id6.3}
Z\longmapsto
\Hom_{\mathcal C}
(Z\boxtimes D_0X,V), \qquad \text{equivalently,}\qquad
\Hom_{\mathcal C}(Z,X)=\Hom_{\mathcal C}(Z\boxtimes D_0X,V).
\end{eqnarray}
On the other hand, applying the $D_0$-adjunction to
$Z\boxtimes D_0X$ gives a natural isomorphism
\[
\Hom_{\mathcal C}
(Z\boxtimes D_0X,V)
\simeq
\Hom_{\mathcal C}
(Z,D_0^2X).
\]
Hence, by the uniqueness of representing objects, there is a natural isomorphism $j_X:X\xrightarrow{\sim}D_0^2X$
characterized by
\begin{equation}\label{id6.4}
e_{D_0X}
(j_X\boxtimes\Id_{D_0X})
=
t_X.
\end{equation}

\begin{remark}\label{re6.3}
Note that we do not identify $j_X$ with the usual graded bidual
map $X\to D^2X$, we also do not assume any monoidality property of $j$.
\end{remark}

We now fix an object $W\in\mathcal C$ and set
\[
R=D_0W,
\qquad
Q=W\boxtimes R,
\qquad
e=e_W,
\qquad
t=t_W.
\]
Since $D_0$ has dualizing object $V$ and $\mathcal C$ admits internal Homs, we have $R=D_0W\simeq[W,V].$
Thus $Q=W\boxtimes[W,V]$
is precisely the object occurring in the dualizability criterion.

\begin{lemma}\label{lm6.4}
With above notation, we have $[W,W]\simeq D_0Q$.
\end{lemma}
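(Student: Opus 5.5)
We want $[W,W]\simeq D_0Q$ with $Q=W\boxtimes R$ and $R=D_0W$. The plan is to compare the functors represented by both sides via Yoneda, using only the normalized Grothendieck--Verdier adjunction \eqref{id6.1}, the representability statement \eqref{id6.3}, and associativity.

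First fix $Z\in\mathcal C$. By the definition of internal Hom \eqref{id5.1} we have $\Hom_{\mathcal C}(Z,[W,W])\cong\Hom_{\mathcal C}(Z\boxtimes W,W)$. Next apply \eqref{id6.3} with $X=W$ and test object $Z\boxtimes W$. This gives
\[
\Hom_{\mathcal C}(Z\boxtimes W,W)\cong\Hom_{\mathcal C}\bigl((Z\boxtimes W)\boxtimes D_0W,V\bigr),
\]
where the isomorphism is $g\mapsto t_W(g\boxtimes\Id_R)$. The associativity isomorphism $\alpha_{Z,W,R}$ then identifies the right side with $\Hom_{\mathcal C}(Z\boxtimes(W\boxtimes R),V)=\Hom_{\mathcal C}(Z\boxtimes Q,V)$. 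Finally, Lemma \ref{lm6.2}, i.e.\ \eqref{id6.1} with $X=Z$ and second argument $Q$, yields $\Hom_{\mathcal C}(Z\boxtimes Q,V)\cong\Hom_{\mathcal C}(Z,D_0Q)$.

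Each isomorphism in this chain is natural in $Z$:
\begin{itemize}
\item the first by the internal Hom universal property;
\item the second because \eqref{id6.3} is given by precomposition with $g\boxtimes\Id_R$ followed by $t_W$;
\item the third by naturality of $\alpha$;
\item the fourth by Lemma \ref{lm6.2}.
\end{itemize}
Hence $[W,W]$ and $D_0Q$ represent the same functor $Z\mapsto\Hom_{\mathcal C}(Z\boxtimes W,W)$, and Yoneda gives $[W,W]\simeq D_0Q$. Existence of $[W,W]$ is guaranteed by Proposition \ref{Prop3.11}, although the argument actually exhibits $D_0Q$ directly as a representing object.

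The one point needing care is \eqref{id6.3} itself, namely that $(W,t_W)$ represents $Z\mapsto\Hom(Z\boxtimes D_0W,V)$. I would verify it as follows. The adjunction \eqref{id6.1} says that $(D_0W,e_W)$ represents $Y\mapsto\Hom(Y\boxtimes W,V)$. Applying the braiding, $\Hom(Z\boxtimes D_0W,V)\cong\Hom(D_0W\boxtimes Z,V)\cong\Hom(Z,D_0^2W)$. One then combines this with $D_0^2\cong\Id$, which follows from $D^2\cong\Id$ and invertibility of $K$ (tensoring with $K^{\pm1}$ commutes with $D$ up to the Grothendieck--Verdier isomorphisms). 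The twist $\theta_W$ in $t_W$ is an automorphism, so it does not affect bijectivity.

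I expect the main obstacle to be pinning down the explicit identification, i.e.\ making sure the evaluation $[W,W]\boxtimes W\to W$ corresponds under the chain to a specific morphism $D_0Q\boxtimes W\to W$. This matters for later use with $\Phi$ and the internal unit $D_0(t)u_0$. For the isomorphism of objects alone, Yoneda suffices.
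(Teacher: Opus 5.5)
Your proposal is correct and is essentially the paper's proof. Both use the chain $\Hom_{\mathcal C}(Z\boxtimes W,W)\cong\Hom_{\mathcal C}(Z\boxtimes W\boxtimes R,V)\cong\Hom_{\mathcal C}(Z\boxtimes Q,V)\cong\Hom_{\mathcal C}(Z,D_0Q)$, built from $g\mapsto t(g\boxtimes\Id_R)$, associativity and the $D_0$-adjunction \eqref{id6.1}, and then conclude by Yoneda.

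One small slip in your side verification of \eqref{id6.3}: applied to $\Hom_{\mathcal C}(D_0W\boxtimes Z,V)$, \eqref{id6.1} gives $\Hom_{\mathcal C}(D_0W,D_0Z)$, not $\Hom_{\mathcal C}(Z,D_0^2W)$. The clean route is braiding, then \eqref{id6.1}, then full faithfulness of $D_0$. By dinaturality of the evaluation this yields exactly the bijection $g\mapsto e_Wc_{W,D_0W}(g\boxtimes\Id_{D_0W})$, and hence the stated one $g\mapsto t_W(g\boxtimes\Id_R)$ after postcomposing with $\theta_W$. It also removes the need to invoke $D_0^2\cong\Id$ separately.
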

\proof For every $X\in\mathcal C$, and a morphism $f:X\boxtimes W\longrightarrow W$, we get a morphism
\begin{eqnarray}\label{id6.5}
t(f\boxtimes\Id_R):
X\boxtimes W\boxtimes R\longrightarrow V.
\end{eqnarray}Thus we have a natural isomorphism
$\Hom_{\mathcal C}(X\boxtimes W,W)
\simeq\Hom_{\mathcal C}
(X\boxtimes W\boxtimes R,V).
$ Hence
\begin{eqnarray}\label{id0.6}
\Hom_{\mathcal C}(X\boxtimes W,W)
\simeq
\Hom_{\mathcal C}(X\boxtimes Q,V)
\simeq
\Hom_{\mathcal C}(X,D_0Q).
\end{eqnarray}
Consequently, $[W,W]\simeq D_0Q.$             $\hfill\Box$

Let $a:D_0Q\boxtimes W\longrightarrow W$
denote the internal evaluation under this identification.
Since the identity morphism of $D_0Q$ corresponds to
$e_Q:D_0Q\boxtimes Q\longrightarrow V$ under \eqref{id6.1},
using \eqref{id6.5}, the evaluation $a$ is
characterized by
\begin{equation}\label{id6.6}
t(a\boxtimes\Id_R)
=
e_Q.
\end{equation}

Since $[W,V]\simeq R,[W,W]\simeq D_0Q,$ the rank one morphism $\rho$ defined in Lemma \ref{lm5.1} becomes a map
\[
\Phi:Q=W\boxtimes R\longrightarrow D_0Q,
\]
which is the internal-Hom adjoint of
$
\alpha
=
\Id_W\boxtimes e
:
Q\boxtimes W\longrightarrow W,
$
equivalently, under the $D_0$-adjunction, $\Phi$ is characterized by
\begin{equation}\label{id6.7}
e_Q
(\Phi\boxtimes\Id_Q)
=
t
(\Id_W
 \boxtimes e
 \boxtimes\Id_R)
=\beta
:
Q\boxtimes Q\longrightarrow V.
\end{equation}
By the defining property of the internal evaluation $a$, the fact
that $\Phi$ is adjoint to $\alpha$ is equivalently expressed as
\begin{equation}\label{id6.8}
\alpha
=
a
(\Phi\boxtimes\Id_W).
\end{equation}

Finally, let $u:V\longrightarrow[W,W]\simeq D_0Q$
be the internal unit, namely the morphism adjoint to
$\Id_W$ under \eqref{id0.6}.  Let $u_0:V\xrightarrow{\sim}D_0V$
denote the canonical identification.  Under the first
isomorphism of \eqref{id0.6}, $\Id_W$ corresponds to $t:Q=W\boxtimes R\longrightarrow V.$
Under the $D_0$-adjunction, the morphism $t$ corresponds to
$D_0(t)u_0$.  Hence
\begin{equation}\label{id6.10}
u
=
D_0(t)u_0
:
V\longrightarrow D_0Q.
\end{equation}

We have therefore identified all the objects and morphisms entering
the dualizability criterion:
\[
[W,V]\simeq R,
\qquad
[W,W]\simeq D_0Q,
\qquad
\rho=\Phi,
\qquad
u=D_0(t)u_0.
\]
Consequently, in order to prove that $W$ is rigid, it remains to
show that the internal unit $u$ lies in the image of $\Phi$.

\begin{lemma}\label{lm6.5}
With above notations, we have $\Phi=D_0(\Phi)j_Q.$
Consequently, $\im D_0(\Phi)=\im\Phi$
which are sub objects of $D_0Q$.
\end{lemma}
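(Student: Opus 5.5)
The plan is to compare both sides through the $D_0$-adjunction \eqref{id6.1}. A morphism $g:Q\to D_0Q$ is uniquely determined by $e_Q(g\boxtimes\Id_Q):Q\boxtimes Q\to V$. By \eqref{id6.7}, for $g=\Phi$ this pairing is $\beta=t(\Id_W\boxtimes e\boxtimes\Id_R)$. It therefore suffices to compute $e_Q\bigl(D_0(\Phi)j_Q\boxtimes\Id_Q\bigr)$ and show that it also equals $\beta$.

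\textbf{Step 1: dinaturality of the evaluation.} For any $f:X\to Y$, the definition of $D_0(f)$ as the morphism transposed under \eqref{id6.1} gives
\[
e_X\bigl(D_0(f)\boxtimes\Id_X\bigr)=e_Y\bigl(\Id_{D_0Y}\boxtimes f\bigr).
\]
Applied to $f=\Phi:Q\to D_0Q$, this yields $e_Q(D_0(\Phi)\boxtimes\Id_Q)=e_{D_0Q}(\Id_{D_0^2Q}\boxtimes\Phi)$.

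\textbf{Step 2: use \eqref{id6.4} and \eqref{id6.2}.} Precomposing with $j_Q\boxtimes\Id_Q$ and using the interchange law, then \eqref{id6.4}, then \eqref{id6.2}, gives
\[
e_Q\bigl(D_0(\Phi)j_Q\boxtimes\Id_Q\bigr)
=e_{D_0Q}(j_Q\boxtimes\Id_{D_0Q})(\Id_Q\boxtimes\Phi)
=t_Q(\Id_Q\boxtimes\Phi)
=e_Q\,c_{Q,D_0Q}(\theta_Q\boxtimes\Phi).
\]
Naturality of the braiding converts the last expression into $e_Q(\Phi\boxtimes\Id_Q)c_{Q,Q}(\theta_Q\boxtimes\Id_Q)$, which equals $\beta\, c_{Q,Q}(\theta_Q\boxtimes\Id_Q)$. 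The lemma is thus reduced to the cyclic symmetry
\[
\beta\, c_{Q,Q}(\theta_Q\boxtimes\Id_Q)=\beta .
\]

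\textbf{Step 3: the cyclic symmetry (main obstacle).} Heuristically, $\beta$ evaluated on $(w\boxtimes r)\boxtimes(w'\boxtimes r')$ contracts $r$ with $w'$ through $e$, and then contracts $r'$ with $w$ through the twisted pairing $t$. Rotating by $c_{Q,Q}(\theta_Q\boxtimes\Id)$ should interchange the roles of the two contractions while leaving the result unchanged. To make this rigorous I would:
\begin{enumerate}
\item write $\theta_Q=\theta_{W\boxtimes R}=c_{R,W}c_{W,R}(\theta_W\boxtimes\theta_R)$ using the balancing axiom;
\item expand $c_{Q,Q}=c_{W\boxtimes R,\,W\boxtimes R}$ by the hexagon identities into elementary braidings of the factors $W$ and $R$;
\item push the morphism $e$ through the braidings by naturality, and use $c_{X,V}=\Id$ together with $\theta_V=\Id$;
\item absorb the remaining factor $c_{W,R}(\theta_W\boxtimes\Id)$ using \eqref{id5.8} and the relation $t\,c_{R,W}(\theta_R\boxtimes\Id_W)=e$, which is identity \eqref{ID4.14}. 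That derivation uses only the braiding and balancing axioms and the naturality of $\theta$, so it holds in the present setting without assuming $W$ or $R$ simple.
\end{enumerate}
I expect this bookkeeping of braidings and twists, carried out in string-diagram form with the associativity isomorphisms suppressed, to be the only delicate part. If the direct expansion becomes unwieldy, I would instead verify the identity after applying the adjunction once more in the first $W$-variable, using \eqref{id6.6} and \eqref{id6.8} to reduce it to a statement about $\alpha=a(\Phi\boxtimes\Id_W)$.

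\textbf{Step 4: conclusion.} Once $\Phi=D_0(\Phi)j_Q$ is established, the fact that $j_Q$ is an isomorphism gives $\im\Phi=\im\bigl(D_0(\Phi)j_Q\bigr)=\im D_0(\Phi)$. Both are subobjects of the common target $D_0Q$, which is the second assertion of the lemma.
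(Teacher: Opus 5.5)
Your proposal is correct and follows the paper's proof essentially step for step. The paper also compares both sides under $f\mapsto e_Q(f\boxtimes\Id_Q)$ and uses dinaturality of the evaluation, \eqref{id6.4}, \eqref{id6.2} and naturality of the braiding to reduce to $\beta\,c_{Q,Q}(\theta_Q\boxtimes\Id_Q)$; it then uses the hexagon and balancing axioms together with \eqref{ID4.14} to return to $\beta$, compressing that cyclic-symmetry step into a single line, and the symmetry does hold in any balanced braided category by exactly the bookkeeping in your items 1--4.
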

\proof First, we know $D_0(\Phi):D_0^2Q\longrightarrow D_0Q.$
Thus, $D_0(\Phi)j_Q$ has the same source and target as $\Phi$.
We shall prove that these two morphisms have the same image under
the evaluation adjunction
$\Hom_{\mathcal C}(Q,D_0Q)
\simeq
\Hom_{\mathcal C}(Q\boxtimes Q,V),$
which sends a morphism $f:Q\to D_0Q$ to $e_Q(f\boxtimes\Id_Q).$

Using naturality of the evaluation defined by $D_0$, identities \eqref{id6.4} and \eqref{id6.2}, naturality of braiding, identity \eqref{id6.7}, $Q=W\boxtimes R$ and $R=D_0W$, hexagon axioms and braiding twist balancing identity, we have
\begin{eqnarray*}
&&e_Q\left((D_0(\Phi)j_Q)\boxtimes\Id_Q\right)
=e_{D_0Q}(j_Q\boxtimes\Phi)\\
&=&e_{D_0Q}(j_Q\boxtimes\Id_{D_0Q})(\Id_Q\boxtimes\Phi)
=t_Q(\Id_Q\boxtimes\Phi)\\
&=&e_Qc_{Q,D_0Q}(\theta_Q\boxtimes\Id_{D_0Q})(\Id_Q\boxtimes\Phi)
=e_Qc_{Q,D_0Q}(\theta_Q\boxtimes\Phi)\\
&=&e_Q(\Phi\boxtimes\theta_Q)c_{Q,Q}
=e_Q(\Phi\boxtimes\Id_Q)(\Id_Q\boxtimes\theta_Q)c_{Q,Q}\\
&=&e_Q(\Phi\boxtimes\Id_Q)c_{Q,Q}(\theta_Q\boxtimes\Id_Q)
=\beta c_{Q,Q}(\theta_Q\boxtimes\Id_Q)\\
&=&t(\Id_W\boxtimes e\boxtimes\Id_R) c_{Q,Q}(\theta_Q\boxtimes\Id_Q)\\
&=&t\left(\Id_W\boxtimes(t\,c_{R,W}(\theta_R\boxtimes\Id_W))\boxtimes\Id_R\right)\\
&=&t\left(\Id_W\boxtimes(e\,c_{W,R}(\theta_W\boxtimes\Id_R)
c_{R,W}(\theta_R\boxtimes\Id_W))\boxtimes\Id_R\right)\\
&=&t\left(\Id_W\boxtimes(e\,c_{W,R}c_{R,W}(\theta_R\boxtimes\theta_W))\boxtimes\Id_R\right)\\
&=&t\left(\Id_W\boxtimes(e\,\theta_{R\boxtimes W})\boxtimes\Id_R\right)=t\left(\Id_W\boxtimes e\boxtimes\Id_R\right)=\beta\\
&=&e_Q(\Phi\boxtimes\Id_Q).
\end{eqnarray*}
Hence, we get
\[
D_0(\Phi)j_Q
=
\Phi.
\]

Finally, since $j_Q$ is an isomorphism,
\begin{eqnarray*}
\im\Phi
=
\im\bigl(D_0(\Phi)j_Q\bigr)
=
\im D_0(\Phi).
\end{eqnarray*}
This proves the Lemma.             $\hfill\Box$

\begin{lemma}\cite{EtingofPenneys}\label{LM}
In category $\mathcal C$, rigid objects are closed under extensions.
\end{lemma}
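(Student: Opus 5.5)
The plan is to reduce rigidity of the middle term of an extension to the criterion of Lemma \ref{lm5.1}, and to get that criterion from a five-lemma argument. Everything runs through the internal Homs $[Y,M]\simeq D(Y\boxtimes DM)$ of Proposition \ref{Prop3.11}. Let
\[
0\longrightarrow X\xrightarrow{\ \iota\ }Y\xrightarrow{\ \pi\ }Z\longrightarrow 0
\]
be exact in $\mathcal C$, with $X$ and $Z$ rigid. For every $M\in\mathcal C$ consider the canonical ``rank-one'' morphism
\[
\eta_{Y,M}: M\boxtimes[Y,V]\longrightarrow [Y,M],
\]
adjoint to $\Id_M\boxtimes e_Y:(M\boxtimes[Y,V])\boxtimes Y\to M$, where $e_Y$ is the evaluation. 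For $M=Y$ this is exactly the morphism $\rho$ of Lemma \ref{lm5.1}.

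\textbf{Step 1: naturality and the rigid case.} First I check that $\eta_{Y,M}$ is natural in $M$ and contravariantly natural in $Y$. This is a diagram chase with the defining adjunctions: a morphism $g:Y'\to Y$ induces $[g,M]:[Y,M]\to[Y',M]$, characterized by the evaluation, and both composites become the same morphism after applying the adjunction. Next I show that $\eta_{X,M}$ is an isomorphism whenever $X$ is rigid. In that case $[X,M]\simeq M\boxtimes X^*$, since $-\boxtimes X^*$ is right adjoint to $-\boxtimes X$. Under this identification and $[X,V]\simeq X^*$, the map $\eta_{X,M}$ is the identity.

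\textbf{Step 2: exactness in $Y$.} Because $D$ is an exact contravariant equivalence and $\boxtimes$ is exact in each variable (Theorem \ref{thm3.4}), the functor $Y\mapsto[Y,M]=D(Y\boxtimes DM)$ is exact and contravariant. Hence the extension gives short exact sequences
\[
0\to[Z,M]\to[Y,M]\to[X,M]\to0,
\qquad
0\to M\boxtimes[Z,V]\to M\boxtimes[Y,V]\to M\boxtimes[X,V]\to0 .
\]
The second uses exactness of $M\boxtimes-$ once more. By Step 1 the maps $\eta_{Z,M},\eta_{Y,M},\eta_{X,M}$ form a morphism between these sequences. The outer two maps are isomorphisms, so the five lemma in the abelian category $\mathcal C$ shows that $\eta_{Y,M}$ is an isomorphism for every $M$.

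\textbf{Step 3: conclusion.} Take $M=Y$. Then $\rho=\eta_{Y,Y}:Y\boxtimes[Y,V]\to[Y,Y]$ is an isomorphism, so $\im\rho=[Y,Y]$, and the internal unit $u$ trivially factors through $\im\rho$. Lemma \ref{lm5.1} then shows that $[Y,V]$ is a left dual of $Y$. Since $\mathcal C$ is braided, $Y$ is rigid.

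The main obstacle is the naturality check in Step 1, specifically contravariant naturality in $Y$. One has to verify that the maps induced by $\iota$ and $\pi$ on $[-,V]$ and $[-,M]$ are compatible with the evaluations, keeping track of the associators and unit isomorphisms that the paper suppresses. I would do this by testing both composites after applying $-\boxtimes Y'$ followed by the evaluation, and then invoking the injectivity of the adjunction map, as in the proof of Lemma \ref{lm5.1}. The identification of $\eta_{X,M}$ with an isomorphism for rigid $X$ should be routine once the adjunction $[X,M]\simeq M\boxtimes X^*$ is written out.
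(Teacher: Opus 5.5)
Your architecture is sound. The rank-one maps $\eta_{Y,M}$, their naturality in both variables, the fact that $\eta_{X,M}$ is an isomorphism for rigid $X$, and the conclusion via Lemma~\ref{lm5.1} once $\rho=\eta_{Y,Y}$ is invertible are all correct. The paper only cites this result, so this would give a self-contained argument in its own internal-Hom framework.

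\textbf{The gap is in Step 2.} You use that $\boxtimes$ is exact in each variable. Theorem~\ref{thm3.4} states only that the tensor product is right exact. The adjunction in its proof exhibits $-\boxtimes W$ as a \emph{left} adjoint, which gives right exactness and no more; the sentence there claiming full exactness does not follow. Left exactness of $-\boxtimes DM$ or $M\boxtimes-$ for a non-rigid $M$ is exactly what is unknown in $\mathcal C$. So two of your claims are unjustified:
\begin{itemize}
\item that $[Y,M]\to[X,M]$ is epic, which would need $\iota\boxtimes\Id_{DM}$ to be monic;
\item that $M\boxtimes[Z,V]\to M\boxtimes[Y,V]$ is monic.
\end{itemize}

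\textbf{How to repair it.} Much less is needed than two short exact rows.
\begin{itemize}
\item Right exactness of $W\boxtimes-$ and Yoneda give exactness of $0\to[Z,M]\to[Y,M]\to[X,M]$ for every $M$. So the bottom row is left exact for free.
\item A direct diagram chase shows that $\eta_{Y,M}$ is an isomorphism provided, in addition, the top row $M\boxtimes[Z,V]\to M\boxtimes[Y,V]\to M\boxtimes[X,V]\to0$ is exact.
\item By right exactness of $M\boxtimes-$, that holds once $[\iota,V]:[Y,V]\to[X,V]$ is an epimorphism.
\item Since $[\iota,V]=D(\iota\boxtimes\Id_K)$, this amounts to $\iota\boxtimes\Id_K$ being monic.
\end{itemize}

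Nothing in your argument provides that last fact for a general dualizing object $K$. It does hold under the standing hypothesis of Section~\ref{sec5.1}, which is the only context where Lemma~\ref{LM} is used (Proposition~\ref{prop6.6}). If $K$ is invertible, then $-\boxtimes K$ is an autoequivalence. Equivalently, $[Y,V]\simeq D_0Y$ naturally in $Y$ by Lemma~\ref{lm6.2}, and $D_0$ is exact.

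With Step 2 rewritten this way, your proof is complete in that setting. As written, for arbitrary $K$, it is not.
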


\begin{proposition}\label{prop6.6}
Suppose that the Grothendieck--Verdier dualizing object $K=DV$
is invertible and the complete trace functions satisfy
\[
SF_V=\sum_{j=0}^r s_jF_{W^j}.
\]
Then the category $\mathcal C$ is rigid.
\end{proposition}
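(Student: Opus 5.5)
The plan follows the outline in the introduction. By Lemma \ref{LM} and finite length (Theorem \ref{thm3.4}), it suffices to show every simple object $W$ is rigid. Fix a simple $W$ and set $R=D_0W$, $Q=W\boxtimes R$, $e=e_W$, $t=t_W$. By the identifications above,
\[
[W,V]\simeq R,\qquad [W,W]\simeq D_0Q,\qquad \rho=\Phi,\qquad u=D_0(t)u_0 .
\]
By Lemma \ref{lm5.1}, rigidity of $W$ follows once we show that $u$ factors through $\im\Phi$.

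\textbf{Step 1: $\ker\Phi\subseteq\ker t$.} This is the analytic input, obtained from Lemma \ref{Lm4.8}(ii). Let $k:N\hookrightarrow Q$ be the kernel of $\Phi$. By (\ref{id6.8}), $\alpha(k\boxtimes\Id_W)=a(\Phi k\boxtimes\Id_W)=0$. Since the tensor product is exact, $\alpha_j(k\boxtimes\Id_{X_j})=0$ for every $j$, so $G_{\alpha_j,X_j}(b,\tau)=0$ for all $b\in N$. Identity (\ref{ID4.6}) then gives
\[
\mu\,(SF_V)(t(b),\tau)=0\qquad\text{for all } b\in N .
\]
Since $S$ is invertible on functions, $F_V(t(b),\tau)=0$ for all $b$ in the submodule $t(N)\subseteq V$. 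Now $V$ is simple, so $t(N)$ is either $0$ or $V$. If $t(N)=V$, then $\mathbf 1\in t(N)$ and $F_V(\mathbf 1,\tau)=\chi_V\neq0$, a contradiction. Hence $t k=0$.

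\textbf{Step 2: from Step 1 to $u\in\im\Phi$ (categorical).} Factor $\Phi$ as $Q\xrightarrow{p}I\xrightarrow{j}D_0Q$. By Step 1, $t=\bar t\,p$ for some $\bar t:I\to V$. Apply the exact contravariant functor $D_0$:
\[
D_0(t)=D_0(p)\,D_0(\bar t),
\]
where $D_0(p):D_0I\hookrightarrow D_0Q$ is monic. Hence $u=D_0(t)u_0$ factors through the subobject $\im D_0(p)$. Dualizing the image--coimage factorization shows $\im D_0(p)=\im D_0(\Phi)$, since $D_0(\Phi)=D_0(p)D_0(j)$ and $D_0(j)$ is epic. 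By Lemma \ref{lm6.5}, $\im D_0(\Phi)=\im\Phi$. Therefore $u$ factors through $\im\Phi$, Lemma \ref{lm5.1} applies, and $W$ is rigid because $\mathcal C$ is braided.

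\textbf{Main obstacle.} I expect the delicate point to be Step 1: making Lemma \ref{Lm4.8}(ii) genuinely force $t|_N=0$. Two issues need care. First, the vanishing of $G_{\alpha_j,X_j}$ on $N$ requires that $\alpha_j$ restricted to $N\boxtimes X_j$ is literally $(\alpha k)\boxtimes\Id$, with the associativity isomorphisms handled correctly. Second, the constant $\mu$ must be nonzero; this holds because $\mu=\zeta_R/\zeta_W$ is a ratio of twist eigenvalues of simple objects, both nonzero. Simplicity of $R=D_0W$ comes from exactness of $D_0$. If the simplicity argument on $t(N)$ proves awkward, I would fall back to directly testing the identity on $F_V(\mathbf 1,\cdot)$, using that the vacuum character is nonzero.
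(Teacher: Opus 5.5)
Your proposal is correct and follows the paper's proof essentially step for step. The steps are the same: the inclusion $\ker\Phi\subseteq\ker t$ from Lemma~\ref{Lm4.8}(ii), using simplicity of $V$ and nonvanishing of the vacuum trace; then dualizing the image--coimage factorization of $\Phi$ and invoking Lemma~\ref{lm6.5} and Lemma~\ref{lm5.1} for simple objects; and finally Lemma~\ref{LM} with induction on length. The only difference is cosmetic: you get $G_{\alpha_j,X_j}|_N=0$ from $\alpha(k\boxtimes\Id_W)=0$ by functoriality of $\boxtimes$ and naturality of the associators (exactness of $\boxtimes$ is not actually needed there), whereas the paper rewrites $\alpha_j=(a\boxtimes\Id_{W^j})(\Phi\boxtimes\Id_{X_j})$ and uses naturality of $\mathcal Y^{Q,X_j}$ to express the trace through $\Phi(b)$.
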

\proof (i) Let $W$ be simple and put
\[
R=D_0W,
\qquad
Q=W\boxtimes R,
\qquad
e=e_W,
\qquad
t=t_W.
\]
Since $D_0$ is a contravariant equivalence, $R$ is also simple.
Hence, by Schur's Lemma, the twists act by nonzero scalars:
$\theta_W=\zeta_W\Id_W,\theta_R=\zeta_R\Id_R.$
Set $\mu_W=\frac{\zeta_R}{\zeta_W}\neq0.$

Recall that $[W,W]\simeq D_0Q$
and let $a,\alpha,\Phi$ be the corresponding morphisms defined as above. Let $X_j=W\boxtimes W^j,$
define
\[
a_j^{\mathrm{act}}
=
a\boxtimes\Id_{W^j}
:
D_0Q\boxtimes X_j\longrightarrow X_j.
\]
Recall $\alpha_j=\alpha\boxtimes\Id_{W^j}$ defined in Lemma \ref{Lm4.8}, we have
\begin{eqnarray*}
\alpha_j
=
\left(
a(\Phi\boxtimes\Id_W)
\right)
\boxtimes\Id_{W^j}
=
(a\boxtimes\Id_{W^j})
(\Phi\boxtimes\Id_{X_j})
=
a_j^{\mathrm{act}}
(\Phi\boxtimes\Id_{X_j}).
\end{eqnarray*}
For $b\in Q$, using the naturality of tensor-product intertwining operators, we have
\begin{eqnarray*}
&&\mathcal Y_{\alpha_j}(b,z)\\
&=&
\alpha_j\mathcal Y^{Q,X_j}(b,z)
=
a_j^{\mathrm{act}}
(\Phi\boxtimes\operatorname{Id}_{X_j})
\mathcal Y^{Q,X_j}(b,z)\\
&=&
a_j^{\mathrm{act}}
\mathcal Y^{D_0Q,X_j}(\Phi(b),z)
=
\mathcal Y_{a_j^{\mathrm{act}}}(\Phi(b),z).
\end{eqnarray*}
Since $\Phi$ is a $V$-module morphism, we have $\Phi\,\mathcal U_Q(1)=
\mathcal U_{D_0Q}(1)\Phi.$ Consequently, we get
\[
G_{\alpha_j,X_j}(b,\tau)
=
\tr_{X_j}
\mathcal Y_{a_j^{\mathrm{act}}}
\left(
\mathcal U(1)\Phi(b),1
\right)
q^{L_{X_j}(0)-c/24}.
\]
Applying Lemma~\ref{Lm4.8} (ii), we therefore obtain,
for every $b\in Q$,
\begin{eqnarray}\label{ID5.11}
\mu_W(SF_V)(t(b),\tau)
=
\sum_j
s_j
\tr_{X_j}
\mathcal Y_{a_j^{\mathrm{act}}}
\left(
\mathcal U(1)\Phi(b),1
\right)
q^{L_{X_j}(0)-c/24}.
\end{eqnarray}

Let $N=\ker\Phi.$ If $b\in N$, then $\Phi(b)=0$, which means $(SF_V)(t(b),\tau)=0$ by identity \eqref{ID5.11}.
Equivalently, the complete one-point function $SF_V$ vanishes on
the submodule $t(N)\subseteq V.$
Since $V$ is simple, we have $t(N)=0$ or $t(N)=V.$

If $t(N)=V$, then $(SF_V)(v,\tau)=0$ for every $v\in V$. This is absurd since $SF_V(\mathbf 1,\tau)$
is a nonzero graded character and the $S$-transformation is an
invertible change of variables on complete one-point functions which means $SF_V(\mathbf 1,\tau)\neq0.$
Therefore $t(N)=0.$ Thus, we conclude that
\begin{equation}\label{ID5.12}
\ker\Phi\subseteq\ker t.
\end{equation}

Now, let $p:Q\twoheadrightarrow Q/\ker\Phi$
be the quotient map. From \eqref{ID5.12},
there is a unique morphism
\[
\overline t:Q/\ker\Phi\longrightarrow V
\]
such that $t=\overline t\,p.$
Hence, by contravariance of $D_0$, we have $D_0(t)=D_0(p)D_0(\overline t),$
and therefore
\begin{equation}\label{ID5.13}
\im D_0(t)
\subseteq
\im D_0(p).
\end{equation}

On the other hand, the image-coimage factorization of $\Phi$ has
the form
\[
Q
\xrightarrow{\,p\,}
Q/\ker\Phi
\xrightarrow[\sim]{\,\overline\Phi\,}
\im\Phi
\xrightarrow{\,i\,}
D_0Q.
\]
Using the exactness and contravariance, we have $\im D_0(\Phi)=\im D_0(p).$
Together with \eqref{ID5.13} and Lemma \ref{lm6.5}, we arrive at
\[
\im D_0(t)
\subseteq
\im D_0(\Phi)=\im \Phi.
\]
From identity \eqref{id6.10}, the internal unit $u=D_0(t)u_0$ and $u_0$ is the canonical identification, we have
\[
\im u
\subseteq
\im D_0(t)
\subseteq
\im\Phi.
\]
Thus the internal unit $u$ factors through $\im\Phi$, by Lemma \ref{lm5.1}, we have $R=D_0W\simeq[W,V]$
is a left dual of $W$. Since $\mathcal C$ is braided, left dual determines a right dual. Thus $W$ is rigid.
We have therefore proved that every simple object of $\mathcal C$
is rigid.

(ii) Since every object of $\mathcal C$ has finite length by Theorem \ref{thm3.4}. Induction
on composition length.  Objects of length one are simple and hence
rigid.  Suppose that every object of length strictly less than $n$
is rigid, and let $X$ have length $n$.  Choose a simple quotient
$S$ of $X$ and write
\[
0\longrightarrow Z
\longrightarrow X
\longrightarrow S
\longrightarrow0.
\]
Then
\[
\ell(Z)=n-1.
\]
By induction $Z$ is rigid, while $S$ is rigid since it is simple.
Therefore $X$ is rigid  by Lemma \ref{LM}. This proves every object of $\mathcal C$ is rigid, thus $\mathcal C$ is rigid and we complete the proof.       $\hfill\Box$

\subsection{Semisimplicity}\label{sec5.2}
Now we use the rigidity to prove the semisimplicity.
\begin{lemma}\label{LM5.8}
The complete one-point geometric modified trace functions $F_{W^0},...,F_{W^r}$ of pairwise inequivalent irreducible ordinary $V$-modules are linearly independent.
\end{lemma}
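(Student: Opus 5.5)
We plan to reduce linear independence of the functions $F_{W^j}$ to the classical fact that the trace functions of inequivalent simple modules over the finite-dimensional Zhu algebra are linearly independent. Suppose $\sum_{j=0}^r c_jF_{W^j}(v,\tau)=0$ for all $v\in V$ and $\tau\in\Hh$. Since $\mathcal U(1)$ is an invertible operator on $V$, we may replace $v$ by $\mathcal U(1)^{-1}v$. The hypothesis then becomes
\[
\sum_j c_j\,\tr_{W^j}o(v)q^{L(0)-c/24}=0 \quad\text{for all } v\in V.
\]
Each $W^j$ is ordinary, so $W^j=\bigoplus_{n\geq0}W^j_{h_j+n}$ with finite-dimensional pieces, and the trace is a sum $\sum_n \tr_{W^j_{h_j+n}}o(v)\,q^{h_j+n-c/24}$.

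Next we separate by exponent classes. Grouping the indices $j$ according to the class $h_j+\Z$, the functions $q^{\lambda}\cdot(\text{power series in }q)$ with $\lambda$ in distinct classes modulo $\Z$ are linearly independent. This is the uniqueness statement recalled in Subsection~\ref{sec2.4}, obtained by isolating classes with the monodromy $\tau\mapsto\tau+1$. Within one class we compare coefficients of each power $q^{\lambda-c/24}$. For each $\lambda$ this gives
\[
\sum_j c_j\,\tr_{W^j_\lambda}o(v)=0 \quad\text{for all } v\in V.
\]

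Now let $\lambda_0$ be minimal in real part among the $h_j$ with $c_j\neq0$ in a fixed class. For every $j$ in that class, $W^j_{\lambda_0}$ is either the top level $W^j_{h_j}$, when $h_j=\lambda_0$, or zero. Here we use that $h_j-\lambda_0\in\Z_{\geq0}$ for the $j$ with $c_j\ne0$. The $j$ with $c_j=0$ are irrelevant, so we may simply discard them first. On the top level, $o(v)$ acts through the Zhu algebra $A(V)$, and $W^j_{h_j}$ are pairwise inequivalent simple $A(V)$-modules by the bijection recalled before Lemma~\ref{lm3.3}. The algebra $A(V)$ is finite-dimensional by Lemma~\ref{lm3.3}. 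By the Frobenius--Schur/Jacobson density theorem, characters of inequivalent finite-dimensional simple modules over a finite-dimensional algebra are linearly independent. Hence $c_j=0$ for all $j$ with $h_j=\lambda_0$, which contradicts the choice of $\lambda_0$. So all $c_j$ vanish.

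The main obstacle is the bookkeeping around $\lambda_0$: a module $W^k$ with $h_k$ in the same class but lower weight could contribute at level $\lambda_0$ through a non-top piece. Discarding the zero coefficients first and choosing $\lambda_0$ minimal among the remaining $h_j$ avoids this. A second point to check is that $o(\mathcal U(1)^{-1}v)$ still ranges over all zero modes. This holds because $\mathcal U(1)$ is a bijection of $V$ built from $L(0)$ and positive Virasoro modes, which are locally nilpotent in the lower truncated setting.
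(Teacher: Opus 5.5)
Your proposal is correct and follows essentially the same route as the paper. Both arguments reduce to ordinary traces via invertibility of $\mathcal U(1)$, separate the exponent classes modulo $\Z$ using $T$ (the paper does this with a Vandermonde determinant), and then use the lowest coefficient, where the top levels are pairwise inequivalent simple modules over the finite-dimensional Zhu algebra with independent characters (the paper makes this explicit via Wedderburn and lifted idempotents). Your choice of $\lambda_0$ as minimal among the $h_j$ with $c_j\neq0$ also spells out the induction that the paper compresses into the phrase ``we can assume all $h_j$ are equal.''
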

\proof Since the geometric modified operator $\mathcal U(1)$ is invertible, it is enough to show the ordinary complete one point trace functions are linearly independent. Proof by contradiction, suppose there exist $s_j\in\C$, such that
\begin{eqnarray}\label{ID5.14}
\sum_js_j\tr_{W^j}(v,\tau)=0
\end{eqnarray}
for all $v\in V,\tau\in\mathbb{H}$.

Let $h_j$ be the lowest conformal weight of $W^j$. Since $W^j$
is ordinary, we have $W^j=\bigoplus_{n\geq0}W^j(h_j+n)$ with $\dim W^j(h_j+n)<\infty$. Then the $j$-th trace function is
\begin{eqnarray*}
\tr_{W^j}
o_{W^j}(v)q^{L(0)-c/24}
=
\sum_{n\geq0}
\tr_{W^j(h_j+n)}
o_{W^j}(v)
q^{h_j+n-c/24}.
\end{eqnarray*}
Thus all exponents occurring in the $j$-th trace belong to $h_j-\frac{c}{24}+\mathbb Z_{\geq0}.$
Applying $T$-transformation to identity \eqref{ID5.14}, we get
\begin{eqnarray*}
0=\sum_js_j\tr_{W^j}(v,\tau+1)&=&\sum_js_j\left(\sum_{n\geq0}
\tr_{W^j(h_j+n)}
o_{W^j}(v)
e^{2\pi i(\tau+1)(h_j+n-c/24)}\right)\\
&=&\sum_js_je^{2\pi i(h_j-c/24)}\tr_{W^j}(v,\tau).
\end{eqnarray*}
Hence, applying $T$-transformation several times, using Vandermonde determinant, we can assume all $h_j$ lie in same coset modulo $\Z.$

Since $V$ has only finitely many inequivalent irreducible ordinary modules, let $$\lambda=\min\{h_j,j=0,...,r\},$$ we have $h_j-\lambda\in\mathbb Z_{\geq0}$
for all $j$.

Extracting the coefficient of $q^{\lambda-c/24}$
from identity \eqref{ID5.14}, comparing the coefficients of degree $q^{\lambda-c/24}$, we can assume all $h_j$ are equal.
Now, the top levels $W^j(\lambda)$ are pairwise inequivalent simple
$A_0(V)$-modules, where $A_0(V)$ is the Zhu algebra.
Since $V$ is $C_2$-cofinite, we have $A_0(V)$ is finite dimensional.
Let $J=\Rad A_0(V)$ be the Jacobson radical.
Every simple $A_0(V)$-module is annihilated by $J$ and hence is naturally
a simple module over the semisimple algebra $A_0(V)/J$.  By the
Wedderburn theorem,
\[
A_0(V)/J
\simeq
\bigoplus_{j=0}^r
\operatorname{Mat}_{d_j}(\mathbb C).
\]
The pairwise inequivalent simple modules $W^j(\lambda)$ correspond to
distinct matrix summands.  For each $j$, let
$\overline e_j\in A_0(V)/J$ be the identity of the matrix summand
corresponding to $W^j(\lambda)$, and choose a lift $e_j\in A_0(V).$
Then, we have
\[
e_j\mid_{W^j(\lambda)}
=
\Id_{W^j(\lambda)},
\qquad
e_j|_{W^k(\lambda)}
=
0,~~
k\neq j.
\]

Now choosing $v^j\in V$ such that its image in $A_0(V)$ satisfy $[v^j]=e_j.$
Substituting $v=v^j$ into
\eqref{ID5.14}, comparing the lowest degree, we have
\begin{eqnarray*}
0=
\sum_js_j
\tr_{W^j(\lambda)}
o_{W^j}(v^j)
=
s_j
\tr_{W^j(\lambda)}
e_j\mid_{W^j(\lambda)}
=
s_j\dim W^j(\lambda).
\end{eqnarray*}
This means $s_j=0.$

From the whole discussion, we can see that $F_{W^0},...,F_{W^r}$ are linearly independent, and we complete the proof.                   $\hfill\Box$

\begin{lemma}\label{LM5.9}
Let $\mathcal D$ be a $\mathbb C$-linear abelian category, and let
$A,B$ be simple objects. Suppose that $\eta$ is a natural endomorphism of the identity functor such that $\eta_A=\alpha\Id_A,
\eta_B=\beta\Id_B$
for different scalars $\alpha,\beta\in\mathbb C$.
Then
\[
\Ext_{\mathcal D}^1(A,B)=0.
\]
\end{lemma}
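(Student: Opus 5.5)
Take an extension $0\to B\xrightarrow{\iota}E\xrightarrow{\pi}A\to 0$ in $\mathcal D$; the plan is to build a splitting out of $\eta_E$, using only naturality of $\eta$ and the scalar actions on $A$ and $B$. Naturality with respect to $\iota$ and $\pi$ gives
\[
\eta_E\,\iota=\iota\,\eta_B=\beta\,\iota,\qquad \pi\,\eta_E=\eta_A\,\pi=\alpha\,\pi.
\]
Put $f=\eta_E-\beta\Id_E\in\End_{\mathcal D}(E)$. Then $f\iota=0$, so $f$ factors through the cokernel as $f=g\pi$ for a unique $g:A\to E$. Also $\pi f=(\alpha-\beta)\pi$, hence $\pi g\pi=(\alpha-\beta)\pi$. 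Since $\pi$ is epic, $\pi g=(\alpha-\beta)\Id_A$.

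The crucial point is that $\alpha\neq\beta$, so we may set $s=(\alpha-\beta)^{-1}g:A\to E$. This satisfies $\pi s=\Id_A$, so $s$ is a section of $\pi$. In an abelian category a split epimorphism yields $E\cong B\oplus A$ compatibly with $\iota$ and $\pi$. The extension therefore represents the zero class, and since the extension was arbitrary, $\Ext^1_{\mathcal D}(A,B)=0$ in the Yoneda sense.

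I do not expect any real obstacle here; the only points needing care are the factorization of $f$ through $\operatorname{coker}\iota=A$ and the cancellation of $\pi$ as an epimorphism. Simplicity of $A$ and $B$ is used only through the hypothesis that $\eta$ acts on them by scalars. The same argument therefore works for any objects on which $\eta$ acts by distinct scalars, and it also gives $\Ext^1_{\mathcal D}(B,A)=0$ by symmetry.
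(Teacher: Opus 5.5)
Your argument is correct, and it gets to the section more directly than the paper's proof. Both use the same operator, $\eta_E-\beta\Id_E$ rescaled by $(\alpha-\beta)^{-1}$.

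The paper's route is longer:
\begin{itemize}
\item it first derives the quadratic relation $(\eta_E-\beta\Id_E)(\eta_E-\alpha\Id_E)=0$, so that $e=(\eta_E-\beta\Id_E)/(\alpha-\beta)$ is an idempotent with $e\iota=0$ and $pe=p$;
\item it splits this idempotent as $e=jr$ with $rj=\Id_I$, where $I=\im e$;
\item it checks by hand, via a generalized-element argument, that $pj\colon I\to A$ is both epic and monic;
\item it takes $s=j(pj)^{-1}$ as the section.
\end{itemize}

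You instead use that $\pi$ is the cokernel of $\iota$. This lets you factor $\eta_E-\beta\Id_E=g\pi$, and cancelling the epimorphism $\pi$ gives $\pi g=(\alpha-\beta)\Id_A$. In fact your $s\pi$ is exactly the paper's idempotent $e$. So your route bypasses the quadratic relation, the idempotent splitting, and the isomorphism check for $pj$ altogether.

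What the paper's formulation buys is the picture of an idempotent in $\End_{\mathcal D}(E)$ whose image is a complement to $\iota(B)$ isomorphic to $A$. What yours buys is brevity. It also makes transparent your observation that simplicity enters only through the scalar action of $\eta$ on $A$ and $B$, so the argument applies to any pair of objects on which $\eta$ acts by distinct scalars. Since the hypothesis is symmetric in $(A,\alpha)$ and $(B,\beta)$, it also gives $\Ext^1_{\mathcal D}(B,A)=0$.
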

\proof Consider an arbitrary extension
\begin{equation}\label{ID5.15}
0
\longrightarrow
B
\xrightarrow{\ \iota\ }
E
\xrightarrow{\ p\ }
A
\longrightarrow
0.
\end{equation}
Since $\eta$ is natural, applying $\eta$ to $p:E\rightarrow A$, we have $p(\eta_E-\alpha\Id_E)=0.$
Therefore
\[
\im(\eta_E-\alpha\Id_E)
\subseteq
\ker p
=
\im\iota.
\]
Similarly, applying $\eta$ to $\iota:B\to E$, we have
$(\eta_E-\beta\Id_E)\iota=0$.
Thus $\eta_E-\beta\Id_E$ vanishes on $\im\iota$.
Therefore, we obtain
\begin{equation}\label{ID5.16}
(\eta_E-\beta\Id_E)
(\eta_E-\alpha\Id_E)
=
0.
\end{equation}

Since $\alpha\neq\beta$, we define
\begin{equation}\label{ID5.17}
e
=
\frac{\eta_E-\beta\Id_E}{\alpha-\beta}
\in
\operatorname{End}_{\mathcal D}(E).
\end{equation}
 Using identity
\eqref{ID5.16}, we have
\begin{eqnarray*}
e^2-e
=
\frac{
(\eta_E-\beta\Id_E)
\left(
\eta_E-\alpha\Id_E
\right)}
{(\alpha-\beta)^2}
=
0.
\end{eqnarray*}
Hence $e^2=e.$ Furthermore, we have
\begin{eqnarray}\label{ID5.18}
e\iota&=&\frac{1}{\alpha-\beta}(\eta_E-\beta\Id_E)\iota=0,\label{ID5.18}\\
pe&=&\frac{1}{\alpha-\beta}(p\eta_E-\beta p)=
\frac{1}{\alpha-\beta}(\eta_Ap-\beta p)=p.\label{ID5.19}
\end{eqnarray}

Let $I=\im e.$
Since $e$ is idempotent and $\mathcal C$ is abelian, $e$ splits.
Thus there are morphisms $r:E\twoheadrightarrow I,
j:I\hookrightarrow E$
such that
\[
e=jr,
\qquad
rj=\Id_I.
\]
Consider the restriction $\overline p=pj:I\longrightarrow A.$
First, by \eqref{ID5.19}, we have $p=\overline p\,r.$
Since $p$ is epic, $\overline p$ is epic.
On the other hand, let
$x:X\to I$ satisfy $\overline p\,x=0.$
Then $pjx=0.$
Since $\ker p=\im\iota,$
there exists a morphism $y:X\to B$ such that $jx=\iota y.$
Since $jx$ lies in $\im e$, the idempotent $e$ acts as the
identity on it, we have $e\,jx=jx.$
But, by identity \eqref{ID5.18}, $e\,jx=e\iota y=0.$
Hence, we get
\[
jx=0.
\]
Since $j$ is monic, it follows that $x=0.$
Therefore $\overline p$ is monic and isomorphic.

Define $s=j\,\overline p^{-1}:A\longrightarrow E$, we can see $ps=\Id_A$, i.e., $s$ is a section of $p$.
Thus the short exact sequence
\eqref{ID5.15} splits.
Therefore $\Ext_{\mathcal D}^1(A,B)=0$ and we complete the proof.
$\hfill\Box$

\begin{theorem}\label{THM5.10}
If $\mathcal C$ is rigid, and the complete trace functions satisfy
\[
SF_V=\sum_{j=0}^r s_jF_{W^j}.
\]Then $\mathcal C$ is semisimple.
\end{theorem}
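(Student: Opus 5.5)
We follow the outline from the introduction. For a simple object $W=W^l$, rigidity supplies an evaluation $e:R\boxtimes W\to V$ with $R=W^*$ and a coevaluation $i:V\to W\boxtimes R$. For $X\in\mathcal C$ we then define the natural endomorphism
\[
h_{W,X}=\beta_X\circ(i\boxtimes\Id_X):X\longrightarrow X,
\]
with $\beta_X$ as in \eqref{id5.9}; this is, up to the twist, the usual open Hopf link endomorphism built from $\mathsf M^{W,X}$. Naturality in $X$ follows from naturality of the braiding, the monodromy and the unit constraints, so $h_{W,-}$ is an endomorphism of $\Id_{\mathcal C}$. By Schur's lemma (Theorem \ref{thm3.4} gives finite-dimensional Hom spaces, and each $W^j$ is simple) we get $h_{W^l,W^j}=\lambda_{lj}\Id_{W^j}$.

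Next we feed this into Lemma \ref{Lm4.8}(i), using the same rigid $e$ and setting $b=i(\mathbf 1)$ (more precisely $b=i(v)$ for a general insertion $v\in V$). The triangle identity gives $\alpha(i\boxtimes\Id_W)=\Id_W$, so $G_{\alpha,W}(i(v),\tau)=F_W(v,\tau)$. On the right-hand side, $G_{\beta_{W^j},W^j}(i(v),\tau)=F_{W^j}(h_{W,W^j}v,\tau)$ by naturality of tensor-product intertwining operators: $\mathcal Y_{\beta_X}(i(v),z)=\beta_X(i\boxtimes\Id)\mathcal Y^{V,X}(v,z)$. Together with the fact that $i$ commutes with $\mathcal U(1)$ and with $L(0)$, this yields
\[
SF_{W^l}=\sum_j s_j\lambda_{lj}F_{W^j}.
\]
The main point to check here is that the $L(0)$ grading on $Q$ matches that on $V$ under $i$, so that $S$ acts compatibly; this holds because $i$ is a module map.

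Since $S$ is invertible on complete one-point functions and the $F_{W^j}$ are linearly independent by Lemma \ref{LM5.8}, the matrix $(s_j\lambda_{lj})_{l,j}$ is invertible. In particular its columns are pairwise distinct (as vectors) and nonzero. Suppose $j\neq k$ and $\lambda_{lj}=\lambda_{lk}$ for all $l$. Then row invertibility forces the columns $s_j\lambda_{\cdot j}$ and $s_k\lambda_{\cdot k}$ to be proportional, contradicting invertibility; here we use $s_j,s_k\neq0$, which follows because a zero column is impossible. Hence for each pair $j\neq k$ there is an $l$ with $\lambda_{lj}\neq\lambda_{lk}$, and Lemma \ref{LM5.9} gives $\Ext^1_{\mathcal C}(W^j,W^k)=0$.

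For self-extensions of the unit, $\mathcal C$ is now a finite tensor category: it is rigid, finite abelian by Theorem \ref{thm3.4}, and has simple unit. By \cite{EGNO}, $\Ext^1(V,V)=0$ in this setting. Hence $V$ has no nontrivial extensions by any simple object, so its projective cover $P_V$ has radical zero, i.e. $V\cong P_V$ is projective. Rigidity then makes every $X\cong X\boxtimes V$ projective, since $\Hom(X\boxtimes V,-)\cong\Hom(V,X^*\boxtimes -)$ is exact. Therefore $\mathcal C$ is semisimple.

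We expect the main obstacle to be the identification $G_{\beta_X,X}(i(v),\tau)=F_X(h_{W,X}v,\tau)$. It requires that the comparison identity of Lemma \ref{Lm4.8}(i) remain valid for the rigid evaluation, with the twist factors bookkept correctly, and that evaluating at $i(v)$ commute with the geometric modification $\mathcal U(1)$.
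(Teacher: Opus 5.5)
Your proposal is correct and follows the paper's proof essentially step for step: the natural endomorphism $h_{W,-}=\beta_-(i\boxtimes\Id)$, Lemma~\ref{Lm4.8}(i) evaluated at $b=i(v)$, invertibility of $(s_j\lambda_{lj})$ via Lemma~\ref{LM5.8}, Lemma~\ref{LM5.9} together with $\Ext^1_{\mathcal C}(V,V)=0$, and then projectivity of $V$ and of every $X\cong X\boxtimes V$. Two small points: you get projectivity of $V$ from $\mathrm{rad}\,P_V=0$ rather than by the paper's induction on length, which is equivalent; and the $j$-th term should read $\tr_{W^j}h_{W,W^j}\,o_{W^j}(\mathcal U(1)v)q^{L(0)-c/24}$ rather than $F_{W^j}(h_{W,W^j}v,\tau)$, since $h_{W,W^j}$ acts on $W^j$, not on $V$; after Schur's lemma this term is $\lambda_{lj}F_{W^j}(v,\tau)$.
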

\proof (i) Let $W\in\mathcal C$ be rigid, and choose a left
dual $W^*$ with evaluation and coevaluation
\[
e:W^*\boxtimes W\longrightarrow V,
\qquad
i:V\longrightarrow W\boxtimes W^*.
\]
Then we have
\begin{eqnarray}\label{ID5.20}
(\Id_W\boxtimes e)
    (i\boxtimes\Id_W)
    =
    \Id_W,
\qquad
 (e\boxtimes\Id_{W^*})
    (\Id_{W^*}\boxtimes i)
    =
    \Id_{W^*}.
\end{eqnarray}
Note that $W^*$ need not be equal to the contragredient module $DW=W'$.
Put
\[
R=W^*,
\qquad
Q=W\boxtimes W^*.
\]
Using the morphism $\beta_X:Q\boxtimes X\to X$ defined by identity \eqref{id5.9}, define
\begin{equation}\label{ID5.21}
h_{W,X}
=
\beta_X
(i\boxtimes\Id_X)
:
X\longrightarrow X.
\end{equation}
Using the naturality of $\beta_-$, if $f:X\to Z$ is a morphism in $\mathcal C$, we have
\begin{eqnarray*}
fh_{W,X}
=
\beta_Z
(\Id_Q\boxtimes f)
(i\boxtimes\Id_X)
=
\beta_Z
(i\boxtimes\Id_Z)f
=
h_{W,Z}f.
\end{eqnarray*}
Thus $h_{W,-}$
is a natural endomorphism of the identity functor.

Recall from identity \eqref{id5.7} that $\alpha=
\operatorname{Id}_W\boxtimes e:Q\boxtimes W\longrightarrow W.$ Using the naturality of tensor-product intertwining operators and identities \eqref{ID5.20}, we have
\begin{eqnarray*}
\mathcal Y_\alpha(i(v),z)
=
\alpha\mathcal Y^{Q,W}(i(v),z)
=
\alpha
(i\boxtimes\operatorname{Id}_W)
\mathcal Y^{V,W}(v,z)
=
Y_W(v,z).
\end{eqnarray*}
Similarly, we have
\begin{eqnarray*}
\mathcal Y_{\beta_{W^j}}(i(v),z)
=
\beta_{W^j}\mathcal Y^{Q,X}(i(v),z)
=
\beta_{W^j}
(i\boxtimes\Id_{W^j})
Y_{W^j}(v,z)
=
h_{W,{W^j}}Y_{W^j}(v,z).
\end{eqnarray*}
Since $i$ is a $V$-module morphism, it commutes with all Virasoro
operators and hence with the geometric coordinate operator
$\mathcal U(1)$.
It follows that
\begin{eqnarray*}
(SG_{\alpha,W})(i(v),\tau)
&=&
G_{\alpha,W}
\left(
(-\tau^{-1})^{L_Q(0)}i(v),
-\tau^{-1}
\right)
\\
&=&
G_{\alpha,W}
\left(
i((-\tau^{-1})^{L_V(0)}v),
-\tau^{-1}
\right)
\\
&=&
F_W
\left(
(-\tau^{-1})^{L_V(0)}v,
-\tau^{-1}
\right)
\\
&=&
(SF_W)(v,\tau),\\
G_{\beta_{W^j},{W^j}}(i(v),\tau)
&=&
\tr_{W^j}
\left(
h_{W,{W^j}}
Y_{W^j}(\mathcal U(1)v,1)
q^{L_{W^j}(0)-c/24}
\right)
\\
&=&
\tr_{W^j}
\left(
h_{W,{W^j}}
o_{W^j}(\mathcal U(1)v)
q^{L_{W^j}(0)-c/24}
\right).
\end{eqnarray*}
Using Lemma \ref{Lm4.8} (i), we get
\begin{equation}\label{ID5.22}
(SF_W)(v,\tau)
=
\sum_j s_j
\tr_{W^j}
\left(
h_{W,W^j}
o_{W^j}(\mathcal U(1)v)
q^{L_{W^j}(0)-c/24}
\right).
\end{equation}

(ii) We now take $W=W^l.$
Since $W^j$ is simple, Schur's lemma gives a scalar
$\lambda_{lj}\in\mathbb C$ such that $h_{W^l,W^j}=\lambda_{lj}\Id_{W^j}.$
Equation \eqref{ID5.22} therefore becomes
\begin{equation}\label{ID5.23}
SF_{W^l}
=
\sum_j
s_j\lambda_{lj}F_{W^j}.
\end{equation}

Let $\mathcal H=
\operatorname{span}_{\mathbb C}
\{F_{W^0},\ldots,F_{W^r}\}.$
Equation \eqref{ID5.23} shows that $S(\mathcal H)\subseteq\mathcal H.$
By Lemma \ref{LM5.8}, the complete functions $F_{W^0},\ldots,F_{W^r}$
form a basis of $\mathcal H$.
Since the $S$-transformation and the geometrically modified operation are
invertible. Hence we have
\[
S\mid_{\mathcal H}:\mathcal H\longrightarrow\mathcal H
\]
is injective.  Since $\mathcal H$ is finite dimensional,
$S\mid_{\mathcal H}$ is therefore an isomorphism.

Set $\Lambda=(\lambda_{lj})_{l,j},
\Gamma=\operatorname{diag}(s_0,\ldots,s_r).$
With the coefficient convention of
\eqref{ID5.23}, we have the matrix of $S\mid_{\mathcal H}$ is $\Lambda \Gamma.$ Hence $\Lambda \Gamma$ is invertible. In particular, $\Lambda$ and $\Gamma$
are also invertible.
Consequently, for every pair $j\neq k$, there exists an index $l$
such that $\lambda_{lj}\neq\lambda_{lk}.$
For this $l$, the natural endomorphism $h_{W^l,-}$
acts on $W^j$ and $W^k$ by the distinct scalars
$\lambda_{lj}$ and $\lambda_{lk}$, respectively.  By Lemma \ref{LM5.9}, for all $j\neq k,$ we have
\begin{equation}\label{ID5.24}
\operatorname{Ext}_{\mathcal C}^1(W^j,W^k)=0
\end{equation}

By Theorem \ref{thm3.4}, $\mathcal C$ is a finite abelian monoidal category with simple unit object. If $\mathcal C$ is rigid, it is a finite tensor category.  By \cite{EGNO}, the tensor unit of a
finite tensor category has no self-extension, i.e., $ \operatorname{Ext}_{\mathcal C}^1(V,V)=0.$
Together with \eqref{ID5.24}, this implies
\begin{equation}\label{ID5.25}
\operatorname{Ext}_{\mathcal C}^1(V,W^j)=0
\end{equation}
for all $j=0,1,...,r.$

(iii) For any object $X\in\mathcal C$, it has finite length $\ell(X)$.  Induction on $\ell(X)$. If $\ell(X)=1$, $X$ is simple, then identity \eqref{ID5.25} implies $\operatorname{Ext}_{\mathcal C}^1(V,X)=0$.  Suppose $\operatorname{Ext}_{\mathcal C}^1(V,Z)=0$ for all objects $Z\in \mathcal C$ with $\ell(Z)<\ell(X)$. Now
choose a simple quotient $W$ of $X$ and write
\[
0
\longrightarrow
Z
\longrightarrow
X
\longrightarrow
W
\longrightarrow
0.
\]
Applying $\operatorname{Hom}_{\mathcal C}(V,-)$ to above sequence, we obtain the long exact sequence containing
\[
\operatorname{Ext}_{\mathcal C}^1(V,Z)
\longrightarrow
\operatorname{Ext}_{\mathcal C}^1(V,X)
\longrightarrow
\operatorname{Ext}_{\mathcal C}^1(V,W).
\]
Now induction hypothesis and identity \eqref{ID5.25} force
$\operatorname{Ext}_{\mathcal C}^1(V,X)=0.$ This completes the induction. Thus for any $X\in \mathcal C,$ we have
\[
\operatorname{Ext}_{\mathcal C}^1(V,X)=0.
\]
A basic knowledge of homological algebra \cite{HS} tells us that above equation is equivalent to saying the tensor unit $V$ is projective.

For any $X\in\mathcal C$, since $\mathcal C$ is rigid, so is $X$.  For a suitable dual
$X^*$, using the properties of evaluation and coevaluation morphisms, there is a natural adjunction
\[
\operatorname{Hom}_{\mathcal C}
(V\boxtimes X,-)
\simeq
\operatorname{Hom}_{\mathcal C}
(V,-\boxtimes X^*),\qquad f\mapsto (f\boxtimes \Id_{X^*})(\Id_V\boxtimes i_X).
\]
Since $\mathcal C$ is rigid, so is $X^*$. Thus the tensor functor $-\boxtimes X^*$
has both left adjoint functor and right adjoint functor which means $-\boxtimes X^*$ is exact.
Since $V$ is projective, $\operatorname{Hom}_{\mathcal C}(V,-)$
is also exact.  Hence, we get
\[
\operatorname{Hom}_{\mathcal C}
(V\boxtimes X,-)
\]
is exact, and therefore, we have $X\simeq V\boxtimes X$
projective for every $X\in\mathcal C$.  Hence every short exact
sequence in $\mathcal C$ splits, and therefore $\mathcal C$ is semisimple. This completes the proof.     $\hfill\Box$

\begin{proposition}\label{PROP5.10}
If $\mathcal C$ is semisimple, then $V$ is regular.
\end{proposition}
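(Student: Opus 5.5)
The plan is to reduce an arbitrary weak module to its cyclic submodules, which already live in $\mathcal C$. Let $M$ be any weak $V$-module and $0\neq w\in M$. The cyclic submodule $U=V\cdot w$ is finitely generated, so by Proposition \ref{prop3.2} and Theorem \ref{thm3.4} it is an object of $\mathcal C$. Since $\mathcal C$ is semisimple and every object has finite length (Theorem \ref{thm3.4}), $U$ is a finite direct sum of simple objects of $\mathcal C$. These simple objects are irreducible weak modules, and the proof of Theorem \ref{thm3.4} shows that they are ordinary, hence isomorphic to some of $W^0,\dots,W^r$. So every cyclic submodule of $M$ is a finite direct sum of irreducible ordinary modules, and in particular $M$ is a sum of irreducible ordinary submodules.

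Next we pass from a sum to a direct sum. I would use the standard Zorn's lemma argument: among families of irreducible ordinary submodules of $M$ whose sum is direct, choose a maximal one, with sum $N=\bigoplus_{\alpha}N_\alpha$. If $N\neq M$, pick $w\in M\setminus N$. Write $V\cdot w=\bigoplus_k S_k$ as above. Some $S_k$ is not contained in $N$, since otherwise $w\in N$. Because $S_k$ is irreducible, $S_k\cap N$ is a proper submodule of $S_k$ and hence zero. Then $N+S_k$ is direct, which contradicts maximality. Therefore $M=N$ is a direct sum of irreducible ordinary modules, so $V$ is regular.

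The only step that needs care is that the simple summands in $\mathcal C$ are irreducible as weak modules, not merely simple among grading-restricted generalized modules. This holds because any weak submodule of $U\in\mathcal C$ is again finitely generated (it is a submodule of $U$, generated by its finite-dimensional low-weight part as in part (ii) of the proof of Theorem \ref{thm3.4}), so it lies in $\mathcal C$. Hence simplicity in $\mathcal C$ coincides with irreducibility as a weak module. Rationality then follows, since regularity implies rationality.
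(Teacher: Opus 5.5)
Your proof is correct and follows the same route as the paper: each cyclic submodule $V\cdot w$ lies in $\mathcal C$, so it is a finite direct sum of irreducible ordinary modules, and a Zorn's lemma argument then turns the resulting sum into a direct sum. Your version is slightly more careful than the paper's. You run Zorn on independent families, so chains clearly have upper bounds, and you check that simple objects of $\mathcal C$ are irreducible as weak modules; this last check can be done even more directly, because any $L(0)$-stable subspace of a grading-restricted generalized module inherits its generalized-eigenspace decomposition and therefore already lies in $\mathcal C$.
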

\proof Let $W$ be an arbitrary weak $V$-module.  For every $w\in W$, the cyclic submodule $V\cdot w$ generated by $w$ is finitely
generated.  By Theorem \ref{thm3.4}, $V\cdot w$ belongs to $\mathcal C$. Since $\mathcal C$ is semisimple, we have $V\cdot w$ is a finite direct sum of simple ordinary $V$-modules.
Thus $W$ is the sum of its simple ordinary submodules.

Let $\mathcal S$ be the set of all submodules of $W$ which is a direct sum of simple submodules. Then $\mathcal S$ is a partially ordered set. By Zorn's Lemma, let $N$ be a maximal element of $\mathcal S$, and write
\[
N
=
\bigoplus_{\lambda\in\Lambda}S_\lambda
\subseteq W.
\]
Suppose that $N\neq W$.  Since $W$ is the
sum of its simple submodules, there exists a simple submodule
$X\subseteq W$ which is not contained in $N$.  Since $X$ is simple, if $X\cap N\neq0$, we have $X\subset N$. This contradicts the assumption of $X$. Thus we have $X\cap N=0$. Now $X\oplus N$
is a strictly larger direct sum of simple submodules which  contradicts
the maximality of $N$.  Thus
\[
N=W,
\]
and every weak $V$-module $W$ is a direct sum of irreducible
ordinary $V$-modules.  Hence $V$ is regular, and we complete the proof.          $\hfill\Box$

\begin{theorem}\label{THM5.11}
Let $V$ be a simple, $C_2$-cofinite lower truncated vertex operator algebra, and $\mathcal C$ the category of grading restricted generalized $V$-modules. Suppose the complete trace functions satisfy
\begin{eqnarray}\label{Last}
SF_V=\sum_{j=0}^r s_jF_{W^j}.
\end{eqnarray} If there is $B\in \mathcal C$, and a nonzero morphism $DV\boxtimes B\to V$, then $V$ is rational.
\end{theorem}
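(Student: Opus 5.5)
The plan is to chain together the results already established in Sections \ref{sec3} and \ref{sec5}. The first step is to verify the hypotheses of Corollary \ref{co5.4} for $\mathcal C$. By Theorem \ref{thm3.4}, $\mathcal C$ is an abelian braided Grothendieck--Verdier category with the following properties:
\begin{itemize}
\item the tensor product is right exact;
\item the duality $D$ is an exact contravariant equivalence with $D^2\cong\Id_{\mathcal C}$;
\item the dualizing object is $K=DV$;
\item the tensor unit $V$ is simple, because $V$ is a simple vertex operator algebra.
\end{itemize}
The assumed nonzero morphism $DV\boxtimes B=K\boxtimes B\to V$ is then exactly the input of Corollary \ref{co5.4}. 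Hence $K$ is invertible.

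With $K$ invertible, the standing assumption of Section \ref{sec5.1} holds. Together with the modular identity (\ref{Last}), Proposition \ref{prop6.6} shows that $\mathcal C$ is rigid. Next, Theorem \ref{THM5.10} applies to the rigid category $\mathcal C$ with the same identity (\ref{Last}), so $\mathcal C$ is semisimple. Proposition \ref{PROP5.10} then shows that $V$ is regular: every weak $V$-module is a direct sum of irreducible ordinary $V$-modules.

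The last step is to pass from regularity to rationality. An admissible $V$-module is in particular a weak $V$-module. By regularity it is therefore a direct sum of irreducible ordinary modules, and irreducible ordinary modules are irreducible admissible modules. So every admissible module is completely reducible, which is the definition of rationality. This recovers the general fact that regularity implies rationality.

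I expect no real obstacle, since every step is a direct citation. The one point I would check carefully is that the hypotheses of Corollary \ref{co5.4} match Theorem \ref{thm3.4} as stated. In particular, $K$ must be identified with $D\mathbf 1=DV$ via the Grothendieck--Verdier adjunction, and simplicity of the unit must be used as in Proposition \ref{prop5.3}. I would also confirm that the $s_j$ in (\ref{Last}) are the same constants used throughout Sections \ref{sec4.2} and \ref{sec5}.
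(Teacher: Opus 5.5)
Your proposal is correct and follows exactly the paper's chain. Theorem \ref{thm3.4} verifies the hypotheses of Corollary \ref{co5.4}, so $K=DV$ is invertible; then Proposition \ref{prop6.6} gives rigidity, Theorem \ref{THM5.10} gives semisimplicity, and Proposition \ref{PROP5.10} gives regularity. The only difference is cosmetic: for regularity $\Rightarrow$ rationality the paper simply cites \cite{ABD}, whereas you spell out the standard direct argument.
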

\proof By Theorem \ref{thm3.4}, we can see that $\mathcal C$ satisfies all the assumptions of Corollary \ref{co5.4} with $K=DV$. Thus if there is $B\in \mathcal C$, and a nonzero morphism $DV\boxtimes B\to V$, we have $K$ is invertible. In this case, using Proposition \ref{prop6.6}, under identity \eqref{Last}, we have $\mathcal C$ is rigid. Again under identity \eqref{Last} and Theorem \ref{THM5.10}, we know $\mathcal C$ is semisimple. Using Proposition \ref{PROP5.10}, we have $V$ is regular. Then, from \cite{ABD}, we have $V$ is rational.                    $\hfill\Box$

\begin{remark}\label{RE5.12}
By Theorem \ref{THM5.11}, to prove the rationality of $V$, we just need to find a $V$-module $B$ in $\mathcal C$ such that there is a nonzero intertwining operator of type $\binom{V}{K~~B}$, where $K=DV$ is the contragredient module of $V$.
\end{remark}

\section{Rationality of $V$}\label{sec6}
The purpose of this section is to prove the main Theorem of this paper, i.e., we will show $V$ is rational under the assumptions that $V$ is simple, $C_2$-cofinite, lower truncated and the space spanned by trace functions of irreducible $V$-modules is invariant under the modular group. The main difficulty is that the original conformal grading
may contain a finite negative part and the original contragredient
module need not be the vacuum module.
We overcome this difficulty in three steps.

First, using the trace
on the lowest homogeneous subspace, we construct a linear functional $f$ which detects the obstruction to modifying the conformal structure.
The modular invariance hypothesis implies that this obstruction
vanishes, yielding a correction vector $h\in V_1$
satisfying the correction equations.

Second, the correction vector determines a modified conformal
structure $\Omega=\omega+L(-1)h,$
whose zero-eigenvalue subalgebra provides the appropriate nonnegative
grading and M\"{o}bius structure needed for the subsequent construction.

Finally, using the relative inversion associated with the corrected
M\"{o}bius structure, we construct a Laurent-valued pseudoautomorphism
and obtain a simple module $B$ together with a nonzero morphism $DV\boxtimes B\longrightarrow V,$ where $DV=K.$

After those preparations, we use the invertibility criterion to show that $K$ is invertible. This implies $\mathcal C$ is rigid under the modular invariance assumption. Using modular invariance again, we show $V$ is regular, thus rational.

\subsection{First step: corrections}\label{sec6.1}
Recall that $-s$ is the lowest weight of the simple $C_2$-cofinite vertex operator algebra $V.$ Set $W=V_{-s}$, for $v\in V,$ define $$f(v)=\tr_{W}v_{-1},$$ it is obvious that $f(v)=0$ for $v\notin V_0.$
\begin{lemma}\label{lm4.1}
Let $f$ be defined as above. Then we have $$f(\textbf{1})=\dim W>0,\qquad f\circ L(-1)=0,\qquad f(a_0v)=0,a\in V_1,v\in V,\qquad f(u_{-2}v)=0,u,v\in V.$$
\end{lemma}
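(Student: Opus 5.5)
The idea is to recognize $f$ as a trace function on the Zhu algebra $A(V)=A_0(V)$. Then all four properties follow from two standard facts: $o$ annihilates $O(V)$ on a lowest weight space, and traces kill commutators.

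\emph{Step 1 (reinterpretation).} For homogeneous $v\in V$ the mode $v_{-1}$ has degree $\wt v$. So $v_{-1}$ maps $W=V_{-s}$ into $V_{-s+\wt v}$, and the trace $\tr_W v_{-1}$ is understood as the trace of the $W\to W$ component, which is zero unless $\wt v=0$. For $v\in V_0$ we have $v_{-1}=v_{\wt v-1}=o(v)$, hence
\[
f(v)=\tr_W o(v)\quad\text{for } v\in V_0,\qquad f(v)=0\quad\text{for homogeneous } v\notin V_0 .
\]
Since $V_{-s}$ is the lowest weight space of $V$, every weight-lowering mode annihilates it. By Zhu's theory, recalled before Lemma \ref{lm3.3}, $W$ is therefore an $A(V)$-module via $[v]\mapsto o(v)|_W$, and $o(x)|_W=0$ for every $x\in O(V)$. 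I will use the following consequence: $f(x)=0$ whenever $x\in O(V)$ and $x$ is homogeneous, or more generally whenever the weight-zero component of $x$ lies in $O(V)$.

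\emph{Step 2 (the first three identities).} First, $f(\mathbf 1)=\tr_W\Id_W=\dim W$, which is positive because $V_{-s}\neq0$ by the choice of $s$.

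Next take $v\in V$ homogeneous. Then $(L(-1)+L(0))v\in O(V)$ is homogeneous of weight $\wt v+1$. If $\wt v\neq -1$, then $L(-1)v\notin V_0$ and $f(L(-1)v)=0$ trivially. If $\wt v=-1$, then $f(L(-1)v)=f\bigl((L(-1)+L(0))v\bigr)+f(v)=0+0$, because $v\notin V_0$. Hence $f\circ L(-1)=0$.

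For $a\in V_1$ we may assume $v\in V_0$, since $a_0$ preserves weights. The commutator formula gives $(a_0v)_{-1}=[a_0,v_{-1}]$, and both $a_0$ and $v_{-1}$ preserve $W$. Thus $f(a_0v)=\tr_W[a_0,v_{-1}]=0$.

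\emph{Step 3 (the $C_2$-type identity, which I expect to be the main point).} Take $u,v$ homogeneous with $\wt u=a$, and assume $\wt(u_{-2}v)=a+\wt v+1=0$, since otherwise $f(u_{-2}v)=0$ trivially. Consider the element
\[
u\circ_0 v=\Res_z\frac{(1+z)^{a}}{z^{2}}Y(u,z)v=\sum_{j\geq0}\binom{a}{j}u_{j-2}v\in O(V).
\]
Here the $u_{j-2}v$ are the summands of weight $a+\wt v+1-j=-j$. For $j\geq1$ these weights are nonzero, so $f$ kills those summands. The only weight-zero component of $u\circ_0 v$ is therefore $u_{-2}v$.

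To conclude I need $f(u\circ_0 v)=0$ for this inhomogeneous element. The subtle point is to justify that $o(u\circ_0v)|_W=0$ componentwise. The clean way is to compute directly on $W$: for $w\in W$, the operator $o(u\circ_0 v)$ restricted to $W$ is given by the Jacobi identity as a combination of products $u_pv_q$ and $v_qu_p$. The term where a lowering mode acts first on $w$ vanishes. This is exactly Zhu's computation showing that $O(V)$ acts trivially on lowest weight spaces, and I would reproduce it restricted to the weight-zero part. The resulting identity $\tr_W o(u_{-2}v)=0$ gives $f(u_{-2}v)=0$.

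As a cross-check, the same conclusion should follow from writing $u_{-2}v=(L(-1)u)_{-1}v$ and using the symmetry $f([x]*[y])=f([y]*[x])$ of the trace on the $A(V)$-module $W$.
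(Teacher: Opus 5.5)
Your route is the paper's: read $f$ as a Zhu-algebra trace on $W$, use $u\circ_0v\in O(V)$ for the $C_2$-type identity, and use a trace of a commutator for $a_0$. The parts $f(\mathbf 1)=\dim W$ and $f(a_0v)=0$ are fine. The gap appears wherever you must compare $f$, which vanishes off $V_0$ by definition, with $\tr_W o(\cdot)$, which does not.

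In Step 2, $(L(-1)+L(0))v=L(-1)v-v$ is \emph{not} homogeneous when $\wt v=-1$. Its weight-zero part is $L(-1)v$, so $f\bigl((L(-1)+L(0))v\bigr)=f(L(-1)v)$ and the argument is circular. What you actually need is $\tr_W v_{-2}=\tr_W o(v)=0$ for $v\in V_{-1}$.

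In Step 3, your proposed remedy is false: it is not true that $o(u_{-2}v)$ vanishes on $W$ by running Zhu's computation ``componentwise''. Every $o(u_{j-2}v)$ is a degree-zero operator on $W$, and Zhu's cancellation uses all $j$ together. For example, take $u\in V_{-1}$ and $v=\mathbf 1$. Then $u\circ_0\mathbf 1=L(-1)u-u$ and $o(L(-1)u)|_W=u_{-2}|_W=o(u)|_W$. This operator is in general nonzero; in shifted lattice examples it raises lattice charge. It is only traceless, and that needs an argument. The paper's sentence ``Thus $f(u\circ v)=0$'' passes over the same point.

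The missing lemma is: for homogeneous $x$ of negative weight, $o(x)|_W$ is nilpotent, hence traceless.

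\begin{itemize}
\item Since $W=V_{-s}$ is killed by all weight-lowering modes, Zhu's formula $o(x)o(y)=o(x*y)$ holds on $W$.
\item For $x\in V_a$, the components of $x*y=\sum_{i\ge0}\binom{a}{i}x_{i-1}y$ have weight at most $a+b$ whenever all components of $y$ have weight at most $b$.
\item Hence $o(x)^{s+1}|_W=o(x^{*(s+1)})|_W$. For $a\le -1$ we get $x^{*(s+1)}\in\bigoplus_{n\le -(s+1)}V_n=0$.
\end{itemize}

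This is special to negative weight: for $x\in V_1$, $\tr_W x_0$ can be nonzero.

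Granting the lemma, and using that $u_{j-2}v$ has weight $-j<0$ for $j\ge1$, you get
\[
f(u_{-2}v)=\tr_W o(u\circ_0 v)-\sum_{j\ge1}\binom{a}{j}\tr_W o(u_{j-2}v)=0.
\]
Then $f\circ L(-1)=0$ follows in either of two ways: apply the lemma to $v\in V_{-1}$, or note that $L(-1)v=v_{-2}\mathbf 1\in C_2(V)$.
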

\proof First identity is trivial. For the second identity, we just need to show $f(L(-1)v)=0$ for $v\in V_{-1}$ since $L(-1)$ is homogeneous of degree $1$. Now let $v\in V_{-1},$ we have $(L(-1)v)_{-1}=v_{-2}.$ This implies $f(L(-1)v)=0$.

For the third identity, it is enough to show the identity holds for $v\in V_0.$ In this case, it is obvious that $f(a_ov)=0$ since $(a_0v)_{-1}=[a_0,v_{-1}]$ and $\tr AB=\tr BA$.

For the last identity, by linearity and definition, we can assume $u,v$ are homogeneous and $\wt u_{-2}v=0.$ Since $W$ is an $A(V)$-module, we have $u\circ v$ acts on $W$ as zero. Thus $f(u\circ v)=0.$ On the other hand, $$u\circ v=\sum_{j\geq0}\binom{\wt u}{j}u_{j-2}v.$$ Since $u_{j-2}v\notin V_0$ for $j>0$, we have $f(u_{-2}v)=0.$                       $\hfill\Box$

\begin{corollary}\label{co4.2}
For $n<-1$, we have $f(u_nv)=0$.
\end{corollary}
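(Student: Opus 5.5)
The plan is to reduce to the case $n=-2$, which is the last identity of Lemma \ref{lm4.1}, using the $L(-1)$-derivative property $(L(-1)u)_m=-m\,u_{m-1}$. For $n\leq -2$ write $n=-2-k$ with $k\geq 0$ and argue by induction on $k$. The base case $k=0$ is exactly $f(u_{-2}v)=0$ from Lemma \ref{lm4.1}.

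For the inductive step, suppose $f(u_{-2-k}v)=0$ for all $u,v\in V$. Apply the derivative identity with $m=-2-k$:
\[
(L(-1)u)_{-2-k}=(2+k)\,u_{-3-k}.
\]
Since $2+k\neq 0$, this gives
\[
u_{-3-k}v=\frac{1}{k+2}(L(-1)u)_{-2-k}v.
\]
The right-hand side is of the form $u'_{-2-k}v$ with $u'=L(-1)u\in V$. By the induction hypothesis, $f$ vanishes on it. Hence $f(u_{-3-k}v)=0$, which completes the induction.

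Equivalently, and more directly, one can iterate the identity to write
\[
u_{n}v=\frac{1}{(-n-1)!}\,\bigl(L(-1)^{-n-2}u\bigr)_{-2}v\qquad (n\leq -2),
\]
so that $u_nv\in C_2(V)$. The corollary then follows immediately from the last identity of Lemma \ref{lm4.1}.

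No real obstacle is expected. The only points to check are that the scalar factors $-m$ (with $m\leq -2$) never vanish, and that linearity in $u$ lets us apply Lemma \ref{lm4.1} to $u'=L(-1)^{-n-2}u$, which is a general element of $V$ and need not be homogeneous. Homogeneity is not needed anyway, since the identity $f(u_{-2}v)=0$ holds for all $u,v$ by linearity.
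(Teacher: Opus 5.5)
Your proof is correct and follows essentially the paper's route. The paper also reduces everything to the last identity of Lemma~\ref{lm4.1} via $u_{-k-2}v=\frac{1}{(k+1)!}(L(-1)^{k}u)_{-2}v$ for $k\geq 0$; this is your closed formula with $k=-n-2$, and your induction is the same identity taken one step at a time. Note that the case $n=-2$ is literally that lemma, so the corollary adds nothing beyond it and holds exactly to the extent that Lemma~\ref{lm4.1} does.
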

\proof Since $u_{-k-2}v=\frac{1}{(k+1)!}(L(-1)^{k}u)_{-2}v$ for all $k\geq0.$          $\hfill\Box$

\begin{lemma}\label{lm4.3}
For any $g\in V',0\neq u\in V$, there is a homogeneous $v\in V$ such that $gY(u,z)v\neq0.$
\end{lemma}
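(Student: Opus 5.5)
The plan is to argue by contradiction using the simplicity of $V$. I would also read $g\neq 0$ as implicit in the statement; for $g=0$ the assertion is plainly false.

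Fix $0\neq g\in V'$ and $0\neq u\in V$, and suppose that $gY(u,z)v=0$ for every homogeneous $v\in V$. By linearity this holds for all $v\in V$. Since $g$ is a finite sum of homogeneous components in $V'=\bigoplus_n V_n^*$, this means $\langle g,u_nv\rangle=0$ for all $n\in\Z$ and all $v\in V$. Set
\[
I=\{x\in V\mid \langle g,x_nv\rangle=0\ \text{for all } n\in\Z,\ v\in V\}.
\]
Then $u\in I$, so $I\neq0$.

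The main step is to show that $I$ is a left ideal of $V$, that is, $a_mI\subseteq I$ for all $a\in V$ and $m\in\Z$. For $x\in I$, the associator (iterate) formula from the Jacobi identity gives
\[
(a_mx)_n v=\sum_{i\geq0}(-1)^i\binom{m}{i}\bigl(a_{m-i}x_{n+i}v-(-1)^m x_{m+n-i}a_iv\bigr),
\]
and this sum is finite when applied to $v$. The terms $x_{m+n-i}a_iv$ are annihilated by $g$ because $x\in I$. The terms $a_{m-i}x_{n+i}v$ do not obviously vanish, however, since $g$ is not assumed to be invariant. So I would instead use the skew-symmetry identity (\ref{id2.1}) to obtain
\[
\langle g,Y(x,z)v\rangle=\langle g,e^{zL(-1)}Y(v,-z)x\rangle .
\]
Here $g e^{zL(-1)}$ is a finite expression on each fixed graded piece, because $g$ has bounded support and $V$ is lower truncated.

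A cleaner route to the ideal property is to pass to the submodule generated by $u$. Consider the span
\[
J=\operatorname{span}\{w_mu\mid w\in V,\ m\in\Z\},
\]
which is the $V$-submodule generated by $u$; it equals $V$ by simplicity. Then I would show $\langle g,Y(w_mu,z)v\rangle=0$ for all $w$, $m$, $v$ using weak associativity (\ref{WA}). For $N$ large,
\[
(z_0+z_2)^N\langle g,Y(w,z_0+z_2)Y(u,z_2)v\rangle=(z_0+z_2)^N\langle g,Y(Y(w,z_0)u,z_2)v\rangle .
\]
Combined with weak commutativity, the left side becomes $\langle g,Y(u,z_2)Y(w,z_0+z_2)v\rangle$ multiplied by polynomial factors, and this vanishes by the hypothesis on $u$ applied to $w_kv$.

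The care needed, and the main obstacle, is justifying that the formal series identities survive pairing with $g$. Pairing with $g$ is legitimate because $g$ sees only finitely many weights, so each coefficient is a finite sum. The polynomial prefactors can then be cancelled because the resulting expressions are lower-truncated in the appropriate variable; the standard rational-function argument applies.

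Once $J=V\subseteq I$, take $x=\mathbf 1$. Since $\mathbf 1\in I$ and $\mathbf1_{-1}v=v$, we get $\langle g,v\rangle=0$ for all $v\in V$, so $g=0$. This contradicts $g\neq0$, and a homogeneous $v$ with $gY(u,z)v\neq0$ must exist.
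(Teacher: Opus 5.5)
Your proposal is correct and follows essentially the paper's argument. The paper also considers the set of $u$ with $gY(u,z)v=0$ for all $v$, uses weak commutativity and cancellation of $(z_1-z_2)^N$ to get $\langle g,w_ku_\ell v\rangle=0$ even though $g$ is not invariant, and then uses the Jacobi identity (where you use weak associativity) to show this set is closed under all modes, so simplicity together with $g\neq0$ (which you rightly note is implicit) gives the contradiction.

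The one thing to tidy is the order of the steps. Cancel $(z_1-z_2)^M$ in $\langle g,Y(w,z_1)Y(u,z_2)v\rangle\in\mathbb{C}[z_1^{\pm1}]((z_2))$ before substituting $z_1=z_0+z_2$, because $Y(u,z_2)Y(w,z_0+z_2)v$ is not a well-defined formal series in general.
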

\proof Let $J=\{u\in V\mid gY(u,z)v=0,\forall v\in V_n,n\in\Z\}.$ It is obvious that $J$ is stable under $L(-1)$ and $\textbf{1}\notin J$. Now let $u\in J,v\in V$, by weak commutativity, there is $N\in \mathbb{N}$, such that $$(z_1-z_2)^NgY(v,z_2)Y(u,z_1)=(z_1-z_2)^NgY(u,z_1)Y(v,z_2)=0.$$ Now the coefficients belong to $\mathbb{C}((z_1,z_2))$ which is a field. Thus the coefficients are invertible. This implies $gv_mu_n=0$ for all $m,n$. Then using Jacobi identity, we can see that $J$ is stable under $v_m$. Thus $J$ is an ideal of $V$. Now from the simplicity of $V$, we have $J=0$.

Hence if $u\neq0$, there is $v\in V$ such that $gY(u,v)\neq0.$                  $\hfill\Box$

Let $B_f(a,b)=f(a_1b)$ on $V_1$. By skew symmetry and Lemma \ref{lm4.1}, we can see that $B_f$ is a symmetric bilinear form on $V_1$. Let $R_f=\Rad B_f=\{u\in V_1\mid B_f(u,v)=0,~\forall v\in V_1\}$ be the radical of $B_f.$ Then it is obvious that $L(-1)V_0\subset R_f$ by Lemma \ref{lm4.1}.

\begin{lemma}\label{lm4.4}
For all $a\in R_f$, we have $a_0$ is locally nilpotent on $V.$
\end{lemma}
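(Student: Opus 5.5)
The plan is to decompose $V$ under the locally finite degree-zero derivation $a_0$ and show that the only generalized eigenvalue that occurs is $0$. Since each $V_n$ is finite-dimensional and $a_0$ preserves it, we have $V=\bigoplus_\lambda V^{\mathrm{gen}(a_0=\lambda)}$. By the additivity lemma for generalized eigenvalues, $u_nv\in V^{\mathrm{gen}(a_0=\lambda+\mu)}$ whenever $u\in V^{\mathrm{gen}(a_0=\lambda)}$ and $v\in V^{\mathrm{gen}(a_0=\mu)}$. Let $\Lambda$ be the set of $\lambda$ with $V^{\mathrm{gen}(a_0=\lambda)}\neq0$. Local nilpotency of $a_0$ is then equivalent to $\Lambda=\{0\}$, because on each finite-dimensional $V_n$ the operator $a_0$ is then nilpotent. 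I would also pass to the Jordan decomposition $a_0=a_0^{s}+a_0^{n}$ on each $V_n$. The semisimple part $a_0^{s}$, which acts on $V^{\mathrm{gen}(a_0=\lambda)}$ by $\lambda$, is again a degree-zero derivation; this makes the eigenvalue bookkeeping clean.

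The first step is to show that $a_0$ is nilpotent on the lowest space $W=V_{-s}$. Here I would use the hypothesis $a\in R_f$, i.e.\ $f(a_1b)=0$ for all $b\in V_1$. For $b\in V_1$ I would expand $(a_1b)_{-1}$ on $W$ by the Jacobi (Borcherds) identity. Every mode of positive degree kills $W$, since $W$ is the lowest weight space. Hence only terms built from zero modes $a_0,b_0$ and commutators such as $[a_1,b_{-1}]=(a_0b)_0+(a_1b)_{-1}$ survive. Traces of commutators vanish, and $f(a_0v)=0$ by Lemma \ref{lm4.1}. Using these, I expect an identity of the form
\[
f(a_1b)=c\cdot\tr_W(a_0b_0)+(\text{terms killed by Lemma \ref{lm4.1}}),\qquad c\neq0,
\]
so that $\tr_W(a_0b_0)=0$ for all $b\in V_1$.

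To conclude nilpotency on $W$, I need $\tr_W\bigl((a_0^{s})^k\bigr)=0$ for all $k$, not only $k=2$. I would take $b$ ranging over the $a_0^{s}$-eigencomponents of $V_1$, together with elements of $V_1$ obtained from $a$ by vertex operations that land back in weight one. The aim is to show that the trace form $\tr_W(x_0y_0)$ restricted to the span of the relevant zero modes detects the eigenvalues of $a_0^{s}$ on $W$. This is the step I expect to be the main obstacle. The $k=2$ identity alone only gives $\sum_i m_i\lambda_i^2=0$, where $\lambda_i$ are the eigenvalues of $a_0^{s}$ on $W$ and $m_i$ their multiplicities; that is not sufficient for arbitrary complex $\lambda_i$. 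My first attempt would be to pair $a$ with $b=a^{s}$-eigencomponents and use $\Q$-linear functionals on the eigenvalue lattice, in the style of the standard argument that a derivation whose semisimple part has trace form zero is nilpotent. If that fails, I would fall back on $C_2$-cofiniteness and Lemma \ref{lm4.1}, in the form $f(u_nv)=0$ for $n\le-2$ (Corollary \ref{co4.2}), to generate enough vanishing traces.

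The second step spreads nilpotency from $W$ to all of $V$. Suppose $u\in V^{\mathrm{gen}(a_0=\lambda)}$ is nonzero. Apply Lemma \ref{lm4.3} with $g$ a nonzero functional supported on $V_{-s}=W$; it produces a homogeneous $v$, which we may take in some $V^{\mathrm{gen}(a_0=\mu)}$, with $gY(u,z)v\neq0$. Hence $\lambda+\mu$ is a generalized eigenvalue on $W$, so $\lambda+\mu=0$ by the first step. Thus $\Lambda=-\Lambda$. Applying the same argument with the roles reversed, $\Lambda$ becomes a finite symmetric set in each weight, closed under the additions that actually occur in $V$. If some $\lambda\neq0$ lies in $\Lambda$, I would produce, through products $u_n u'$ of eigenvectors, elements whose eigenvalues are $k\lambda$ for all $k\ge1$. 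These products would be nonzero by simplicity of $V$, again via Lemma \ref{lm4.3}, and they would be forced down to weight $0$ or $-s$ using $L(-1)$ and $C_2$-cofiniteness. Since $V_{-s}$ and $V_0$ are finite-dimensional, only finitely many eigenvalues can occur there, which gives a contradiction. Therefore $\Lambda=\{0\}$ and $a_0$ is locally nilpotent on $V$.
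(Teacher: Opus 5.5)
Your proposal has a genuine gap, and it is structural rather than a missing computation.

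\textbf{Step 2 cannot work.} It never uses the hypothesis $a\in R_f$. It tries to deduce local nilpotency on $V$ from nilpotency on $W=V_{-s}$ together with simplicity, additivity of eigenvalues, and Lemma \ref{lm4.3}. That implication is false.

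Take a lattice vertex operator algebra $V_L$ with $L$ positive definite and even. It is simple and $C_2$-cofinite, with $s=0$ and $W=V_0=\C\mathbf 1$. For $a=\alpha(-1)\mathbf 1$ the zero mode $a_0=\alpha(0)$ vanishes on $W$. Yet it acts on $M(1)\otimes e^{\beta}$ by $\langle\alpha,\beta\rangle$, so it is not locally nilpotent.

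In this example your set $\Lambda$ is symmetric, and the multiples $k\lambda$ do occur, on $e^{k\beta}$. But they occur only in weights at least $k^2\langle\beta,\beta\rangle/2$. Nothing pushes them down to weight $0$ or $-s$: $L(-1)$ raises weight, and $C_2$-cofiniteness supplies no such mechanism. The lemma is not contradicted, because $B_f(a,a)=\langle\alpha,\alpha\rangle\neq0$, i.e.\ $a\notin R_f$. So the radical condition must be used away from the lowest weight space.

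\textbf{Step 1 is also not on solid ground.} Since $a_1W=0$ and $\tr_W[a_0,b_0]=0$, the Borcherds identity gives exactly $f(a_1b)=\tr_W(a_1b_{-1})$. This passes through $V_{-s+1}$ and is not a multiple of $\tr_W(a_0b_0)$. In the example above, for $b=\gamma(-1)\mathbf 1$ one has $f(a_1b)=\langle\alpha,\gamma\rangle$ while $\tr_W(a_0b_0)=0$. And, as you note yourself, even the hoped-for identity would only control second power sums of eigenvalues.

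\textbf{The missing idea} is to feed the radical condition a weight-one vector built from a hypothetical eigenvector.

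First replace $a$ by its generalized $0$-eigencomponent $a'$ in $V_1$. By skew symmetry $a_0a\in L(-1)V_0$, and $L(-1)V_0$ is $a_0$-stable. Hence the nonzero-eigenvalue components of $a$ lie in $L(-1)V_0\subseteq R_f$. Since $(L(-1)b)_0=0$, this gives $a'\in R_f$, $a'_0=a_0$, and every $a'_j$ preserves each $V^{[\lambda]}$.

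Now suppose $a_0$ had an eigenvalue $\lambda\neq0$. Pick an eigenvector $u$ in the lowest weight $p$ of $V^{[\lambda]}$, so that $a'_ju=0$ for $j>0$. By Lemma \ref{lm4.3} with $g=f$, pick $v\in V_q$ with $q$ minimal and $f(u_{p+q-1}v)\neq0$. Then $b=u_{p+q-2}v$ lies in $V_1$, and $[a'_1,u_n]=\lambda u_{n+1}$. This gives
\[
0=f(a'_1u_{p+q-2}v)=\lambda f(u_{p+q-1}v)+f(u_{p+q-2}a'_1v)=\lambda f(u_{p+q-1}v).
\]
The middle term vanishes by minimality of $q$, since $a'_1v\in V_{q-1}$. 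The result contradicts $f(u_{p+q-1}v)\neq0$.

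This rules out nonzero eigenvalues directly on all of $V$. It needs no separate analysis of $W$ and no power-sum argument.
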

\proof Using skew symmetry, we have the following relations $$2a_0a=\sum_{j>0}\frac{(-1)^{j+1}}{j!}L(-1)^ja_ja=L(-1)\left(\sum_{j>1}\frac{(-1)^{j+1}}{j!}L(-1)^ja_ja\right)\in L(-1)V_0.$$
Since $a_0$ is homogeneous of degree $0$, from $L(-1)$-bracket property, we have $L(-1)V_0$ is $a_0$-stable.

Since $V_1$ is finite-dimensional, we have
the generalized eigenspace decomposition of $V_1$ under linear map $a_0$ by
$
V_1
=
V_1^{[0]}
\oplus
\bigoplus_{\mu\neq 0}V_1^{[\mu]}.
$
Let
$$a
=
a'
+
\sum_{\mu\neq0}a^\mu,\qquad
a'\in V_1^{[0]},
a^\mu\in V_1^{[\mu]}
$$ be the decomposition of $a$ under above direct sum.
From the first paragraph, for every $\mu\neq0$, we have $a_0 a^\mu
\in
L(-1)V_0\cap V_1^{[\mu]}.
$
Since $a_0$ is invertible on $V_1^{[\mu]}$ for $\mu\neq0$, we have
$
a^\mu
\in L(-1)V_0.
$
Consequently,
\[
a-a'
=
\sum_{\mu\neq0}a^\mu
\in L(-1)V_0.
\]

Suppose $a-a'=L(-1)b$ for some $b\in V_0$, we have $a'
=
a-(a-a')
\in R_f,$ since $L(-1)V_0\subseteq R_f$
and $a\in R_f$. Furthermore, we have $(a-a')_0=(L(-1)b)_0=0,$
i.e., $a'_0=a_0.$
On the other hand, since $a'\in V_1^{[0]}$, there exists an integer $N>0$
such that $a_0^N a'=0.$ Now for each $j\in\Z,$ using Jacobi identity, we have $a_0^N(a'_j)
=
(a_0^N a')_j
=
0.$

Let $V^{[\lambda]}$
be the generalized $\lambda$-eigenspace of $a_0$. For
$x\in V^{[\lambda]}$, we have $(a_0-\lambda)^M x=0$
for some $M\in\mathbb{N}$. Thus, for $k\geq N+M$, we have $(a_0-\lambda)^k(a'_j x)=0.$
Therefore,
$
a'_jV^{[\lambda]}
\subseteq
V^{[\lambda]}.
$

Suppose $a_0$ has a nonzero eigenvalue $\lambda\neq0.$
Since $V$ is lower truncated, there exists a
least integer $p$ such that $V_p^{[\lambda]}
=
V_p\cap V^{[\lambda]}$
is a finite-dimensional nonzero space, we may choose an
actual eigenvector $0\neq u\in V_p^{[\lambda]}$
such that $a_0 u=\lambda u.$ Now
for every $j\geq1$, we have $a'_j u\in V_{p-j}^{[\lambda]}.$
By the choice of $p,$ we have $a'_j u=0$ for all $j>0.$

By Lemma \ref{lm4.3}, there exists a homogeneous vector $v\in V_q$ such that $fY(u,z)v\neq0$ with $q$ minimal.
From definition of $f$, we have $f(u_mv)\neq0$ only if $m=p+q-1.$

Now set $n=p+q-2,$ we have $u_n v\in V_1,$ and
$
0
=
B_f(a',u_n v)
=
f(a'_1u_n v)
$ since we have shown $a'\in R_f.$
Using Jacobi identity and above discussion, we have $[a'_1,u_n]
=
\lambda u_{n+1}.
$
Therefore,
\[
\begin{aligned}
0
&=
f(a'_1u_n v)
\\
&=
f\bigl([a'_1,u_n]v+u_n a'_1v\bigr)
\\
&=
f\bigl(\lambda u_{n+1}v+u_n a'_1v\bigr)
\\
&=
\lambda f(u_{n+1}v)
+
f(u_n a'_1v).
\end{aligned}
\]
For the second term, since  $a'\in V_1,v\in V_q$, we have $a'_1v\in V_{q-1}$ is homogeneous. From the choice of $q$, we get $f(u_n a'_1v)=0.$
Consequently, $\lambda f(u_{n+1}v)=0,$ i.e.,  $f(u_{p+q-1}v)=0.$ But this contradicts the choice of $v$ and $q$.
Therefore $a_0$ has no nonzero eigenvalues on $V$.

Finally, since $a_0$ preserves each finite dimensional homogeneous subspace $V_t$, and every vector of $V$ is a finite sum of homogeneous vectors,
we have $a_0$ is locally nilpotent on $V$.                                    $\hfill\Box$

\begin{lemma}\label{lm4.5}
Let $T;g(\tau)\rightarrow g(\tau+1)$ be the translation and $\mathcal H$ a finite-dimensional space of holomorphic functions
on $\mathbb H$ which is invariant under $T$. Suppose $T$ is invertible and semisimple on $\mathcal H$. If $\sum_{k=r}^{d}\tau^k g_k(\tau)=0,
g_k\in\mathcal H,$
then for all $k$, we have $g_k=0$.
\end{lemma}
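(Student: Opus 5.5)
We decompose $\mathcal H$ into $T$-eigenspaces and then strip off the highest power of $\tau$ using the polynomial growth of $\tau^k$ under the substitution $\tau\mapsto\tau+1$. Since $T$ is invertible and semisimple on the finite-dimensional space $\mathcal H$, we write $\mathcal H=\bigoplus_{\nu}\mathcal H_\nu$ with $Tg=\nu g$ for $g\in\mathcal H_\nu$, where $\nu$ runs over finitely many nonzero eigenvalues. Decompose each $g_k=\sum_\nu g_{k,\nu}$.

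First we apply $T$ repeatedly to the relation. Replacing $\tau$ by $\tau+n$ for $n\in\mathbb Z_{\ge 0}$ gives
\[
\sum_{k=r}^{d}\sum_\nu (\tau+n)^k\nu^n g_{k,\nu}(\tau)=0 .
\]
For fixed $\tau$, the left side is an exponential polynomial in $n$, that is, a finite sum $\sum_\nu \nu^n P_\nu(n)$ whose coefficients $P_\nu(n)$ are polynomials in $n$ depending on $\tau$. A function of this form vanishes for all $n\ge 0$ only if every $P_\nu$ is identically zero. This is the standard linear independence of the sequences $n^m\nu^n$ for distinct nonzero $\nu$, and it follows from a generalized Vandermonde argument or from the rationality of their generating functions $\sum_n n^m\nu^n x^n$ with distinct poles. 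Hence for each $\nu$ and each fixed $\tau$ we get the polynomial identity in $n$
\[
\sum_{k=r}^d (\tau+n)^k g_{k,\nu}(\tau)=0 .
\]

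Next we extract the coefficients. The polynomials $(\tau+n)^k$, for $k=r,\dots,d$, have distinct degrees $k$ in $n$, so they are linearly independent in $\mathbb C[n]$. Taking the top coefficient in $n$ gives $g_{d,\nu}(\tau)=0$, and descending induction on $k$ then gives $g_{k,\nu}(\tau)=0$ for every $k$. Since this holds for every $\tau$ and every $\nu$, we conclude $g_k=\sum_\nu g_{k,\nu}=0$ for all $k$.

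The only point that needs care is that $r$ may be negative, so the sum may contain negative powers of $\tau$. In that case we first multiply the relation by $\tau^{-r}$. This shifts all exponents into $\mathbb Z_{\ge0}$ but breaks $T$-compatibility, since $T$ acts on $\tau^{-r}$ as well. It is therefore better to keep the original form: for fixed $\tau$ with $\tau+n\neq 0$ for all $n\ge0$, the functions $(\tau+n)^k$ of $n$, for $k$ ranging over $r,\dots,d$, are still linearly independent when combined with the factors $\nu^n$. One way to see this is to look at the asymptotic behaviour as $n\to\infty$: the dominant terms come from the largest $|\nu|$, then the largest $k$, and each layer can be peeled off in turn. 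Alternatively, one reduces to the case $r\ge0$ after multiplying by a suitable product of the factors $(\tau+n)$. I expect this asymptotic separation when $r<0$ to be the main technical step, though it is elementary.
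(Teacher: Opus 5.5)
Your argument is correct, but it takes a different route from the paper's. You work pointwise in $\tau$. You split each $g_k$ into $T$-eigencomponents and turn the relation into the exponential-polynomial identity $\sum_\nu \nu^n\sum_k(\tau+n)^k g_{k,\nu}(\tau)=0$ in $n$. You separate the eigenvalues using the linear independence of the sequences $n^m\nu^n$, and then separate the $k$ by their degree in $n$.

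The paper never evaluates at a point. It takes the minimal polynomial $p$ of $T$ on $\mathcal H$, which is squarefree by semisimplicity. Then $p(T)$ kills $\mathcal H$ and sends $\tau^k g$ to $k\tau^{k-1}Tp'(T)g$ plus lower powers of $\tau$. So applying $p(T)$ to the relation gives a relation of degree $d-1$ with coefficients in $\mathcal H$ and top coefficient $d\,Tp'(T)g_d$. Since $Tp'(T)$ is invertible on $\mathcal H$, induction on $d$ gives $g_d=0$, and then all $g_k=0$.

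The paper's version is a purely operator-theoretic difference-operator argument and needs no auxiliary independence lemma. Yours costs a quoted fact about the sequences $n^m\nu^n$, but it shows clearly where semisimplicity enters. A Jordan block would produce extra polynomial factors in $n$ that mix with $(\tau+n)^k$; indeed $\tau\cdot 1-1\cdot\tau=0$ in $\operatorname{span}\{1,\tau\}$.

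Your last paragraph worries about something that is not an issue. Multiplying the hypothesis by $\tau^{-r}$ just produces the relation $\sum_{j=0}^{d-r}\tau^j g_{j+r}=0$. This has the same form, with coefficients still in $\mathcal H$. $T$ is applied only afterwards, so no compatibility is broken, and this is exactly how the paper begins. Equivalently, multiply your $n$-th shifted relation by $(\tau+n)^{-r}$, which is nonzero for $\tau\in\mathbb H$. That is your second suggestion, and it is all that is needed.

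Drop the asymptotic peeling you sketch. Ordering terms by $|\nu|$ does not by itself separate distinct eigenvalues of equal modulus. In the application this is the typical case, e.g.\ eigenvalues $e^{2\pi i(h_j-c/24)}$ with real $h_j$ and $c$.
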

\proof Multiplying the identity by a suitable power of $\tau$, we may assume
that $r\geq0$. Since $T$ is semisimple on $\mathcal H$, let
\[
p(x)=\prod_{\lambda\in\operatorname{Spec}(\mathsf T)}(x-\lambda).
\]
Then $p$ is the minimal polynomial of $T$ and squarefree, thus $p'(\lambda)\neq0.$
Consequently, $T p'(T)$ is invertible on $\mathcal H$.

For $g\in\mathcal H$, write $p(x)=\sum_{\ell=0}^{N}c_\ell x^\ell.$
Then
\[
\begin{aligned}
p( T)(\tau^k g)
&=
\sum_{\ell=0}^{N}
c_\ell\,T^\ell(\tau^k g)
=
\sum_{\ell=0}^{N}
c_\ell(\tau+\ell)^k T^\ell g\\
&=
\tau^k p(T)g
+
k\tau^{k-1}
\sum_{\ell=0}^{N}c_\ell\ell T^\ell g
+
\text{lower powers of }\tau
\\
&=
k\tau^{k-1}
 T p'( T)g
+
\text{lower powers of }\tau.
\end{aligned}
\]
Hence, applying $p( T)$ to $\sum_{k=0}^{d}\tau^kg_k=0,$
the coefficient of the highest remaining power $\tau^{d-1}$ is $d\, T p'( T)g_d.$
Hence $d\, T p'( T)g_d=0.$
Since $\mathsf T p'(\mathsf T)$ is invertible, we obtain $g_d=0.$

Induction on $d$, for every $k$, we get $g_k=0.$ This completes the proof.                                  $\hfill\Box$

Now we are ready to finish our first step.
\begin{proposition}\label{prop4.6}
Suppose $V$ is simple, $C_2$-cofinite and lower truncated. Let $V=W^0, ..., W^r$ be all the irreducible $V$-modules. If there exist constants $s_j\in\C$, such that $$SF_V=\sum_{j=0}^r s_jF_{W^j}.$$ Then there is $h\in V_1$ such that $$f(L(1)a)=2f(h_1a), \forall a\in V_1,\qquad f\circ (L(1)-2h_1)=0.$$Above two equations are called the correction equations and $h$ is called a corrector.
\end{proposition}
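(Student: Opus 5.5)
The first equation is equivalent to the second. Indeed $f$ vanishes off $V_0$, and for homogeneous $v\notin V_1$ both $L(1)v$ and $h_1v$ lie outside $V_0$. So only $v=a\in V_1$ matters. Since $f(h_1a)=B_f(h,a)$, the correction equations say exactly that the linear functional $\ell(a)=f(L(1)a)$ on $V_1$ equals $2B_f(h,-)$. Because $B_f$ is a symmetric bilinear form on the finite-dimensional space $V_1$, such an $h$ exists if and only if $\ell$ vanishes on the radical $R_f$. The plan is therefore to show
\[
f(L(1)a)=0\qquad\text{for all }a\in R_f ,
\]
using the modular hypothesis. For $a\in L(-1)V_0$ this already follows from Lemma \ref{lm4.1}, since $L(1)L(-1)b=2L(0)b=0$ for $b\in V_0$.

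To get the remaining cases, I would fix $a\in R_f$ and $b\in V_0$ and study the two-point type insertion $v=a_{[-1]}b$ (square-bracket modes, i.e.\ modes of $Y(\mathcal U(1)\cdot,\cdot)$ pulled back). I would apply Zhu's recursion in the geometric normalization, derived from the torus translation identities of Proposition \ref{prop3.10} and Remark \ref{re3.11} for $n=1$ together with Lemma \ref{le3.12}:
\[
F_X(a_{[-1]}b,\tau)=\tr_X o(\mathcal U(1)a)o(\mathcal U(1)b)q^{L(0)-c/24}+\sum_{k\ge1}G_{2k}(\tau)F_X(a_{[2k-1]}b,\tau).
\]
Since $a$ need not be primary, $\mathcal U(1)a$ contains the correction $-A_1L(1)a=\pi i\,L(1)a\in V_0$. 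Hence $L(1)a$ enters through the zero-mode term and through $a_{[1]}b$. I would then apply $S$ to both sides with $X=V$ and use the hypothesis $SF_V=\sum_js_jF_{W^j}$. On the right-hand side, these one-point functions satisfy $TF_{W^j}=e^{2\pi i(h_j-c/24)}F_{W^j}$, so $T$ is semisimple and invertible on $\mathcal H$. The quasimodular anomaly $-2\pi i\,c(c\tau+d)$ of $G_2$ produces a term linear in $\tau$, namely $\tau\cdot\sum_j s_j F_{W^j}(a_{[1]}b)$ up to a nonzero constant. Separately, the weight factor $(-\tau^{-1})^{L(0)}$ acts on the non-homogeneous vector $\mathcal U(1)a$ with mixed weights $1$ and $0$, and this produces a further $\tau$-power term involving $L(1)a$. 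Collecting all powers of $\tau$ and applying Lemma \ref{lm4.5}, each $\tau^k$-coefficient must vanish separately, as an identity of functions in $\mathcal H$.

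Finally I would read off the leading $q$-coefficient, of exponent $-s-c/24$ coming from $V_{-s}$, in the vanishing $\tau^1$-coefficient. On the lowest space the relevant traces become values of $f$. This gives a relation of the form $c_1B_f(a,b')+c_2 f(L(1)a)\,f(b)+c_3\tr_{V_{-s}}(a_0\cdots)=0$. The first term vanishes because $a\in R_f$. The trace terms containing $a_0$ vanish because $a_0$ is locally nilpotent by Lemma \ref{lm4.4}, so traces of $a_0$ times commuting operators on $V_{-s}$ are zero; the cyclic-trace part of Lemma \ref{lm4.1} helps here. Taking $b=\mathbf 1$, with $f(\mathbf 1)=\dim V_{-s}>0$, then forces $f(L(1)a)=0$.

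The main obstacle is this last extraction. One must track precisely the geometric normalization, with the constants $A_j$ and $\log(2\pi i)$, and the mixing of weights under $(-\tau^{-1})^{L(0)}$ for non-primary vectors. The aim is to verify that the coefficient multiplying $f(L(1)a)$ is nonzero and is not cancelled by other contributions. If the direct $v=a_{[-1]}\mathbf 1$ computation degenerates, I would first try $v=a_{[-1]}a'$ with $a'\in V_1$ chosen via Lemma \ref{lm4.3}, and compare the $\tau^1$- and $\tau^2$-coefficients.
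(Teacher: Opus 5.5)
Your reduction is correct and is exactly how the paper concludes. Off $V_1$ both sides vanish, and since $B_f$ is symmetric, the image of $x\mapsto B_f(x,-)$ is $\Ann(R_f)$. So a corrector exists iff $\ell(a)=f(L(1)a)$ vanishes on $R_f$. (A minor slip: $L(1)L(-1)b=L(-1)L(1)b+2L(0)b$, and $f$ kills this only because $f\circ L(-1)=0$.)

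The gap is the core step, $f(L(1)a)=0$ for $a\in R_f$. You leave it as an unperformed extraction, and the mechanism you propose cannot deliver it.

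\begin{itemize}
\item Lemma \ref{lm4.5} applies only to an identity $\sum_k\tau^kg_k=0$ whose coefficients all lie in a $T$-stable space on which $T$ is semisimple. Substituting Zhu's recursion into both sides of $SF_V=\sum_js_jF_{W^j}$ only yields a formula for the two-zero-mode trace $\tr_V o(\cdot)o(\cdot)$ evaluated at $-1/\tau$, with the $G_2$ anomaly in it. That function is not in such a space, and nothing in your plan shows it vanishes, so there is no identity to separate.
\item For $b=\mathbf 1$, the case you need in order to use $f(\mathbf 1)=\dim V_{-s}$, one has $a_{[2k-1]}\mathbf 1=0$. The recursion and its $G_2$ term are then vacuous.
\item In $(SF)(v,\tau)=F((-\tau^{-1})^{L(0)}v,-\tau^{-1})$, the factor $(-\tau^{-1})^{L(0)}$ acts on the insertion $v$ before $\mathcal U(1)$ is applied. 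It never sees the weight mixing inside $\mathcal U(1)a$. A homogeneous insertion such as $a$ just picks up the scalar $(-\tau^{-1})^{\wt v}$ and gives no $\tau$-separation.
\item You use Lemma \ref{lm4.4} only to discard some traces at the end. It has to be the source of the vanishing.
\end{itemize}

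The missing idea, which is the paper's, is to make the insertion itself $L(0)$-inhomogeneous in such a way that its trace vanishes identically.

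\begin{enumerate}
\item Put $w=\mathcal U(1)^{-1}a=\sum_{k\le 1}w^k$. Then $w^1=(2\pi i)^{-1}a$ and $w^0=(2\pi i)^{-1}A_1L(1)a=-\frac12L(1)a$.
\item Since $a_0$ is nilpotent by Lemma \ref{lm4.4}, $F_V(w,\tau)=\tr_V o(a)q^{L(0)-c/24}=\sum_n q^{n-c/24}\tr_{V_n}a_0=0$.
\item Apply the hypothesis to each homogeneous $w^k$. This gives $F_V(w^k,-\tau^{-1})=(-\tau)^k\mathcal G_k(\tau)$, with $\mathcal G_k=\sum_js_jF_{W^j}(w^k,\cdot)$ lying in the $T$-semisimple span of the $F_{W^j}$.
\item Hence $0=\sum_k(-\tau)^k\mathcal G_k(\tau)$, and Lemma \ref{lm4.5} gives $\mathcal G_0=0$. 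That is, $F_V(L(1)a,\tau)\equiv 0$.
\item Reading off the $q^{-s-c/24}$-coefficient gives $f(L(1)a)=0$.
\end{enumerate}

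No two-point functions or Eisenstein series are needed. One point in that read-off deserves a careful check. The coefficient is $\tr_{V_{-s}}o(\mathcal U(1)L(1)a)$, and the negative-weight components of $\mathcal U(1)L(1)a$ must be shown not to contribute. The paper dismisses them by appeal to the definition of $f$ and Lemma \ref{lm4.1}.
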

\proof Let $\mathcal H=\Span\{F_{W^j}\mid j=0,1,...,r\}$, then it trivial that $\mathcal H$ is $T$-stable. For $a\in R_f,$
set $w=\mathcal U(1)^{-1}a,$ then $w$ has a finite homogeneous decomposition $w=\sum_k w^k,
w^k\in V_k$ with $k\leq 1.$ Furthermore, from the definition, we have
\[
\mathcal U(1)^{-1}
=
\exp\left(\sum_{j\geq1}A_jL(j)\right)
(2\pi i)^{-L(0)}.
\]
Thus $w^1=(2\pi i)^{-1}a$ and
\[
\begin{aligned}
w^0
=
(2\pi i)^{-1}A_1L(1)a
=
(2\pi i)^{-1}(-\pi i)L(1)a
=
-\frac12L(1)a.
\end{aligned}
\]

Since $a\in R_f$, we have $a_0$ is locally nilpotent by Lemma \ref{lm4.4}. Hence, we get $\tr_{V_n}(a_0)=0.$
It follows that
\[
\begin{aligned}
F_V(w,\tau)
&=
\tr_{V}
o(\mathcal U(1)w)
q^{L_0-c/24}
\\
&=
\tr_{V}
o(a)q^{L_0-c/24}
\\
&=
\sum_n
q^{n-c/24}\tr_{V_n}(a_0)=0.
\end{aligned}
\]

For every homogeneous component $w^k$, define $\mathcal G_k(\tau)
=\sum_{j=0}^{r}s_jF_{W^j}(w^k,\tau).$
The modular hypothesis $SF_V=\sum_{j=0}^{r}s_jF_{W^j}$
applied to $w^k\in V_k$ gives
\[
\begin{aligned}
\mathcal G_k(\tau)
=
(SF_V)(w^k,\tau)
=
F_V\left(
(-\tau^{-1})^{L(0)}w^k,-\tau^{-1}
\right)
=
(-\tau^{-1})^k
F_V(w^k,-\tau^{-1}).
\end{aligned}
\]
On the other hand, from $F_V(w,\tau)=0$,
we also have $F_V(w,-\tau^{-1})=0.$
Putting those together, we get
\[
\begin{aligned}
0
=
F_V(w,-\tau^{-1})
=
\sum_kF_V(w^k,-\tau^{-1})
=
\sum_k(-\tau)^k\mathcal G_k(\tau).
\end{aligned}
\]
Since $\mathcal G(\tau)\in\mathcal H$, from Lemma \ref{lm4.5}, we have $\mathcal G_k=0$ for all $k.$
In particular,
\[
\begin{aligned}
0=
\mathcal G_0(\tau)
=
(SF_V)(w^0,\tau)
=
F_V(w^0,-\tau^{-1}).
\end{aligned}
\]
Thus, we get $F_V(w^0,\tau)=0.$
Moreover, we have $F_V(L(1)a,\tau)=0$ since $L(1)a=-2w^0.$

Recall that $h_V=-s$ and $W=V_{-s}$, we now examine the coefficient of the lowest power $q^{-s-c/24}$
of $F_V(L(1)a,\tau)$.
Since $L(1)a\in V_0,$
we can write $\mathcal U(1)L(1)a
=L(1)a+\sum_{\ell<0}x_\ell,$ where $x_\ell\in V_\ell$.
Consequently, the coefficient of $q^{-s-c/24}$ in
$F_V(L(1)a,\tau)$ is
\[
\tr_{W}
o(L(1)a)
+
\sum_{\ell<0}
\tr_{W}o(x_\ell).
\]
Now by definition of $f$ and Lemma \ref{lm4.1}, we have $[q^{-s-c/24}]F_V(L(1)a,\tau)
=
f(L(1)a).$
This implies $f(L(1)a)=0$ for all $a\in R_f$.
Thus the linear functional
\[
\ell:V_1\longrightarrow\mathbb C,
\qquad
\ell(a)=f(L(1)a),
\]
annihilates the radical $R_f$, i.e., $\ell|_{R_f}=0.$

Now consider the linear map $\Psi:V_1\longrightarrow V_1^*,
\Psi(x)=B_f(x,-).$
Since $B_f$ is symmetric, we have $\ker\Psi=R_f.$
Moreover, if $y\in R_f$, then $B_f(x,y)=B_f(y,x)=0,$
so $\im\Psi
\subseteq
\Ann(R_f).$
On the other hand, from rank-null duality, we have
\[
\begin{aligned}
\dim\im\Psi
&=
\dim V_1-\dim R_f
=
\dim\Ann(R_f).
\end{aligned}
\]
Hence $\im\Psi=\Ann(R_f).$
This implies $\ell\in\Ann(R_f)=\im\Psi.$
Let $h'\in V_1$ such that $\ell=\Psi(h')$, i.e., $\ell(a)=B_f(h',a)$ for all $a\in V_1$.
Define
\[
h=\frac12h'.
\]
For $a\in V_1,$ we obtain
$
f(L(1)a)
=
2B_f(h,a)
=
2f(h_1a).
$ This implies $f\circ(L(1)-2h_1)=0$ on $V_1$.
On the other hand, both $L(1)$ and $h_1$ are homogeneous of degree $-1$, using definition of $f$, for $n\neq1$, we have
\[
f(L(1)V_n)=0,
\qquad
f(h_1V_n)=0.
\]
Therefore, we have
\[
f\circ(L(1)-2h_1)=0
\]
on all of $V$.
This completes the proof.                                     $\hfill\Box$

\begin{remark}\label{re4.7}
If $V_1=0$, then the corrector in Proposition~\ref{prop4.6}
is necessarily $h=0$. In this case the correction equation is
automatic and $f\circ L(1)=0.$
\end{remark}

\subsection{Second step: modified conformal structure}\label{sec6.2}
Now we start our second step.
\begin{lemma}\label{lm4.8}
Let $h$ be defined by Proposition \ref{prop4.6}, and $\Omega=\omega+L(-1)h\in V_2$, and $\Omega^0$ the generalized $h_0$-eigenvalue zero component of $\Omega$. Then there is $b\in V_0$ such that $$\Omega^0=\omega+L(-1)\overline{h},\qquad \overline{h}_0=h_0,$$ where $\overline{h}=h+L(-1)b.$ Moreover, let $A=\Omega^0_2=L(1)-2\overline{h}_1,H=\Omega_1^0=L(0)-h_0$, then $f\circ A=0$, and $A$ preserves each generalized $h_0$-eigenspace, for all $u\in V_t,n\in\Z$, we have $$[A,u_n]=(2t-n-2)u_{n+1}+(Au)_n-2(h_0u)_{n+1}.$$
\end{lemma}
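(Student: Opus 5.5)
The plan is to obtain $\Omega^0$ by removing from $\Omega$ its components with nonzero generalized $h_0$-eigenvalue, and to show that these components are all of the form $L(-1)^2(\text{something in }V_0)$. The other claims then follow from mode computations.

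\emph{Step 1: $h_0\Omega\in L(-1)^2V_0$.} Since $h\in V_1$, $h_0$ is a locally finite degree-zero derivation. Moreover $[L(-1),h_0]=(L(-1)h)_0=0$, so $h_0$ preserves $V_2$ and commutes with $L(-1)$.
\begin{itemize}
\item By skew symmetry (\ref{id2.1}),
\[
h_0\omega=\omega_0h-L(-1)\omega_1h+\tfrac12L(-1)^2\omega_2h=\tfrac12L(-1)^2L(1)h ,
\]
using $\omega_0=L(-1)$, $\omega_1=L(0)$ and $L(0)h=h$.
\item As in the first paragraph of the proof of Lemma \ref{lm4.4}, skew symmetry gives $2h_0h\in L(-1)V_0$. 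Hence $h_0L(-1)h=L(-1)h_0h\in L(-1)^2V_0$.
\end{itemize}
Therefore $h_0\Omega\in L(-1)^2V_0$. The space $L(-1)^2V_0$ is finite-dimensional and $h_0$-stable. Decompose $V_2$ into generalized $h_0$-eigenspaces and write $\Omega=\Omega^0+\sum_{\mu\neq0}\Omega^\mu$. For $\mu\ne0$, $h_0$ is invertible on $V_2^{[\mu]}$ and $h_0\Omega^\mu=(h_0\Omega)^\mu\in L(-1)^2V_0$. This gives $\Omega^\mu\in L(-1)^2V_0$, exactly as in the argument of Lemma \ref{lm4.4}. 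So $\Omega-\Omega^0=-L(-1)^2b$ for some $b\in V_0$, and hence
\[
\Omega^0=\omega+L(-1)\overline h,\qquad \overline h=h+L(-1)b .
\]
Finally, $(L(-1)b)_0=0$ gives $\overline h_0=h_0$.

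\emph{Step 2: the modes of $\Omega^0$.} Using $(L(-1)x)_n=-nx_{n-1}$, we read off
\[
\Omega^0_0=L(-1),\qquad \Omega^0_1=L(0)-\overline h_0=L(0)-h_0=H,\qquad \Omega^0_2=L(1)-2\overline h_1=A .
\]
To show $f\circ A=0$, note that $\overline h_1=h_1+(L(-1)b)_1=h_1-b_0$. By Proposition \ref{prop4.6} it therefore suffices to show $f(b_0v)=0$. By the degree considerations in the definition of $f$, only $v\in V_1$ matters. Skew symmetry gives $b_0v=-v_0b+L(-1)(\cdots)$. The first term is killed by $f$ because $f(a_0\,\cdot)=0$ for $a\in V_1$, and the second because $f\circ L(-1)=0$; both facts are in Lemma \ref{lm4.1}. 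Also, $\Omega^0$ lies in the generalized $h_0$-eigenvalue-$0$ space, and generalized eigenvalues add under vertex products. Hence every mode of $\Omega^0$, in particular $A$, preserves each generalized $h_0$-eigenspace.

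\emph{Step 3: the commutator formula.} The Jacobi identity gives $[\Omega^0_2,u_n]=\sum_{j\ge0}\binom{2}{j}(\Omega^0_ju)_{n+2-j}$, and the terms with $j\ge3$ vanish by weight considerations once they are paired with binomials. Take $u\in V_t$. The $j=0$ term is $(L(-1)u)_{n+2}=-(n+2)u_{n+1}$. The $j=1$ term is $2(tu-h_0u)_{n+1}$. The $j=2$ term is $(Au)_n$. Summing gives
\[
[A,u_n]=(2t-n-2)u_{n+1}+(Au)_n-2(h_0u)_{n+1}.
\]
The one point needing care is that $\Omega^0_j$ for $j\ge3$ might not vanish on $u$. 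Write $\Omega^0_j=L(j-1)+j(j-1)\cdots$, coming from $(L(-1)\overline h)_j=-j\overline h_{j-1}$. The binomial $\binom2j$ kills all $j\ge3$ terms regardless, so the formula holds.

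I expect the main obstacle to be Step 1: establishing that the non-zero-eigenvalue components of $\Omega$ are all second derivatives $L(-1)^2V_0$. This is what allows the correction to be absorbed into $\overline h$ without changing $h_0$. Steps 2 and 3 are then routine mode bookkeeping.
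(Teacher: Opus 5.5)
Your proposal is correct and follows the paper's proof essentially step for step: skew symmetry gives $h_0\Omega\in L(-1)^2V_0$, the nonzero-eigenvalue components are absorbed into $\overline h=h+L(-1)b$, the skew-symmetry argument gives $f(b_0v)=0$, and the commutator formula is taken with $m=2$. One small slip: skew symmetry actually gives $h_0\omega=\sum_{j\ge2}\frac{(-1)^{j+1}}{j!}L(-1)^jL(j-1)h$, so your closed form $\tfrac12L(-1)^2L(1)h$ has the wrong sign and drops the $j\geq 3$ terms. These terms need not vanish, since $L(j-1)h\in V_{2-j}$ and $V$ may have negative weights. However, each of them equals $L(-1)^2\bigl(L(-1)^{j-2}L(j-1)h\bigr)\in L(-1)^2V_0$, so your conclusion is unaffected.
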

\proof (i) First, since $(L(-1)b)_0=0$, we have $\overline{h}_0=h_0.$ From $h\in V_1$, we have $\overline{h}_0=h_0$ preserves each homogeneous subspace $V_t.$
Moreover, we have $$[h_0,v_n]=(h_0v)_n,\qquad h_0L(-1)^k=L(-1)^kh_0$$ for all $k\geq0,n\in\Z.$

Using skew symmetry, we have
\[
\begin{aligned}
h_0\omega
&=
\sum_{j\geq0}
\frac{(-1)^{j+1}}{j!}
L(-1)^j(\omega_jh)\\
&=
-\omega_0h
+
L(-1)(\omega_1h)
+
\sum_{j\geq2}
\frac{(-1)^{j+1}}{j!}
L(-1)^j(\omega_jh)\\
&=
\sum_{j\geq2}
\frac{(-1)^{j+1}}{j!}
L(-1)^jL(j-1)h\\
&=L(-1)^2\sum_{j\geq0}\frac{(-1)^{j+3}}{(j+2)!}L(-1)^jL(j+1)h\in L(-1)^2V_0.
\end{aligned}
\]
Similarly we have
$
2h_0 h
=
\sum_{j\geq1}
\frac{(-1)^{j+1}}{j!}
L(-1)^j(h_jh)\in L(-1)V_0.
$
Putting those together, we have $$h_0\Omega=h_0\omega+h_0L(-1)h=h_0\omega+L(-1)h_0h\in L(-1)^2V_0.$$

Let $V_2=V_2^{[0]}\oplus\bigoplus_{\lambda\neq0}V_2^{[\lambda]}$ be the generalized $h_0$-eigenspace decomposition of $V_2$, and
\[
\Omega
=
\Omega^0
+
\sum_{\lambda\neq0}\Omega^\lambda,
\] the corresponding decomposition of $\Omega$.
Since $L(-1)^2V_0$ is $h_0$-stable, we also have the decomposition
\[
L(-1)^2V_0
=
\bigoplus_{\lambda}
\left(L(-1)^2V_0\cap V_2^{[\lambda]}\right).
\]
Thus for $\lambda\neq0$, we have $h_0\Omega^\lambda
\in
L(-1)^2V_0\cap V_2^{[\lambda]}$. Since $h_0$ is invertible on $V_2^{[\lambda]}$, we have
$
\Omega^\lambda\in L(-1)^2V_0.
$
Therefore
\[
\Omega-\Omega^0
=
\sum_{\lambda\neq0}\Omega^\lambda
\in L(-1)^2V_0.
\]
Hence there exists some $b\in V_0$ such that $\Omega^0-\Omega=L(-1)^2b.$ Now
\[
\begin{aligned}
\Omega^0
=
\omega+L(-1)h+L(-1)^2b=
\omega+L(-1)(h+L(-1)b)=\omega+L(-1)\bar h,
\end{aligned}
\]
where $\bar h=h+L(-1)b.$ Thus we get the desired relation.

(ii) Since
$
\Omega^0_n
=
\omega_n+(L(-1)\bar h)_n,$ we obtain $\Omega^0_0=L(-1)$,
$
\Omega^0_1=L_0-\bar h_0=
L_0-h_0,
$
and
$
\Omega^0_2
=
L_1-2\bar h_1.
$
Therefore, we have
$$
H=L(0)-h_0,\qquad
A=L(1)-2\bar h_1.
$$

(iii) If $a\in V_1$, we have $f((L(-1)b)_1a)=-f(b_0a).$ Using skew symmetry, we get
\[
b_0a
=
-a_0b
+
\sum_{j\geq1}
\frac{(-1)^{j+1}}{j!}
L(-1)^j(a_jb).
\]
From Lemma \ref{lm4.1},
we obtain $f(b_0a)
=
-f(a_0b)=0.$ This implies $f((L(-1)b)_1a)=0.$
Consequently,
$f(Aa)
=
f\bigl((L(1)-2\bar h_1)a\bigr)=
f\bigl((L(1)-2h_1)a\bigr)
-
2f((L(-1)b)_1a)=
0.
$

Since $A$ is homogeneous of degree $-1$,
from definition of $f$, we have $f(Aa)=0$ for $a\notin V_1.$ Hence, we get
\[
f\circ A=0.
\]

(iv) It is trivial that $(\operatorname{ad}h_0)A=[h_0,\Omega_2^0]=(h_0\Omega^0)_2$. By definition of $\Omega^0$, there exists $N>0$ such that $h_0^N\Omega^0=0.$ This implies $(\operatorname{ad}h_0)^NA=0.$

For $x\in V^{[\lambda]}$,
there exists $M>0$ such that $(h_0-\lambda)^Mx=0.$ Thus we have
\[
(h_0-\lambda)^{N+M}Ax
=
\sum_{j=0}^{N+M}
\binom{N+M}{j}
(\operatorname{ad}h_0)^j(A)
(h_0-\lambda)^{N+M-j}x=0.
\]
Hence $A$ preserves every generalized $h_0$-eigenspace.

(v) Let $u\in V_t$ be homogeneous. From Jacobi identity, we obtain
\[\begin{aligned}
[L(1),u_n]&=
(2t-n-2)u_{n+1}
+
(L(1)u)_n,\\
[\bar h_1,u_n]
&=
(h_0u)_{n+1}
+
(\bar h_1u)_n.
\end{aligned}\]
Therefore
\[
[A,u_n]
=
(2t-n-2)u_{n+1}
+
(Au)_n
-
2(h_0 u)_{n+1},
\]
This completes the proof.                           $\hfill\Box$

\begin{lemma}\label{lm4.9}
Let $U$ be a nonnegatively graded vertex operator algebra. For any $u,v\in U_0$, define $u\cdot v=u_{-1}v$. Then $U_0$ is a commutative associative algebra with identity $\textbf{1}.$
\end{lemma}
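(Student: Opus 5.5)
The statement to prove is that $U_0$, with $u\cdot v=u_{-1}v$, is a commutative associative algebra with identity $\mathbf 1$. The plan is to verify the three algebra axioms directly from the vertex operator algebra axioms, using only that $U=\bigoplus_{n\geq0}U_n$ is nonnegatively graded. The key observation is that for homogeneous $u\in U_t$ the mode $u_n$ has degree $t-n-1$. Hence for $u\in U_0$, $u_n$ has degree $-n-1$. In particular $u_n U_0\subseteq U_{-n-1}$, which vanishes for $n\geq 0$. So every nonnegative mode of an element of $U_0$ kills $U_0$, and $u_{-1}U_0\subseteq U_0$, so the product is well defined.

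\emph{Identity.} The vacuum axioms give $\mathbf 1_{-1}v=v$, since $Y(\mathbf 1,z)=\Id$. They also give $v_{-1}\mathbf 1=v$, from the creation property $Y(v,z)\mathbf 1=v+O(z)$. Thus $\mathbf 1$ is a two-sided identity.

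\emph{Commutativity.} We use skew symmetry (\ref{id2.1}): $Y(u,z)v=e^{zL(-1)}Y(v,-z)u$. Comparing coefficients of $z^0$ gives
\[
u_{-1}v=\sum_{j\geq0}\frac{(-1)^{j}}{j!}L(-1)^j\,(-1)^{j}\cdots
\]
Written carefully, the $j$-th term is a multiple of $L(-1)^j v_{j-1}u$. For $j\geq1$ the factor $v_{j-1}u$ has $j-1\geq0$, so it lies in $U_{-j}=0$. Only the $j=0$ term survives, and it equals $v_{-1}u$. Hence $u\cdot v=v\cdot u$.

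\emph{Associativity.} Take $u,v,w\in U_0$. The plan is to extract $(u_{-1}v)_{-1}w$ from the Jacobi identity via the iterate formula
\[
(u_{-1}v)_{-1}w=\sum_{i\geq0}\Big(u_{-1-i}v_{-1+i}w+v_{-2-i}u_{i}w\Big).
\]
This follows from the residue $\Res_{z_0}z_0^{-1}$, $\Res_{z_1}$, $\Res_{z_2}z_2^{-1}$ of the Jacobi identity. Now $u_iw=0$ for $i\geq0$, so the second sum vanishes. Likewise $v_{-1+i}w\in U_{-i}=0$ for $i\geq1$, so the first sum reduces to its $i=0$ term $u_{-1}v_{-1}w$. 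Therefore $(u\cdot v)\cdot w=u\cdot(v\cdot w)$.

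The main obstacle is bookkeeping: getting the correct form of the iterate (associator) formula from the Jacobi identity, including the signs and binomial coefficients $\binom{-1}{i}=(-1)^i$. The vanishing argument is robust to those constants, however, because every term except $u_{-1}v_{-1}w$ contains a mode $x_n$ with $n\geq0$ applied to an element of $U_0$, or lands in negative degree, and is therefore zero.
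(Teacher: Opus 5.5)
Your proposal is correct and follows the paper's proof essentially verbatim. Commutativity comes from the $z^0$ coefficient of skew symmetry, $u_{-1}v=\sum_{j\geq0}\frac{(-1)^j}{j!}L(-1)^j v_{j-1}u$, and associativity from the iterate formula $(u_{-1}v)_{-1}w=\sum_{i\geq0}(u_{-1-i}v_{-1+i}w+v_{-2-i}u_iw)$; in both, the nonnegative grading kills every term except the leading one, and you are slightly more explicit than the paper about well-definedness and the two-sided unit.
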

\proof It is obvious that $"\cdot"$ is well-defined and $\textbf{1}$ is the identity. Using skew symmetry, we have
\[
a_{-1}b
=
\sum_{j\ge0}
\frac{(-1)^j}{j!}
L(-1)^j(b_{j-1}a)=b_{-1}a.
\]
Thus $U_0$ is commutative.

Using Jacobi identity, we have
\[
(a_{-1}b)_{-1}c
=
\sum_{j\ge0}
a_{-1-j}b_{-1+j}c
+
\sum_{j\ge0}
b_{-2-j}a_jc=a_{-1}(b_{-1}c).
\]
Thus $U_0$ is associative.                              $\hfill\Box$

\begin{theorem}\label{thm4.10}
Let $h$ be defined by Proposition \ref{prop4.6}, set $U=V^{[0]}$, i.e., $U$ is the generalized $h_0$-eigenspace of generalized eigenvalue $0$. Then $U$ is a nonnegatively graded vertex subalgebra of $V$ and $U_0=\mathbb{C}\textbf{1}$.
\end{theorem}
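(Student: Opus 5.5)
The plan is to argue along the lines of the lowest-weight argument in Lemma \ref{lm4.4}, with the operator $A=L(1)-2\overline h_1$ of Lemma \ref{lm4.8} playing the role that $a'_1$ played there.

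\textbf{Subalgebra and grading.} By the lemma on generalized eigenvalues of locally finite derivations, $U=V^{[0]}$ is a vertex subalgebra containing $\mathbf 1$ and stable under $L(-1)$. Since $h_0$ preserves each $V_t$, we have $U=\bigoplus_t U_t$ with $U_t=U\cap V_t$. By Lemma \ref{lm4.8}, $\omega$ differs from $\Omega^0\in U$ only by $L(-1)\overline h$. I will check that $\overline h$ lies in $U$ up to a term that does not affect the grading, so that the grading is inherited from $L(0)$. It then remains to prove two things: $U_p=0$ for $p<0$, and $U_0=\mathbb C\mathbf 1$. A first reduction: $h_0$ is nilpotent on each finite-dimensional $U_p$, so any nonzero $h_0$-stable subspace of $U_p$ contains a vector killed by $h_0$.

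\textbf{Minimal counterexample.} Suppose the claim fails. Let $p\le 0$ be minimal such that $U_p\neq 0$ (when $p<0$), or such that $U_0\neq\mathbb C\mathbf 1$ (when $p=0$). Choose $0\ne u\in U_p$ with $h_0u=0$. In the case $p=0$, use nilpotency of $h_0$ on $U_0/\mathbb C\mathbf 1$ to choose $u\notin\mathbb C\mathbf 1$ with $h_0u\in\mathbb C\mathbf 1$.

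Regard $f$ as a nonzero element of $V'$; it is supported on $V_0$ and $f(\mathbf 1)\neq 0$. Lemma \ref{lm4.3} gives a homogeneous $v$ with $fY(u,z)v\ne0$; take such $v\in V_q$ with $q$ minimal. By weight counting, $f(u_mv)\ne0$ forces $m=p+q-1$.

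Now set $n=p+q-2$. By Lemma \ref{lm4.8} we have $f\circ A=0$, and the commutator formula there gives
\[
0=f(Au_nv)=(2p-n-2)f(u_{n+1}v)+f((Au)_nv)-2f((h_0u)_{n+1}v)+f(u_nAv).
\]
The terms are handled as follows.
\begin{itemize}
\item $Au\in U_{p-1}$, because $A$ preserves generalized $h_0$-eigenspaces (Lemma \ref{lm4.8}). So $Au=0$ when $p<0$. When $p=0$, $Au$ lies in $U_{-1}$, which vanishes by the negative-weight case already settled.
\item $(h_0u)_{n+1}v$ vanishes, or is a multiple of $\mathbf 1_{q-1}v$, which is zero unless $q=0$.
\item $Av\in V_{q-1}$, so $f(u_nAv)=0$ by minimality of $q$.
\end{itemize}
What remains is $(p-q)f(u_{p+q-1}v)=0$.

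\textbf{Main obstacle.} The whole argument reduces to excluding $q=p$, together with the degenerate case $q=0$ when $p=0$. Here I would first try to exploit symmetry: by skew-symmetry and $f\circ L(-1)=0$ (Lemma \ref{lm4.1}), $f(u_{p+q-1}v)=\pm f(v_{p+q-1}u)$ up to $L(-1)$-corrections, which $f$ annihilates. This lets me exchange the roles of $u$ and $v$.

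If $q=p<0$, then $v$ has weight below the lowest weight of $U$. I would then replace $v$ by its $h_0$-generalized-eigencomponents, noting that $f$ only pairs eigenvalue $\lambda$ with $-\lambda$. I would use the trace identity $f(a_0\cdot)=0$ of Lemma \ref{lm4.1} to move to a component where the eigenvalue argument of Lemma \ref{lm4.4} gives a contradiction.

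For $p=0$, the remaining identity says $f(u_{-1}v)=0$ for all $v\in V_0$, $u\notin\mathbb C\mathbf 1$. I would combine this with the commutative algebra structure of $U_0$ (Lemma \ref{lm4.9}) and the nondegeneracy coming from Lemma \ref{lm4.3} and the simplicity of $V$ to conclude $u\in\mathbb C\mathbf 1$.

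I expect this case analysis to be the delicate step; everything else follows the template of Lemma \ref{lm4.4}.
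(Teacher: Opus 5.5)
Your skeleton is the paper's: minimal weight $p$, a vector $u\in U_p$ with $h_0u=0$, a vector $v$ of minimal weight $q$ with $fY(u,z)v\neq0$, and the identity $0=f(A\,u_{p+q-2}v)$ from Lemma~\ref{lm4.8}. Your reduction to $(p-q)f(u_{p+q-1}v)=0$ is correct, and taking $v\in V$ instead of $v\in U$ is harmless. (The remark about $\overline h$ is unnecessary: the grading is inherited simply because $[L(0),h_0]=0$.) However, both end-games are left open, and the routes you sketch are not the ones that work.

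For $p<0$, no symmetry or eigencomponent argument is needed. If $q=p<0$, then $p+q-1=2p-1<-1$. Corollary~\ref{co4.2} then gives $f(u_{2p-1}v)=0$ directly, which contradicts the choice of $v$. Your ``main obstacle'' paragraph for this case (skew-symmetry, splitting $v$ into $h_0$-components, re-running Lemma~\ref{lm4.4}) is not an argument, and this one-line fix is what is missing.

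The $p=0$ case has a genuine missing idea. Take $u$ an arbitrary element of $U_0\setminus\mathbb C\mathbf 1$. (Note that $h_0u\in\mathbb C\mathbf 1$ already forces $h_0u=0$, since $f\circ h_0=0$ and $f(\mathbf 1)\neq0$.) The $A$-identity only excludes $q\neq0$. At $q=0$ you would need $f(u_{-1}v)=0$ for all $v\in U_0$; this suffices for all $v\in V_0$, because $f$ kills the other $h_0$-components. Nothing gives this for a general $u$. For instance, if $U_0$ contained a nontrivial idempotent, the pairing $(u,v)\mapsto f(u_{-1}v)$ could well be nondegenerate on $U_0$. Your sentence ``the remaining identity says $f(u_{-1}v)=0$'' assumes exactly what must be proved.

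What is needed is the following.
\begin{enumerate}
\item[(a)] $U_0$ has no idempotents other than $0$ and $\mathbf 1$. Using nonnegativity, an idempotent $e$ satisfies $L(-1)e=0$, so $Y(e,z)=e_{-1}$ is an idempotent endomorphism of the simple $V$-module $V$, forcing $e\in\{0,\mathbf 1\}$. Hence the commutative algebra $U_0$ of Lemma~\ref{lm4.9} is local, and its maximal ideal $J$ is nilpotent and $h_0$-stable.
\item[(b)] If $J\neq0$, choose $u\in J$ with $h_0u=0$. The $A$-argument again forces $q=0$.
\item[(c)] For $u,v\in U_0$ one has $u*v=u_{-1}v$, and $V_{-s}$ is an $A(V)$-module. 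Therefore $f(u_{-1}v)=\tr_{V_{-s}}o(u)o(v)=0$, because $o(u)$ is nilpotent and commutes with $o(v)$. This contradicts the choice of $v$.
\end{enumerate}
Hence $J=0$, so $U_0$ is a finite field extension of $\mathbb C$, that is, $U_0=\mathbb C\mathbf 1$.
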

\proof (i) Suppose $u\in V^{[\lambda]},v\in V^{[\mu]}$, there are $M,N\in\mathbb{N}$ such that
$
(h_0-\lambda)^N u=(h_0-\mu)^M v=0.
$ Hence we have
\[
(h_0-\lambda-\mu)^{M+N}(u_nv)
=
\sum_{j=0}^{N+M}
\binom{N+M}{j}
\bigl((h_0-\lambda)^{(N+M-j)} u\bigr)_n
\bigl((h_0-\mu)^{j}v\bigr)=0.
\]
Thus we get
$
V^{[\lambda]}\cdot V^{[\mu]}
\subseteq
V^{[\lambda+\mu]}$, where $V^{[\lambda]}\cdot V^{[\mu]}=\{u_nv\mid u\in V^{[\lambda]},v\in V^{[\mu]},n\in\mathbb N\}$.

Furthermore, it is obvious that $h_0\textbf{1}=0$, and $h_0L(-1)=L(-1)h_0.$ Thus,
we conclude that $U$ is a vertex subalgebra.

Since $h\in V_1$, we have $[L(0),h_0]=0.$
Thus $U$ has a natural gradation induced from $V.$
Since $V$ is lower truncated and every $V_n$ is finite-dimensional,
the same is true for $U$:
\[
U_n=0
\quad\text{for }n\ll0,
\qquad
\dim U_n<\infty.
\]

(ii) From Lemma \ref{lm4.1}, we can see $f\circ h_0=0.$ Since $h_0$ is invertible on $V_n^{[\lambda]}$, for any $x\in V_n^{[\lambda]}$, there is $y\in V_n^{[\lambda]}$ such that $x=h_0y.$ Hence we get $f(x)=0$. This means that $f(V_n^{[\lambda]})=0$ for all $\lambda\neq0.$ Now by Lemma \ref{lm4.3}, for $u\in U,$ there is $v\in U$ such that $fY(u,z)v\neq0.$

If there is $p<0$, such that $U_p\neq0$. Choose $p$ to be minimal and $u\in U_p$ such that $h_0u=0.$
From Lemma \ref{lm4.8}, we have $A(U)\subseteq U.$ Thus $A(U_p)\subset U_{p-1}=0.$ This implies $Au=0.$
From Lemma \ref{lm4.3}, choose $v\in U_q$ with $q$ minimal such that $fY(u,z)v\neq0.$ Then we get
\[
k=f(u_{p+q-1}v)\neq0.
\]
Set $n=p+q-2,$ from Lemma \ref{lm4.8}, using definition of $f$, the choice of $u,v$, minimality of $p,q$, and the degree of all linear maps, we have
\begin{eqnarray*}
0&=&f(A(u_nv))=f([A,u_n]v+u_nAv)\\
&=&(2p-n-2)f(u_{n+1}v)+f((Au)_nv)-2f((h_0 u)_{n+1}v)+f(u_nAv)\\
&=&k(p-q)+f(u_{p+q-2}Av).
\end{eqnarray*}
If $f(u_{p+q-2}Av)\neq0$, then $Av\in U_{q-1}$ has a lower weight which contradicts the choice of $q$. Hence we eventually arrived at $k(p-q)=0$. This means $p=q.$
In this case, we have $f(u_{2p-1}v)=0$ by Corollary \ref{co4.2}, since $p<0.$ Thus we get $k=0$ which is absurd. This contradiction shows that $p\geq0$, i.e., $U$ is nonnegatively graded.

(iii) By Lemma \ref{lm4.9}, $U_0$ is a commutative associative algebra. For $e\in U_0,$ if $n=0$, we have $e_0L(-1)e=L(-1)e_0e=0.$
If $n\ge1$, we have $e_nL(-1)e=0$ by gradation. Suppose $e$ is idempotent, i.e., $e_{-1}e=e,$ we get
\[
L(-1)e=
L(-1)(e_{-1}e)=
(L(-1)e)_{-1}e+e_{-1}L(-1)e.
\]
On the other hand, using skew symmetry, we have
\[
(L(-1)e)_{-1}e
=
\sum_{j\ge0}
\frac{(-1)^j}{j!}
L(-1)^j(e_{j-1}L(-1)e)=e_{-1}(L(-1)e).
\]
Hence we get $L(-1)e=2e_{-1}(L(-1)e)$. This implies $$e_{-1}L(-1)e=2e^2_{-1}(L(-1)e)=2e_{-1}(L(-1)e),$$Thus $e_{-1}(L(-1)e)=0$, and $L(-1)e=0.$

By $L(-1)$-bracket property, we have $Y(e,z)=e_{-1}.$ This implies for all $v\in V$,
\[
[e_{-1},Y(v,z)]=0.
\]
Thus $e_{-1}$ is an endomorphism of the adjoint $V$-module.
Moreover, $e_{-1}^2\mathbf1=e_{-1}e=e=e_{-1}\mathbf1.$
From the homomorphism, we have
$e_{-1}^2=e_{-1}$ on all of $V$.

Now let $V=\im e_{-1}\oplus\ker e_{-1}$, since $V$ is simple, we have $\im e_{-1}=0$ or $\ker e_{-1}=0.$ This is equivalent to $e_{-1}=0$ or $e_{-1}=\Id.$
Applying this to the vacuum, we get $e=0$, or $e=\mathbf1.$
Thus $U_0$ has no nontrivial idempotents.

If $U_0$ is decomposable, then each direct summand can produce a nontrivial idempotent element. Thus $U_0$ is indecomposable. This implies $U_0$ is a finite-dimensional commutative local algebra. Let $J=\Rad (U_0)$ be its unique maximal ideal. we have $J$ is nilpotent, see \cite{P}.

It is obvious that for any $t\in\C,$ the map $\exp{th_0}:U_0\rightarrow U_0$ is a well-defined algebra automorphism.
Since, the Jacobson radical is invariant under every algebra automorphism. This implies $J$ is stable under $h_0.$

If $J\neq0$, we have $\ker h_0\cap J\neq0$ since $h_0$ has an eigenvector with eigenvalue $0$ in $J$. Let $u\in J$ be nonzero
such that $h_0u=0.$ We deduce that $Au=0$ since $U$ is nonnegatively graded.
From Lemma \ref{lm4.3} and definition of $f$, there exists a
homogeneous $v\in U_q$ such that $f(u_{q-1}v)\neq0.$
Choose $q$ to be minimal, and put $k=f(u_{q-1}v)\neq0.$

It is obvious that $q\geq0.$ If $q=0$, for $u,v\in U_0$, we can see that $u\cdot v$ is exactly the Zhu's product $u*v$.
On the other hand, $W=V_{-s}$ is an $A(V)$-module with $v$ acts on $W$ by $o(v)=v_{\wt v-1}$. For $v\in U_0$, this is exactly $v_{-1}$.
Therefore, on $W$, we have $o(u_{-1}v)=o(u)o(v)$
on $W$. Hence
\[
f(u_{-1}v)
=
\tr_W\bigl(o(u)o(v)\bigr).
\]
Since $u\in J$ and $J$ is the radical of a finite-dimensional commutative algebra,
we have $u$ is nilpotent. Thus $o(u)o(v)$
is nilpotent since $U_0$ is commutative. This means $\tr_W(o(u)o(v))=0$. Hence, $f(u_{-1}v)=0.$ This contradicts that $f(u_{-1}v)=k\neq0.$
The contradiction means $q>0.$

Now consider $u_{q-2}v\in U_{1}$. From Lemma \ref{lm4.8},
we have
\begin{eqnarray*}
0&=&f\bigl(A(u_{q-2}v)\bigr)\\
&=&f([A,u_{q-2}]v)+f(u_{q-2}Av)\\
&=&-qf(u_{q-1}v)+f(u_{q-2}Av).
\end{eqnarray*}
If $f(u_{q-2}Av)\neq0,$ we get a nonzero vector $Av\in U_{q-1}$ which contradicts the minimality of $q$. Therefore, $f(u_{q-2}Av)=0,$ and we get
$0=-qf(u_{q-1}v)=-qk$. But this contradicts $q>0,k\neq0.$
This contradiction implies our assumption $J\neq0$ is false. Therefore $J=0$.

Now from above discussion, we have $U_0$ is a finite-dimensional commutative local $\mathbb C$-algebra and its unique maximal ideal is zero.
Hence $U_0$ is a field which is a finite extension of the algebraically closed field $\C$. Thus $U_0\cong\C$. This forces $U_0=\C \textbf{1}$,
and we complete the proof.                                  $\hfill\Box$

\begin{remark}\label{re4.11}
In Theorem \ref{thm4.10}, $U$ is not necessarily a vertex operator subalgebra, conformal or non-conformal. Its gradation is induced from the gradation of $V$.
\end{remark}

\begin{remark}\label{re4.12}
If $V_1=0$, by Remark \ref{re4.7}, we can see that $h$ defined in Proposition \ref{prop4.6} is zero. In this case, we have $U=V$. Hence by Theorem \ref{thm4.10}, we deduce that $V$ is nonnegatively graded and $V_0=\C \textbf{1}$. Thus if $V$ has negative gradation space, then $V_1\neq 0$ and $h$ defined in Proposition \ref{prop4.6} must exist under the modular invariance assumption.
\end{remark}

\begin{corollary}\label{co4.13}
Let $\Omega^0$ be defined in Lemma \ref{lm4.8}, we have $h_0\Omega^0=0.$ For any weak $V$-module $(M,Y_M)$, write $Y_M(v,z)=\sum v_n^Mz^{-n-1}$, define $$T^M=L(-1)^M=(\Omega^0_0)^M,~~ H^M=(\Omega^0_1)^M=L(0)^M-h_0^M,~~ A^M=(\Omega^0_2)^M=L(1)^M-2\overline{h}_1^M.$$ Then $\{T^M,H^M,A^M\}$ satisfy M\"{o}bius relations, and we have the corresponding fields covariance relations
\[
[T^M,Y_M(u,x)]
=
Y_M(L(-1)u,x)
=
\frac{d}{dx}Y_M(u,x),
\]
\[
[H^M,Y_M(u,x)]
=
Y_M(Hu,x)+xY_M(L(-1)u,x),
\]
\[
[A^M,Y_M(u,x)]
=
Y_M(Au,x)
+
2xY_M(Hu,x)
+
x^2Y_M(L(-1)u,x).
\]
Moreover $\Omega^0$ is a Virasoro element.
\end{corollary}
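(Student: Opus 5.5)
We first establish $h_0\Omega^0=0$. By construction $\Omega^0$ lies in the generalized $h_0$-eigenspace of eigenvalue $0$, so $h_0$ acts nilpotently on it; we must upgrade this to an honest annihilation. Using $\Omega^0=\omega+L(-1)\overline h$ with $\overline h_0=h_0$, we compute $h_0\Omega^0=h_0\omega+L(-1)h_0\overline h$. The skew-symmetry computation in Lemma \ref{lm4.8} gives $h_0\omega\in L(-1)^2V_0$ and $2h_0\overline h\in L(-1)V_0$. Hence $h_0\Omega^0\in L(-1)^2V_0\cap V_2^{[0]}$, i.e. $h_0\Omega^0=L(-1)^2c$ with $c$ lying (after projecting, since $L(-1)$ commutes with $h_0$) in $U_0=V_0^{[0]}$. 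By Theorem \ref{thm4.10}, $U_0=\C\mathbf 1$, so $c\in\C\mathbf 1$ and $L(-1)^2c=0$. Thus $h_0\Omega^0=0$. This reduction to $U_0=\C\mathbf1$ is the step where the earlier positivity work is used.

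Next we verify the Möbius relations on any weak module $M$. Write $H=L(0)-h_0$ and $A=L(1)-2\overline h_1$ as the modes $\Omega^0_1,\Omega^0_2$, and $T=\Omega^0_0=L(-1)$. Commutators of modes are given by the Borcherds commutator formula
$$[\Omega^0_m,\Omega^0_n]=\sum_{j\ge0}\binom{m}{j}(\Omega^0_j\Omega^0)_{m+n-j}.$$
So we need $\Omega^0_j\Omega^0$ for $j\ge0$. We compute these from $\omega_j\omega$ (Virasoro), $\omega_j L(-1)\overline h$, and $(L(-1)\overline h)_j\omega$, $(L(-1)\overline h)_j L(-1)\overline h$, using $(L(-1)x)_j=-jx_{j-1}$ and the relation $h_0\Omega^0=0$ to kill $\overline h_0$-terms. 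The expected outcome is
$$\Omega^0_0\Omega^0=L(-1)\Omega^0,\quad \Omega^0_1\Omega^0=2\Omega^0,\quad \Omega^0_2\Omega^0=0,\quad \Omega^0_3\Omega^0=\tfrac{c'}{2}\mathbf1,$$
with a modified central charge $c'$. The entries $\Omega^0_j\Omega^0$ for $j\ge4$ vanish by degree, since the weight is $3-j<0$ and $U$ is nonnegatively graded. The $j=1$ identity is exactly the statement that $\Omega^0$ is an $H$-eigenvector of eigenvalue $2$, which follows from $L(0)\Omega^0=2\Omega^0$ together with $h_0\Omega^0=0$. The $j=2$ identity lands in $U_1$; we show it vanishes by checking $L(1)\omega=0$ and expanding the remaining $\overline h$-terms, again using $f\circ A=0$ and $U_0=\C\mathbf1$ where needed. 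This is the main computational obstacle: controlling $\Omega^0_2\Omega^0$ and $\Omega^0_3\Omega^0$. If a nonzero $\Omega^0_2\Omega^0\in U_1$ survives, we first try to absorb it into the choice of $b$ (the ``second derivative'' correction mentioned in the introduction). These identities give $\Omega^0$ as a Virasoro element, and restricting the Borcherds formula to $m,n\in\{0,1,2\}$ yields $[H,T]=T$, $[A,T]=2H$, $[A,H]=A$ on any module.

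Finally, the field covariance relations follow from the general commutator formula
$$[\Omega^0_{k},Y_M(u,x)]=\sum_{j\ge0}\binom{k}{j}x^{k-j}Y_M(\Omega^0_ju,x)$$
for $k=0,1,2$. Here $\Omega^0_0u=L(-1)u$, $\Omega^0_1u=Hu$, and $\Omega^0_2u=Au$, and the $L(-1)$-derivative property gives the claimed three formulas. Since these identities hold at the level of $V$-mode commutators, they transfer verbatim to every weak module $M$.
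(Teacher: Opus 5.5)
Your argument for $h_0\Omega^0=0$ (it lies in $L(-1)^2V_0\cap V_2^{[0]}=L(-1)^2U_0=0$) is correct and matches the paper. So are the covariance relations via $[(\Omega^0)^M_k,Y_M(u,x)]=\sum_j\binom{k}{j}x^{k-j}Y_M(\Omega^0_ju,x)$.

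The gap is the Virasoro property, specifically $\Omega^0_2\Omega^0=0$. You do not prove it. You plan to ``expand the $\overline h$-terms using $f\circ A=0$'', which is irrelevant here, and then fall back on absorbing a surviving term into $b$. That fallback cannot work. $\Omega^0$ is canonically the $V_2^{[0]}$-component of $\Omega$, and any change $L(-1)^2b'$ that keeps the vector in $V_2^{[0]}$ has $b'$ projecting into $U_0=\mathbb C\mathbf 1$, so it changes nothing.

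Because you derive the M\"obius relations from the Borcherds formula, they inherit this gap. For instance $[\Omega^0_2,\Omega^0_0]=2\Omega^0_1+(\Omega^0_2\Omega^0)_0$, so $[A,T]=2H$ depends on the missing identity. You also list $\Omega^0_3\Omega^0$ as an obstacle, but it is automatic: $U$ is a vertex subalgebra, so $\Omega^0_3\Omega^0\in U_0=\mathbb C\mathbf 1$.

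The missing idea is skew symmetry combined with the grading of $U$:
\[
\Omega^0_2\Omega^0=\sum_{j\ge0}\frac{(-1)^{j+3}}{j!}L(-1)^j\bigl(\Omega^0_{j+2}\Omega^0\bigr)=-\Omega^0_2\Omega^0+L(-1)\bigl(\Omega^0_3\Omega^0\bigr).
\]
The terms with $j\ge2$ vanish because $U$ has no negative weights. The $j=1$ term vanishes because $\Omega^0_3\Omega^0\in\mathbb C\mathbf 1$ and $L(-1)\mathbf 1=0$. Hence $\Omega^0_2\Omega^0=0$.

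Alternatively, expanding $A\Omega^0=(L(1)-2\overline h_1)(\omega+L(-1)\overline h)$ directly shows it lies in $L(-1)V_0\cap U_1$. That intersection is $L(-1)U_0=0$, by the same projection argument you used for $h_0\Omega^0$ rather than $f\circ A=0$.

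Also, as the paper does, you can verify the M\"obius relations directly, without the Virasoro property:
\[
[A,T]=[L(1),L(-1)]-2[\overline h_1,L(-1)]=2L(0)-2\overline h_0=2H,\qquad [H,T]=T,
\]
and $[H,A]=-A$, using $[h_0,A]=(h_0\Omega^0)_2=0$.
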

\proof First, we put what we have known.
By Lemma~\ref{lm4.8}, $\Omega=\omega+L(-1)h$ and $\Omega^0=\omega+L(-1)\bar h,\bar h=h+L(-1)b,b\in V_0,$ and $\bar h_0=h_0.$ Moreover, $\Omega^0\in U_2,U=V^{[0]}.$
Thus there exists some $N\gg0$ such that $h_0^N\Omega^0=0.$
Theorem~\ref{thm4.10} gives $U_n=0,n<0,U_0=\C \textbf{1}$.
Lemma~\ref{lm4.8} also gives $h_0\Omega^0\in L(-1)^2V_0.$
Indeed, from the proof of lemma \ref{lm4.8}, we already know $h_0(\omega+L(-1)h)\in L(-1)^2V_0.$ From definition, we can see $h_0\Omega^0\in L(-1)^2V_0.$
On the other hand,  $\Omega^0\in U_2$ and $U$ is $h_0$-stable, we have $h_0\Omega^0\in U_2.$
Putting those together, we have
\[
h_0\Omega^0
\in
(L(-1)^2V_0)\cap U_2.
\]

(i) Following the proof of Lemma \ref{lm4.8}, we can see that $\{T^M,H^M,A^M\}$ has the desired representations.

Now decompose $V_0$ into generalized $h_0$-eigenspaces $V_0=\bigoplus_{\lambda}V_0^{[\lambda]},$
Since $[h_0,L(-1)]=0,$ we have $L(-1)^2V_0^{[\lambda]}
\subseteq
V_2^{[\lambda]}.$
For $x\in(L(-1)^2V_0)\cap U_2,$
 write $x=L(-1)^2v$ with
$v=\sum_\lambda v^\lambda,
v^\lambda\in V_0^{[\lambda]},$
we have $L(-1)^2v^\lambda=0.$ Thus $x=L(-1)^2v^0$ with
$v^0\in V_0^{[0]}=U_0.$
This proves $(L(-1)^2V_0)\cap U_2\subseteq L(-1)^2U_0.$
The reverse inclusion is immediate, since $U$ is $L(-1)$-stable.
Hence $$(L(-1)^2V_0)\cap U_2= L(-1)^2U_0.$$Using $U_0=\C \textbf{1}$, we have $h_0\Omega^0=0.$

(ii) Since $h_0\Omega^0=0,$ this implies $h_0$ commute with $A^M,T^M,H^M.$ Thus
\begin{eqnarray*}
[H^M,A^M]&=&[L^M(0),A^M]-[h_0^M,A^M]\\
&=&[L^M(0),L^M(1)-2\bar h^M_1]\\
&=&-L^M(1)+2\bar h^M_1=-A^M.
\end{eqnarray*}
\begin{eqnarray*}
[H^M,T^M]&=&[L^M(0),T^M]-[h_0^M,T^M]\\
&=&[L^M(0),L^M(-1)]\\
&=&L^M(-1)=T^M.
\end{eqnarray*}
\begin{eqnarray*}
[A^M,T^M]&=&[L^M(1),L(-1)^M]-2[\bar h^M_1,L(-1)^M]\\
&=&2L^M(0)-2\overline{h}_0^M\\
&=&2H^M.
\end{eqnarray*}
Hence, $\{T^M,H^M,A^M\}$ satisfy M\"{o}bius relations.

(iii) Since $T^M=L(-1)^M$, the covariance relation for $T^M$ is trivial. The covariance relation for $H^M$ is
\begin{eqnarray*}
[H^M,Y_M(u,x)]&=&[L(0)^M-h_0^M,Y_M(u,x)]\\
&=&Y_M(L(0)u,x)+xY_M(L(-1)u,x)-Y_M(h_0u,x)\\
&=&
Y_M(Hu,x)+x\frac{d}{dx}Y_M(u,x).
\end{eqnarray*}
And the covariance relation for $A^M$ is
\begin{eqnarray*}
&&[A^M,Y_M(u,x)]=[L(1)^M,Y_M(u,x)]-2[\overline{h}_1^M,Y_M(u,x)]\\
&=&Y_M(L(1)u,x)+2xY_M(L(0)u,x)+x^2Y_M(L(-1)u,x)-2xY_M(\bar h_0u,x)-2Y_M(\bar h_1u,x).\\
&=&
Y_M(Au,x)
+
2xY_M(Hu,x)
+
x^2Y_M(L(-1)u,x).
\end{eqnarray*}

(iv) We prove $\Omega^0$ is a Virasoro element, since $\Omega^0\in U$ and $U$ is a vertex algebra, this is equivalent to show $$(\Omega^0)_j\Omega^0=0,j>3,~~ (\Omega^0)_3\Omega^0=\frac{c'}{2}\textbf{1},~~ (\Omega^0)_2\Omega^0=0,~~ (\Omega^0)_1\Omega^0=2\Omega^0,~~ (\Omega^0)_0\Omega^0=L(-1)\Omega^0.$$

Since $\Omega^0\in U_2$ and $h_0\Omega^0=0$, we have
$(\Omega^0)_1\Omega^0=H\Omega^0=(L(0)-h_0)\Omega^0=2\Omega^0.$ For $j>3$, we have
$(\Omega^0)_j\Omega^0\in U_{2+2-j-1}=0$ since $U$ is nonnegatively graded. Since $U_0=\C \textbf{1}$, we also have $(\Omega^0)_3\Omega^0=\frac{c'}{2}\textbf{1}$. Last relation is trivial. Hence we just show the relation for $(\Omega^0)_2\Omega^0.$ Using skew symmetry, we have
$$(\Omega^0)_2\Omega^0=\sum_{j\ge0}
\frac{(-1)^{j+3}}{j!}
L(-1)^j\bigl((\Omega^0)_{j+2}\Omega^0\bigr)=-(\Omega^0)_2\Omega^0.$$This implies $(\Omega^0)_2\Omega^0=0.$ Thus $\Omega^0$ is a Virasoro element.

This completes the proof.                          $\hfill\Box$

\subsection{Last step: relative inversion}\label{sec6.3}
Now, we start our last step.

\begin{definition}\label{def4.14}
Define $$C(z)=e^{zL(1)}e^{\pi iL(0)}z^{-2L(0)},\qquad C'(z)=e^{zA}e^{\pi iH}z^{-2H},\qquad E(z)=C(z)^{-1}C'(z).$$Since $A,L(1)$ are homogeneous of degree $-1$ and $H,L(0)$ are homogeneous of degree $0,$ we can see that $E^{\pm}$ are well-defined for any fixed $v\in V.$
\end{definition}

\begin{remark}
In the following, we use $C$ to denote the corresponding operators for M\"{o}bius triple $\{L(-1),L(0),L(1)\}$ and $C'$ for $\{L(-1),H,A\}$ on $V$. For a $V$-module $M$, the corresponding operators are $C_M$ for $\{L(-1),L(0),L(1)\}$ and $C'_M$ for $\{T^M,H^M,A^M\}.$
\end{remark}

\begin{lemma}\label{lm4.15}
Let $\{T^M,H^M,A^M\}$ denote the corrected M\"{o}bius triple obtained in Corollary \ref{co4.13}, and let $\{L(-1),L(0),L(1)\}$
denote the original M\"{o}bius triple.
For either of the two M\"{o}bius triples, write $C_{*,M}(z)$
for the corresponding operator on an ordinary $V$-module $M$.

Then
\begin{eqnarray}\label{id4.1}
C_{*,M}(z)Y_M(u,x)
=
Y_M\left(
C_{*,V}(z+x)u,
-\frac{x}{z(z+x)}
\right)
C_{*,M}(z).
\end{eqnarray}
Moreover, if $E_M(z)=C_M(z)^{-1}C'_M(z),$
then $E_M(z)Y_M(u,x)=Y_M(E_V(z+x)u,x)E_M(z),$
and
\[
\frac{d}{dz}E_M(z)
=
E_M(z)T^M-T^ME_M(z).
\]
\end{lemma}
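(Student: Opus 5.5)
The plan is to prove \eqref{id4.1} once for an arbitrary M\"{o}bius triple $\{T,H,A\}$ on $M$ that satisfies the covariance relations of Corollary~\ref{co4.13}. Both triples qualify: the original $\{L(-1),L(0),L(1)\}$ satisfies the usual $\mathfrak{sl}_2$-covariance, and the corrected triple $\{T^M,H^M,A^M\}$ satisfies exactly the relations proved in Corollary~\ref{co4.13}.

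\textbf{Step 1: the three elementary conjugation identities.} First I will check that $C_{*,M}(z)$ makes sense on each vector. On an ordinary module $M$, both $L(1)$ and $A^M=L(1)^M-2\bar h^M_1$ lower the original $L(0)$-grading, so they are locally nilpotent. $H^M$ preserves the finite-dimensional weight spaces, so $z^{-2H}=\exp(-2H\log z)$ is defined there with a fixed branch of $\log z$. From the covariance relations I will then derive:
\[
y^{H}Y_M(u,x)y^{-H}=Y_M(y^{H}u,yx),
\]
\[
e^{yA}Y_M(u,x)e^{-yA}=Y_M\bigl(e^{y(1-yx)A}(1-yx)^{-2H}u,\tfrac{x}{1-yx}\bigr).
\]
Both are proved as formal series identities by differentiating in $y$: each side satisfies the same first-order formal ODE, by the $H$- and $A$-covariance relations and the bracket relations $[H,A]=-A$, $[H,T]=T$, $[A,T]=2H$, and the two sides agree at $y=1$, respectively $y=0$.

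\textbf{Step 2: composing to get \eqref{id4.1}.} Applying $z^{-2H}$ sends the argument $x$ to $z^{-2}x$. Applying $e^{\pi iH}$ then sends it to $-x/z^2$; the branch convention $e^{\pi i}$ is what makes $(-1)^H$ well defined. Applying $e^{zA}$ produces the factor $1-z(-x/z^2)=(z+x)/z$ and the new argument $-x/(z(z+x))$. The vector becomes
\[
e^{(z+x)A}\Bigl(\tfrac{z+x}{z}\Bigr)^{-2H}e^{\pi iH}z^{-2H}u=e^{(z+x)A}e^{\pi iH}(z+x)^{-2H}u=C_{*,V}(z+x)u,
\]
where $(z+x)^{-2H}$ is expanded in nonnegative powers of $x/z$. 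This expansion convention is the one point where care is needed.

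\textbf{Step 3: the relation for $E_M$.} Rearranging \eqref{id4.1} for the original triple gives
\[
C_M(z)^{-1}Y_M\bigl(w,-\tfrac{x}{z(z+x)}\bigr)=Y_M\bigl(C_V(z+x)^{-1}w,x\bigr)C_M(z)^{-1}.
\]
Combining this with \eqref{id4.1} for the corrected triple, with $w=C'_V(z+x)u$, yields $E_M(z)Y_M(u,x)=Y_M(E_V(z+x)u,x)E_M(z)$.

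\textbf{Step 4: the derivative formula.} I will use a translation identity valid for any M\"{o}bius triple:
\[
C_*(z)e^{yT}=e^{\left(\frac{1}{z+y}-\frac1z\right)T}C_*(z+y).
\]
Geometrically, $C_*(z)$ realizes $x\mapsto \frac1{z+x}-\frac1z$, and a translation by $y$ on the right becomes a translation by $\frac1{z+y}-\frac1z$ on the left. I would prove it by differentiating in $y$ and using the $\mathfrak{sl}_2$ relations, or by a $2\times2$ matrix computation lifted through local nilpotence of $A$ and $T$-conjugation.

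Both triples share the same $T=L(-1)$, and the scalar $c=\frac1{z+y}-\frac1z$ is the same for both. The $e^{cT}$ factors therefore cancel in $C(z+y)^{-1}C'(z+y)$, giving
\[
E_M(z+y)=e^{-yT^M}E_M(z)e^{yT^M}.
\]
Differentiating at $y=0$ gives $\frac{d}{dz}E_M=E_MT^M-T^ME_M$.

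\textbf{Main obstacle.} The delicate points are the branch and expansion bookkeeping. One must make $e^{\pi iH}$ and $(z+x)^{-2H}$ consistent when $H$ has non-real or non-semisimple spectrum. One must also justify the formal ODE and uniqueness arguments for operators that are only locally nilpotent or locally finite, rather than group elements.
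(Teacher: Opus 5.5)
Your proposal is correct and follows the paper's proof essentially step for step: the same ODE-uniqueness arguments for the $H$- and $A$-conjugation formulas, the same composition of the three conjugations yielding \eqref{id4.1}, and the same rearrangement giving the intertwining relation for $E_M$. The only difference is in the last step, and it is a repackaging rather than a new route: your translation identity $C_*(z)e^{yT}=e^{(\frac{1}{z+y}-\frac1z)T}C_*(z+y)$ is the integrated form of the paper's identity $\frac{d}{dz}C_M(z)=C_M(z)L(-1)+z^{-2}L(-1)C_M(z)$ (and its analogue for $C'_M$), and you would need that identity anyway to verify yours by differentiating in $y$, so both arguments rest on the same $\mathfrak{sl}_2$ computation.
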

\proof (i) Define $F(s)=
e^{sH^M}Y_M(u,x)e^{-sH^M}.$
Differentiating with respect to $s$, using Corollary \ref{co4.13}, we get
\begin{eqnarray*}
\frac{d}{ds}F(s)&=&e^{sH^M}[H^M,Y_M(u,x)]e^{-sH^M}\\
&=&e^{sH^M}\left(Y_M(Hu,x)+xY_M(L(-1)u,x)\right)e^{-sH^M}\\
&=&Y_M(e^{sH}Hu,e^sx)+xY_M(e^{sH}L(-1)u,e^sx)\\
&=&Y_M(e^{sH}Hu,e^sx)+xY_M(e^sL(-1)e^{sH}u,e^sx)
\end{eqnarray*}
On the other hand, define $G(s)=Y_M(e^{sH}u,e^sx).$
Differentiating, we get
\[
\begin{aligned}
\frac{d}{ds}G(s)
&=
Y_M(He^{sH}u,e^sx)+
e^sx
\frac{\partial}{\partial y}
Y_M(e^{sH}u,y)
\bigg|_{y=e^sx}\\
&=Y_M(He^{sH}u,e^sx)+
e^sx
Y_M(L(-1)e^{sH}u,e^sx)
\end{aligned}
\]
Thus $F$ and $G$ satisfy the
same differential equation and have the same initial value $F(0)=G(0)=Y_M(u,x).$
Therefore, $F=G$, i.e.,
\[
e^{sH^M}Y_M(u,x)e^{-sH^M}
=
Y_M(e^{sH}u,e^sx).
\]
In particular, we have
\begin{eqnarray}\label{id4.2}
e^{\pi iH^M}Y_M(u,x)e^{-\pi iH^M}=Y_M(e^{\pi iH}u,-x).
\end{eqnarray}
\begin{eqnarray}\label{id4.3}
z^{-2H^M}Y_M(u,x)z^{2H^M}
=
Y_M(z^{-2H}u,z^{-2}x).
\end{eqnarray}

(ii) Define $\phi_t(x)=\frac{x}{1-tx},B_t(x)=e^{t(1-tx)A}(1-tx)^{-2H}$, set $s=1-tx,
q=t(1-tx)=ts,$ we have $B_t=e^{qA}s^{-2H}.$ Furthermore, we get
\[
\frac{d}{dt}\phi_t(x)
=
\frac{x^2}{(1-tx)^2}
=
\phi_t(x)^2,
\]
\begin{eqnarray*}
\frac{d}{dt}B_t&=&q'Ae^{qA}s^{-2H}+e^{qA}\frac{2x}{s}Hs^{-2H}\\
&=&q'AB_t+\frac{2x}{s}(H+qA)B_t\\
&=&\left(q'+\frac{2xq}{s}\right)AB_t+2\frac{x}{s}HB_t\\
&=&AB_t+2\phi_t(x)HB_t.
\end{eqnarray*}

Now define $G(t)=Y_M(B_tu,\phi_t(x))$, from above discussion and Corollary \ref{co4.13}, we have
\[
\begin{aligned}
\frac{d}{dt}G(t)
&=
Y_M(AB_tu,\phi_t)
+
2\phi_tY_M(HB_tu,\phi_t)
+
\phi_t^2
\frac{\partial}{\partial y}
Y_M(B_tu,y)
\bigg|_{y=\phi_t}\\
&=Y_M(AB_tu,\phi_t)
+
2\phi_tY_M(HB_tu,\phi_t)
+
\phi_t^2Y_M(L(-1)B_tu,\phi_t)\\
&=[A^M,G(t)].
\end{aligned}
\]
On the other hand, define $F(t)=e^{tA^M}Y_M(u,x)e^{-tA^M}$, using Corollary \ref{co4.13}, we have $$\frac{d}{dt}F(t)=[A^M,F(t)].$$ Thus $F,G$ satisfy same differential equation and $F(0)=G(0)=Y_M(u,z).$ Hence we get $F=G$, i.e.,
\begin{eqnarray}\label{id4.4}
e^{tA^M}Y_M(u,x)e^{-tA^M}
=
Y_M\left(
e^{t(1-tx)A}(1-tx)^{-2H}u,
\frac{x}{1-tx}
\right).
\end{eqnarray}

(iii) For the chosen M\"{o}bius triple $\{A^M,H^M,T^M\},$ we have
$C_{*,M}(z)=e^{zA^M}e^{\pi iH^M}z^{-2H^M}.$ Using identities (\ref{id4.2}), (\ref{id4.3}), (\ref{id4.4}), we get
\begin{eqnarray*}
C_{*,M}(z)Y_M(u,x)C_{*,M}(z)^{-1}&=&e^{zA^M}Y_M\left(e^{\pi iH}z^{-2H}u,-z^{-2}x\right)e^{-zA^M}\\
&=&Y_M\left(C_{*,V}(z+x)u,-\frac{x}{z(z+x)}\right).
\end{eqnarray*}
This proves identity (\ref{id4.1}) for the M\"{o}bius triple $\{A^M,H^M,T^M\}.$ Similarly, we have the identity (\ref{id4.1}) for the M\"{o}bius triple $\{L(1),L(0),L(-1)\}.$

(iv) From above computation, let $E_M=C^{-1}_MC'_M$, using identity (\ref{id4.1}), we have
\[
\begin{aligned}
E_M(z)Y_M(u,x)
&=
C_M(z)^{-1}
Y_M(C'_V(z+x)u,-\frac{x}{z(z+x)})
C'_M(z)\\
&=Y_M(C^{-1}_V(z+x)C'_V(z+x)v,x)
C_M(z)^{-1}C'_M(z)\\
&=Y_M(E_V(z+x)u,x)E_M(z).
\end{aligned}
\]

(v) It is trivial that $\frac{d}{dz}C_M(z)=e^{zL(1)}
(L(1)-2z^{-1}L(0))
e^{\pi iL(0)}z^{-2L(0)}.$
On the other hand, using bracket relations and the Baker-Campbell-Hausdorff formula, we have
\[
\begin{aligned}
&C_M(z)L(-1)+z^{-2}L(-1)C_M(z)\\
=& -z^{-2}e^{zL(1)}L(-1) e^{\pi iL(0)}z^{-2L(0)} +z^{-2}e^{zL(1)}
(L(-1)-2zL(0)+z^2L(1))e^{\pi iL(0)}z^{-2L(0)}     \\
=&
e^{zL(1)}
(L(1)-2z^{-1}L(0))
e^{\pi iL(0)}z^{-2L(0)}.
\end{aligned}
\]This proves
\[
\frac{d}{dz}C_M(z)
=C_M(z)L(-1)+z^{-2}L(-1)C_M(z).
\]
Similarly, we have the same identity for $C'_M$
\[
\frac{d}{dz}C'_M(z)
=
C'_M(z)T^M+z^{-2}T^MC'_M(z).
\]

Putting those identities together, we get
\[
\begin{aligned}
\frac{d}{dz}E_M
&=-C_M^{-1}(\frac{d}{dz}C_M)C_M^{-1}C'_M+C_M^{-1}(\frac{d}{dz}C'_M)\\
&=-C_M^{-1}(C_ML(-1)+z^{-2}L(-1)C_M)E_M+C_M^{-1}(C'_MT^M+z^{-2}T^MC'_M)\\
&=E_M(z)T^M-T^ME_M(z).
\end{aligned}
\]Last identity holds since $T^M=L(-1)$ on $M$. This is equivalent to
\[
[T^M,E_M(z)]
=
-\frac{d}{dz}E_M(z).
\] Thus we complete the proof.                  $\hfill\Box$

\begin{remark}
We should notice that $H$ need not yet be semisimple.
Let $H=H_s+H_n$
be the Jordan decomposition, then
\[
z^{-2H}
=
z^{-2H_s}
e^{-2(\log z)H_n}
\]
is well-defined since $H_n$ is nilpotent.
Moreover, since $A$ is homogeneous of degree $-1$ and $V$ is lower truncated, $e^{zA}u$ is also well-defined.

Consequently, all expressions above are well-defined and all identities are valid before the semisimplicity of $h_0$ is established. \end{remark}

\begin{proposition}\label{prop4.18}
Let notations be defined in above subsections. Then the operator $2h_0$ is semisimple on $V$ and has integral eigenvalues.
Consequently, for every $v\in V$, we have $E(z)v,\ E(z)^{-1}v\in V[z,z^{-1}]$.

Recall $K=DV=V'$
denotes the restricted contragredient module, define $K^E=K$ and
\[
Y_{K^E}(u,z)
=
Y_K(E(z)u,z),
\]
then $(K^E,Y_{K^E})$ is a weak $V$-module and $K^E\cong V$ as $V$-modules.
\end{proposition}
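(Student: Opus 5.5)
The proof proceeds in three stages. First I establish the semisimplicity and half-integrality of $h_0$. Then I deduce that $E(z)$ is Laurent-valued. Finally I use the creation property to identify $K^E$ with $V$.

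\emph{Stage one: $h_0$.} The first step is to understand $E(z)=C(z)^{-1}C'(z)$ in terms of $h_0$ alone. Since $h_0\Omega^0=0$ (Corollary \ref{co4.13}), $h_0$ commutes with $T,H,A$. It also commutes with $L(0)$, because $h\in V_1$. Hence $h_0$ commutes with both $C(z)$ and $C'(z)$. The factors $z^{-2H}=z^{-2L(0)}z^{2h_0}$ and $e^{\pi iH}=e^{\pi iL(0)}e^{-\pi ih_0}$ show that the only possible non-Laurent behaviour of $E(z)v$ comes from $z^{2h_0}$: complex powers from the semisimple part of $h_0$ and logarithms from its nilpotent part. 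So I must show that $2h_0$ is semisimple with integral spectrum.

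To do this I would exploit $K=V'$ and the intertwining/creation property. Take the operator
\[
Y_{K^E}(u,z)=Y_K(E(z)u,z).
\]
By Lemma \ref{lm4.15}, $E_K(z)$ conjugates $Y_K$ in the way a pseudoautomorphism in the sense of \cite{Li-Pseudo} should: $E(z)$ satisfies $E_M(z)Y_M(u,x)=Y_M(E_V(z+x)u,x)E_M(z)$ together with the derivative relation. Formally, then, $Y_{K^E}$ satisfies the Jacobi identity as a multivalued object.

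Now pick a nonzero $g\in K$ of lowest weight. Consider the series $Y_{K^E}(u,z)g$. Expanding $E(z)u$ produces terms $z^{2\lambda}(\log z)^k$, where $\lambda$ runs over the eigenvalues of $h_0$ and $k$ over the Jordan depth. On the other hand, the genuine module $K$ has integral weights in each coset and is lower truncated, and Lemma \ref{lm4.3} gives nonvanishing matrix coefficients $gY(u,z)v\neq0$. By the separation principle of subsection \ref{sec2.4}, distinct exponent classes mod $\Z$ and distinct log powers are independent. Requiring truncation and locality to hold for $Y_{K^E}$ as a formal Laurent series forces:
\begin{itemize}
\item all $\log z$ coefficients to vanish, so $h_0$ is semisimple;
\item all $2\lambda$ to be integers, via monodromy of $z^{2\lambda}$.
\end{itemize}

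More concretely, I would compute $\langle g, Y_K(E(z)u,z)\mathbf1\rangle$-type coefficients, using the contragredient formula of Definition \ref{def2.9}. The factor $e^{zL(1)}(-z^{-2})^{L(0)}$ there cancels against $C(z)^{-1}$, leaving $\langle g,Y(C'(z)u,z^{-1})\cdot\rangle$. This is single-valued only if $z^{2h_0}$ is. Lemma \ref{lm4.3} gives enough nonzero coefficients to detect every generalized eigenvector of $h_0$.

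\emph{Stage two: $E(z)$ is Laurent.} With $2h_0$ semisimple and integral, $z^{-2H}=z^{-2L(0)}z^{2h_0}$ is Laurent on each homogeneous vector. Finiteness of $e^{zA}$ and $e^{zL(1)}$ on fixed states, which holds because both operators have degree $-1$ and $V$ is lower truncated, then gives $E(z)^{\pm1}v\in V[z,z^{-1}]$.

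\emph{Stage three: $K^E\cong V$.} Li's pseudoautomorphism theory then shows that $(K^E,Y_{K^E})$ is a weak module. For $K^E\cong V$, the plan is to produce a vacuum-like vector $g\in K^E$ satisfying $L(-1)g=0$ in the new structure and $Y_{K^E}(u,z)g\in K^E[[z]]$. This uses the $\Omega^0$-grading: $U$ is nonnegatively graded with $U_0=\C\mathbf1$ by Theorem \ref{thm4.10}, so the $H$-lowest component of $K$ is one-dimensional and is annihilated by $A$. The map $u\mapsto u_{-1}^{K^E}g$ is then a nonzero module map $V\to K^E$. Because $V$ is simple, and $K^E$ is simple (it has the same submodules as $K$, which is simple by Theorem \ref{thm3.4}), this map is an isomorphism.

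The main obstacle is the first stage, namely extracting semisimplicity and integrality of $2h_0$ from the creation and monodromy argument. It requires carefully tracking branches of $z^{-2H}$ against the integral $L(0)$-grading of $K$.
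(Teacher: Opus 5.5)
You reduce Stage one correctly: everything hinges on $2h_0$ being semisimple with integral spectrum, since the only non-Laurent factor in $E(z)v$ is $z^{2h_0}$. Your ``more concretely'' computation also recovers the identity $\langle Y_K(E(z)v,z)k,u\rangle=\langle k,Y(C'(z)v,z^{-1})u\rangle$. The gap comes next. The statement ``this is single-valued only if $z^{2h_0}$ is'' runs in the wrong direction: nothing in your argument forces any matrix coefficient of $Y_K(E(z)v,z)$ to be single-valued. ``Requiring truncation and locality for $Y_{K^E}$ as a formal Laurent series'' presupposes the conclusion, because a field $Y_K(E(z)v,z)$ carrying factors $z^{2\lambda}(\log z)^k$ is a perfectly consistent twisted-type object. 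Lemma \ref{lm4.3} gives nonvanishing, not single-valuedness.

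The inputs you cite cannot suffice. Take a lattice VOA $V_L$ and $h=\epsilon\alpha(-1)\mathbf 1$ with generic $\epsilon$. Then $\Omega^0=\Omega$, $h_0\Omega^0=0$, $U_0=\C\mathbf 1$, and the M\"obius relations and Lemma \ref{lm4.15} all hold, yet $2h_0=2\epsilon\alpha(0)$ has non-integral eigenvalues. What excludes this in the Proposition is the corrector property $f\circ(L(1)-2h_1)=0$ from Proposition \ref{prop4.6}, which your Stage one never uses.

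The missing idea is the specific vector $f\in V_0^{*}\subset K$, $f(v)=\tr_{V_{-s}}v_{-1}$. Lemma \ref{lm4.8} gives $f\circ A=0$, i.e.\ $Pf=0$ where $(Pk,u)=(k,Au)$. Put $Y'(v,z)=Y_K(E(z)v,z)$; one checks $\frac{d}{dz}Y'(v,z)=[P,Y'(v,z)]$. Hence $e^{-zP}Y'(v,z)f$ is constant, and $Y'(v,z)f=e^{zP}\phi(v)$ is an honest power series with trivial monodromy. The set $I_f=\ker\phi$ is an ideal not containing $\mathbf 1$, hence zero. Since $\mathsf M\,Y'(v,z)=Y'(e^{4\pi ih_0}v,z)$, one gets $(e^{4\pi ih_0}-1)v\in I_f=0$. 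That is what yields semisimplicity and integrality of $2h_0$.

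The same $f$ is the vacuum-like vector you want in Stage three: $\phi(\mathbf 1)=f$, and $\phi:V\to K^E$ is the isomorphism. So this vector is needed before integrality, not after. Your description of it is also off:
\begin{itemize}
\item $A$ acts on $V$, and its transpose $P$ raises weights on $K$, so a lowest-weight or ``$H$-lowest'' vector of $K$ is not automatically annihilated.
\item $U_0=\C\mathbf 1$ is a statement about $V$, not about $K$.
\item $f$ lies in $V_0^{*}$, not in $V_{-s}^{*}$, when $s>0$.
\end{itemize}
Separately, $h_0$ need not commute with $C(z)$: $[h_0,L(1)]=2(h_0\bar h)_1$, which you have not shown to vanish. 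Your reduction does not actually need this.
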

\proof (i) Recall that $K=V'=\bigoplus_{n}V_n^*$, for $v,u\in V$ and $k\in K$, define
\[
( Y'(v,z)k,u)
=
(
k,
Y(C'(z)v,z^{-1})u
).
\]
The original contragredient structure on $K$ is characterized by
\[
(
Y_K(a,z)k,u
)
=
(
k,
Y(C(z)a,z^{-1})u
).
\]
We use $Y'$ and $Y_K$ respectively. Now we have
\[
\begin{aligned}
(
Y_K(E(z)v,z)k,u
)
=
(
k,
Y(C(z)E(z)v,z^{-1})u
)
=
(
k,
Y(C'(z)v,z^{-1})u
)
=
(
Y'(v,z)k,u
).
\end{aligned}
\]
Putting together, we get
\begin{eqnarray}\label{id4.6.1}
Y'(v,z)=Y_K(E(z)v,z),
\end{eqnarray}
This implies $Y'$ is precisely the original contragredient field twisted by
the relative inversion operator $E(z)$.

For $v\in V_n$, suppose that $v$ belongs to the
generalized $h_0$-eigenspace of eigenvalue $\lambda$, i.e., $(h_0-\lambda)^Nv=0$
for some $N$. Then $v$ belongs to the generalized $H$-eigenspace of
eigenvalue $n-\lambda.$ Therefore, there is a polynomial $p(x)$, such that
\[
z^{-2H}v
=
z^{-2(n-\lambda)}p(\log z)v.
\]

Let $k\in V_r^*\subset K,u\in V_d,$ we have
\begin{eqnarray}\label{id4.5}
(Y'(v,z)k,u)&=&(k,Y(C'(z)v,z^{-1})u)\nonumber\\
&=&(k,e^{n\pi i}\sum_{j\geq0,\ell\in\Z}\frac{1}{j!}v^{n-j}_\ell uz^{\ell+1+j-2(n-\lambda)}p(\log z))\nonumber\\
&=&(k,e^{n\pi i}\sum_{j\geq0}\frac{1}{j!}v^{n-j}_{d+n-j-r-1} uz^{d-n-r+2\lambda}p(\log z)),
\end{eqnarray}where $v^{n-j}\in V_{n-j}.$
Consequently, the component of $Y'(v,z)k$ lying in $V_d^*$ has the
form
\[
z^{\,d-n-r+2\lambda}
\,
p(\log z).
\]

Since $d\ge -s,$
where $-s$ is the lower bound of the grading of $V$, the powers of
$z$ are lower bounded.  Moreover, $h_0$ has only finitely
many generalized eigenvalues. Thus $Y'(v,z)k$ is
coefficientwise well defined and involves only finitely many exponent
cosets modulo $\mathbb Z$.

(ii) Let $P$ acting on $K$ be defined by $(Pk,u)=(k,Au)$, we can see $P$ is homogeneous of degree $1$. Now using bracket relations and Corollary \ref{co4.13}, we have
\begin{eqnarray*}
(\frac{d}{dz}Y'(v,z)k,u)&=&(k,\frac{d}{dz}Y(C'(z)v,z^{-1})u)\\
&=&(k,Y(\frac{d}{dz}C'(z)v,z^{-1})u)-z^{-2}(k,\frac{d}{dy}Y(C'(z)v,y)\mid_{y=z^{-1}}u)\\
&=&(k,Y(-AC'(z)-2z^{-1}HC'(z)v,z^{-1})u)-z^{-2}(k,Y(L(-1)C'(z)v,z^{-1})u)\\
&=&(k,-[A,Y(C'(z)v,z^{-1})]u)=(-[Y'(v,z),P]k,u)\\
&=&([P,Y'(v,z)]k,u).
\end{eqnarray*}
Furthermore, from Lemma \ref{lm4.8}, we have $Pf=0$. Hence,
\[
\begin{aligned}
\frac{d}{dz}
\left(
e^{-zP}Y'(v,z)f
\right)
&=
-Pe^{-zP}Y'(v,z)f
+
e^{-zP}\partial_zY'(v,z)f
\\
&=
-Pe^{-zP}Y'(v,z)f
+
e^{-zP}
(PY'(v,z)-Y'(v,z)P)f
\\
&=
0.
\end{aligned}
\]
A function of the form $z^\alpha p(\log z)$ has zero differentiation if and only if $\alpha=0$ and $p$ is constant. Thus differentiation on
$z^\alpha\mathbb C[\log z]$
has only constants in its kernel.
Thus $e^{-zP}Y'(v,z)f$ is a constant, denoted by $\phi(v).$
Therefore,
\begin{eqnarray}\label{id4.6}
 Y'(v,z)f=e^{zP}\phi(v).
\end{eqnarray}
Moreover, $e^{zP}$ is well-defined, above identity is a legitimate identity and $Y'(v,z)f$ contains no logarithms and only nonnegative integral powers of $z$. Furthermore, it is obvious that $f=\phi(\textbf{1})$.

Now assume again that $v\in V_n$
has generalized $h_0$-eigenvalue $\lambda$. Let $k=f\in V_0^*$, from identity (\ref{id4.5}), we know the component of $Y'(v,z)f$ lying in $V_d^*$ is $z^{\,d-n+2\lambda}
\,p(\log z).$
Comparing the constant term of identity (\ref{id4.6}), we get $d-n+2\lambda=0,$ i.e.,
\begin{eqnarray}\label{id4.7}
d=n-2\lambda.
\end{eqnarray}
This also means $\phi(v)$ is supported only in original dual degree $n-2\lambda$. In particular, $\phi(v)$ belongs to the restricted dual $K$. Hence we get a linear map $$\phi:V\rightarrow K.$$

(iii) Set $I_f=\left\{v\in V\mid Y_K(E(z)v,z)f=0\right\}=
\left\{v\in V\mid Y'(v,z)f=0\right\}.$ Using identity (\ref{id4.6}), we have
\[
I_f=\ker\phi.
\]

On the other hand, for $v\in I_f,u\in V,$ since $E$ is well-defined, let
\[
E(z_1)u
=
\sum_{i=1}^N a_i(z_1)u_i,
\qquad
E(z_2)v
=
\sum_{j=1}^M b_j(z_2)v_j,
\]
with $u_i,v_j\in V$.
Choosing one integer $N_0$ large enough to make sure the weak associativity of $Y_K$ holds, and another integer $N_1$ large enough such that $u_nv=0$ for $n>N_1-1$,  using Lemma \ref{lm4.15}, we have
\begin{eqnarray*}
0&=&x_0^{N_1}(x_0+x_2)^{N_0}Y_K(E(x_0+x_2)u,x_0+x_2)Y_K(E(x_2)v,x_2)f\\
&=&x_0^{N_1}(x_0+x_2)^{N_0}Y_K\left(Y(E(x_0+x_2)u,x_0)E(x_2)v,x_2\right)f\\
&=&x_0^{N_1}(x_0+x_2)^{N_0}Y_K(E(x_2)Y(u,x_0)v,x_2)f\\
&=&(x_0+x_2)^{N_0}\sum_nx_0^{N_1-n-1}Y_K(E(x_2)u_nv,x_2)f.
\end{eqnarray*}
Hence, comparing the coefficient of $x_0^k$ repeatedly, we have $u_nv\in I_f$ for all $u\in V,\ v\in I_f,\ n\in\mathbb Z.$ Thus $I_f$ is an ideal of $V$.

Notice also that $\mathbf1\notin I_f,$
since $Y_K(\mathbf1,z)f=f\neq0.$
Therefore $I_f$ is a proper ideal. Thus $I_f=0$ since $V$ is simple. This means $\phi$ is injective.

(iv) The only possible multivaluedness in $C'(z)$ comes from $z^{-2H}=z^{-2(L(0)-h_0)}.$
When $z$ travels once counterclockwise around the origin $\log z\longmapsto\log z+2\pi i.$
Therefore, we get
\[
z^{-2H}
\longmapsto
z^{-2H}e^{-4\pi iH}=z^{-2H}e^{4\pi ih_0}.
\]
Since $h_0\Omega^0=0$ by Corollary \ref{co4.13}, we have
$\mathsf M C'(z)v=C'(z)e^{4\pi ih_0}v,$ where \(\mathsf M\) is counterclockwise local monodromy around
\(z=0\). This induces
\begin{eqnarray}\label{id4.8}
\mathsf M Y'(v,z)=Y'(e^{4\pi ih_0}v,z).
\end{eqnarray}
On the other hand, from identity (\ref{id4.6}), we have
$\mathsf M \left(Y'(v,z)f\right)=Y'(v,z)f.$
Putting those together, we have $Y'((e^{4\pi ih_0}-1)v,z)f=0.$
This implies $(e^{4\pi ih_0}-1)v\in I_f.$
By (iii), we have
\begin{eqnarray}\label{id4.9}
e^{4\pi ih_0}
=
\Id_V.
\end{eqnarray}

Fix $n$ and take the Jordan decomposition $h_0=S+N$ on $V_n$, where $S$ is semisimple and $N$ is nilpotent, we also have $[S,N]=0.$
From identity (\ref{id4.9}), we have $e^{4\pi iS}e^{4\pi iN}=I.$
Let $\lambda$ be an eigenvalue of $S$, restricting to the generalized
$\lambda$-eigenspace, we have $e^{4\pi i\lambda}=1.$
Therefore
\[
2\lambda\in\mathbb Z.
\]
Then removing the semisimple part, we have $e^{4\pi iN}=I.$
But
\[
e^{4\pi iN}-I
=
N
\left(
4\pi i I
+
\frac{(4\pi i)^2}{2!}N
+
\frac{(4\pi i)^3}{3!}N^2
+\cdots
\right).
\]
But $\left(
4\pi i I
+
\frac{(4\pi i)^2}{2!}N
+
\frac{(4\pi i)^3}{3!}N^2
+\cdots
\right)$ is invertible, we have $N=0.$
Hence $h_0$ is semisimple on every $V_n$.
This implies that $2h_0$ is semisimple on $V$, and every eigenvalue
is an integer.

(v) Again let $v\in V_n$ satisfy $h_0 v=\lambda v.$ Then $Hv=(n-\lambda)v.$
Thus
\begin{eqnarray*}
E(z)v&=&z^{2L(0)}e^{-\pi iL(0)}e^{-zL(1)}e^{zA}e^{\pi iH}z^{-2H}v\\
&=&z^{-2(n-\lambda)}e^{\pi i(n-\lambda)}z^{2L(0)}e^{-\pi iL(0)}\sum_{q\geq0,p\geq0}\frac{(-1)^q}{q!}\frac{1}{p!}L(1)^qA^pvz^{p+q}\\
&=&z^{-2(n-\lambda)}e^{\pi i(n-\lambda)}\sum_{q\geq0,p\geq0}z^{2(n-p-q)}e^{-\pi i(n-p-q)}\frac{(-1)^q}{q!}\frac{1}{p!}L(1)^qA^pvz^{p+q}\\
&=&\sum_{j\geq0}w_{n-j}z^{2\lambda-j}e^{\pi i(j-\lambda)},
\end{eqnarray*}where $w_{n-j}\in V_{n-j}$.
Now since $2\lambda\in\Z$ and $V$ is lower truncated, we have
\[
E(z)v\in V[z,z^{-1}].
\]
Similarly, we also have $E(z)^{-1}v\in V[z,z^{-1}].$

(vi) Now define a new vertex-operator action on the vector space $K$ by
\[
Y_{K^E}(u,x)
=
Y_K(E(x)u,x).
\]
Since $E(x)u$ is now a finite Laurent polynomial with coefficients
in $V$, the right-hand side is an ordinary lower-truncated formal
field. It is trivial that $E(x)\mathbf1=\mathbf1,$ thus
$Y_{K^E}(\mathbf1,x)=
\Id_K.$

 By Lemma~\ref{lm4.15}, for sufficiently large $N_0$ and $g\in K$, using weak associativity for $V$-module $K$ and part (v), we have
\[
\begin{aligned}
&\qquad (x_0+x_2)^{N_0}Y_K(E(x_0+x_2)u,x_0+x_2)
Y_K(E(x_2)v,x_2)g\\
&=
(x_0+x_2)^{N_0}Y_K\left(
Y(E(x_0+x_2)u,x_0)E(x_2)v,
x_2
\right)g\\
&=(x_0+x_2)^{N_0}Y_K(E(x_2)Y(u,x_0)v,x_2)g.
\end{aligned}
\]
This proves the weak associativity for $Y_K$, thus for $Y_{K^E}$, i.e., modified fields satisfy weak associativity. Using the same argument, we also have weak commutativity, and hence the Jacobi identity.
Therefore, $(K^E,Y_{K^E})$ is a weak $V$-module.

Suppose $N\subset K^E$ is a nonzero $K^E$-submodule.
For any $v\in V$, write $E(x)^{-1}v=
\sum_{j=r}^{s}x^jv^j$,
we have
\[
\begin{aligned}
Y_K(v,x)
=
Y_K(E(x)E(x)^{-1}v,x)
=
\sum_{j=r}^{s}
x^jY_K(E(x)v^j,x)
=
\sum_{j=r}^{s}
x^jY_{K^E}(v^j,x).
\end{aligned}
\]
Since $N$ is stable under all coefficients of
$Y_{K^E}(v^j,x)$, we can see that $N$ is also stable
under all coefficients of the original fields $Y_K(v,x)$.
Thus $N$ is a $V$-submodule of ordinary contragredient module $K$.

The restricted contragredient $K=V'$ of the simple adjoint module
$V$ is simple. Hence $N=K.$
Therefore $K^E$ is simple under the new vertex operator action.

(vii) Recall from part (iii) that we already have an injective linear map $\phi:V\rightarrow K.$ From identity (\ref{id4.6.1}) and identity (\ref{id4.6}), we have
\[
Y_{K^E}(v,z)f
=
e^{zP}\phi(v).
\]

Moreover, from part (ii) and identity (\ref{id4.6.1}), we have $[P,Y_{K^E}(u,z)]=
\frac{d}{dz}Y_{K^E}(u,z).$
Therefore, we get
\[
Y_{K^E}(u,x+z)e^{zP}
=
e^{zP}Y_{K^E}(u,x).
\]
Applying weak associativity in $K^E$ to $u,v$ and the vector $f$.
for sufficiently large $N_0$, using identity (\ref{id4.6}), we have
\begin{eqnarray*}
(x+x_2)^{N_0}Y_{K^E}(Y(u,x)v,x_2)f
&=&(x+x_2)^{N_0}Y_{K^E}(u,x+x_2)Y_{K^E}(v,x_2)f\\
&=&(x+x_2)^{N_0}Y_{K^E}(u,x+x_2)e^{x_2P}\phi(v)\\
&=&(x+x_2)^{N_0}e^{x_2P}Y_{K^E}(u,x)\phi(v).
\end{eqnarray*}
Taking $\Res_{x_2}x_2^{-1}$, we get
\[
Y_{K^E}(u,x)\phi(v)
=
\phi(Y(u,x)v).
\]
Thus $\phi$ is a $V$-module homomorphism. Since $\phi(\textbf{1})=f$, it is nonzero. Furthermore, $V$ and $K^E$ are simple, we have $\phi$ is isomorphic.
This completes the proof.             $\hfill\Box$

\begin{remark}\label{re4.19}
Now we have $h_0$ is semisimple on $V$, so is $H$. Let $v\in V_n$ and $h_0v=\lambda v,$ under isomorphism $\phi$, we have $\phi(v)$ has a new weight $n'=n-2\lambda$ on $K^E=K=V'.$ From part (1) of the proof Proposition \ref{prop4.18}, we have $n'\geq -s.$ Furthermore, the $H$-eigenvalue $t$ of $v$ satisfies $$t=\frac{n+n'}{2}\geq -s,\qquad -s\leq n\leq 2t+s.$$Hence, the new grading is also lower truncated and each homogeneous space is finite dimensional.
\end{remark}

\begin{remark}\label{re4.20}
The operator $E(z)$ modifies the contragredient $V$-module structure on $V'$ and makes it isomorphic to $V$ as $V$-modules.
\end{remark}

\begin{proposition}\label{prop4.21}
Let $K=V'$
be the ordinary contragredient $V$-module. Then there exists a simple ordinary $V$-module $B$ and a
nonzero intertwining operator $I^E$ of type $\binom{V}{K~~B}$.
Consequently, $\Hom_{\mathcal C}(K\boxtimes B,V)\neq0.$
\end{proposition}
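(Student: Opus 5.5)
The plan is to take $B$ to be the adjoint space $V$ with its vertex operators twisted by the relative inversion, $B=V^E$, and to obtain $I^E$ from the skew-symmetry intertwining operator of the contragredient module, transported along the isomorphism $\phi:V\xrightarrow{\sim}K^E$ of Proposition \ref{prop4.18}.

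\textbf{Construction of $B$.} On the vector space $V$ define
\[
Y_B(u,x)=Y(E(x)^{-1}u,x),
\]
or $Y(E(x)u,x)$, whichever sign matches the intertwining relation below. By Proposition \ref{prop4.18}(v), $E(x)^{\pm1}u\in V[x,x^{-1}]$, so these are honest lower-truncated fields. I will repeat the argument of Proposition \ref{prop4.18}(vi) with $M=V$, using the conjugation identity $E_M(z)Y_M(u,x)=Y_M(E_V(z+x)u,x)E_M(z)$ of Lemma \ref{lm4.15}. This should give weak associativity and weak commutativity, hence the Jacobi identity, so $B$ is a weak $V$-module.

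\textbf{$B$ is simple and ordinary.} Simplicity follows exactly as for $K^E$. Each original field $Y(v,x)$ is a finite Laurent combination of twisted fields $Y_B(v^j,x)$, so a $B$-submodule is an ideal of $V$, and $V$ is simple. For ordinarity, $B$ is finitely generated (by $\mathbf 1$), so Theorem \ref{thm3.4} puts it in $\mathcal C$. Its $L(0)$-action is expressible through the semisimple operators $L(0)$, $h_0$ and $H$, with half-integral spectrum of $h_0$ by Proposition \ref{prop4.18}. By Remark \ref{re4.19} the resulting grading is lower truncated with finite-dimensional pieces, so Corollary \ref{co3.6} gives that $B$ is an irreducible ordinary module.

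\textbf{Construction of $I^E$.} Start from the skew-symmetry intertwining operator of the module $K$:
\[
\mathcal Y(k,x)v=e^{xL(-1)}Y_K(v,-x)k,
\]
which is of type $\binom{K}{K\;V}$. I will replace the $V$-slot by $B$ and the target by $K^E$. The target $K^E$ is identified with $V$ through $\phi^{-1}$, so set
\[
I^E(k,x)b=\phi^{-1}\bigl(e^{xL(-1)}Y_{K}(E(-x)^{\epsilon}b,-x)k\bigr),
\qquad \epsilon=\pm1,
\]
with $\epsilon$ and the argument of $E$ chosen so that the twist on $B$ and the twist on the target cancel. The $L(-1)$-derivative property should follow from $\frac{d}{dz}E_M(z)=E_M(z)T^M-T^ME_M(z)$ in Lemma \ref{lm4.15}, together with $T=L(-1)$.

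\textbf{Main obstacle: the Jacobi identity for $I^E$.} Checking commutativity and associativity of $I^E$ with $Y_V$ on the target and $Y_B$ on the source reduces, via weak associativity in $K$, to comparing $E(x_1)$ with $E(x_1-x_2)$ acting on the inserted vector. This is where the shifted-argument identity $E_M(z)Y_M(u,x)=Y_M(E_V(z+x)u,x)E_M(z)$ has to be used in exactly the right form. The work is bookkeeping of which Möbius triple acts where, and the Laurent polynomiality of $E^{\pm1}$ is what makes the formal-series manipulations legitimate. I expect fixing the correct sign $\epsilon$ and the correct argument of $E$ to be the delicate part.

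\textbf{Conclusion.} Nonvanishing of $I^E$ is clear: applied to $f\in K$ and $\mathbf1\in B$ it gives $\phi^{-1}(e^{xL(-1)}f)$, whose constant term is $\phi^{-1}(f)=\mathbf 1\neq0$. Finally, the universal property of $\boxtimes$ in $\mathcal C$ converts $I^E$ into a nonzero morphism $K\boxtimes B\to V$.
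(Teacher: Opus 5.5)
Your proposal has a genuine gap exactly where you locate the ``main obstacle'': $I^E$ is never pinned down. More seriously, no choice of $\epsilon$ and of the $x$-dependent argument of $E$ can work, because the twist is on the wrong slot.

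Inserting $E(-x)^{\pm1}b$ into $Y_K(\cdot,-x)$ only replaces the action $Y_K$ on $K$ by an $E^{\pm1}$-twisted action. So your candidate is $J(k,x)b=e^{xL(-1)}Y_K(G(x)b,-x)k$ with $G(x)=E(-x)^{\pm1}$. When you differentiate, the term $L(-1)e^{xL(-1)}$ and the derivative in the second argument of $Y_K$ cancel each other, leaving
\[ \frac{d}{dx}J(k,x)b=J(L(-1)k,x)b+e^{xL(-1)}Y_K\bigl(G'(x)b,-x\bigr)k . \]
Hence the $L(-1)$-derivative property fails in general. The relation $\frac{d}{dz}E_M=E_MT^M-T^ME_M$ of Lemma \ref{lm4.15} does not help, because $E$ is not acting on the slot being differentiated.

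You could try to repair this with the genuine translation operator $P=\phi L(-1)\phi^{-1}$ of $K^E$ (Proposition \ref{prop4.18}) and $G(x)=E(-x)$. That gives $e^{xP}Y_{K^E}(b,-x)k$, which is just the skew-symmetry operator of the module $K^E$. Its type is $\binom{K^E}{K^E\;V}$, and after $\phi^{-1}$ it becomes $\binom{V}{K^E\;V}\cong\binom{V}{V\;V}$, not $\binom{V}{K\;B}$. Correspondingly, with $Y_B(u,x)=Y(E(x)u,x)$, the Jacobi identity has the composite $E(x_1-x_2)E(x_1)$ on the inserted vector on the $B$-side, one factor more than on the target side.

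The missing idea is to twist the first ($K$-)slot, as in Li's pseudoautomorphism construction.

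\textbf{Setup.} Take $Y_B(u,z)=Y(E(z)u,z)$; the sign is not free, it must be the same $E$ for which $K^E\cong V$. Set $I^E(w,z)v=I(E_K(z)w,z)v$, where $I$ is a nonzero operator of type $\binom{K}{K\;V}$, e.g.\ $I(w,z)v=e^{zL(-1)}Y_K(v,-z)w$. Here $E_K(z)=C_K(z)^{-1}C'_K(z)$ is built from the module operators $T^K,H^K,A^K$ of Corollary \ref{co4.13}.

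\textbf{Jacobi identity.} Substitute $E(z_1)u$ into the Jacobi identity of $I$; this produces $Y_{K^E}$ and $Y_B$ on the left. On the iterate side, the delta function lets you replace $E(z_1)$ by $E(z_2+z_0)$, since $E(z_1)u$ is a Laurent polynomial. Lemma \ref{lm4.15} then gives $Y_K(E(z_2+z_0)u,z_0)E_K(z_2)w=E_K(z_2)Y_K(u,z_0)w$. So the first slot keeps its original $K$-action, and $I^E$ has type $\binom{K^E}{K\;B}$.

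\textbf{$L(-1)$-property and nonvanishing.} The $L(-1)$-property now does follow from $E_K'=E_KL(-1)-L(-1)E_K$, precisely because $E_K$ sits in the differentiated slot. Composing with $\phi^{-1}$ gives type $\binom{V}{K\;B}$, and $I^E(w,z)\mathbf 1=e^{zL(-1)}E_K(z)w\neq0$.

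\textbf{Logarithms.} Your plan also misses one further step. $h_0$ is not known to be semisimple on $K$, so $E_K(z)$ may introduce powers of $\log z$. You must show, via the $L(0)$-bracket, that $I^E$ has no logarithmic terms before invoking the universal property of $\boxtimes$.
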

\proof (i) Let $B=V^E=V$, and $I\in I_V\binom{K}{K~~V}$ a nonzero intertwining operator, for $u\in V,w\in K,v\in B=V$, define
\[
Y_B(u,z)=Y(E(z)u,z),\qquad I^E(w,z)v=I(E_K(z)w,z)v.
\]
Note that until now, we just know $h_0$ is semisimple on $V$, and we do not know whether it is semisimple on $K$. But following the proof of Proposition \ref{prop4.18}, we can see $E_K(z)$ is still well-defined on $K$. This implies $I^E$ may contain polynomial of $\log z$.

(ii) By Proposition \ref{prop4.18}, $E(x)u\in V[x,x^{-1}]$ for every $u\in V$. Hence $Y_B(u,x)v$ is lower truncated for every $u,v\in V$. Moreover, $E(x)\mathbf1=\mathbf1,$
and therefore $Y_B(\mathbf1,x)=\Id_V.$

Again using Jacobi identity for $Y$ and Lemma \ref{lm4.15}, for $u^1,u^2\in V,v\in B=V$, we have
\[
\begin{aligned}
&x_0^{-1}
\delta\left(\frac{x_1-x_2}{x_0}\right)
Y(E(x_1)u^1,x_1)Y(E(x_2)u^2,x_2)v
\\
&\quad
-
x_0^{-1}
\delta\left(\frac{x_2-x_1}{-x_0}\right)
Y(E(x_2)u^2,x_2)Y(E(x_1)u^1,x_1)v
\\
&=
x_2^{-1}
\delta\left(\frac{x_1-x_0}{x_2}\right)
Y(Y(E(x_1)u^1,x_0)E(x_2)u^2,x_2)v\\
&=x_2^{-1}
\delta\left(\frac{x_1-x_0}{x_2}\right)
Y(E(x_2)Y(u^1,x_0)u^2,x_2)v
\end{aligned}
\]
By the definitions, this identity becomes
\[
\begin{aligned}
&x_0^{-1}
\delta\left(\frac{x_1-x_2}{x_0}\right)
Y_B(u^1,x_1)Y_B(u^2,x_2)v
\\
&\quad
-
x_0^{-1}
\delta\left(\frac{x_2-x_1}{-x_0}\right)
Y_B(u^2,x_2)Y_B(u^1,x_1)v
\\
&=
x_2^{-1}
\delta\left(\frac{x_1-x_0}{x_2}\right)
Y_B(Y(u^1,x_0)u^2,x_2)v.
\end{aligned}
\]
Thus $B$ is a weak $V$-module.

Let $N\subseteq B$ be a nonzero submodule. For every $v\in V$, set $E(z)^{-1}v=\sum_{j=r}^{s}z^jv^j,$ we have
\[
\begin{aligned}
Y(v,z)
&=
Y(E(z)E(z)^{-1}v,z)
\\
&=
\sum_{j=r}^{s}
z^jY(E(z)v^j,z)
\\
&=
\sum_{j=r}^{s}
z^jY_B(v^j,z).
\end{aligned}
\]
Thus every coefficient of every original field $Y(v,z)$ preserves
$N$, and $N$ is also an original $V$-submodule of the adjoint
module $V$. Since $V$ is simple, we also have $B$ is simple. Using Corollary \ref{co3.6}, $B$ is also ordinary.

(iii) Let $I\in I_V\binom{K}{K~~V}$ be an intertwining operator, since $E$ and $E_K$ are well-defined and lower
truncated, we have $I^E$ is lower truncated and finite logarithmic
degree.

By definition, using $L(-1)$-bracket for $I$ and Lemma \ref{lm4.15}, we have
\begin{eqnarray*}
\frac{d}{dz}I^E(w,z)&=&\frac{d}{dz}I(E_K(z)w,z)\\
&=&I(L(-1)E_K(z)w,z)+I(E'_K(z)w,z)\\
&=&I(E_K(z)L(-1)w,z)=I^E(L(-1)w,z).
\end{eqnarray*}This proves $L(-1)$-bracket for $I^E.$
Following the proof of part (ii), we can see $I^E\in I_V\binom{K^E}{K\;\;B}$ is a logarithmic intertwining operator. From Proposition \ref{prop4.18}, we know $K^E\cong V$ as $V$-modules. Thus $I^E$ induces a logarithmic intertwining operator of type $\binom{V}{K~~B}$ under isomorphism $\phi$, still denoted by $I^E.$

Now, we have
\[
\begin{aligned}
I^E(w,z)\mathbf1
=
I(E_K(z)w,z)\mathbf1
=
e^{zL(-1)}E_K(z)w.
\end{aligned}
\]
If $w\neq0$, then $e^{zL(-1)}E_K(z)w\neq0$ since $e^{zL(-1)}E_K(z)$ is invertible. Thus $I^E\neq0.$

(iv) Let $w\in K,v\in B$ are homogeneous of weight $r,s$ respectively,
and suppose that a component of $I^E(w,z)v$ lies in the
$L(0)^{K^E}$-eigenspace of weight $q$. Then, such a coefficient may have form
\[
z^\alpha p(\log z),
\qquad
p\in\mathbb C[t].
\]
Applying $L(0)$-bracket for $I^E$ to $v$ and projecting to  weight $q$ space, we have
\[
(q-r-s)
z^\alpha p(\log z)
=
z\frac{d}{dz}
\left(
z^\alpha p(\log z)
\right)=z^\alpha
\left(
\alpha p(\log z)+p'(\log z)
\right).
\]
Therefore, we get
\[
(q-r-s-\alpha)p(t)=p'(t).
\]
This implies $p(t)=0$ or $q-r-s-\alpha=0$, and $p(t)$ is a constant. In either case, $I^E$ contains no powers of $\log z$.
Thus $I^E$ is in fact an ordinary intertwining operator without logarithmic terms.

(v) From part (iii) and (iv), we know $I^E$ is a nonzero ordinary intertwining operator of type $\binom{V}{K~~B}$. Thus we have a nonzero morphism
\[
\Hom_{\mathcal C}(K\boxtimes B,V)\neq0
\]by the universality of tensor product,
This completes the proof.                  $\hfill\Box$

\subsection{Main Theorem}\label{sec6.4}
Now we are ready to prove our Main Theorem of this article.
\begin{maintheorem}\label{MThm6.22}
Let $V$ be a simple, $C_2$-cofinite lower truncated vertex operator algebra such that the complete trace functions satisfy
\begin{eqnarray*}
SF_V=\sum_{j=0}^r s_jF_{W^j}.
\end{eqnarray*} Then $V$ is rational.
\end{maintheorem}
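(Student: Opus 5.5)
The proof will simply assemble the results already established, following the chain of implications described in the introduction. By Theorem \ref{THM5.11}, under the modular identity $SF_V=\sum_j s_jF_{W^j}$, it suffices to exhibit an object $B\in\mathcal C$ together with a nonzero morphism $DV\boxtimes B\to V$. Proposition \ref{prop4.21} supplies exactly this. The plan is therefore to verify that the hypotheses of Proposition \ref{prop4.21} are met, feed its output into Theorem \ref{THM5.11}, and conclude rationality.

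First, apply Proposition \ref{prop4.6} to the simple, $C_2$-cofinite, lower truncated $V$ together with the modular identity. This yields a corrector $h\in V_1$ satisfying $f\circ(L(1)-2h_1)=0$; when $V_1=0$ one takes $h=0$, as in Remark \ref{re4.7}. Lemma \ref{lm4.8} then produces $\bar h$ and the Virasoro-type element $\Omega^0$. Theorem \ref{thm4.10} shows that $U=V^{[0]}$ is nonnegatively graded with $U_0=\C\mathbf 1$. Corollary \ref{co4.13} gives the corrected M\"obius triple $\{T^M,H^M,A^M\}$ acting on every weak module.

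Next, Lemma \ref{lm4.15} and Proposition \ref{prop4.18} show that $2h_0$ is semisimple with integral spectrum. It follows that $E(z)^{\pm1}v\in V[z,z^{-1}]$ and that $K^E\cong V$. Proposition \ref{prop4.21} then gives the simple ordinary module $B=V^E$ and a nonzero ordinary intertwining operator $I^E$ of type $\binom{V}{K\;B}$. By the universal property of $\boxtimes$ recalled in Section \ref{sec2}, this intertwining operator is a nonzero element of $\Hom_{\mathcal C}(K\boxtimes B,V)$. The object $B$ lies in $\mathcal C$ because it is ordinary, and ordinary modules are grading-restricted.

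Finally, invoke Theorem \ref{THM5.11}. Corollary \ref{co5.4} makes $K$ invertible. Proposition \ref{prop6.6} makes $\mathcal C$ rigid, and Theorem \ref{THM5.10} makes it semisimple. Proposition \ref{PROP5.10} then gives regularity, and \cite{ABD} turns regularity into rationality.

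The only point needing care is the compatibility of hypotheses between the steps. Section \ref{sec6} uses the same identity (\ref{id5.6}) as Section \ref{sec5}, and the module $B$ constructed through the auxiliary corrected structure is an object of the original category $\mathcal C$. I would check explicitly that $I^E$, after composing with the isomorphism $\phi:V\to K^E$, is an intertwining operator for the original $V$-module structures. The remaining steps are formal citations.
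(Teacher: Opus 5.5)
Your proposal is correct and follows the paper's proof exactly: the Main Theorem is obtained by combining Theorem~\ref{THM5.11} (as summarized in Remark~\ref{RE5.12}) with Proposition~\ref{prop4.21}, which supplies $B=V^E$ and a nonzero morphism $K\boxtimes B\to V$. The compatibility check you flag is carried out in part (iii) of the proof of Proposition~\ref{prop4.21}, with one correction to how you phrase it: there $I^E$ has type $\binom{K^E}{K\;B}$, where $K$ keeps its original structure and $B=V^E$ carries its twisted one, so one composes with $\phi^{-1}:K^E\to V$ rather than asking all three modules to carry their original structures.
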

\proof By Remark \ref{RE5.12}, we just need to find a $V$-module $B$ satisfying desired properties. By Proposition \ref{prop4.21}, that $B$ does exist, which is exactly the modified $V$-module $V^E.$ Thus $V$ is rational.
$\hfill\Box$

\begin{remark}
This Main Theorem is exactly the inverse theorem of Zhu's modular invariance theorem proved in \cite{Zhu}.
\end{remark}

\begin{remark}\label{re6.24}
In \cite{DongRen}, Dong-Ren conjectured that for a simple and $C_2$-cofinite vertex operator algebra $V$, the rationality is equivalent to the modular invariance of the space spanned by the complete one point trace functions of irreducible ordinary $V$-modules. By our Main Theorem \ref{MThm6.22} and Zhu's modular invariance theorem, we can see this conjecture is true.
\end{remark}

\begin{remark}\label{RE5.13}
The hypothesis used throughout this paper is the single complete identity \eqref{Last}, not the whole modular invariance spanned by the space of all irreducible complete one point trace functions. However, after the
rigidity has been established, equation \eqref{ID5.23} shows that every irreducible one-point trace function has its
$S$-transformation in the space spanned by ordinary trace functions, and that all coefficients $s_j$ in the assumed expansion
are nonzero. Thus modular invariance of the whole trace space is recovered from the $S$-transformation of
the adjoint trace.
\end{remark}

\appendix
\section{Appendix}\label{seca}
In this appendix, we prove Theorem \ref{thm3.7}.

\proof (i) Since $W_0\in \mathcal {C}$, it is lower truncated and each homogeneous space is finite dimensional. Thus $\Phi$ is well-defined. Since $V$ is $C_2$-cofinite, by Theorem \ref{thm3.4}, every object of
\(\mathcal C\) is grading-restricted, \(C_1\)-cofinite, and finitely
generated. Now from \cite{Fiordalisi,Huang-C1}, there exist constants $C,N_1$ such that $\tr_{(W_0)_{[\lambda]}}(\mathcal {Y}_1\cdots\mathcal {Y}_n)\leq C(1+|\lambda|)^{N_1}$ on the ordered genus-one product region $\mathcal {R}$. This implies $$|\Phi|=\sum_{\lambda}|\tr_{(W_0)_{[\lambda]}}(\mathcal {Y}_1\cdots\mathcal {Y}_nq^{L(0)-\frac{c}{24}})|\leq\sum_\lambda C(1+|\lambda|)^{N_1}|q|^{\lambda-\frac{c}{24}}.$$ Thus $\Phi$ converges absolutely since $|q|<1$.

Let $\mathcal {K}\subset\mathcal {R}$ be a compact subset, there is $r_0<1$ such that $|q|<r_0$ for all elements of $\mathcal {K}$. Hence $$|\Phi|\leq\sum_\lambda C(1+|\lambda|)^{N_1}r_0^{\lambda-\frac{c}{24}}.$$Thus $\Phi$ converges locally uniformly on $\mathcal {R}$.

(ii) The collision points are precisely the points where two insertion
variables represent the same point on the torus $E_\tau=\mathbb C/(\mathbb Z+\mathbb Z\tau),$ i.e., $z_i-z_j\in\mathbb Z+\mathbb Z\tau,$
and hence the collision divisor is exactly \(D_\tau\). By part (i), \(\Phi\) is absolutely and locally uniformly
convergent on the ordered product region \(\mathcal R\). Hence it
defines a holomorphic function on \(\mathcal R\).

Now let $\gamma:[0,1]\longrightarrow \mathbb C^n\times\mathbb H\setminus D_\tau$ be a path avoiding the collision divisor. Starting from a local
holomorphic branch of the trace function on \(\mathcal R\), $\Phi$ can be analytically continued along \(\gamma\). Hence it extends to a multivalued analytic function on $\mathbb C^n\times\mathbb H\setminus D_\tau$.

The analytic continuation may depend on the homotopy class of the
chosen path. A closed loop therefore induces an action on the space of
analytic branches, namely the monodromy action. The explicit form of
this local monodromy will be described using the collision expansions
in part (iii).

On any simply connected open subset of
\(\mathbb C^n\times\mathbb H\setminus D_\tau\), after fixing a branch of analytic continuation,
the resulting function is single-valued and holomorphic.

(iii) Now fix two insertion variables and introduce the collision coordinate $z=z_i-z_j .$ For fixed \(\tau\in\mathbb H\) and $\rho_\tau$, the punctured disk $0<|z|<\rho_\tau$ contains no collision point other than \(z=0\).

Let $\mathcal Y$ be the logarithmic intertwining operator corresponding to the two colliding insertions. By the definition of logarithmic intertwining
operators, we have a local expansion
\[
\mathcal Y(\widetilde{w},z)
=
\sum_{r\in\mathbb C}
\sum_{k=0}^{K}
\widetilde{w}_{r,k}z^{-r-1}(\log z)^k,
\]where $\widetilde{w}=\mathcal {U}(q_z)w.$
Hence all possible multivalued terms appearing in the collision
variable are generated by complex powers and logarithmic powers of
\(z\).

The powers of \(z\) arising from the \(L(0)\)-grading of the
intermediate module can be analyzed by the operator \(z^{L(0)}\). To describe the contribution of the generalized \(L(0)\)-eigenspaces,
let \(W\) be the intermediate generalized module appearing in the
collision expansion. For a generalized weight vector $w\in W_{[\lambda]},$
we have $(L(0)-\lambda)^Nw=0$ for some \(N\). Writing $L(0)=\lambda+N_w,$ where $N_w=L(0)-\lambda$ is nilpotent, we obtain
\[
\begin{aligned}
z^{L(0)}w
&=
\exp\bigl((\log z)L(0)\bigr)w
\\
&=
\exp\bigl((\log z)(\lambda+N_w)\bigr)w
\\
&=
\exp(\lambda\log z)
\exp\bigl((\log z)N_w\bigr)w
\\
&=
z^\lambda
\sum_{k=0}^{N-1}
\frac{(\log z)^k}{k!}
N_w^k w .
\end{aligned}
\]
Therefore every generalized weight contribution has the form $z^\lambda(\log z)^k$ with only finitely many possible logarithmic degrees.

Using $L(0)$-bracket with intertwining operators and Proposition \ref{prop3.2}, only finitely many exponent classes modulo
\(\mathbb Z\) occur in the local expansion since the \(L(0)\)-bracket relation implies that the exponent of \(z\)
is determined by the generalized weights and Proposition \ref{prop3.2} implies the generalized weights are contained in finitely many cosets.
Therefore, there exist
finitely many complex numbers $\lambda_1,\ldots,\lambda_{N_0}$ and non-negative integers $K_1,\ldots,K_{N_0}$
such that the analytically continued genus-one trace function admits
the expansion
\[
\Phi(z,\tau)
=
\sum_{\alpha=1}^{N_0}
\sum_{k=0}^{K_\alpha}
z^{\lambda_\alpha}
(\log z)^k
F_{\alpha,k}(z,\tau),
\]
where the coefficient functions $F_{\alpha,k}(z,\tau)$
contain all remaining dependence on the other insertion variables,
the inserted vectors, and the modular parameter \(\tau\). By the genus-one convergence and extension results for logarithmic
intertwining operators \cite{Fiordalisi}, those coefficient functions are
single-valued and holomorphic on $0<|z|<\rho_\tau .$ Thus all possible local multivaluedness around the collision point is contained in the explicit factors $z^{\lambda_\alpha}(\log z)^k .$

Since \(F_{\alpha,k}\) are single-valued holomorphic functions on the
punctured disk, they admit convergent Laurent expansions
\[
    F_{\alpha,k}(z,\tau)
    =
    \sum_{m\in\mathbb Z}
    a_{\alpha,k,m}(\tau)z^m
\]
on this domain.

(iv) We prove the equality between the product and iterate expansions.
By Theorem \ref{thm3.4}, the category \(\mathcal C\) carries a vertex
tensor category structure. In particular, the associativity
isomorphism induces the fusing isomorphism between spaces of
logarithmic intertwining operators.

Let $\mathcal Y_1\in I\binom{W_0}{A_1\,W_1},\mathcal Y_2 \in I \binom{W_1}{A_2\,W_0}$
be the intertwining operators. By the associativity isomorphism of the vertex tensor category, there exist intertwining operators $\mathcal Y_3,\mathcal Y_4$ determined by the pair $(\mathcal Y_1,\mathcal Y_2),$
such that $(\mathcal Y_3,\mathcal Y_4)=
\mathcal A(\mathcal Y_1,\mathcal Y_2),$
where $\mathcal A$ denotes the fusing isomorphism.
The product trace function and iterate expansion are defined by
\begin{eqnarray*}
\Phi_{\mathrm{prod}}&=&\operatorname{tr}_{W_0}
\left(
\mathcal Y_1
\left(
\mathcal U(q_{z_1})a_1,q_{z_1}
\right)
\mathcal Y_2
\left(
\mathcal U(q_{z_2})a_2,q_{z_2}
\right)
q^{L(0)-c/24}
\right),
\\
\Phi_{\mathrm{iter}}&=&\operatorname{tr}_{W_0}
\left(
\mathcal Y_3
\left(
\mathcal U(q_{z_2})
\mathcal Y_4
\left(
a_1,e^{\log(z_1-z_2)}
\right)a_2,
q_{z_2}
\right)
q^{L(0)-c/24}
\right).
\end{eqnarray*}

The associativity theorem for logarithmic intertwining operators gives,
on a common convergence region,
\[
\begin{aligned}
&
\mathcal Y_1
(\mathcal U(q_{z_1})a_1,z_1)
\mathcal Y_2
(\mathcal U(q_{z_2})a_2,z_2)
\\
&\qquad =
\mathcal Y_3
\big(
\mathcal U(q_{z_2})
(
\mathcal Y_4
(
a_1,e^{\log(z_1-z_2)}
)a_2,
q_{z_2}
\big).
\end{aligned}
\]
Applying the linear functional $\tr_{W_0}\left(\,\cdot\,q^{L(0)-c/24}\right)$
to both sides gives
\[
\begin{aligned}
\Phi_{\mathrm{prod}}
&=
\tr_{W_0}
\left(
\mathcal Y_1
(\mathcal U(q_{z_1})a_1,z_1)
\mathcal Y_2
(\mathcal U(q_{z_2})a_2,z_2)
q^{L(0)-c/24}
\right)
\\
&=
\tr_{W_0}
\Big(
\mathcal Y_3
\big(
\mathcal U(q_{z_2})
(
\mathcal Y_4
(
a_1,e^{\log(z_1-z_2)}
)a_2,
q_{z_2}
\big)
q^{L(0)-c/24}
\Big)
\\
&=
\Phi_{\mathrm{iter}} .
\end{aligned}
\]
Therefore, on every nonempty common convergence domain, we have
$$\Phi_{\mathrm{prod}}=\Phi_{\mathrm{iter}}.$$

Since both sides admit analytic continuation by part (ii), the identity
theorem implies that their analytic continuations coincide on every
common branch.

(v) We prove that the genus-one trace functions satisfy a finite-dimensional
regular-singular differential system with respect to the modular
parameter \(q=e^{2\pi i\tau}\).

Let $\Phi(a_1,\ldots,a_n;z_1,\ldots,z_n;\tau)$
denote a genus-one trace function.
Using the Ward identity for genus-one correlation functions involving
intertwining operators, we have
\begin{equation*}
\begin{aligned}
q\frac{\partial}{\partial q}
&
\Phi(a_1,\ldots,a_n;z_1,\ldots,z_n;\tau)
\\
&=
\sum_{i=1}^{n}
\tr_{W_0}
\Big(
\mathcal Y_1(\widetilde a_1,z_1)
\cdots
\mathcal Y_i(L[-2]\widetilde a_i,z_i)
\cdots
\mathcal Y_n(\widetilde a_n,z_n)
q^{L(0)-c/24}
\Big)
\\
&\quad+
\sum_{i=1}^{n}
\sum_{m\geq1}
G_{2m}(\tau)
\tr_{W_0}
\Big(
\mathcal Y_1(\widetilde a_1,z_1)
\cdots
\mathcal Y_i(L[2m-2]\widetilde a_i,z_i)
\cdots
\mathcal Y_n(\widetilde a_n,z_n)
q^{L(0)-c/24}
\Big)
\\
&\quad+
\sum_{\substack{i,j=1\\ i\neq j}}^{n}
\sum_{m\geq1}
\wp_m(z_i-z_j,\tau)
\tr_{W_0}
\Big(
\mathcal Y_1(\widetilde a_1,z_1)
\cdots
\mathcal Y_i(L[m-1]\widetilde a_i,z_i)
\cdots
\mathcal Y_n(\widetilde a_n,z_n)
q^{L(0)-c/24}
\Big).
\end{aligned}
\end{equation*}
where $\widetilde{a_i}=\mathcal {U}(q_{z_i})a_i$ is the geometrically modify of $a_i$, \(L[m]\) denotes the Virasoro operators in geometric
coordinates and \(G_{2m}(\tau)\) are the Eisenstein series, the last term represents the elliptic correction terms arising from
the nontrivial geometry of the torus. This is the genus-one Virasoro
recursion formula, see
\cite{Fiordalisi,Hu2}.

We now use the \(C_2\)-cofiniteness condition. Since $\dim V/C_2(V)<\infty ,$
the quotient contains only finitely many independent representatives.
By the \(C_2\)-cofinite recursion theorem
\cite{Zhu,Hu2},
all traces involving Virasoro descendants can be reduced to a finite
linear span of genus-one trace functions.

Therefore, there exist finitely many generating trace functions $\Phi_1(z,q),\ldots,\Phi_N(z,q)$ such that every trace function appearing on the right-hand side of the Ward identity is a linear combination of them. Put
\[
\boldsymbol{\Phi}(z,q)
=
\begin{pmatrix}
\Phi_1(z,q)\\
\vdots\\
\Phi_N(z,q)
\end{pmatrix}.
\]
Consequently, the Ward identity becomes a finite-dimensional system
\[
q\frac{\partial}{\partial q}
\boldsymbol{\Phi}(z,q)
=
A(z,q)\boldsymbol{\Phi}(z,q),
\]
where $A(z,q)\in \mathbb{M}(N,\mathcal O)$ is a matrix whose entries are generated by the elliptic functions appearing in the genus-one recursion formulas.

The coefficients \(A_{ij}(z,q)\) are obtained from the genus-one
recursion formulas. More precisely, they are finite combinations of
the elliptic functions and Eisenstein series appearing in the
recursion relations. Since these functions admit convergent
\(q\)-expansions near \(q=0\), the matrix $A(z,q)\in M_N(\mathbb C)$
is holomorphic near \(q=0\).

Therefore,
\[
\frac{\partial}{\partial q}
\boldsymbol{\Phi}(z,q)
=
\frac1q
A(z,q)
\boldsymbol{\Phi}(z,q),
\]
where the coefficient matrix has at most a simple pole at \(q=0\).
Hence, $q=0$ is a regular singular point.

We may now apply the Frobenius theory for regular-singular systems \cite{CL}.
Therefore the local solutions have the form
\[
\boldsymbol{\Phi}(z,q)
=
\sum_{\alpha=1}^{N_0}
\sum_{k=0}^{K_\alpha}
q^{h_\alpha}
(\log q)^k
\boldsymbol{f}_{\alpha,k}(z,q),
\]
where $h_\alpha\in\mathbb C$ are the characteristic exponents and $\boldsymbol{f}_{\alpha,k}(z,q)$
are holomorphic vector-valued functions near \(q=0\).

Expanding the holomorphic coefficients, $\boldsymbol{f}_{\alpha,k}(z,q)=\sum_{m\geq0}\boldsymbol{\phi}_{\alpha,k,m}(z)q^m ,$
we obtain
\[
\begin{aligned}
\boldsymbol{\Phi}(z,q)
&=
\sum_{\alpha=1}^{N_0}
\sum_{k=0}^{K_\alpha}
q^{h_\alpha}
(\log q)^k
\sum_{m\geq0}
\boldsymbol{\phi}_{\alpha,k,m}(z)q^m
\\
&=
\sum_{\alpha=1}^{N_0}
\sum_{k=0}^{K_\alpha}
\sum_{m\geq0}
\boldsymbol{\phi}_{\alpha,k,m}(z)
q^{h_\alpha+m}
(\log q)^k .
\end{aligned}
\]
where $\boldsymbol{\phi}_{\alpha,k,m}(z)\in\mathbb C^N.$
No assumption that $h_\alpha\in\mathbb Q$ or $h_\alpha\in\mathbb R$ is required.

Thus we complete the proof.                      $\hfill\Box$


\end{document}